\def\chaptermark#1{}
\def\chapter{%
  \if@openright\cleardoublepage\else\clearpage\fi
  \thispagestyle{plain}\global\@topnum\z@
  \@afterindenttrue \secdef\@chapter\@schapter}
\def\@chapter[#1]#2{\refstepcounter{chapter}%
  \ifnum\c@secnumdepth<\z@ \let\@secnumber\@empty
  \else \let\@secnumber\thechapter \fi
  \typeout{\chaptername\space\@secnumber}%
  \def\@toclevel{0}%
  \ifx\chaptername\appendixname \@tocwriteb\tocappendix{chapter}{#2}%
  \else \@tocwriteb\tocchapter{chapter}{#2}\fi
  \chaptermark{#1}%
  \addtocontents{lof}{\protect\addvspace{10\p@}}%
  \addtocontents{lot}{\protect\addvspace{10\p@}}%
  \@makechapterhead{#2}\@afterheading}
\def\@schapter#1{\typeout{#1}%
  \let\@secnumber\@empty
  \def\@toclevel{0}%
  \ifx\chaptername\appendixname \@tocwriteb\tocappendix{chapter}{#1}%
  \else \@tocwriteb\tocchapter{chapter}{#1}\fi
  \chaptermark{#1}%
  \addtocontents{lof}{\protect\addvspace{10\p@}}%
  \addtocontents{lot}{\protect\addvspace{10\p@}}%
  \@makeschapterhead{#1}\@afterheading}
\newcommand\chaptername{Chapter}
\def\@makechapterhead#1{\global\topskip 7.5pc\relax
  \begingroup
  \fontsize{\@xivpt}{18}\bfseries\centering
    \ifnum\c@secnumdepth>\m@ne
      \leavevmode \hskip-\leftskip
      \rlap{\vbox to\z@{\vss
          \centerline{\normalsize\mdseries
              \uppercase\@xp{\chaptername}\enspace\thechapter}
          \vskip 3pc}}\hskip\leftskip\fi
     #1\par \endgroup
  \skip@34\p@ \advance\skip@-\normalbaselineskip
  \vskip\skip@ }
\def\@makeschapterhead#1{\global\topskip 7.5pc\relax
  \begingroup
  \fontsize{\@xivpt}{18}\bfseries\centering
  #1\par \endgroup
  \skip@34\p@ \advance\skip@-\normalbaselineskip
  \vskip\skip@ }
\def\appendix{\par
  \c@chapter\z@ \c@section\z@
  \let\chaptername\appendixname
  \def\thechapter{\@Alph\c@chapter}}
\newcounter{chapter}
\newif\if@openright
\newtheorem{theorem}{Theorem}[section]
\newtheorem{proposition}[theorem]{Proposition}
\newtheorem{lemma}[theorem]{Lemma}
\newtheorem{corollary}[theorem]{Corollary}
\theoremstyle{remark}
\newtheorem{remark}[theorem]{Remark}
\newtheorem{example}[theorem]{Example}
\newtheorem{definition}[theorem]{Definition}
\numberwithin{equation}{chapter}
\numberwithin{figure}{chapter}
\newcommand{\N}{\mathbb{N}}
\newcommand{\Z}{\mathbb{Z}}
\newcommand{\Q}{\mathbb{Q}}
\newcommand{\R}{\mathbb{R}}
\newcommand{\PP}{\mathbb{P}}
\newcommand{\Lg}{\mathcal{L}}
\newcommand{\A}{\mathcal{A}}
\newcommand{\Ra}{\mathcal{R}}
\newcommand{\Co}{\mathcal{C}}
\newcommand{\bu}{\mathbf{u}}
\newcommand{\bv}{\mathbf{v}}
\newcommand{\bz}{\mathbf{z}}
\newcommand{\bt}{\mathbf{t}}
\newcommand{\bw}{\mathbf{w}}
\newcommand{\bx}{\mathbf{x}}
\newcommand{\be}{\mathbf{e}}
\newcommand{\by}{\mathbf{y}}
\newcommand{\bone}{\mathbf{1}}
\newcommand{\bM}{\mathbf{M}}
\newcommand{\bsigma}{{\boldsymbol{\sigma}}}
\begin{document} 
\title[${S}$-adic sequences]{Chapter 3 \\[0.7cm]  $\boldsymbol{S}$-adic sequences.\\ A bridge between dynamics, arithmetic, and geometry}
\author{J\"org M.~Thuswaldner}
\address{J\"org M.~Thuswaldner, Department Mathematics and Information Technology, University of Leoben, Franz-Josef-Strasse~18, A-8700 Leoben}
\email{joerg.thuswaldner@unileoben.ac.at}

\maketitle

\begin{abstract}
A Sturmian sequence is an infinite nonperiodic string over two letters with minimal subword complexity. In two papers, the first written by Morse and Hedlund in 1940 and the second by Coven and Hedlund in 1973, a surprising correspondence was established between Sturmian sequences on one side and rotations by an irrational number on the unit circle on the other. In 1991 Arnoux and Rauzy observed that an induction process (invented by Rauzy in the late 1970s), related with the classical continued fraction algorithm, can be used to give a very elegant proof of this correspondence. This process, known as the Rauzy induction, extends naturally to interval exchange transformations (this is the setting in which it was first formalized).
It has been conjectured since the early 1990s that these correspondences carry over to rotations on higher dimensional tori, generalized continued fraction algorithms, and so-called $S$-adic sequences generated by substitutions. The idea of working towards such a generalization is known as Rauzy's program. Recently Berth\'e, Steiner, and Thuswaldner made some progress on Rauzy's program and were indeed able to set up the conjectured generalization of the above correspondences. Using a generalization of Rauzy's induction process in which generalized continued fraction algorithms show up, they proved that under certain natural conditions an $S$-adic sequence gives rise to a dynamical system which is measurably conjugate to a rotation on a higher dimensional torus. Moreover, they established a metric theory which shows that counterexamples like the one constructed in 2000 by Cassaigne, Ferenczi, and Zamboni are rare.
It is the aim of the present chapter to survey all these ideas and results.
\end{abstract}

\thispagestyle{empty}

\makeatletter
\def\@makechapterhead#1{%
  \vspace*{10\p@}%
  {\parindent \z@ \raggedright \normalfont
    \interlinepenalty\@M
    \Large \bfseries #1\par\nobreak
    \vskip 0\p@
  }}
\def\@makeschapterhead#1{%
  \vspace*{10\p@}%
  {\parindent \z@ \raggedright
    \normalfont
    \interlinepenalty\@M
    \Large \bfseries  #1\par\nobreak
    \vskip 0\p@
  }}
\makeatother

\addtocontents{toc}{\protect\setcounter{tocdepth}{-1}}
\chapter[$S$-adic Sequences]{}
\addtocontents{toc}{\protect\setcounter{tocdepth}{3}}

\thispagestyle{empty}

\setcounter{tocdepth}{3}
\tableofcontents

\newpage

\section{Introduction}
A {\em Sturmian sequence} is an infinite string over two letters with low subword complexity. In particular, it has exactly $n+1$ different subwords of a given length $n\in\N$. Sturmian sequences have been studied extensively in the literature from various points of view and we refer to Lothaire~\cite[Chapter~2]{Lothaire:02} or Pytheas Fogg~\cite[Chapter~6]{Fog02} for detailed accounts. The history of the research surveyed in the present chapter starts with two papers written by Morse and Hedlund~\cite{Morse&Hedlund:1940} as well as Coven and Hedlund~\cite{Coven&Hedlund:1973} in 1940 and 1973, respectively. In these papers the authors established a surprising correspondence between Sturmian sequences and rotations by an irrational number $\alpha$ on the torus $\mathbb{T}=\mathbb{R}/\mathbb{Z}$.  In their proof ``balance properties'' of Sturmian sequences play a prominent role. Several decades later, Arnoux and Rauzy~\cite{Arnoux-Rauzy:91} observed that an induction process in which the classical continued fraction algorithm appears can be used to give another very elegant proof of this correspondence (see also Rauzy's earlier papers~\cite{Rauzy:77,Rauzy:79} on this induction process). Their proof also shows how arithmetic and Diophantine properties of an irrational number $\alpha$ are encoded in the corresponding Sturmian sequence. 

It has been conjectured since the early 1990s that these correspondences between rotations on $\mathbb{T}$, continued fractions, and Sturmian sequences carry over to rotations on higher dimensional tori, generalized continued fraction algorithms, and so-called $S$-adic sequences generated by substitutions. The idea of working towards such a generalization is known as \emph{Rauzy's program} and starting with Rauzy~\cite{Rauzy:82} a number of examples which hint at such a generalization was devised. A natural class of $S$-adic sequences to study in this context are so-called {\em Arnoux-Rauzy sequences} which go back to Arnoux and Rauzy~\cite{Arnoux-Rauzy:91}. These are sequences over three letters that behave analogously to Sturmian sequences in many regards. However, in 2000 Cassaigne, Ferenczi, and Zamboni~\cite{Cassaigne-Ferenczi-Zamboni:00} could construct Arnoux-Rauzy sequences with strong ``imbalance'', a property which cannot occur for a Sturmian sequence. Cassaigne, Ferenczi, and Messaoudi~\cite{Cassaigne-Ferenczi-Messaoudi:08} even constructed Arnoux-Rauzy sequences that give rise to weakly-mixing dynamical systems which are far from rotations in their dynamical behavior. All this shows the limitations of Rauzy's program and indicates that the situation in the general setting is more complicated than it is in the classical case.

Nevertheless, recently Berth\'e, Steiner, and Thuswaldner~\cite{Berthe-Steiner-Thuswaldner} made some progress on Rauzy's program and were indeed able to set up the conjectured generalization of the above correspondences. Using a generalization of Rauzy's induction process in which generalized continued fraction algorithms show up, they proved that under certain natural conditions an $S$-adic sequence gives rise to a dynamical system which is measurably conjugate to a rotation on a higher dimensional torus. Moreover, they established a metric theory which shows that exceptional cases like the ones constructed in \cite{Cassaigne-Ferenczi-Messaoudi:08} and \cite{Cassaigne-Ferenczi-Zamboni:00} are rare. A prominent role in this generalization is played by tilings induced by generalizations of the classical \emph{Rauzy fractal} introduced by Rauzy~\cite{Rauzy:82}.

Another idea which can be linked to the above results goes back to Artin~\cite{Artin:24}, who observed that the classical continued fraction algorithm and its natural extension can be viewed as a Poincar\'e section of the geodesic flow on the space of two-dimensional lattices $\mathrm{SL}_{2}(\mathbb{Z})\setminus \mathrm{SL}_{2}(\mathbb{R})$. Arnoux and Fisher~\cite{AF:01} revisited Artin's idea and showed that the correspondence between continued fractions, rotations, and Sturmian sequences can be interpreted in a very nice way in terms of an extension of this geodesic flow to pointed lattices which is called the \emph{scenery flow}. Currently, Arnoux {\it et al.}~\cite{ABMST:18} are setting up a generalization of this connection between continued fraction algorithms and geodesic flows. In particular, they code the {\em Weyl Chamber Flow}, a diagonal $\R^{d-1}$-action on the space of $d$-dimensional lattices $\mathrm{SL}_{d}(\mathbb{Z})\setminus \mathrm{SL}_{d}(\mathbb{R})$, arithmetically and geometrically by generalized continued fraction algorithms. In this coding, which provides a new view of the relation between $S$-adic sequences and rotations on higher dimensional tori, non-stationary Markov partitions defined in terms of generalized Rauzy fractals are of great importance.

\medskip

It is the aim of the present chapter to survey all these ideas and results. In Section~\ref{sec:sturm} we deal with the case of Sturmian sequences and Section~\ref{sec:problems3} discusses the problems with the extension of the theory to the more general situation. From Section~\ref{sec:genset} onwards we set up the general theory of $S$-adic sequences and their relation to generalized continued fraction algorithms and rotations on higher dimensional tori.

\section{The classical case}\label{sec:sturm}

We start our journey by giving some elements of the interaction between Sturmian sequences, the classical continued fraction algorithm, and irrational rotations on the circle. After that we discuss natural extensions of continued fractions and show how all these objects turn up in the study of the geodesic flow acting on the space $\mathrm{SL}_2(\mathbb{Z})\backslash \mathrm{SL}_2(\mathbb{R})$ of lattices and its extension to pointed lattices. We will prove most of the results that we state and although our exposition is self-contained we recommend the reader to have a look at the survey \cite[Chapter~6]{Fog02} in order to find more background information on the subject of this section. 

\subsection{Sturmian sequences and their basic properties}\label{sec:sturmianprop}
For a finite set $\{1,2,\ldots,d\}$ denote by $\{1,2,\ldots,d\}^*$ the set of all finite \emph{words} $v_{0}\ldots v_{n-1}$ whose \emph{letters} $v_{i}$, $0\le i <n$, are contained in $\{1,2,\ldots,d\}$. Moreover, let $\{1,2,\ldots, d\}^\mathbb{N}$ be the space of \emph{(right-infinite) sequences} $w=w_{0}w_{1}\ldots$ whose letters $w_{i}$, $i\in\N$, are elements of $\{1,2,\ldots,d\}$. The \emph{shift} $\Sigma:\{1,2,\ldots, d\}^\mathbb{N}\to\{1,2,\ldots, d\}^\mathbb{N}$ on this space of sequences is defined by $\Sigma(w_0w_1\ldots)=w_1w_2\ldots$ 
Let $w=w_{0}w_{1}\ldots \in \{1,2,\ldots, d\}^\mathbb{N}$ be a sequence. A {\em factor} (or \emph{subword}) of $w$ is a word $v_{0}\ldots v_{n-1} \in \{1,2,\ldots,d\}^*$ for which there is $k \ge 0$ such that $w_{k}\ldots w_{k+n-1}= v_{0}\ldots v_{n-1}$. In this case we say that $v$ occurs in $w$ at position $k$. The \emph{complexity function} $p_w:\mathbb{N}\to\mathbb{N}$ of $w$ assigns to each integer $n$ the number of words $v_{0}\ldots v_{n-1}\in\{1,2,\ldots,d\}^*$ that are factors of $w$. If $w$ is \emph{ultimately periodic} in the sense that there exist $k>0$ and $N\ge 0$ with $w_n=w_{n+k}$ for each $n\ge N$ then $p_w$ is a bounded function. On the other hand, a result by Coven and Hedlund~\cite{Coven&Hedlund:1973} which is not hard to prove states that a sequence $w\in\{1,2,\ldots, d\}^\N$ that admits the inequality $p_w(n)\le n$ for a single choice of $n$ is ultimately periodic (see also~\cite[Proposition~1.1.1]{Fog02}). It is the class of not ultimately periodic sequences with smallest complexity function that we are interested in.

\begin{definition}[Sturmian sequence]\label{def:sturm}\index{Sturmian sequence}\index{sequence!Sturmian}
A sequence $w\in\{1,2\}^\N$ is called a \emph{Sturmian sequence} if its complexity function satisfies $p_w(n)=n+1$ for all $n\in\N$.
\end{definition}

It is {\it a priori} not clear that Sturmian sequences exist at all. However, we will see in Theorem~\ref{th:sturmrot} below that they can be characterized as so-called \emph{natural codings} of irrational rotations which are easy to construct (and will be defined in Section~\ref{sec:sturmrot}).

A detailed account on the early history of Sturmian sequences, which goes back to Johann Bernoulli~\cite{Bernoulli:1772}, is given in \cite[Notes to Chapter~2]{Lothaire:02}. The name ``Sturmian sequence'' was coined in 1940 by Morse and Hedlund~\cite{Morse&Hedlund:1940}. Sturmian sequences have been studied extensively. For an overview on fundamental properties of Sturmian sequences we refer in particular to Lothaire~\cite[Chapter~2]{Lothaire:02}, Pytheas Fogg~\cite[Chapter~6]{Fog02}, or Allouche and Shallit~\cite[Chapters~9 and~10]{Allouche-Shallit:03}. Belov~{\it et al.}~\cite{Belov-Kondakov-Mitrofanov:11} discuss some aspects of Sturmian sequences which are related to the present survey.

We start with the discussion of basic properties of Sturmian sequences. The fact that $p_w(n)=n+1$ holds for a Sturmian sequence entails that for each $n$ there is only one factor $v_{0}\ldots v_{n-1}$ of $w$ with the property that both words $v_{0}\ldots v_{n-1}1$ and $v_{0}\ldots v_{n-1}2$ are factors of $w$. Such a word $v_{0}\ldots v_{n-1}$ is called \emph{right special factor} of $w$. Left special factors are defined analogously.

Our first lemma deals with \emph{recurrence} of Sturmian sequences.  Recall that a sequence $w\in \{1,2\}^\N$ is called \emph{recurrent} if each factor of $w$ occurs infinitely often, {\it i.e.}, at infinitely many positions, in~$w$.

\begin{lemma}[{{\em cf.~e.g.}~\cite[Proposition~6.1.2]{Fog02}}]\label{lem:sturmianrec}
A Sturmian sequence is recurrent.
\end{lemma}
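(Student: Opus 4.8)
The plan is to argue by contradiction, reducing non-recurrence to the existence of a tail of $w$ whose complexity drops below the critical threshold, and then to invoke the Coven--Hedlund result quoted just before Definition~\ref{def:sturm} (``a sequence with $p(n)\le n$ for a single $n$ is ultimately periodic'') together with the fact, also recalled there, that an ultimately periodic sequence has bounded complexity.

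First I would suppose that $w\in\{1,2\}^\N$ is Sturmian but not recurrent, so that some factor $u$ of $w$ occurs at only finitely many positions. Let $N$ be the largest such position (this maximum exists since the set of occurrences is finite and nonempty), and set $n=|u|$. The key move is to pass to the tail $t=\Sigma^{N+1}(w)$. Any occurrence of $u$ in $t$ would correspond to an occurrence of $u$ in $w$ at some position $\ge N+1>N$; since none exists, the word $u$ is \emph{not} a factor of $t$.

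Next I would compare complexity functions. Every factor of $t$ is in particular a factor of $w$, whence $p_t(m)\le p_w(m)$ for all $m$. At the specific length $n$, the word $u$ is one of the $p_w(n)=n+1$ factors of $w$ of length $n$ but is missing from $t$, so
\[
  p_t(n)\le p_w(n)-1=n.
\]
By the cited Coven--Hedlund statement, $t$ is therefore ultimately periodic. This periodicity transfers back to $w$: if $t_m=t_{m+k}$ holds for all $m\ge M$ with some period $k>0$, then $w_{N+1+m}=w_{N+1+m+k}$ for all $m\ge M$, so $w$ is itself ultimately periodic and hence has bounded complexity. This contradicts $p_w(n)=n+1$, and the contradiction forces $w$ to be recurrent.

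The argument is short and I do not expect a serious obstacle; the only point that needs care is the bookkeeping in the passage to the tail, namely checking that the disappearance of the single factor $u$ genuinely lowers $p_t(n)$ to at most $n$. Once the critical inequality $p_t(n)\le n$ is in place, the two facts already recorded in the text finish the proof, and the temptation to run a longer argument through right/left special factors can be avoided entirely.
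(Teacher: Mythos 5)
Your proof is correct and follows essentially the same route as the paper's: both argue by contradiction, pass to a tail $\Sigma^k w$ that misses the finitely-occurring factor, observe that this forces $p_{\Sigma^k w}(n)\le n$, invoke the Coven--Hedlund criterion to get ultimate periodicity of the tail, and transfer it back to $w$ to contradict Sturmianness. Your write-up merely makes explicit the bookkeeping (choice of the last occurrence, the complexity count, and the transfer of periodicity) that the paper leaves implicit.
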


\begin{proof}
Suppose that this is wrong and let $w$ be a nonrecurrent Sturmian sequence. Then there exists a factor $v$ of length $n$, say, that occurs only finitely many times in $w$. Then there exists $k\in \N$ such that $w'=\Sigma^kw$ does not contain $v$ as a factor. However, as $p_w(n)=n+1$ this implies that $p_{w'}(n)\le n$ and, hence, $w'$ is ultimately periodic. However, then also $w$ is ultimately periodic, a contradiction.
\end{proof}

Next we discuss \emph{balance}. To give a formal definition we introduce some notation. For a word $v\in\{1,2\}^*$ we denote by $|v|$ its \emph{length}, {\it i.e.}, the number of letters of $v$. Moreover, for $i\in\{1,2\}$, we write $|v|_i$ for the number of occurrences of the letter $i$ in $v$. 

\begin{definition}[Balanced sequence]\label{def:sturmbalance}\index{balance}
A sequence $w\in\{1,2\}^\N$ is called \emph{balanced} if each pair of factors $(v,v')$ of $w$ with $|v|=|v'|$  satisfies $\big| |v|_1-|v'|_1\big| \le 1$.  
\end{definition}

As was observed already in \cite{Morse&Hedlund:1940}, there is a tight relation between Sturmian sequences and balance. 

\begin{proposition}\label{prop:sturmbalance}
Let $w\in\{1,2\}^\N$ be given. Then $w$ is a Sturmian sequence if and only if $w$ is not ultimately periodic and balanced.
\end{proposition}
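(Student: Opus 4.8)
The plan is to prove the two implications separately, extracting aperiodicity cheaply in each and concentrating the real work on the equivalence between balance and the complexity condition. The common engine will be the combinatorial observation that $w$ is balanced if and only if there is \emph{no} word $u$ for which both $1u1$ and $2u2$ are factors of $w$; write $(\ast)$ for the existence of such a $u$. One half of the engine is immediate: if $1u1$ and $2u2$ are factors, then they have equal length while $|1u1|_1=|u|_1+2=|2u2|_1+2$, so $(\ast)$ forces $w$ to be unbalanced.

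For the direction ``not ultimately periodic and balanced $\Rightarrow$ Sturmian'' I would first record that a binary $w$ is Sturmian exactly when it has precisely one right special factor of each length. Indeed, sorting the factors of length $n+1$ by their length-$n$ prefix shows that $p_w(n+1)-p_w(n)$ equals the number of right special factors of length $n$ (every factor of an infinite sequence has at least one right extension). Since $w$ is not ultimately periodic, the Coven--Hedlund result quoted before Definition~\ref{def:sturm} makes $p_w$ strictly increasing (a single equality $p_w(n+1)=p_w(n)$ would propagate and eventually force $p_w(n)\le n$), so there is at least one right special factor of each length and $p_w(1)=2$; hence $p_w(n)=n+1$ for all $n$ is equivalent to there being \emph{at most} one such factor for every $n$. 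Now if $w$ had two distinct right special factors of some length, their longest common suffix $u$ would have the two factors ending in $1u$ and $2u$, and right-specialness would then produce $1u1$ and $2u2$, i.e.\ $(\ast)$ holds and $w$ is unbalanced. Contrapositively, a balanced $w$ has at most one right special factor per length, and together with aperiodicity this yields $p_w(n)=n+1$.

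For the converse ``Sturmian $\Rightarrow$ not ultimately periodic and balanced'', aperiodicity is clear because $p_w(n)=n+1$ is unbounded whereas ultimately periodic sequences have bounded complexity. To obtain balance I would argue by contradiction, which forces me to use the nontrivial half of the engine, namely \emph{unbalanced $\Rightarrow (\ast)$}. Here I would take a pair of equal-length factors $(v,v')$ with $\big||v|_1-|v'|_1\big|\ge 2$ of minimal length. Since the number of $1$'s in a sliding window of fixed length changes by at most one from one position to the next, its values form an interval of integers, so I may assume the gap is exactly $2$; minimality then shows $v$ and $v'$ can share neither a first nor a last letter (stripping a shared boundary letter preserves the gap and shortens both), which after normalising gives $v=1\alpha1$ and $v'=2\beta2$ with $|\alpha|_1=|\beta|_1$. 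A descent on the central words $\alpha,\beta$ then locates a common bracketed core, yielding the desired $u$ with $1u1,2u2$ both factors. Finally, using that a Sturmian sequence is recurrent (Lemma~\ref{lem:sturmianrec}) I would upgrade such a $u$ to a \emph{second} right special factor of some length, contradicting the ``one special factor per length'' characterisation above.

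The main obstacle is exactly this engine in its nontrivial direction (unbalanced $\Rightarrow (\ast)$). The boundary-stripping reduction and the complexity bookkeeping are routine; the delicate point is closing the descent when the two central words $\alpha,\beta$ carry the same number of $1$'s yet differ. There one must compare the prefix-sum-of-$1$s profiles of $\alpha$ and $\beta$, which agree at both ends, find the position at which they first separate, and read off a genuinely common core delimited by opposite letters on the two sides. This step, rather than any of the surrounding arguments, is where the proof has to do real combinatorial work.
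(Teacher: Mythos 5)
Your overall architecture is the classical one that the paper itself points to: the "engine" ($w$ is unbalanced if and only if some $u$ has both $1u1$ and $2u2$ as factors) together with right-special-factor bookkeeping. Note that the paper gives no proof of Proposition~\ref{prop:sturmbalance} at all; it states precisely this engine observation and defers to \cite{Lothaire:02} and \cite[Chapter~6]{Fog02} ``since the details are a bit tricky''. Your direction ``not ultimately periodic and balanced $\Rightarrow$ Sturmian'' is complete and correct (the strict increase of $p_w$, the identity $p_w(n+1)-p_w(n)=\#\{\text{right special factors of length }n\}$, and the longest-common-suffix argument are all fine). The converse direction, however, has two genuine gaps, sitting exactly at the details the paper declines to write out. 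First, your profile comparison is run on the wrong pair of words. For $v=1\alpha1$, $v'=2\beta2$ with $|\alpha|_1=|\beta|_1$ and $\alpha\neq\beta$, the prefix-count difference of $\alpha$ and $\beta$ starts and ends at $0$, and between the first separation and the first return you read off a common core $u'$ bracketed as $1u'2$ inside one word and $2u'1$ inside the other --- mixed brackets, which do \emph{not} give the pattern $1u'1$, $2u'2$ you claim. The correct move is to compare the profiles of two full factors $v,v'$ of equal length with $|v|_1\ge|v'|_1+2$: the difference runs from $0$ to (at least) $2$; between its last zero $\ell$ before the first time $m$ it reaches $2$ it is constantly $1$, so the letters of $v$ and $v'$ agree strictly inside that window, while the two increments at its ends force $v_{\ell+1}=v_m=1$ and $v'_{\ell+1}=v'_m=2$. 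Run this way the argument needs no boundary stripping, no normalization to gap exactly $2$, and no minimality.

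Second, and more seriously, the finishing move ``upgrade $u$ to a second right special factor of some length'' does not work. From $1u1$ and $2u2$ being factors you only get that $u$ is bispecial; the contradiction you want is immediate only when both $1u,2u$ are right special, or both $u1,u2$ are left special, or neither of $1u,2u$ is right special. The mixed configuration in which the two-sided extensions of $u$ are exactly $\{1u1,1u2,2u2\}$ (or its mirror $\{1u1,2u1,2u2\}$) has precisely one right special extension ($1u$) and precisely one left special extension ($u2$), so it is locally consistent with $p_w(n+1)-p_w(n)=1$ at every length and with recurrence; no counting contradiction of the kind you invoke exists at this level. Ruling out this surviving configuration is the actual content of ``Sturmian $\Rightarrow$ balanced'': the classical proofs (Morse--Hedlund; \cite[Theorem~2.1.5]{Lothaire:02}; \cite[Chapter~6]{Fog02}) take $u$ of \emph{minimal} length with $1u1,2u2$ factors, show it is a palindrome, and push a further global analysis. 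So your closing assessment --- that only the hard half of the engine requires real work while the rest is routine --- is not accurate; as written, this second step is a gap of comparable depth.
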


The proof of this result is combinatorial. It is based on the observation that for a sequence $w$ which is not balanced there is a word $v\in \{1,2\}^*$ such that $1v1$ and $2v2$ are factors of $w$. Since the details are a bit tricky we do not give them here and refer the reader to \cite{Morse&Hedlund:1940} or \cite[Chapter~6, p.~147{\it ff}\,]{Fog02}.

The fact that Sturmian sequences are balanced will now be exploited in order to prove that they can be \emph{coded} using the \emph{Sturmian substitutions}
\begin{equation}\label{eq:sturmsubs}
\sigma_1: 
\begin{cases} 1 \mapsto 1,\\ 2 \mapsto 21,\end{cases} \qquad
\sigma_2: 
\begin{cases} 1 \mapsto 12,\\ 2 \mapsto 2.\end{cases} 
\end{equation}
The domain of these substitutions can naturally be extended from $\{1,2\}$ to $\{1,2\}^*$ and $\{1,2\}^\mathbb{N}$ by concatenation. The next statement essentially says that balance is maintained by ``desubstitution''.

\begin{lemma}[{see {\em e.g.}~\cite[Lemma~4.2]{AF:01}}]\label{lem:balbal}
If a sequence $w\in\{1,2\}^\mathbb{N}$ is  not balanced, then for each $a\in\{1,2\}$ the sequence $\sigma_1(aw)$ is not balanced.
\end{lemma}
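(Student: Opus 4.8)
The plan is to reduce both the hypothesis and the conclusion to the combinatorial criterion for non-balance recorded right after Proposition~\ref{prop:sturmbalance}: a sequence fails to be balanced precisely when it admits a factor of the form $1u1$ together with a factor $2u2$ for one and the same word $u\in\{1,2\}^*$. The easy half of this criterion (that the existence of such a $u$ forces non-balance) is immediate, since $1u1$ and $2u2$ have equal length $|u|+2$ while $\big||1u1|_1-|2u2|_1\big|=2$; the harder half (that non-balance produces such a $u$) is the tricky statement I will simply invoke. It therefore suffices to manufacture, out of a witness $u$ for $w$, a new word $u'$ for which both $1u'1$ and $2u'2$ are factors of $\sigma_1(aw)$.

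First I would apply $\sigma_1$ to the two factors furnished by the hypothesis. Using $\sigma_1(1)=1$ and $\sigma_1(2)=21$ and the extension of $\sigma_1$ by concatenation, the words $\sigma_1(1u1)=1\,\sigma_1(u)\,1$ and $\sigma_1(2u2)=21\,\sigma_1(u)\,21$ are factors of $\sigma_1(w)$, hence of $\sigma_1(aw)$. Writing $t=\sigma_1(u)$, my candidate is $u'=1\,t=1\,\sigma_1(u)$. The factor $2u'2=21\,t\,2$ is then nothing but $\sigma_1(2u2)=21\,t\,21$ with its last letter deleted, so it is certainly a factor of $\sigma_1(aw)$.

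It remains to produce $1u'1=1\,1\,t\,1$, that is, to show that the factor $1\,t\,1=\sigma_1(1u1)$ is always immediately preceded by the letter $1$ in $\sigma_1(aw)$. This is the one point where the prepended letter $a$ is essential, and it is the heart of the argument: every letter of $aw$ has a $\sigma_1$-image ending in $1$, since $\sigma_1(1)=1$ and $\sigma_1(2)=21$ both end in $1$. If the occurrence of $1u1$ used here sits at a position $k\ge 1$ in $w$, then $\sigma_1(1u1)$ is preceded in $\sigma_1(w)$ by $\sigma_1(w_{k-1})$, which ends in $1$; and if $1u1$ is a prefix of $w$, then in $\sigma_1(aw)=\sigma_1(a)\sigma_1(w)$ the block $\sigma_1(1u1)$ is preceded by $\sigma_1(a)$, which again ends in $1$. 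In either case $1\cdot\sigma_1(1u1)=1\,1\,t\,1=1u'1$ is a factor of $\sigma_1(aw)$.

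Having both $1u'1$ and $2u'2$ as factors of $\sigma_1(aw)$, the easy half of the criterion shows that $\sigma_1(aw)$ is not balanced, which is what I want. The main obstacle I anticipate is exactly the preceding-letter step: one must rule out that an occurrence of $\sigma_1(1u1)$ falls at the very start of $\sigma_1(aw)$ with no $1$ in front of it, and this is precisely what the extra letter $a$ guarantees. (The degenerate case $u=\varepsilon$, where $1u1=11$ and $2u2=22$ give $t=\varepsilon$ and $u'=1$, is checked in the same way, the required factors being $111$ and $212$.)
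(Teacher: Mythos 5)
Your proof is correct, and its engine is identical to the paper's: prepend a letter (using $a$ precisely when the occurrence of $1u1$ sits at the start of $w$), exploit the facts that $\sigma_1(1)$ and $\sigma_1(2)$ both end in $1$ while $\sigma_1(2)$ begins with $2$, and exhibit two factors of $\sigma_1(aw)$ of equal length whose numbers of $1$s differ by $2$. The difference lies in what you feed into this engine. You invoke the full-strength criterion that an unbalanced sequence contains $1u1$ and $2u2$ for \emph{one and the same} word $u$; this is exactly the ``tricky'' combinatorial statement that the paper records after Proposition~\ref{prop:sturmbalance} but deliberately declines to prove, referring instead to the literature cited there. The paper's own proof of Lemma~\ref{lem:balbal} avoids that lemma: it only needs the weaker observation that non-balance yields factors $1u1$ and $2v2$ with two \emph{possibly different} words satisfying $|u|=|v|$ and $|u|_1=|v|_1$, which follows from a short minimality argument (take an imbalanced pair of factors of minimal length; the first and last letters of the two factors must then differ, and truncating forces the $1$-counts of the inner words to agree). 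Since $|u|=|v|$ and $|u|_1=|v|_1$ still give $|\sigma_1(u)|=|\sigma_1(v)|$ and $|\sigma_1(u)|_1=|\sigma_1(v)|_1$, the same count comparison goes through with $11\sigma_1(u)1$ and $21\sigma_1(v)2$. So both arguments are valid: yours is cleaner to state but outsources the hard work to a cited lemma, while the paper's is self-contained modulo an elementary step. Your treatment of the boundary issues --- why the prepended letter $a$ is indispensable, and the degenerate case $u=\varepsilon$ --- matches the paper's and is handled correctly.
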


\begin{proof}
If $w$ is not balanced it is easy to see that there are words $u$ and $v$ with $|u|=|v|$ and $|u|_1=|v|_1$ such that $1u1$ and $2v2$ are factors of $w$. Since $1u1$ occurs in $w$ there is $b\in\{1,2\}$ such that $b1u1$ occurs in $aw$ (we need $a$ in case $1u1$ is the initial word of $w$). As $\sigma_1(b)$ always ends with $1$ and $\sigma_1(2)$ begins with $2$, the words $11\sigma_1(u)1$ and $21\sigma_1(v)2$ have the same length and both occur in $\sigma_1(aw)$. As the number of 1s in these two words clearly differs by 2 the lemma follows.
\end{proof}

Let $w=w_0w_1\ldots\in\{1,2\}^\mathbb{N}$ be given. If $w$ is a Sturmian sequence, it contains exactly three of the four factors $11,12,21,22$. Since it clearly contains $12$ and $21$ as factors, it either doesn't contain $22$, in which case we say that $w$ is of \emph{type~1}, or it doesn't contain $11$, in which case we say it is of \emph{type~2}.
Using recurrence
one can easily see that for each Sturmian sequence $w\in\{1,2\}^\mathbb{N}$ at least one of the sequences $1w$ and $2w$ is Sturmian as well.
A Sturmian sequence $w\in\{1,2\}^\mathbb{N}$ is called \emph{special} if $1w$ as well as $2w$ are both Sturmian sequences.  With these notions we get the following ``desubstitution'' of Sturmian sequences (see also \cite[Section~1]{Arnoux-Rauzy:91} where an analog of this was proved along somewhat different lines). 

\begin{lemma}[{see {\em e.g.}~\cite[Proposition~4.3]{AF:01}}]\label{prop:sturmDesubs}
Let $u$ be a Sturmian sequence of type~1. 
\begin{enumerate}
\item[(i)] If $u$ is not special then either $u=\sigma_1(v)$ with $v$ Sturmian, or $u=\Sigma\sigma_1(v)$ with $v$ Sturmian starting with $2$ (but not both).
\item[(ii)] If $u$ is special then $u=\sigma_1(v_1)=\Sigma\sigma_1(v_2)$ where $\Sigma v_1=\Sigma v_2$ is a special Sturmian sequence.
\end{enumerate}
If $u$ is of type~2 the same statement with the symbols $1$ and $2$ interchanged holds.
\end{lemma}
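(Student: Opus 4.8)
The plan is to let the block structure forced by type~1 do the combinatorial work, reduce everything to statements about balance, and then read off the special/non\nobreakdash-special dichotomy. First I would use that $u$ is of type~1, so $u$ contains no factor $22$; hence every $2$ in $u$ is immediately followed by a $1$, and $u$ decomposes uniquely from the left into blocks from $\{1,21\}=\{\sigma_1(1),\sigma_1(2)\}$. Reading off this decomposition produces a sequence $v$ with $u=\sigma_1(v)$. If $u_0=2$ the first block is $21$, so $v$ starts with $2$ and this is the only admissible form; if $u_0=1$ the first block is $1$, so $v=1w$ with $w:=\Sigma v$, and prepending a $2$ merges the leading $1$ into a block $21$, giving $2u=\sigma_1(2w)$ and hence the second form $u=\Sigma\sigma_1(2w)$ with $2w$ starting in $2$. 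Thus the candidate desubstitutions are always $1w$ and $2w$ for the common tail $w=\Sigma v$, and I would record the identities $u=\sigma_1(1w)$, $2u=\sigma_1(2w)$, $1u=\sigma_1(11w)$, $12u=\sigma_1(12w)$ (for $u_0=1$), which carry all later bookkeeping. I would also note that $u_0=2$ forces $u$ to be non-special: then $2u$ begins with $22$ while, $u$ being of type~1, it still contains $11$, so $2u$ has all four factors of length~$2$ and cannot be Sturmian.

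Next I would show $w$ is Sturmian. Since $\sigma_1$ sends ultimately periodic sequences to ultimately periodic ones, non-ultimate-periodicity of $u$ (hence of $2u$) forces $w$ not to be ultimately periodic. For balance I would use the contrapositive of Lemma~\ref{lem:balbal}, which strips off the first letter: balancedness of $u=\sigma_1(v)$ (Proposition~\ref{prop:sturmbalance}) yields balancedness of $\Sigma v=w$. By Proposition~\ref{prop:sturmbalance} again, $w$ is Sturmian, and so the fact recalled just before the definition of specialness applies: at least one of $1w$, $2w$ is Sturmian.

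For part~(ii), assume $u$ is special, so $u_0=1$ and both $1u$ and $2u$ are Sturmian. From $1u=\sigma_1(11w)$ the same contrapositive of Lemma~\ref{lem:balbal} gives that $1w$ is balanced. For $2w$ I would first observe that $2u$ is Sturmian while $2\cdot 2u=22u$ is not (it again carries all four length-$2$ factors), so by the ``at least one left-extension is Sturmian'' fact applied to $2u$ the sequence $12u=1\cdot 2u$ is Sturmian; since $12u=\sigma_1(12w)$, stripping the first letter shows $2w$ is balanced. Hence $1w$ and $2w$ are both balanced and not ultimately periodic, i.e.\ both Sturmian, which is exactly the assertion that $w$ is special; this proves~(ii).

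For part~(i) I must show that exactly one candidate is Sturmian. When $u_0=2$ only the form $v=2w$ is available, and I would prove $2w$ Sturmian by a tailored version of the balbal computation: if $2w$ were unbalanced, the witnesses $1t1$, $2t2$ (from the observation after Proposition~\ref{prop:sturmbalance}) must, since $w$ is balanced, satisfy $1t1$ a factor of $w$ and $t2$ a prefix of $w$; pushing through $\sigma_1$ and using that in $u=21\sigma_1(w)$ the image $1\sigma_1(t)1$ is always preceded by a $1$ (supplied by the initial block $21$), one obtains two equally long factors of $u$ whose numbers of $1$s differ by $2$, contradicting balancedness of $u$. When $u_0=1$ both forms $1w$ and $2w$ are present, ``at least one'' is already known, and ``not both'' is the contrapositive of the implication ``$w$ special $\Rightarrow u$ special''. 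I expect this last implication to be the main obstacle: it needs the forward balance-preservation ``$z$ balanced $\Rightarrow \sigma_1(z)$ balanced'' (to turn $2w$ Sturmian into $2u$ Sturmian), which I would prove by running the same observation through the block decomposition---given witnesses $1t1$, $2t2$ in $\sigma_1(z)$, the forced $1$ after every $2$ lets one realign them with block boundaries and descend to witnesses $1s1$, $2s2$ in $z$---together with the matching fact that the prepend producing $1u=\sigma_1(11w)$ stays balanced, which is controlled by the left-special factors of the special (characteristic) sequence $w$ and can alternatively be checked by the same boundary computation as in the $u_0=2$ case. The delicate point throughout is the behaviour at the initial block, precisely where the extra prepended letter in Lemma~\ref{lem:balbal} does its work.
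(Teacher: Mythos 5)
Your proposal is correct and rests on the same machinery as the paper's proof: the unique type-1 block decomposition into $\{1,21\}=\{\sigma_1(1),\sigma_1(2)\}$, downward transfer of balance via the contrapositive of Lemma~\ref{lem:balbal}, Proposition~\ref{prop:sturmbalance} to convert balance plus non-ultimate-periodicity into Sturmianness, and the fact that at least one of $1x$, $2x$ is Sturmian when $x$ is. The differences are organizational but worth recording. You split part~(i) by the first letter of $u$ and desubstitute $u$ itself, while the paper splits by which of $1u$, $2u$ is Sturmian and desubstitutes $1u$ (resp.~$12u$); in your $u_0=2$ case the tailored boundary computation can be skipped entirely, since $2u$ contains both $22$ and $11$, hence $1u$ is Sturmian, and the contrapositive of Lemma~\ref{lem:balbal} applied to $1u=\sigma_1(1\cdot 2w)$ already gives that $2w$ is balanced. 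In part~(ii), working with $1u$ instead of the paper's $21u$ spares the cited fact that $11u$ is never Sturmian. The genuinely valuable divergence is your treatment of ``not both'' in~(i): the paper reduces it to the implication ``$\Sigma v$ special $\Rightarrow u$ special'' and dismisses this with ``one easily checks'', whereas you correctly isolate it as the main technical content and name the two ingredients it requires --- the forward implication ``$z$ balanced $\Rightarrow\sigma_1(z)$ balanced'' (which Lemma~\ref{lem:balbal} does \emph{not} supply, as it only goes in the desubstitution direction), used for $2u=\sigma_1(2w)$, and, for $1u=\sigma_1(11w)$, a prefix argument in which any imbalance witness must involve the prepended letter (otherwise $u$ itself would be unbalanced) and then descends through the block structure to a word $s$ with $s1$ a prefix of $w$ and $2s2\in L(w)$, whereupon specialness of $w$ puts $1s1$ in $L(w)$ and contradicts the balance of $w$. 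Both ingredients are true and are provable exactly by the block-realignment computations you sketch, so your plan closes the one step the paper leaves to the reader.
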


\begin{proof}
Since $u$ is of type~1 it is immediate that it can be written as $u=\sigma_1(v)$ for some $v\in\{1,2\}^\N$. 

To prove (i) suppose that $u$ is not special. Then either $1u$ or $2u$ is Sturmian, but not both. 

If $1u$ is Sturmian, $1u=\sigma_1(v')$ with $v'$ starting with $1$ and, hence, by Lemma~\ref{lem:balbal} and Proposition~\ref{prop:sturmbalance}, $v=\Sigma v'$ is Sturmian, and $u=\sigma_1(v)$. If $u$ starts with $2$ then $u\not=\Sigma\sigma_1(v')$ for $v'$ starting with $2$. If $u$ starts with $1$ then also $v$ starts with $1$. If we replace the first letter of $v$ by $2$ this yields a sequence $w$ satisfying $u=\Sigma\sigma_1(w)$. However, if $w$ is also Sturmian $\Sigma v=\Sigma w$ is special and, hence, one easily checks that $u$ is special, a contradiction and we are done. 

If $2u$ is Sturmian then $12u$ has to be Sturmian (since $22$ is forbidden) and thus $12u=\sigma_1(1v)$ with $v$ Sturmian and beginning with $2$. Thus $u=\Sigma\sigma_1(v)$. 
As before, we can write $u=\sigma_1(w)$ where $w$ is the word obtained from $v$ by replacing the first letter by $1$.
This leads again to the contradiction of $u$ being special.

To show (ii) assume $u$ is special. Then, as $u$ has to start with $1$ the sequences $12u=\sigma_1(12v)$ and $21u=\sigma_1(21v)$ are Sturmian ($11u$ cannot be Sturmian for imbalance reasons, see \cite[Proposition~6.1.23]{Fog02}). By Lemma~\ref{lem:balbal} and Proposition~\ref{prop:sturmbalance} the sequences $1v$ and $2v$ are Sturmian, so $v$ is special and $u=\sigma_1(1v)=\Sigma\sigma_1(2v)$. 

The proof of the type~2 case is analogous.
\end{proof}

From the proof of Lemma~\ref{prop:sturmDesubs} we see that for a special sequence $u$ of type~1 there exists a special sequence $v$ such that $21u=\sigma_1(21v)$ and $12u=\sigma_1(12v)$ are Sturmian sequences. If $u$ is special of type~2 we get the existence of a special sequence $v$ with $21u=\sigma_2(21v)$ and $12u=\sigma_2(12v)$ Sturmian by analogous reasoning.  
If $u$ is a special Sturmian sequence then the two Sturmian sequences $12u$ and $21u$ are called \emph{limit sequences} or \emph{fixed sequences}. By the above arguments they can be desubstituted to sequences that are limit sequences as well. This process can be iterated: let $w$ be a limit sequence. Then there is a sequence $(w^{(n)})_{n\ge 0}$ of limit sequences with 
\[
w=w^{(0)} \quad\hbox{and}\quad w^{(n)}=\sigma_{i_n}(w^{(n+1)}) \hbox{ for }n\ge 0.
\]
This can be rewritten as 
\begin{equation}\label{eq:2lettercoding}
w=\sigma_{i_0}\circ\cdots\circ\sigma_{i_n}(w^{(n+1)}). 
\end{equation}
As $w$ is Sturmian, the sequence $(i_n)\in\{1,2\}^\N$ has to change its value infinitely often because otherwise $w$ would be ultimately constant. Now observe that a sequence $w^{(n)}$ starting with a letter $a$ results in a sequence $w^{(0)}$ also starting with $a$. Moreover, since the sequence $(i_n)$ changes its value infinitely often we see that the first letter of $w^{(n)}$ determines a prefix of $w$ whose length tends to infinity with $n$. Thus, equipping $\{1,2\}^\N$ with the product topology of the discrete topology yields
\begin{equation}\label{eq:sadic2letters}
w=\lim_{n\to\infty}\sigma_{i_0}\circ\cdots\circ\sigma_{i_n}(a),
\end{equation}
where $a$ is the first letter of $w$ (note that we slightly abuse notation here: to be exact the argument of $\sigma_{i_n}$ should be $aa\ldots\in\mathcal{A}^\N$ since the limit is not defined for finite words). We could also group the blocks of the sequence $(i_n)$. So if it starts with a block of $a_0$ times the symbol 1 followed by a block of $a_1$ times the symbol 2 and so on we can rewrite \eqref{eq:sadic2letters} as
\begin{equation}\label{eq:sadic2lettersMult}
w=\lim_{k\to\infty}\sigma_{1}^{a_0}\circ\sigma_{2}^{a_1}\circ\sigma_{1}^{a_2}\circ\cdots\circ\sigma_{1}^{a_{2k}}(a).
\end{equation}
A sequence $w$ that can be represented by iteratively composing substitutions as in \eqref{eq:2lettercoding} is called an \emph{$S$-adic sequence}.

Note that for arbitrary Sturmian sequences a similar coding as in \eqref{eq:2lettercoding} is possible, however, in the general case shifts have to be inserted between the composed substitutions on the appropriate places according to Lemma~\ref{prop:sturmDesubs}. Inserting these shifts does not change the collection of factors (called \emph{language}) of the sequence. Thus each Sturmian sequence $w$ is associated with a sequence $(\sigma_{i_m})$ which determines its language. We call this sequence the \emph{coding sequence} of $w$. Summing up we proved the following proposition.

\begin{proposition}[see {\cite[Section~1]{Arnoux-Rauzy:91}}]\label{prop:STsubs}
Let $\sigma_1,\sigma_2$ be the Sturmian substitutions. Then for each Sturmian sequence $w$ there exists a coding sequence $\bsigma=(\sigma_{i_n})$, where $(i_n)$ takes each symbol in $\{1,2\}$ an infinite number of times, such that $w$ has the same language as
\begin{equation*}
u=\lim_{n\to\infty}\sigma_{i_0}\circ\sigma_{i_1}\circ\cdots\circ\sigma_{i_n}(a).
\end{equation*}
Here $a\in\{1,2\}$ can be chosen arbitrarily.
\end{proposition}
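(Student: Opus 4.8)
The plan is to build the coding sequence $\bsigma=(\sigma_{i_n})$ by iterating the desubstitution of Lemma~\ref{prop:sturmDesubs}. Starting from $w=:w^{(0)}$, I read off its type and set $i_0=1$ if $w^{(0)}$ avoids the factor $22$ and $i_0=2$ if it avoids $11$; Lemma~\ref{prop:sturmDesubs} then returns a Sturmian sequence $w^{(1)}$ with $w^{(0)}=\sigma_{i_0}(w^{(1)})$ or $w^{(0)}=\Sigma\sigma_{i_0}(w^{(1)})$. Iterating produces Sturmian sequences $(w^{(n)})_{n\ge0}$ and a coding sequence $(i_n)$ for which $w^{(n)}=\sigma_{i_n}(w^{(n+1)})$ up to a leading shift, so that $w=\sigma_{i_0}\circ\cdots\circ\sigma_{i_{n-1}}(w^{(n)})$ modulo finitely many interspersed shifts $\Sigma$.

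Next I would verify that both symbols occur infinitely often in $(i_n)$. If not, then $i_n$ is eventually constant, say $i_n=1$ for all $n\ge N$; since $\sigma_1^k(1)=1$ and $\sigma_1^k(2)=2\,1^{k}$, iterating $\sigma_1$ forces the level-$N$ sequence $w^{(N)}$ to be ultimately periodic, whence $w=\sigma_{i_0}\circ\cdots\circ\sigma_{i_{N-1}}(w^{(N)})$ is ultimately periodic as well. This contradicts Proposition~\ref{prop:sturmbalance}, and the symmetric argument excludes $i_n\equiv2$, so both symbols recur infinitely often.

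To see that $u=\lim_{n\to\infty}\sigma_{i_0}\circ\cdots\circ\sigma_{i_n}(a)$ exists for an arbitrary seed $a$, I would note from \eqref{eq:sturmsubs} that $\sigma_i(c)$ always begins with the letter $c$; hence $a$ is a prefix of $\sigma_{i_n}(a)$ and the words $\sigma_{[0,n)}(a):=\sigma_{i_0}\circ\cdots\circ\sigma_{i_{n-1}}(a)$ form a nested chain of prefixes. Writing $M_1=\left(\begin{smallmatrix}1&1\\0&1\end{smallmatrix}\right)$ and $M_2=\left(\begin{smallmatrix}1&0\\1&1\end{smallmatrix}\right)$ for the incidence matrices of $\sigma_1,\sigma_2$, the products $M_{i_0}\cdots M_{i_{n-1}}$ pick up a positive block such as $M_1M_2$ infinitely often; once $\sigma_{[0,n)}(a)$ contains both letters, every subsequent $\sigma_1$ acts on some $2$ and every $\sigma_2$ on some $1$, strictly increasing the length, so $|\sigma_{[0,n)}(a)|\to\infty$ and $u$ is a well-defined element of $\{1,2\}^{\mathbb{N}}$.

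It remains to compare languages. First, the seed is irrelevant: given a factor of $\sigma_{[0,n)}(1)$, positivity of $M_{i_n}\cdots M_{i_{m-1}}$ for suitably large $m$ forces $\sigma_{i_n}\circ\cdots\circ\sigma_{i_{m-1}}(2)$ to contain the letter $1$, so $\sigma_{[0,n)}(1)$ reappears inside $\sigma_{[0,m)}(2)$ and the factor lies in the language of the seed-$2$ limit as well; hence both seeds yield the same language. Second, the interspersed shifts are harmless, because every $w^{(n)}$ is Sturmian and therefore recurrent by Lemma~\ref{lem:sturmianrec}, so a Sturmian sequence and each of its shifts share the same set of factors; propagating this through the iteration identifies the language of $w$ with that of $u$. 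I expect this final bookkeeping---tracking the shifts and the seed through the infinite composition while keeping the language fixed---to be the main obstacle, whereas the infinitely-often property and the convergence follow quite directly once Lemma~\ref{prop:sturmDesubs} is available.
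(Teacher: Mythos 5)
Your proposal is correct and follows essentially the same route as the paper's own (informal) argument: iterate Lemma~\ref{prop:sturmDesubs} to extract the coding sequence, rule out an eventually constant $(i_n)$ by an ultimate-periodicity contradiction, obtain the limit $u$ from the nested-prefix property, and dispose of the interspersed shifts via recurrence of the $w^{(n)}$. Be aware, though, that (exactly as in the paper) the last step is thinner than it looks: propagating ``shifts preserve the language of a recurrent sequence'' through the iteration gives $L(w)=L(\sigma_{i_0}\circ\cdots\circ\sigma_{i_{n-1}}(w^{(n)}))$ for every finite $n$, from which $L(u)\subseteq L(w)$ is immediate (the seed letter occurs in each $w^{(n)}$), but the reverse inclusion needs one further ingredient --- for instance, first checking that $u$ is not ultimately periodic (by an argument like your step~2 applied to its shift-free coding), so that the Coven--Hedlund bound quoted in Section~\ref{sec:sturmianprop} gives $p_u(n)=n+1=p_w(n)$, and then inclusion of finite sets of equal cardinality forces $L(u)=L(w)$.
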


Since it will turn out that \eqref{eq:sadic2letters} and \eqref{eq:sadic2lettersMult} are nonabelian versions of the classical continued fraction algorithm we will now review the basics of this well-known concept.
  
\subsection{The classical continued fraction algorithm}\label{sec:CF}

The ``$S$-adic'' representations of a Sturmian sequence given in \eqref{eq:sadic2letters} and \eqref{eq:sadic2lettersMult} are related to continued fraction expansions of irrational numbers. For this reason we provide a brief discussion of the classical continued fraction algorithm (see {\it e.g.}~\cite[Chapter~3]{EW:11} for an introduction to continued fractions of a dynamical flavor or \cite{Berthe:11} for a discussion of continued fractions in a context related to the present paper). 

We start with the well-known additive Euclidean algorithm. Given a pair of two nonnegative real numbers $(a,b)\not=(0,0)$ we define the mapping $F:\R_{\ge 0}^2\setminus\{\mathbf{0}\}\to \R^2_{\ge 0} \setminus\{\mathbf{0}\}$ by
\[
F(a,b) = \begin{cases}
(a-b,b), & \hbox{if }a > b, \\
(a,b-a), & \hbox{if }a \le b.
\end{cases}
\] 
If we iterate this mapping starting with $(a,b)\in\R^2_{>0}$ we see that we reach a pair of the form $(0,c)$ or $(c,0)$ with $c> 0$ if and only if the ratio $a/b$ is rational. If $a/b\not\in\Q$ the iterations of $F$ on $(a,b)$ produce an infinite sequence of pairs of strictly positive numbers. Setting 
\begin{equation}\label{eq:matrAE}
M_1=\begin{pmatrix}
1&1\\
0&1
\end{pmatrix}
\qquad
\hbox{and}
\qquad
M_2=\begin{pmatrix}
1&0\\
1&1
\end{pmatrix}
\end{equation}
we see that $F(a,b)^t=M_1^{-1}(a,b)^t$ if $a > b$ and $F(a,b)^t=M_2^{-1}(a,b)^t$ if $a\le b$. Thus iterating $F$ on a pair $(a,b)$ with $a/b\not\in\Q$ produces an infinite sequence $(M_{i_n})_{n\in \N}\in\{M_1,M_2\}^\N$ defined by
\begin{equation}\label{eq:addseq}
(a,b)^t=M_{i_0}F(a,b)^t=M_{i_0}M_{i_1}F^2(a,b)^t=M_{i_0}M_{i_1}M_{i_2}F^3(a,b)^t=\cdots.
\end{equation}
This sequence $(M_{i_n})$ is called the \emph{additive continued fraction expansion} of $(a,b)$. In \eqref{eq:concvone2lett} we will see that, up to a scalar factor, $(a,b)$ is determined by the sequence $(i_n)$. 

Since the sequence $(M_{i_n})$ is invariant under the multiplication of $(a,b)$ by a scalar, we may use projective coordinates. This motivates the following definition.
Let $\PP$ be the projective line and $X=\{[a:b] \in \PP\,:\, a\ge 0,\, b\ge 0\}$. Define $M:X\to \{M_1,M_2\}$ by $M([a:b]) = M_1$ if $a > b$ and $M([a:b]) = M_2$ if $a \le b$. Then the mapping 
\begin{equation}\label{eq:linearAE}
F:X\to X; \quad \bx \mapsto M(\bx)^{-1}\bx
\end{equation}
is called the \emph{linear additive continued fraction mapping}.

Since $(a,b)\not=(0,0)$ we can define a \emph{projective} version of \eqref{eq:linearAE}. Indeed, we can write $[a:b]=[1,b/a]$ if $a>b$ and $[a:b]=[a/b,1]$ if $a\le b$ and the mapping $F$ can be written as ($c\in[0,1]$)
\begin{equation}\label{eq:ACFc}
F[1:c]= 
\begin{cases}
[1-c:c] = [\frac{1-c}{c}:1], & \hbox{if }c > \frac12,\\
[1-c:c] = [1:\frac{c}{1-c}], & \hbox{if }c \le \frac12,
\end{cases}
\qquad
F [c:1]= 
\begin{cases}
[1:\frac{1-c}{c}], & \hbox{if }c > \frac12,\\
[\frac{c}{1-c}:1], & \hbox{if }c \le \frac12.
\end{cases}
\end{equation}
Since the coordinate $1$ contains no information in \eqref{eq:ACFc} and $c\in[0,1]$, this defines a mapping $f:[0,1]\to[0,1]$ by
\[
f(x) = \begin{cases}
\frac{1-x}{x}, &\hbox{if }x > \frac12,\\
\frac{x}{1-x} ,&\hbox{if }x \le \frac12.
\end{cases}
\]
The mapping $f$ is called \emph{projective additive continued fraction mapping} or \emph{Farey map}. It is visualized in Figure~\ref{fig:additiveCF}.

\begin{figure}[h]
\includegraphics[width=0.3\textwidth]{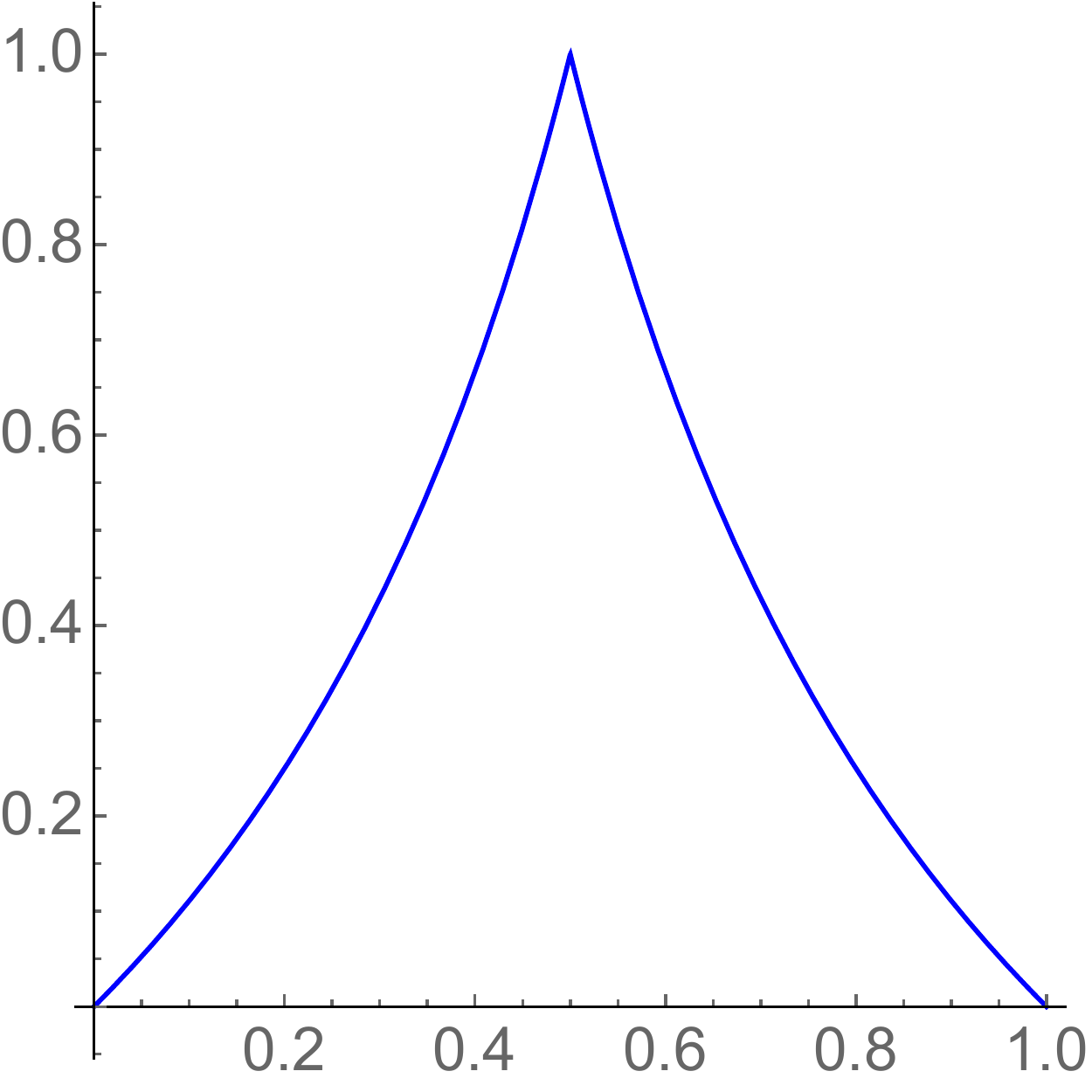}
\caption{The Farey map. \label{fig:additiveCF}}
\end{figure}

The additive continued fraction algorithm can be ``accelerated'' in the following way. Assume that $a,b>0$ are given. If $a>b$ we do not just subtract $b$ from $a$. We subtract it $m$ times where $m$ is chosen in a way that $0\le a-mb < b$. If $a \le b$ we proceed analogously. This results in the \emph{multiplicative Euclidean algorithm} $G:\R_{>0}^2\to \R^2_{\ge 0} \setminus\{\mathbf{0}\}$ with 
\[
G(a,b) = \begin{cases}
(a-\lfloor \frac ab \rfloor b,b), & \hbox{if }a > b, \\
(a,b-\lfloor \frac ba \rfloor a), & \hbox{if }a \le b.
\end{cases}
\] 
As in \eqref{eq:addseq}, iterating $G$ on a pair $(a,b)\in\R_{>0}^2$ yields a sequence of matrices $M_{1}^{a_0},M_{2}^{a_1},M_{1}^{a_2},\ldots$ with positive integers $a_0,a_1,\ldots$ satisfying (we assume $a>b$ here; otherwise the sequence would start with a power of $M_2$)
\begin{equation}\label{eq:mulitCF}
(a,b)^t=M_{1}^{a_0}G(a,b)^t=M_1^{a_0}M_{2}^{a_1}G^2(a,b)^t=M_{1}^{a_0}M_{2}^{a_1}M_{1}^{a_2}G^3(a,b)^t=\cdots.
\end{equation}
However, contrary to \eqref{eq:addseq} this sequence stops if the iteration runs into a vector one of whose coordinates is $0$ because $G$ is not defined for such vectors. Indeed, as can easily be verified, we run into such a vector if and only if $a/b \in \mathbb{Q}$.

Again we move to the projective line and set $X=\{[a:b] \in \PP\,:\, a> 0,\, b> 0\}$. Define $M:X\to \{M_1^m, M_2^m \,:\, m \ge 1\}$ by $M([a:b]) = M_1^m$ if $a > b$ and $0\le a-mb < b$ and $M([a:b]) = M_2^m$ if $a \le b$ and $0\le b-ma < b$. Then the mapping 
\begin{equation}\label{eq:linearME}
G:X\to X; \quad \bx \mapsto M(\bx)^{-1}\bx
\end{equation}
is called the \emph{linear multiplicative continued fraction mapping}.

Similar to the additive case assume that $a,b>0$ and choose the representatives $[a:b]=[1,b/a]$ if $a>b$ and $[a:b]=[a/b,1]$ if $a\le b$. The mapping $G$ can then be written as ($c\in(0,1]$)
\begin{equation}\label{eq:MCFc}
G[1:c]= [1-\lfloor  \scalebox{1}{$\frac1c$} \rfloor c:c] =[\{  \scalebox{1}{$\frac1c$} \} c:c] =[\{  \scalebox{1}{$\frac1c$} \}:1], \qquad
G [c:1]= [1:\{  \scalebox{1}{$\frac1c$} \}].
\end{equation}
As the coordinate $1$ contains no information in \eqref{eq:MCFc} this defines a mapping $g:(0,1]\to[0,1)$ by
\begin{equation}\label{eq:gaussmap}
g(x) = \Big\{\frac1x \Big\}.
\end{equation}
The mapping $g$ is called \emph{projective multiplicative continued fraction mapping} or \emph{Gauss map}\index{Gauss map}. It is visualized in Figure~\ref{fig:multiplicativeCF}.

\begin{figure}[hh]
\includegraphics[width=0.3\textwidth]{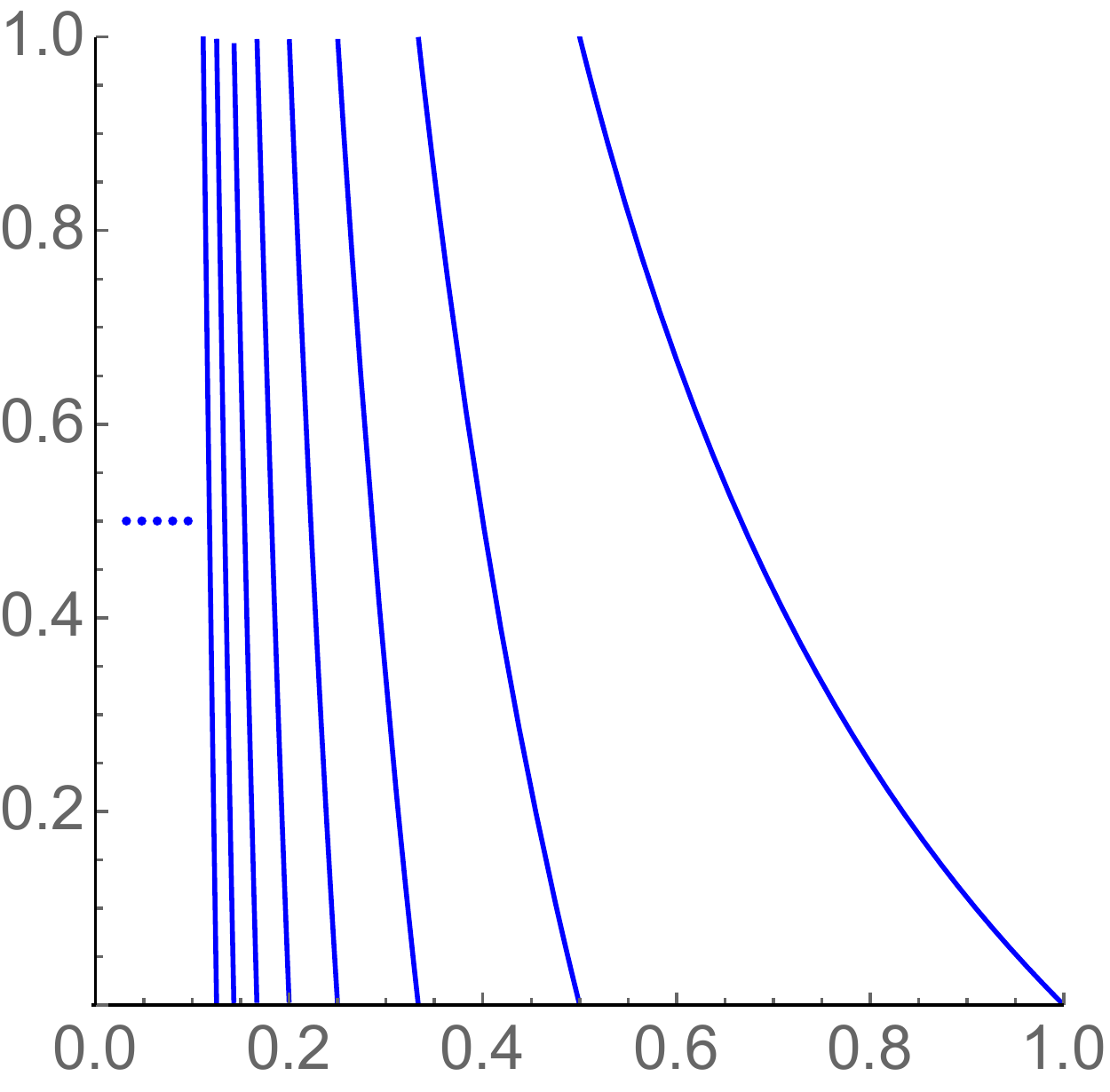}
\caption{The Gauss map $x\mapsto\{\frac1x\}$. \label{fig:multiplicativeCF}}
\end{figure}

By direct calculation (see {\it e.g.}\ \cite[Chapter~3]{EW:11}) it follows from the definition that for each irrational $x\in(0,1)$ the Gauss map $g$ can be iterated infinitely often. This iteration process determines a sequence $(a_n)$ of positive integers defined by $a_n=\big\lfloor \frac{1}{g^n(x)} \big\rfloor$ which admits to develop $x$ in its  \emph{(multiplicative) continued fraction expansion}
\[
x= \frac{\displaystyle 1}{\rule[2ex]{0pt}{1.7ex} \displaystyle a_0+\frac{\displaystyle 1}{\rule[2ex]{0pt}{1.7ex}\displaystyle a_1+\frac{\displaystyle 1}{\rule[2ex]{0pt}{1.7ex}\displaystyle a_2+ \frac{\displaystyle 1}{\rule[2ex]{0pt}{1.7ex}\displaystyle a_3+\ddots}}}}
\]
(which will be denoted by $x=[a_0,a_1,\ldots]$).
By definition this is the same sequence $(a_n)$ as the one we obtain in the exponents of the matrices in \eqref{eq:mulitCF} when setting $(a,b)=(1,x)$. One can show that this sequence is ultimately periodic if and only if $x$ is a quadratic irrational. If $x$ is rational one can associate a finite sequence with $x$ in this way. 

Continued fractions play an eminent role in Diophantine approximation. It is therefore of special interest that they will appear in our theory of Sturmian sequences naturally without being presupposed.
  
\subsection{Dynamical properties of Sturmian sequences}

We want to have a look at the ``abelianized'' version of \eqref{eq:sadic2letters} and \eqref{eq:sadic2lettersMult} in order to get a link between Sturmian sequences and the classical continued fraction algorithm. For a word $v\in\{1,2\}^*$ define the \emph{abelianization} $\mathbf{l}(v)=(|v|_1,|v|_2)^t$, and for $i\in\{1,2\}$ associate to the Sturmian substitution $\sigma_i$ from \eqref{eq:sturmsubs} the \emph{incidence matrix} $M_i=(|\sigma_{i}(k)|_j)_{1\le j,k\le 2}$. Then $M_1$ and $M_2$ are the matrices defined in \eqref{eq:matrAE} which were used to define the linear version of the classical additive continued fraction algorithm in \eqref{eq:linearAE}. Indeed, since $\mathbf{l}\sigma_i(v)=M_i\mathbf{l}(v)$ we see that the vectors (here $\be_1,\be_2$ are the standard basis vectors)
\begin{equation}\label{eq:2letterabel}
M_{i_0}\cdots M_{i_n}\mathbf{e}_a
\end{equation}
form an abelianized version of the expression in the limit of \eqref{eq:sadic2letters}. Since $(i_n)$ changes its value infinitely often,  $M_{i_n}M_{i_{n+1}}$ is a positive matrix for infinitely many $n$ (in particular, $M_{i_n}M_{i_{n+1}}=M_1M_2$ for infinitely many $n$; we therefore call the whole sequence $(M_{i_n})$ a \emph{primitive} sequence of matrices). This property entails that the positive cone $\mathbb{R}^2_{\ge 0}$ is shrunk to a line by these matrices, more precisely, there exists a vector $\bu\in\mathbb{R}^2_{>0}$ such that
\begin{equation}\label{eq:concvone2lett}
\bigcap_{n\ge 0}M_{i_0}\cdots M_{i_n}\mathbb{R}^2_{\ge 0} = \R_+\bu
\end{equation}
(see~\cite[pp.~91-95]{Furstenberg:60}, \cite[Chapter~26]{Viana:06}, or Proposition~\ref{prop:furstmatrix} below). This says that the additive continued fraction algorithm defined by \eqref{eq:addseq} is \emph{weakly convergent} (as is well known, this algorithm is even strongly convergent which is related to the balance property of Sturmian sequences). We call $\bu$, which is uniquely defined up to scalar factors by the sequence $(M_{i_n})$, a \emph{generalized right eigenvector} of $(M_{i_n})$. We also see from  \eqref{eq:concvone2lett} that the vector $(a,b)^t$ in \eqref{eq:addseq} is defined by the sequence $(M_{i_n})$ up to a scalar factor. 

We go back to the (nonabelian) $S$-adic setting. Assume that a Sturmian sequence $w=w_0w_1\ldots$ has a coding sequence $(\sigma_{i_n})$ whose associated sequence of incidence matrices $(M_{i_n})$ satisfies \eqref{eq:concvone2lett}. We will now prove that in this case $w$ has \emph{uniform letter frequencies}, {\it i.e.}, the limit
\[
f_i(w) = \lim_{\ell \to \infty}\frac{|w_k\ldots w_{k+\ell-1}|_i}{\ell}
\] 
exists uniformly in $k$ for each $i\in\{1,2\}$. We get even more, namely, the following lemma holds. In its proof and in all the remaining part of this section we use the abbreviations 
\[
\sigma_{i_{[m,n)}}=\sigma_{i_m}\circ\cdots\circ\sigma_{i_{n-1}}\quad\hbox{and}\quad M_{i_{[m,n)}}=M_{i_m}\cdots M_{i_{n-1}}.
\]

\begin{lemma}\label{lem:sturmletterfreq}
Let $w=w_0w_1\ldots$ be a Sturmian sequence with coding sequence $(\sigma_{i_n})$ whose associated sequence of incidence matrices $(M_{i_n})$ has a generalized right eigenvector $\bu$. Then $w$ has uniform letter frequencies and $(f_1(w),f_2(w))^t=\frac{\bu}{\Vert\bu\Vert_1}$.
\end{lemma}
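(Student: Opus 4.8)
The plan is to convert the projective convergence recorded in \eqref{eq:concvone2lett} into a quantitative statement about the columns of $M_{i_{[0,n)}}$, and then to use the hierarchical $S$-adic structure of $w$ to show that the letter counts of \emph{any} long factor are governed by those columns, uniformly in the starting position. Throughout, let $u_1,u_2$ denote the components of the generalized right eigenvector $\bu$.

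First I would note that the two columns of $M_{i_{[0,n)}}$ are precisely $M_{i_{[0,n)}}\be_1 = \mathbf{l}(\sigma_{i_{[0,n)}}(1))$ and $M_{i_{[0,n)}}\be_2 = \mathbf{l}(\sigma_{i_{[0,n)}}(2))$, so both lie in the cone $C_n = M_{i_{[0,n)}}\R^2_{\ge 0}$. These cones are closed, and nested because $M_{i_n}\R^2_{\ge 0}\subseteq\R^2_{\ge 0}$; by hypothesis $\bigcap_n C_n = \R_+\bu$. Intersecting with the simplex $\{\bx\in\R^2_{\ge 0}:\Vert\bx\Vert_1=1\}$ turns the $C_n$ into a nested family of closed intervals with single-point intersection $\bu/\Vert\bu\Vert_1$, so their lengths tend to $0$. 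Hence, given $\varepsilon>0$, there is $N$ such that for all $n\ge N$, all $j\in\{1,2\}$ and $i\in\{1,2\}$,
\[
\Big|\frac{|\sigma_{i_{[0,n)}}(j)|_i}{|\sigma_{i_{[0,n)}}(j)|}-\frac{u_i}{\Vert\bu\Vert_1}\Big|<\varepsilon,
\]
i.e.\ every single-letter image block already carries letter frequencies $\varepsilon$-close to $\bu/\Vert\bu\Vert_1$.

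Next I would fix such an $n\ge N$ and decompose an arbitrary factor. By Proposition~\ref{prop:STsubs}, $w$ has the same language as $\lim_m\sigma_{i_{[0,m)}}(a)=\sigma_{i_{[0,n)}}(w^{(n)})$, where $w^{(n)}=\lim_m\sigma_{i_{[n,m)}}(a)$; thus any factor $v=w_k\ldots w_{k+\ell-1}$ occurs inside the concatenation of blocks $\sigma_{i_{[0,n)}}(c)$, $c=w^{(n)}_0w^{(n)}_1\ldots$. Reading off where $v$ starts and ends gives $v=s\,\sigma_{i_{[0,n)}}(p_1\cdots p_m)\,t$, with $s$ a suffix and $t$ a prefix of single image blocks and $p_1\cdots p_m$ a factor of $w^{(n)}$. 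Writing $B_n=\max_j|\sigma_{i_{[0,n)}}(j)|$, one has $|s|+|t|<2B_n$ once $\ell>B_n$, while the middle part, being a concatenation of blocks each with letter-$i$ frequency within $\varepsilon$ of $u_i/\Vert\bu\Vert_1$, inherits the same $\varepsilon$-bound for its weighted overall frequency. A routine estimate (bounding the boundary contribution by $|s|+|t|$ and the middle deviation by $\varepsilon$ times its length) then yields, for every such $v$ and hence uniformly in $k$,
\[
\Big|\frac{|v|_i}{\ell}-\frac{u_i}{\Vert\bu\Vert_1}\Big|\le\frac{4B_n}{\ell}+\varepsilon.
\]

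Finally, letting $\ell\to\infty$ with $n$ (and therefore $\varepsilon$) fixed eliminates the first term, and letting $\varepsilon\to0$ shows that $|w_k\ldots w_{k+\ell-1}|_i/\ell$ converges to $u_i/\Vert\bu\Vert_1$ uniformly in $k$; this is exactly the assertion that $w$ has uniform letter frequencies with $(f_1(w),f_2(w))^t=\bu/\Vert\bu\Vert_1$. The cone-contraction step is elementary here since we are in dimension two (nested intervals with a single common point); the real bookkeeping is the two-scale estimate of the previous paragraph, and I expect the one point needing care is the \emph{uniformity in $k$}. That uniformity is, however, automatic: the decomposition applies to every occurrence of a length-$\ell$ factor, and the final bound depends only on $\ell$ and the block length $B_n$, never on the position $k$.
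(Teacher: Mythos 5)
Your proof is correct and follows essentially the same route as the paper's: both decompose an arbitrary factor $w_k\ldots w_{k+\ell-1}$ as a short prefix and suffix around an image $\sigma_{i_{[0,n)}}(\cdot)$ of a word, use the cone convergence \eqref{eq:concvone2lett} to make the letter frequencies of the blocks $\sigma_{i_{[0,n)}}(b)$ $\varepsilon$-close to $\bu/\Vert\bu\Vert_1$, and conclude with a boundary-plus-middle estimate whose bound depends only on $\ell$, giving uniformity in $k$. The only differences are cosmetic: you make the nested-interval consequence of \eqref{eq:concvone2lett} explicit and fix $n$ before letting $\ell\to\infty$, whereas the paper chooses $n$ depending on $\ell$.
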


\begin{proof}
Let $\bu/{\Vert\bu\Vert_1}=(u_1,u_2)^t$. By Proposition~\ref{prop:STsubs}  for all $k,\ell,n\in \N$ we can write
\begin{equation*}
w_k\ldots w_{k+\ell-1} = p \sigma_{i_{[0,n)}}(v) s
\end{equation*}
for some $p,v,s\in\{1,2\}^*$, where the lengths of $p,s$ are bounded by $\max\{|\sigma_{i_{[0,n)}}(1)|,|\sigma_{i_{[0,n)}}(2)|\}$. Now for each $a\in \{1,2\}$
\begin{equation}\label{eq:strumDTeinfach}
\left| \frac{|w_k\ldots w_{k+\ell-1}|_a}{\ell} -  u_a\right| \le \frac{\big| |p|_a - |p|u_a\big|}{\ell} + \frac{\big| |\sigma_{i_{[0,n)}}(v)|_a-|\sigma_{i_{[0,n)}}(v)|u_a\big|}{\ell}  + \frac{\big| |s|_a- |s|u_a\big|}{\ell}.
\end{equation}
By the convergence of the positive cone to $\bu$ in \eqref{eq:concvone2lett} we know that $|\sigma_{i_{[0,n)}}(b)|_a/|\sigma_{i_{[0,n)}}(b)|$ is close to $u_a$ for all $a,b\in\{1,2\}$ if $n$ is large. Thus for each $\varepsilon > 0$ there is $N\in\N$ such that whenever $\ell \ge N$ we can choose $n$ in a way that $|p|,|s| \le \varepsilon \ell$ and $\big||\sigma_{i_{[0,n)}}(b)|_a-|\sigma_{i_{[0,n)}}(b)|u_a \big| < \varepsilon |\sigma_{i_{[0,n)}}(b)|$ for all letters $a$ and $b$. This proves that the right hand side of \eqref{eq:strumDTeinfach} is bounded by $3\varepsilon$ and thus $\lim_{\ell \to \infty}{|w_k\ldots w_{k+\ell-1}|_a}/{\ell}=u_a$ uniformly in $k$.
\end{proof}

For a proof of Lemma~\ref{lem:sturmletterfreq} along similar lines in a more general setting we refer to Lemma~\ref{lem:SadicFreq} (see also Berth\'e and Delecroix~\cite[Theorem~5.7]{Berthe-Delecroix}; a proof using balance, which also gives irrationality of the frequencies, is contained in \cite[Proposition~6.1.10]{Fog02}). 

In the same way as for letters, we can define uniform frequencies for factors of an infinite sequence $w\in\{1,2\}^\N$. Let $w$ be a Sturmian sequence with coding sequence $(\sigma_{i_n})$. The sequence is the shifted image of another Sturmian sequence under an arbitrary large block $\sigma_{i_{[0,n)}}$ of substitutions. This enables one to show that for the words $\sigma_{i_{[0,n)}}(a)$ there exist uniform frequencies in $w$. Since $(i_n)$ changes its value infinitely often, the length of the words $\sigma_{i_{[0,n)}}(a)$ tends to infinity for each letter $a$ if $n\to\infty$.  Using this fact one can prove the following result along similar lines as Lemma~\ref{lem:sturmletterfreq} (for details we refer to the proof of Lemma~\ref{lem:SadicFreq} below; see also \cite[Theorem~5.7]{Berthe-Delecroix}).

\begin{lemma}\label{lem:sturmpatternfreq}
Let $w=w_0w_1\ldots\in\{1,2\}^\N$ be a Sturmian sequence with coding sequence $(\sigma_{i_n})$ whose associated sequence of incidence matrices $(M_{i_n})$ has a generalized right eigenvector $\bu$. Let $v \in \{1,2\}^*$ be given, and let $|w_kw_{k+1}\dots w_{k+\ell-1}|_v$ be the number of occurrences of $v$ in the factor $w_kw_{k+1}\dots w_{k+\ell-1}$ of $w$. Then $|w_kw_{k+1}\dots w_{k+\ell-1}|_v/\ell$ tends to a limit $f_v(w)$ for $\ell\to\infty$ uniformly in $k$. 
\end{lemma}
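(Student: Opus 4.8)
The plan is to mimic the proof of Lemma~\ref{lem:sturmletterfreq} almost verbatim, replacing ``number of occurrences of a letter $a$'' by ``number of occurrences of the word $v$''. The key structural input is Proposition~\ref{prop:STsubs}, which lets me write any factor of $w$ in the form
\begin{equation*}
w_k\ldots w_{k+\ell-1} = p\,\sigma_{i_{[0,n)}}(v')\,s,
\end{equation*}
where $v'\in\{1,2\}^*$ is itself a factor of a Sturmian sequence and $|p|,|s|$ are bounded by $\max\{|\sigma_{i_{[0,n)}}(1)|,|\sigma_{i_{[0,n)}}(2)|\}$. First I would count occurrences of $v$ inside such a block. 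Occurrences of $v$ split into three types: those lying entirely inside one of the images $\sigma_{i_{[0,n)}}(b)$ of a single letter $b$ of $v'$, those straddling the boundary between two consecutive such images, and those meeting the prefix $p$ or suffix $s$. The first type is governed by frequencies of the letters of $v'$, which converge by Lemma~\ref{lem:sturmletterfreq} applied to the Sturmian sequence of which $v'$ is a factor; the key point is that, because the lengths $|\sigma_{i_{[0,n)}}(b)|$ tend to infinity, for all large $n$ the word $v$ is short relative to these images, so each copy of $\sigma_{i_{[0,n)}}(b)$ contributes a well-defined count of interior occurrences of $v$ depending only on $b$.

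Concretely, I would fix $n$ large enough that $|v|<\min_b |\sigma_{i_{[0,n)}}(b)|$, and then, for each letter $b$, let $c_b^{(n)}$ be the number of occurrences of $v$ strictly inside $\sigma_{i_{[0,n)}}(b)$ and let $d_{b,b'}^{(n)}$ be the number of occurrences straddling the junction of $\sigma_{i_{[0,n)}}(b)$ and $\sigma_{i_{[0,n)}}(b')$. Then the count of $v$ inside $\sigma_{i_{[0,n)}}(v')$ is approximately $\sum_b c_b^{(n)}\,|v'|_b$ plus boundary corrections controlled by the number of letter-pairs $bb'$ occurring in $v'$. Dividing by $\ell$ and using that $|\sigma_{i_{[0,n)}}(v')|/\ell\to 1$ and that the letters and the pairs of letters of $v'$ themselves have uniform frequencies (letter frequencies from Lemma~\ref{lem:sturmletterfreq}, and pair frequencies inductively, since a pair $bb'$ is a factor of length two), I would obtain convergence of $|w_k\ldots w_{k+\ell-1}|_v/\ell$ to an explicit limit that is independent of $k$. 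The uniformity in $k$ comes, exactly as in Lemma~\ref{lem:sturmletterfreq}, from the fact that the bound $|p|,|s|\le\varepsilon\ell$ can be arranged simultaneously for all $k$ once $\ell\ge N$.

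The main obstacle, and the place where the argument genuinely goes beyond Lemma~\ref{lem:sturmletterfreq}, is the bookkeeping of boundary occurrences of $v$. A single occurrence of $v$ may overlap the images of several consecutive letters of $v'$; to make the counting clean one wants $v$ to be short relative to the block images, which is why the growth $|\sigma_{i_{[0,n)}}(b)|\to\infty$ is essential. The cleanest way to close this is to note that the number of occurrences of $v$ in any window is a ``local'' quantity: it equals $\ell$ times an average of an indicator over sliding windows of length $|v|$, so counting occurrences of $v$ reduces to counting occurrences of the letters of the recoded sequence over the alphabet of length-$|v|$ factors. This recoding makes the pair-frequency induction systematic rather than ad hoc, and it is the step I would write out most carefully; the rest is the same three-term estimate as in \eqref{eq:strumDTeinfach}, bounded again by a constant multiple of $\varepsilon$.
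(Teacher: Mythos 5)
Your decomposition of $w_k\ldots w_{k+\ell-1}$ into prefix, suffix, interior occurrences, and junction-straddling occurrences is exactly the skeleton the paper uses (the paper defers the details to the proof of Lemma~\ref{lem:SadicFreq}, whose Part~2 is this argument). The gap is in how you treat the straddling occurrences. You propose to count them \emph{exactly}, via the numbers $d_{b,b'}^{(n)}$ weighted by the frequencies of the pairs $bb'$ in $v'$, and you justify the existence of those pair frequencies ``inductively, since a pair $bb'$ is a factor of length two.'' This induction is not well-founded: the pairs whose frequencies you need live in the desubstituted sequence $w^{(n)}$, not in $w$, and uniform frequencies of length-two factors of $w^{(n)}$ are an instance of the very lemma being proved. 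Running your argument for them would in turn demand pair frequencies of a further desubstitution, and so on forever; the base case of letters never suffices, because the straddling count genuinely depends on pairs. The sliding-window recoding you offer as a patch does not close the gap as stated: occurrences of $v$ do become letter-occurrences in the recoded sequence, but to invoke Lemma~\ref{lem:sturmletterfreq} for that sequence you would have to exhibit a coding of it by substitutions (the induced substitutions on $|v|$-blocks) and verify that the associated incidence matrices are primitive and weakly convergent to a generalized right eigenvector. That is a substantial construction, not a bookkeeping remark, and nothing in the paper provides it.

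The paper's proof sidesteps this entirely: it never computes the junction occurrences, it only \emph{bounds} their number by $|v|(r-1)\le |v|\ell/m_n^-$, where $m_n^-=\min_b|\sigma_{i_{[0,n)}}(b)|\to\infty$. For fixed $n$ this yields $\limsup_{\ell\to\infty}\big|\,|w_k\ldots w_{k+\ell-1}|_v/\ell-g(v,n)\big|\le |v|/m_n^-$, where $g(v,n)$ is built solely from letter frequencies of $w^{(n)}$ (available by Lemma~\ref{lem:sturmletterfreq} applied to the shifted coding sequence); letting $n\to\infty$ one concludes that $(g(v,n))_n$ is Cauchy and that its limit is the desired uniform frequency $f_v(w)$. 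If you replace your exact count of straddling occurrences by this crude bound, your proof goes through and needs neither pair frequencies nor any recoding.
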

 
We can associate a dynamical system with a Sturmian sequence $w$ in a very natural way. Let $X_w = \overline{\{\Sigma^k w \;:\; k\in \N\}}$ be the closure of the shift orbit of $w$. Alternatively, $X_w$ can be viewed as the set of all sequences $u$ whose \emph{language} $L(u)$ ({\it i.e.}, its set of factors) satisfies $L(u)\subseteq L(w)$.
Thus if $\bsigma=(\sigma_{i_n})$ is the coding sequence of $w$, Proposition~\ref{prop:STsubs} implies that $X_{w}$ contains all Sturmian sequences with coding sequence $\bsigma$.  Since $X_w$ is shift invariant the shift $\Sigma$ acts on $X_{w}$ and the dynamical system $(X_w,\Sigma)$ is well defined. We call $(X_w,\Sigma)$ a \emph{Sturmian system}. From what we know about Sturmian sequences we can derive a number of properties for these dynamical systems. The notions of \emph{minimality} and \emph{unique ergodicity} of a dynamical system used in the following lemma are defined precisely in Definitions~\ref{def:min} and~\ref{def:ue}, respectively.
 
\begin{proposition}\label{lem:sturmsys}
A Sturmian system $(X_w, \Sigma)$ has the following properties.
\begin{enumerate}
\item[(i)] The system $(X_w, \Sigma)$ is minimal.
\item[(ii)]  The set $X_w$ is the set of all Sturmian sequences having the same language.
\item[(iii)]  The set $X_w$ is the set of all Sturmian sequences having the same coding sequence $\bsigma$.
\item[(iv)] The system $(X_w,\Sigma)$ is uniquely ergodic.
\item[(v)]  We have $X_w=X_{w'}$ for any $w'\in X_w$.
\end{enumerate}
\end{proposition}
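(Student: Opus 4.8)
The plan is to deduce every item from two ingredients: minimality of $(X_w,\Sigma)$, which I obtain from uniform recurrence of $w$, and the uniform factor frequencies furnished by Lemma~\ref{lem:sturmpatternfreq}. Throughout I use that the coding sequence $\bsigma=(\sigma_{i_n})$ of a Sturmian sequence has $(i_n)$ assuming both values infinitely often, so that, as explained around \eqref{eq:concvone2lett}, its incidence matrices admit a generalized right eigenvector $\bu\in\R^2_{>0}$; in particular the hypotheses of Lemmas~\ref{lem:sturmletterfreq} and~\ref{lem:sturmpatternfreq} hold automatically, and the directive sequence is primitive. For (i) I would argue that $w$ is uniformly recurrent. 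Given a factor $v$ of $w$, choosing $n$ so large that $|\sigma_{i_{[0,n)}}(c)|>|v|$ shows that $v$ lies inside $\sigma_{i_{[0,n)}}(c_1c_2)$ for some two-letter factor $c_1c_2$; by primitivity (the standard absorption argument for primitive directive sequences) one may enlarge $n$ so that $v$ becomes a factor of $\sigma_{i_{[0,n)}}(c)$ for \emph{every} letter $c$. Since the limit word $u$, which satisfies $L(u)=L(w)$, is a concatenation of the blocks $\sigma_{i_{[0,n)}}(c)$, each of length at most $L=\max_c|\sigma_{i_{[0,n)}}(c)|$, the factor $v$ occurs in every window of $u$ of length $2L$; as $L(w)=L(u)$ the same holds in $w$. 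Thus $w$ is uniformly recurrent, which is precisely minimality of $(X_w,\Sigma)$. Item (v) is then immediate, since for $w'\in X_w$ the set $X_{w'}$ is a nonempty closed $\Sigma$-invariant subset of $X_w$ and hence equals $X_w$ by minimality.

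For (ii) and (iii) I would first record the standard fact that in a minimal subshift every point carries the full language: if $v\in L(w)$ then the cylinder it determines is nonempty and open, so density of the orbit of any $u\in X_w$ forces $v\in L(u)$, while $L(u)\subseteq L(w)$ holds by the very definition of $X_w$; hence $L(u)=L(w)$ for all $u\in X_w$. Consequently each $u\in X_w$ has $p_u(n)=p_w(n)=n+1$ and is Sturmian with $L(u)=L(w)$, and conversely any Sturmian $u$ with $L(u)=L(w)$ satisfies $L(u)\subseteq L(w)$ and so lies in $X_w$; this is (ii). For (iii) I would use that the coding sequence is a language invariant: the type of a Sturmian sequence, and thus each index $i_n$, is detected by the presence or absence of the factors $11$ and $22$, and the desubstitution of Lemma~\ref{prop:sturmDesubs} descends to a well-defined operation on languages. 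Therefore two Sturmian sequences share a language if and only if they share a coding sequence; together with Proposition~\ref{prop:STsubs} and (ii) this identifies $X_w$ with the set of Sturmian sequences whose coding sequence is $\bsigma$.

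For (iv) I would invoke the classical criterion that a minimal subshift is uniquely ergodic as soon as the Birkhoff averages of the indicator of every cylinder converge uniformly to a constant (such indicators span a dense subspace of $C(X_w)$). The Birkhoff sum $\sum_{j=0}^{N-1}\mathbf{1}_{[v]}(\Sigma^j u)$ counts the occurrences of $v$ among the first $N$ positions of $u$; by (ii) every window of $u\in X_w$ is a factor of $w$, so Lemma~\ref{lem:sturmpatternfreq} keeps this count within $\varepsilon N$ of $f_v(w)\,N$ once $N$ is large, uniformly both in the position and in $u\in X_w$. Hence the averages converge uniformly to the single constant $f_v(w)$, and $(X_w,\Sigma)$ is uniquely ergodic.

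The step I expect to be the main obstacle is (i): once minimality is in hand the remaining items are essentially formal consequences of it and of the frequency lemma, whereas uniform recurrence rests on the absorption argument for primitive directive sequences, whose careful execution (nesting the substitutions enough times to force a prescribed two-letter factor into the image of every letter) is the one genuinely combinatorial point. A secondary delicate spot is the backward direction of (iii): one must verify that the desubstitution of Lemma~\ref{prop:sturmDesubs}, even when it requires an inserted shift, induces a well-defined map on languages, so that the coding sequence really is determined by $L(w)$ alone.
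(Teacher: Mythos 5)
Your proof of (iii) has a genuine gap, and it sits exactly where the real work of this item lies. You reduce (iii) to the assertion that ``the desubstitution of Lemma~\ref{prop:sturmDesubs} descends to a well-defined operation on languages'', i.e.\ that two Sturmian sequences with the same language desubstitute to sequences with the same language; you then flag this as ``a secondary delicate spot'' but never prove it. That assertion \emph{is} statement (iii), up to the easy observation (which you do make) that the type, hence $i_0$, is read off from the presence of $11$ or $22$. The paper closes this gap (following \cite[Lemma~6.3.12]{Fog02}) with a uniqueness-of-desubstitution trick: if $u,u'\in X_w$ are of type~$1$, write $u=\sigma_1(v)$ or $\Sigma\sigma_1(v)$ and $u'=\sigma_1(v')$ or $\Sigma\sigma_1(v')$ as in Lemma~\ref{prop:sturmDesubs}; given $x\in L(v)$, use recurrence to extend $x$ to a factor $y$ of $v$ \emph{beginning with the letter}~$2$. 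Then $\sigma_1(y)$ begins with $2$ and ends with $1$, so it can be parsed into the blocks $\sigma_1(1)=1$, $\sigma_1(2)=21$ in exactly one way; since $\sigma_1(y)\in L(u)=L(u')$ (by (ii)), any occurrence of it in $u'$ must align with block boundaries and therefore forces $y$, hence $x$, into $L(v')$. Iterating this gives that all elements of $X_w$ share the coding sequence, and Proposition~\ref{prop:STsubs} plus (ii) give the converse inclusion. Without this argument (or an equivalent one) your ``language invariance of the coding sequence'' is an unproven claim, not a proof.

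The remaining items are essentially correct. Items (ii), (iv), (v) match the paper's proofs in substance: for (iv) you simply inline the proof of Proposition~\ref{prop:freq_unique_ergod} (the paper cites \cite[Proposition~5.1.21]{Fog02}) on top of Lemma~\ref{lem:sturmpatternfreq}, and your derivation of (v) from minimality instead of from (ii) is equally valid. For (i) you take a genuinely different route: the paper's Sturmian proof uses balance (Proposition~\ref{prop:sturmbalance}) to get the letter $1$ occurring with bounded gaps in $w^{(m)}$, whereas you use the primitivity/absorption argument that the paper itself deploys for Arnoux--Rauzy sequences (Lemma~\ref{lem:ARmin}) and in the general $S$-adic setting (Proposition~\ref{prop:sadicminimal}~(iii)); your route has the advantage of not invoking the combinatorially nontrivial Proposition~\ref{prop:sturmbalance}. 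One imprecision there, though: positivity of a block of incidence matrices absorbs single \emph{letters}, not two-letter words, so from ``$v$ is a factor of $\sigma_{i_{[0,n)}}(c_1c_2)$'' you cannot in one step conclude that a longer block places $v$ inside $\sigma_{i_{[0,n')}}(c)$ for every letter $c$. The standard execution starts differently: since $\sigma_{i_{[0,m)}}(a)$ are nested prefixes of the limit word whose lengths tend to infinity, $v$ lies inside the image $\sigma_{i_{[0,m)}}(a)$ of a \emph{single} letter for $m$ large, and then positivity of some $M_{i_{[m,n)}}$ puts $a$, hence $v$, inside $\sigma_{i_{[0,n)}}(c)$ for all $c$. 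With that correction your (i) is complete.
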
 

\begin{proof}
Let $(\sigma_{i_n})$ be the coding sequence of $w$ with $(M_{i_n})$ being the associated sequence of matrices.

We start with (i). By Proposition~\ref{prop:STsubs} we may assume w.l.o.g.\ that $w = \lim_{n\to \infty} \sigma_{i_{[0,n)}}(1)$. Let $v \in X_w$ be given. To prove minimality it suffices to show that $L(v) = L(w)$. Since $L(v)\subseteq L(w)$ is true by definition we need to prove the reverse inclusion. Let $u\in L(w)$.  
By the definition of $w$ and the primitivity of the sequence $(M_{i_n})$ there is $m\in\N$ such that $u$ occurs in $\sigma_{i_{[0,m)}}(1)$. However, there is a Sturmian word $w^{(m)}$ satisfying $w=\sigma_{i_{[0,m)}}(w^{(m)})$. Since $w^{(m)}$ is balanced by Proposition~\ref{prop:sturmbalance}, the letter $1$ occurs in $w^{(m)}$ with bounded gaps. This implies that $\sigma_{i_{[0,m)}}(1)$ and, hence, $u$ occurs in $w$ with bounded gaps. Thus $u$ occurs in each element of the orbit closure $X_w$ of $w$, hence, also in $v$. Thus $L(v) = L(w)$ is established.

Since $L(v)=L(w)$ holds for each $v\in X_w$ according to the previous paragraph we have $p_v(n)=p_w(n)=n+1$ for all $n\in\N$, hence, $v$ is Sturmian with the same language as $w$. This proves (ii).

To prove (iii) we follow the proof of \cite[Lemma~6.3.12]{Fog02}. Assume w.l.o.g.\ that the elements of $X_w$ are of type $1$ and let $u,u'\in X_w$. Then according to Lemma~\ref{prop:sturmDesubs} there exist Sturmian words $v,v'$ such that $u=\sigma_1(v)$ or $u=\Sigma\sigma_1(v)$ as well as $u'=\sigma_1(v')$ or $u'=\Sigma\sigma_1(v')$. We first prove that $v,v'$ belong to the same Sturmian system. By (ii) we have to show that $L(v)=L(v')$. Suppose that $x \in L(v)$. Since $x$ occurs infinitely often in $v$ by recurrence, there is $y\in L(v)$ starting with the letter $2$ such that $x$ is a subword of $y$. The word $\sigma_1(y)$ occurs in $u$ and by (ii) it occurs also in $u'$ and because $\sigma_1(y)$ begins with $2$ and ends with $1$ it can be desubstituted in only one way by $\sigma_1$, namely to $y$. This proves that $y$ and, hence, also $x$ occurs in $v'$. Thus $L(v)\subseteq L(v')$. The other inclusion follows by interchanging the roles of $v$ and $v'$. Iterating this argument yields that $u$ and $u'$ have the same coding sequence. Thus all elements of $X_w$ have the same coding sequence. As Sturmian sequences with the same coding sequence have the same language by Proposition~\ref{prop:STsubs}, $X_w$ contains all Sturmian sequences having the same coding sequence as $w$.

Item (iv) follows immediately by combining Lemma~\ref{lem:sturmpatternfreq} with \cite[Proposition~5.1.21]{Fog02} (see also Proposition~\ref{prop:freq_unique_ergod} below) which states that the existence of uniform word frequencies implies unique ergodicity. Alternatively, one can use Boshernitzan~\cite{Boshernitzan:84}.

Finally, (v) follows from (ii).
\end{proof}

We emphasize on the fact that for minimality and unique ergodicity of $(X_{w},\Sigma)$ the recurrence of $w$ as well as the primitivity of the sequence $(M_{i_n})$ is of importance. This will be the same in the general case (see Section~\ref{sec:pr}  below). In view of assertion (iii) of the previous lemma we will write $X_\bsigma$ instead of $X_w$, where $\bsigma$ is the coding sequence of $w$. 

\subsection{Sturmian sequences code rotations}\label{sec:sturmrot}
It was observed already by Morse, Coven, and Hedlund~\cite{Coven&Hedlund:1973,Morse&Hedlund:1940} that each Sturmian sequence is a \emph{natural coding} of a rotation by some irrational number $\alpha$. We now sketch a proof of this fact which goes back to Rauzy and in which the multiplicative continued fraction expansion of $\alpha$ pops up when we represent such a coding in an $S$-adic fashion. For proofs of this kind we refer to \cite{AFH:99,AF:01,BFZ:05,BHZ:06}; a different, combinatorial proof along the lines of the original proof by Morse, Coven, and Hedlund is presented in~\cite[Theorem~2.1.13]{Lothaire:02} and \cite[Section~10.5]{Allouche-Shallit:03}.

Before we give the main result of this section we provide some definitions. Let $\mathbb{T}$ be the $1$-torus, {\it i.e.}, the unit interval $[0,1]$ with its end points glued together. A \emph{rotation}\index{torus rotation} or \emph{translation}\index{torus translation} on $\mathbb{T}$ by a real number $\alpha$ is a mapping $R_\alpha: \mathbb{T}\to\mathbb{T}$ with $x \mapsto x + \alpha \pmod{1}$. If $\alpha\not\in\Q$ this gives a minimal dynamical system.  
\begin{figure}[hh]
\includegraphics[width=0.4\textwidth]{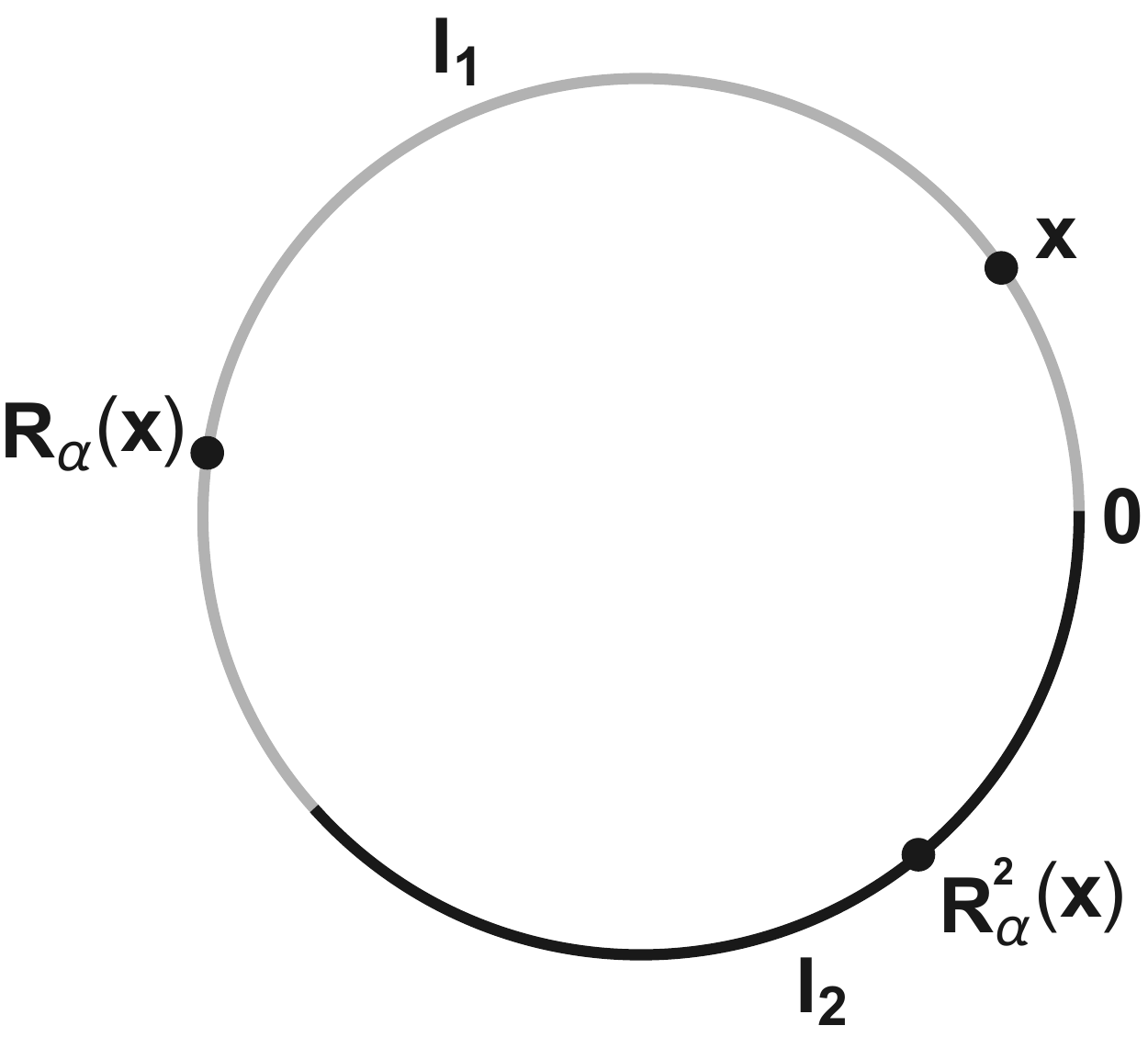}
\caption{Two iterations of the irrational rotation $R_\alpha$ on $\mathbb{T}$ which is subdivided into the two intervals $I_1$ and $I_2$. \label{fig:irrot}}
\end{figure}
Moreover, observe that $R_\alpha$ can be regarded as a \emph{two interval exchange} of the intervals $I_1=[0,1-\alpha)$ and $I_2=[1-\alpha,1)$ or of the intervals $I'_1=(0,1-\alpha]$ and $I'_2=(1-\alpha,1]$, see Figure~\ref{fig:irrot}. We say that a sequence $w=w_0w_1\ldots\in\{1,2\}^\mathbb{N}$ is a \emph{natural coding}\index{natural coding} of $R_\alpha$ if there is $x\in\mathbb{T}$ such that $R_\alpha^k(x) \in I_{w_k}$ for each $k\in\N$ or $R_\alpha^k(x) \in I'_{w_k}$ for each $k\in\N$. 

\begin{theorem}\label{th:sturmrot}
A sequence $w\in\{1,2\}^\N$ is Sturmian if and only if there exists $\alpha\in\R\setminus\Q$ such that $w$ is a natural coding of the rotation $R_\alpha$.
\end{theorem}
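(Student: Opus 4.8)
The plan is to prove both implications using the $S$-adic machinery built up in this section, exploiting the parallel between the coding sequence $\bsigma=(\sigma_{i_n})$ of a Sturmian sequence and the additive continued fraction expansion of $\alpha$. For the direction that a natural coding is Sturmian, I would first verify directly that a natural coding $w$ of an irrational rotation $R_\alpha$ is balanced: if $R_\alpha^k(x)\in I_{w_k}$, then $|w_k\ldots w_{k+\ell-1}|_1$ counts how many of the $\ell$ consecutive orbit points land in $I_1=[0,1-\alpha)$, and by the three-distance (three-gap) phenomenon this count differs from $\ell(1-\alpha)$ by less than $1$, uniformly in $k$. This immediately yields $\big||v|_1-|v'|_1\big|\le 1$ for factors $v,v'$ of equal length, so $w$ is balanced. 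Since $\alpha\notin\Q$ makes $R_\alpha$ minimal, $w$ is not ultimately periodic, and Proposition~\ref{prop:sturmbalance} then gives that $w$ is Sturmian.

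For the converse — every Sturmian sequence codes some rotation — the plan is to read off $\alpha$ from the coding sequence via the abelianization. Given a Sturmian $w$ with coding sequence $(\sigma_{i_n})$ and incidence matrices $(M_{i_n})$, the sequence is primitive (as noted after \eqref{eq:2letterabel}), so \eqref{eq:concvone2lett} produces a generalized right eigenvector $\bu\in\R^2_{>0}$ with $\bu/\Vert\bu\Vert_1=(u_1,u_2)^t$. By Lemma~\ref{lem:sturmletterfreq} these are the uniform letter frequencies $(f_1(w),f_2(w))^t$. I would set $\alpha=u_2$ (equivalently $1-\alpha=u_1$), which is irrational precisely because $(i_n)$ changes value infinitely often — this is exactly the nonabelian shadow of the continued fraction expansion of $\alpha$ from Section~\ref{sec:CF}, since the matrices $M_1,M_2$ governing the cone intersection \eqref{eq:concvone2lett} are the very same matrices driving the additive continued fraction algorithm in \eqref{eq:addseq}. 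The candidate coding point $x$ can then be defined as the frequency-determined position $x=\sum_{k\ge 1}\{w_0\ldots w_{k-1}\text{-contribution}\}$; more cleanly, one constructs the topological conjugacy below and takes $x$ to be the image of $w$ itself.

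The substantive step is to establish that this $\alpha$ genuinely realizes $w$ as a natural coding, i.e.\ to build the semiconjugacy from the Sturmian system $(X_w,\Sigma)$ onto $(\mathbb{T},R_\alpha)$. The natural approach is to define $\varphi:X_w\to\mathbb{T}$ by sending a sequence $u=u_0u_1\ldots\in X_w$ to the unique point $\varphi(u)\in\mathbb{T}$ determined by the nested intervals one obtains by cutting $\mathbb{T}$ along the orbit of $0$ under $R_\alpha$ according to the prefixes of $u$. Using the $S$-adic structure, the cylinder $[\sigma_{i_{[0,n)}}(a)]$ in $X_w$ should map to an interval whose endpoints are iterates $R_\alpha^j(0)$ and whose length is $\Vert M_{i_{[0,n)}}\be_a\Vert_1^{-1}$-scaled; Lemma~\ref{lem:sturmpatternfreq} guarantees these lengths shrink to $0$, so $\varphi$ is well defined and continuous. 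One checks $\varphi\circ\Sigma=R_\alpha\circ\varphi$ on cylinders by the defining recursion, and that $\varphi$ is onto by density of the orbit of $0$.

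The main obstacle I anticipate is proving that $\varphi$ is injective (hence a homeomorphic conjugacy, not merely a semiconjugacy), equivalently that the coding map has no essential collapsing. Two sequences map to the same point exactly when that point is an endpoint $R_\alpha^j(0)$, where the choice of half-open convention ($I_1,I_2$ versus $I'_1,I'_2$) is forced; the balance property of $w$ — which by Proposition~\ref{prop:sturmbalance} is equivalent to being Sturmian — is precisely what rules out any further identification and pins down $x$ up to this countable, convention-resolved ambiguity. I would handle this by showing that distinct points of $\mathbb{T}$ have distinct itineraries away from the orbit of $0$, so that the fibers of $\varphi$ are singletons except over the countable orbit $\{R_\alpha^j(0)\}$, and that on this orbit the two half-open codings $I_{w_k}$ and $I'_{w_k}$ account for exactly the two preimages. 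This is where the combinatorics of left/right special factors and the desubstitution Lemma~\ref{prop:sturmDesubs} do the real work, matching the two limit sequences $12u$ and $21u$ of a special Sturmian sequence with the two admissible codings of the discontinuity point.
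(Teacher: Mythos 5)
Your sufficiency direction (natural coding $\Rightarrow$ Sturmian) is correct but follows a different route from the paper: the paper counts factors directly via \eqref{eq:strumianrotationiff}, which yields $p_w(n)=n+1$ immediately, whereas you go through balance and Proposition~\ref{prop:sturmbalance}. That route works, although the discrepancy bound you need is not really the three-distance theorem; it is the elementary identity that, with $I_2=[1-\alpha,1)$, the number of $2$'s in $w_k\cdots w_{k+\ell-1}$ equals $\lfloor x+(k+\ell)\alpha\rfloor-\lfloor x+k\alpha\rfloor$, hence lies in $\{\lfloor \ell\alpha\rfloor,\lceil \ell\alpha\rceil\}$, which gives balance at once. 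Aperiodicity from minimality also deserves a line of argument, but it is routine.

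The necessity direction, however, has a genuine gap at its core. Your entire construction rests on the claimed correspondence between cylinders $[\sigma_{i_{[0,n)}}(a)]$ of $X_w$ and arcs of $\mathbb{T}$ with endpoints in the orbit of $0$; you assert it (``should map to an interval \dots'') and propose to ``check $\varphi\circ\Sigma=R_\alpha\circ\varphi$ on cylinders by the defining recursion,'' but no such recursion has been defined, and nothing you invoke supplies one. In particular, Lemma~\ref{lem:sturmpatternfreq} cannot do the job you assign to it: it concerns frequencies of words occurring \emph{inside} $w$ and says nothing about lengths of arcs on the circle. Before one can even speak of ``the nested intervals obtained by cutting $\mathbb{T}$ along the orbit of $0$ according to the prefixes of $u$,'' one must know that every factor $w_0\cdots w_{n-1}$ of a sequence in $X_w$ actually occurs in some natural coding of $R_\alpha$ — equivalently, by \eqref{eq:strumianrotationiff}, that $\bigcap_{k=0}^{n-1}R_\alpha^{-k}I_{w_k}\neq\emptyset$. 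That nonemptiness is essentially the theorem itself, and it is precisely what the paper's key Lemma~\ref{lem:rotCF} provides: by inducing the rotation on a subinterval (first-return map) and renormalizing, desubstitution by $\sigma_1,\sigma_2$ is matched step by step with the Gauss map, producing a concrete point of $\mathbb{T}$ whose itinerary has coding sequence $(\sigma_{i_n})$; Proposition~\ref{prop:STsubs} then transfers this to $w$ by equality of languages, and a nested-intersection approximation argument (with the primed intervals in limit cases) finishes. Your proposal contains no substitute for this induction/renormalization step — the bridge from the combinatorial data $(i_n)$ to the geometry of $R_\alpha$ is assumed rather than proved — so your map $\varphi$ is not known to be well defined, let alone shift-equivariant or surjective.

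A secondary remark: the injectivity analysis in your final paragraph, while sensible, is not needed for this theorem. Exhibiting $w$ as a natural coding only requires producing one point with the correct itinerary (a semiconjugacy suffices); the injectivity-off-a-countable-set issue belongs to the subsequent corollary on measurable conjugacy, where the paper handles it via the two codings $I_i$ versus $I_i'$ of the orbit of $0$.
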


The sufficiency part of the theorem is easy. Indeed, it just follows from the observation that 
\begin{equation}\label{eq:strumianrotationiff}
v_0\ldots v_{n-1} \hbox{ is a factor of a natural coding of } R_\alpha \quad\Longleftrightarrow\quad \bigcap_{k=0}^{n-1} R_\alpha^{-k}I_{v_k}\not=\emptyset,
\end{equation}
whose proof is an easy exercise (see \cite[Lemma~2.7]{BFZ:05}).

The proof of the necessity part of Theorem~\ref{th:sturmrot} needs more work and we will see that the classical continued fraction algorithm pops up along the way without being presupposed. We need the following key lemma.

\begin{lemma}\label{lem:rotCF}
For $\alpha \in(0,1)$ irrational let $u$ be the coding of the point $1-\alpha/(\alpha+1)$ under the irrational rotation $R_{\alpha/(\alpha+1)}$. Then there is a sequence $(\sigma_{i_n})$ of substitutions such that
\[
u=\lim_{n\to\infty}\sigma_{i_0}\circ\cdots\circ\sigma_{i_n}(2).
\]
The sequence $(i_n)\in\{1,2\}^\N$ is of the form $1^{a_0}2^{a_1}1^{a_2}2^{a_3}\ldots$ where $[a_0,a_1,a_2,a_3,\ldots]$ is the continued fraction expansion of $\alpha$. For $\alpha >1$ a similar result with switched symbols holds.
\end{lemma}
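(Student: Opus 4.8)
The plan is to connect the coding of the rotation $R_{\alpha/(\alpha+1)}$ directly to the additive continued fraction machinery developed in Section~\ref{sec:CF}, using the interval-exchange viewpoint of $R_\alpha$ together with Rauzy induction. First I would fix $\beta=\alpha/(\alpha+1)\in(0,\tfrac12)$ and write down the coding $u$ of the point $1-\beta$ under $R_\beta$ explicitly at the level of the partition $\{I_1,I_2\}$ with $I_1=[0,1-\beta)$, $I_2=[1-\beta,1)$. The key geometric idea is that inducing (first-return) the rotation $R_\beta$ on one of the two subintervals yields again a rotation by a new irrational number, and that this induced rotation is exactly what the substitution $\sigma_1$ or $\sigma_2$ encodes on the symbolic side. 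Concretely, I expect that the first-return map of $R_\beta$ to the larger subinterval is again a two-interval exchange, i.e.\ a rotation $R_{\beta'}$, and that the lengths transform under the matrices $M_1,M_2$ from \eqref{eq:matrAE} in precisely the way the Farey map $f$ from Section~\ref{sec:CF} prescribes.

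The heart of the argument is a single induction step, which I would phrase as follows. Writing $\alpha=[a_0,a_1,a_2,\ldots]$, the map $R_\beta$ with $\beta=\alpha/(\alpha+1)$ has the property that the parameter sits in the regime governing the first symbol of $(i_n)$; after subtracting the appropriate interval length (which is the geometric meaning of applying $M_1^{-1}$ or $M_2^{-1}$), one lands on a rescaled rotation whose rotation number is the image of $\beta$ under one application of the Farey map $f$. On the symbolic level, this renormalization corresponds to desubstituting $u$ by $\sigma_{i_0}$, exactly as in the desubstitution Lemma~\ref{prop:sturmDesubs}: the coding $u$ of the original rotation equals $\sigma_{i_0}$ applied to the coding of the renormalized rotation. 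Iterating and keeping track of which of $M_1,M_2$ occurs at each stage produces the sequence $(M_{i_n})$ of \eqref{eq:addseq}, and hence the additive continued fraction expansion of the initial vector $(a,b)$ with $a/b$ determined by $\beta$.

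The two remaining bookkeeping points I would then settle are the identification of the digits and the convergence of the infinite composition. For the digits: grouping the Farey steps into blocks, as in the passage from \eqref{eq:sadic2letters} to \eqref{eq:sadic2lettersMult}, turns the additive algorithm into the multiplicative one, so the block lengths are precisely the partial quotients $a_0,a_1,a_2,\ldots$ of the Gauss-map expansion of $\beta$; since $\beta=\alpha/(\alpha+1)$ one checks directly that $[a_0,a_1,\ldots]$ is the continued fraction expansion of $\alpha$ (the Möbius transformation $\alpha\mapsto\alpha/(\alpha+1)$ is chosen exactly to make the initial digit come out cleanly, prepending an $a_0$-block of the letter $1$). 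The pattern $1^{a_0}2^{a_1}1^{a_2}2^{a_3}\cdots$ then follows because the two cases of the Farey map alternate which coordinate is reduced. For convergence: since $\alpha$ is irrational the sequence $(i_n)$ changes value infinitely often, so $(M_{i_n})$ is primitive and \eqref{eq:concvone2lett} applies; as in the argument leading to \eqref{eq:sadic2letters}, the first letter of the $n$-th renormalized coding determines a prefix of $u$ whose length tends to infinity, which legitimizes writing $u=\lim_{n\to\infty}\sigma_{i_0}\circ\cdots\circ\sigma_{i_n}(2)$.

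The step I expect to be the main obstacle is the clean verification of the single induction step, i.e.\ showing that the first-return map of $R_\beta$ to the appropriate subinterval is again a rotation and that its coding desubstitutes by exactly $\sigma_{i_0}$ with the correct choice of symbol. This is where one must be careful about boundary conventions ($I_k$ versus $I'_k$), about which subinterval one induces on, and about the precise point $1-\beta$ being tracked so that it maps to the correct starting point of the renormalized coding. Once this renormalization lemma is pinned down, the rest is the routine iteration and the elementary continued-fraction identity relating $\alpha$ and $\alpha/(\alpha+1)$; the case $\alpha>1$ follows by the symmetric argument with the roles of the two letters (equivalently of $M_1$ and $M_2$) interchanged.
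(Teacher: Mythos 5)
Your proposal is correct and is essentially the paper's own argument: the proof in the text is exactly a Rauzy-induction/renormalization proof in which the first-return map of the rotation is again a rotation, the return words realize desubstitution by a Sturmian substitution, and the renormalization dynamics generates the continued fraction expansion of $\alpha$. The only real differences are organizational --- the paper works multiplicatively, inducing on $J'=[\alpha\lfloor 1/\alpha\rfloor-1,\alpha)$ so that the Gauss map and a whole block $u=\sigma_1^{\lfloor 1/\alpha\rfloor}(v)$ appear in a single step rather than via grouped Farey steps, and it defuses precisely the boundary/shift issue you single out as the main obstacle by replacing $R_{\alpha/(\alpha+1)}$ on $[0,1)$ with the conjugate rotation by $\alpha$ on $[-1,\alpha)$ and coding the orbit of the point $0$, for which each desubstitution is exact with no shift corrections.
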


\begin{proof}
We assume $\alpha <1$ ($\alpha > 1$ can be treated in a similar way). For computational reasons consider the rotation $R$ by $\alpha$ on the interval $J=[-1,\alpha)$ with the partition $P_1=[-1,0)$ and $P_2=[0,\alpha)$. The natural coding $u$ of $1-\alpha/(\alpha+1)$ by $R_{\alpha/(\alpha+1)}$ is the natural coding of $0$ by $R$. Let $R'$ be the first return map of $R$ to the interval $J'=\left[\alpha  \left\lfloor \frac1\alpha\right\rfloor-1  , \alpha\right)$. Let $v$ be a coding of the orbit of $0$ for $R'$. As can be seen from Figure~\ref{fig:induction}, after each occurrence of $2$ in $u$ we leave the interval $J'$ and there follows a block of $1$s of 
\begin{figure}[hh]
\includegraphics[width=\textwidth]{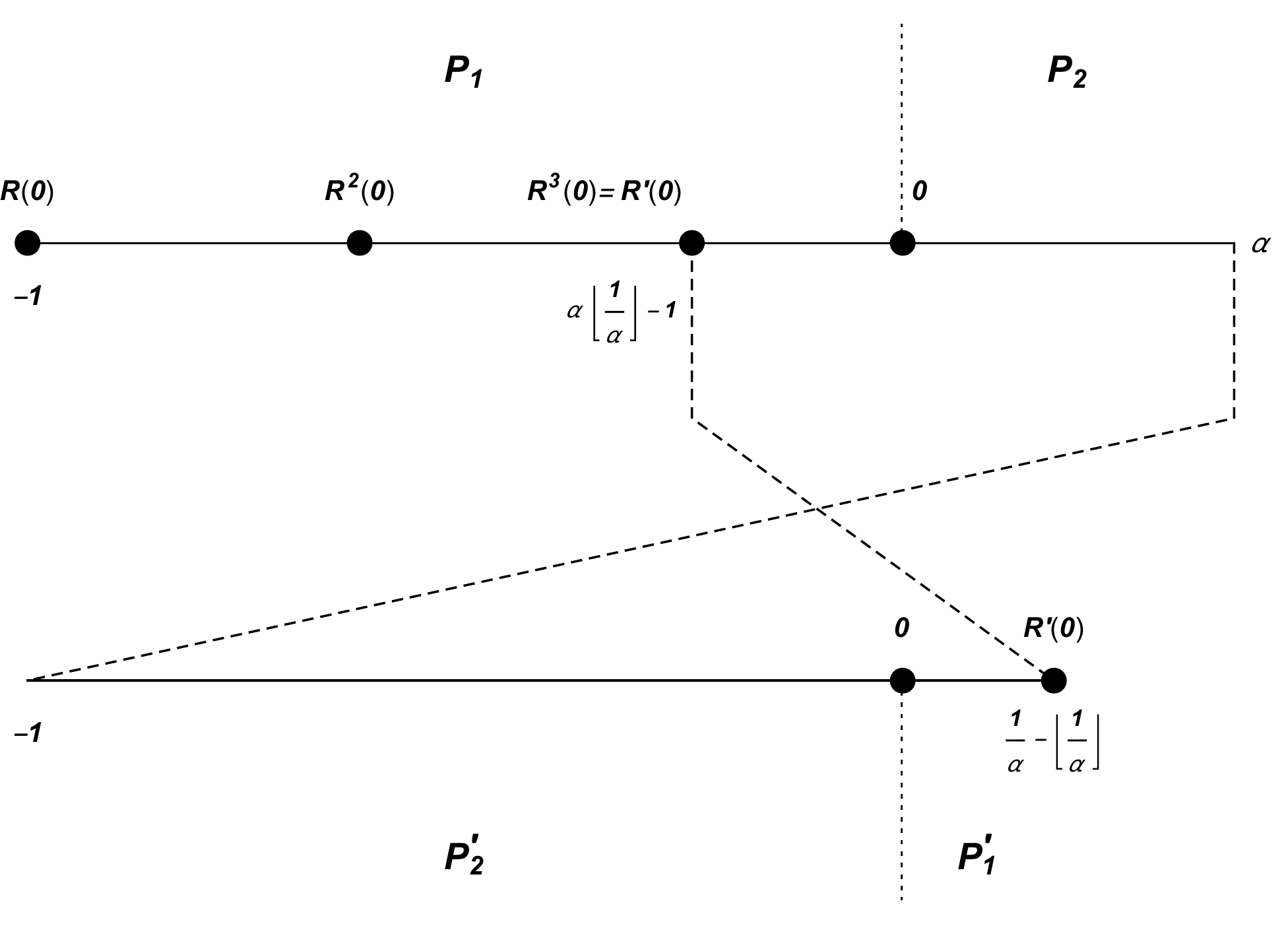}
\caption{The rotation $R'$ induced by $R$. \label{fig:induction}}
\end{figure}
length $\left \lfloor \frac1\alpha\right\rfloor$ before we enter the interval $J'$ again. Thus $v$ emerges from $u$ by removing such a block of $1$s after each letter $2$ occurring in $u$. By the definition of $\sigma_1$ this just means that $u=\sigma_1^{\lfloor 1/\alpha \rfloor}(v)$. We can now renormalize the interval $J'$ by dividing it by $-\alpha$ and, as illustrated in Figure~\ref{fig:induction}, then $R'$ is conjugate to a rotation (called $R'$ again) by $\left\{\frac1\alpha\right\}$ on the interval $\left(-1,\left\{\frac1\alpha\right\}\right]$, where $v$ is the natural coding of the partition $P'_2=(-1,0]$ and $P'_1=\left(0,\left\{\frac1\alpha\right\}\right]$. Note that the \emph{Gauss map} $\alpha\mapsto\left\{\frac1\alpha\right\}$ from \eqref{eq:gaussmap} comes up here without being presupposed. 
Since we are in the same setting as before (just with the letters $1$ and $2$ interchanged), we can iterate this process and thereby obtain a sequence $(u^{(n)})_{n\ge 0}$ of natural codings such that 
\[
u=u^{(0)} \quad\hbox{and}\quad u^{(n)}=\sigma_{i_n}(u^{(n+1)}) \hbox{ for }n\ge 0
\]
for some sequence $(\sigma_{i_n})$ with $(i_n)\in\{1,2\}^\N$ having infinitely many changes between the letters $1$ and $2$.
Arguing in the same way as in Section~\ref{sec:sturmianprop} we gain that 
\[
u=\lim_{n\to\infty}\sigma_{i_0}\circ\dots\circ\sigma_{i_n}(a)
\]
where $a=2$ is the first letter of $u$. The assertion on the continued fraction expansion follows from the above proof as well. Just note that the interval we use has length $\alpha+1$ so that the rotation by $\alpha$ on this interval is conjugate to $R_{\alpha/(\alpha+1)}$. 
\end{proof}

\begin{proof}[Conclusion of the proof of Theorem~\ref{th:sturmrot}] The sufficiency assertion has been treated in \eqref{eq:strumianrotationiff}. The necessity part of the theorem can now be obtained as follows. Let $w$ be a Sturmian sequence. Consider its coding sequence $(\sigma_{i_n})$ and write $(i_n)\in\{1,2\}^\N$ as $1^{a_0}2^{a_1}1^{a_2}2^{a_3}\ldots$ Then $u=\lim_{n\to\infty}\sigma_{i_0}\circ\dots\circ\sigma_{i_n}(2)$ is a natural coding of $R_{\alpha/(1+\alpha)}$ where $\alpha=[a_0,a_1,a_2,\ldots]$. By Proposition~\ref{prop:STsubs} the sequence $w$ has the same language as $u$ and \eqref{eq:strumianrotationiff} together with an approximation argument implies that $w$ is a natural coding of $R_{\alpha/(1+\alpha)}$ (it is easy to verify that there are limit cases where we really need the intervals $I_1'$, $I_2'$ to define the natural coding for $w$).
\end{proof}

The fact that Sturmian sequences have irrational uniform letter frequencies is an immediate consequence of Theorem~\ref{th:sturmrot}. Moreover, we have the following corollary of Theorem~\ref{th:sturmrot} for Sturmian systems.

\begin{corollary}
A  Sturmian system $(X_\bsigma, \Sigma,\mu)$ is measurably conjugate to an irrational rotation $(\mathbb{T},R_\alpha,\lambda)$. Here $\mu$ is the unique $\Sigma$-invariant measure on $X_\bsigma$ and $\lambda$ is the Haar measure on~$\mathbb{T}$.
\end{corollary}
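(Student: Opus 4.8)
The plan is to promote the topological correspondence of Theorem~\ref{th:sturmrot} to a measure-theoretic isomorphism. By Theorem~\ref{th:sturmrot} every $w\in X_\bsigma$ is a natural coding of the rotation $R_\alpha$ for a fixed irrational $\alpha$ (the frequency data, hence $\alpha$, depend only on $\bsigma$ by Lemma~\ref{lem:sturmletterfreq}, so the whole system is coded by one rotation). First I would build the candidate conjugacy. For each $x\in\mathbb{T}$ whose forward orbit avoids the discontinuity point $1-\alpha$, the itinerary $\phi(x)=w_0w_1\ldots$ defined by $R_\alpha^k(x)\in I_{w_k}$ is well defined, and \eqref{eq:strumianrotationiff} together with minimality of the irrational rotation shows $\phi(x)\in X_\bsigma$. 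The intertwining identity $\phi\circ R_\alpha=\Sigma\circ\phi$ is immediate from the definition of the itinerary. This gives a Borel map $\phi:\mathbb{T}\to X_\bsigma$ defined off the countable set $D=\{R_\alpha^{-k}(1-\alpha):k\ge 0\}$, which is $\lambda$-null; hence $\phi$ is defined $\lambda$-almost everywhere.

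Next I would check that $\phi$ is essentially a bijection and pushes Haar measure to the unique invariant measure $\mu$. Injectivity off $D$ follows because two distinct points of $\mathbb{T}$ are separated by some $R_\alpha^{-k}I_{v_k}$, again via \eqref{eq:strumianrotationiff}; surjectivity onto a full-measure subset of $X_\bsigma$ follows because the fibers $\bigcap_{k=0}^{n-1}R_\alpha^{-k}I_{v_k}$ attached to factors $v_0\ldots v_{n-1}\in L(w)$ are intervals shrinking to points as $n\to\infty$. The pushforward measure $\phi_*\lambda$ is $\Sigma$-invariant by the intertwining relation and the $R_\alpha$-invariance of $\lambda$; by unique ergodicity of $(X_\bsigma,\Sigma)$ established in Proposition~\ref{lem:sturmsys}(iv) there is only one such measure, so $\phi_*\lambda=\mu$. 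Thus $\phi$ is a measurable, measure-preserving bijection between full-measure invariant subsets, intertwining $R_\alpha$ and $\Sigma$; its inverse is measurable by the interval-nesting description of the fibers. This is precisely a measurable conjugacy of $(\mathbb{T},R_\alpha,\lambda)$ with $(X_\bsigma,\Sigma,\mu)$.

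The main obstacle I anticipate is the careful bookkeeping around the boundary set $D$ and the two coding conventions $\{I_1,I_2\}$ versus $\{I'_1,I'_2\}$. Points of $\mathbb{T}$ that eventually hit the partition boundary have two natural codings (corresponding to approaching the cut from the left or the right), so $\phi$ is genuinely two-valued there; this is the same subtlety flagged in the proof of Theorem~\ref{th:sturmrot} where the primed intervals are needed. Since $D$ is countable and therefore $\lambda$-null, and the corresponding ambiguous sequences form a $\mu$-null set in $X_\bsigma$ (they are a countable union of orbits, and $\mu$ is non-atomic because $\alpha$ is irrational), this discrepancy does not affect the measurable conjugacy: I would simply restrict $\phi$ to the full-measure invariant set $\mathbb{T}\setminus\bigcup_{k\in\Z}R_\alpha^k D$ and its image. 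The remaining steps—that the fibers are nested intervals, that their lengths tend to $0$, and that $\phi^{-1}$ is Borel—are routine given \eqref{eq:strumianrotationiff} and the minimality of $R_\alpha$, so I would not dwell on them.
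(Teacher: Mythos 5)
Your proof is correct and is essentially the paper's own argument run in the opposite direction: the paper defines the decoding map $\varphi\colon X_\bsigma\to\mathbb{T}$ (sending a sequence to the point whose orbit it codes), shows it is continuous, intertwines $\Sigma$ with $R_\alpha$, and is bijective off the countable set of sequences coded via $\{I_1,I_2\}$ versus $\{I_1',I_2'\}$ --- precisely the inverse of your itinerary map $\phi$ and the same exceptional set you call $D$. The only nuance worth noting is that the paper's $\varphi$ is continuous (so measurability comes for free) and the identification of the measures is left implicit, whereas your Borel map $\phi$ requires the explicit appeal to unique ergodicity of $(X_\bsigma,\Sigma)$ to conclude $\phi_*\lambda=\mu$; both routes are valid and rest on the same ingredients, namely Theorem~\ref{th:sturmrot}, the countability of the ambiguous orbits, and unique ergodicity on one side or the other.
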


\begin{proof}
Let $\varphi: X_\bsigma \to \mathbb{T}$ be defined by $\varphi(w_0w_1\ldots)=x$ if $R_\alpha^k(x) \in I_{w_k}$ for each $k\in\N$ or $R_\alpha^k(x) \in I'_{w_k}$ for each $k\in\N$. Using Theorem~\ref{th:sturmrot} and the minimality of $R_\alpha$ it is easy to check that this is well defined. Surjectivity of $\varphi$ follows immediately from Theorem~\ref{th:sturmrot}. To investigate injectivity let $u=u_0u_1\ldots$ and $v=v_0v_1\ldots$ be distinct elements of $X_\bsigma$ with $\varphi(u)=\varphi(v)$.  By the minimality of $R_\alpha$ this is only possible if the orbit of $\varphi(u)$ passes through $0$ and $u$ is naturally coded by $I_1$, $I_2$ while $v$ is naturally coded by $I_1'$, $I_2'$ (or vice versa).\footnote{This implies that $u$ and $v$ have $\Sigma x$ and $\Sigma y$ in their orbit where $x$ and $y$ are the two limit sequences of $\bsigma$. This interesting fact, which is not needed in this proof, should be proved by the reader.} Since the set of such elements $u$ and $v$ is countable, $\varphi$ is bijective everywhere save for a countable set. Moreover, $\varphi$ is easily seen to be continuous and $\varphi\circ \Sigma = R_\alpha \circ \varphi$ holds by the definition of $\varphi$. This implies the result. 
\end{proof}

We illustrate the concepts of this section by a classical example.

\begin{example}[A variant of the Fibonacci sequence]
Let $\sigma$ be given by
\[
\sigma=\sigma_1\circ\sigma_2: 
\begin{cases}
1 \mapsto 121,\\
2 \mapsto 21.\end{cases} 
\]
This is a reordering of the square of the well-known \emph{Fibonacci substitution} (which is defined by $1\mapsto12$, $2\mapsto1$; see for instance in \cite[Section~1.2.1]{Fog02}).
Consider the coding sequence $\bsigma=(\sigma)$. In this case the associated limit sequences are ``purely substitutive''. One of the two limit sequences is 
\[
w=\lim_{n\to\infty}\sigma^n(2)=21121121211211212112121121121\ldots
\]
Since only one substitution plays a role here, the associated ``$S$-adic'' system $(X_\bsigma,\Sigma)$ is called a \emph{substitutive system}. Let $\varphi=\frac{1+\sqrt{5}}{2}$. By the Perron-Frobenius Theorem the generalized right eigenvector $\bu$ of the sequence of incidence matrices $\bM$ of $\bsigma$ is the eigenvector $(\varphi,1)^t$ corresponding to the dominant eigenvalue $\varphi^2$ of the incidence matrix of $\sigma$. Let $L$ be the eigenline defined by this eigenvector. Being a Sturmian sequence, $w$ is balanced by Proposition~\ref{prop:sturmbalance} and has uniform letter frequencies $(f_1(w),f_2(w))^t=\frac{1}{1+\varphi}(\varphi,1)^t$ by Lemma~\ref{lem:sturmletterfreq}. This is reflected by the fact that the ``broken line'' 
\begin{equation}\label{eq:brokensturmianline}
B= \{\mathbf{l}(p) \;:\; p \hbox{ is a prefix of } w \}
\end{equation}
associated with the sequence $w$ stays at bounded distance from the eigenline $L$ (see Figure~\ref{fig:brokenLineRauzy}). 

Because $w=\lim_{n\to\infty}\sigma^n(2)= \lim_{n\to\infty}(\sigma_1\circ\sigma_2)^n(2)$, it has coding sequence $\sigma_1,\sigma_2,\sigma_1,\sigma_2,\ldots$ Since the ``run lengths'' of $\sigma_{i}$ in this sequence are always equal to $1$ we set $\alpha=[1,1,1,\ldots] =\varphi^{-1}$ and, hence, $\alpha/(\alpha+1)=\varphi^{-2}$.
Thus from Theorem~\ref{th:sturmrot} and its proof we see that $w$ is a natural coding of the rotation by $\varphi^{-2}$ of the point $1-\alpha/(\alpha+1)=\varphi^{-1}\in[0,1)$ with respect to the partition $I_1=[0,\varphi^{-1})$, $I_2=[\varphi^{-1},1)$ (or the according partition $I_1',I_2'$) of $[0,1)$. This gives us an easy way to construct $w$ (and the broken line $B$). Indeed, start at the origin, write out $2$ and go up to the lattice point $(0,1)^t$. After that, inductively proceed as follows: whenever the current lattice point is above $L$, write out $1$ and go right to the next lattice point by adding the vector $(1,0)^t$ and whenever the current lattice point is below $L$, write out $2$ and go up to the next lattice point by adding the vector $(0,1)^t$.\footnote{We could also have started with writing out 1 and going to the right from the origin. This would have produced the second limit sequence of $(\sigma)$ which coincides with $w$ save for the first two letters.}

\begin{figure}[hh]
\includegraphics[width=0.7\textwidth]{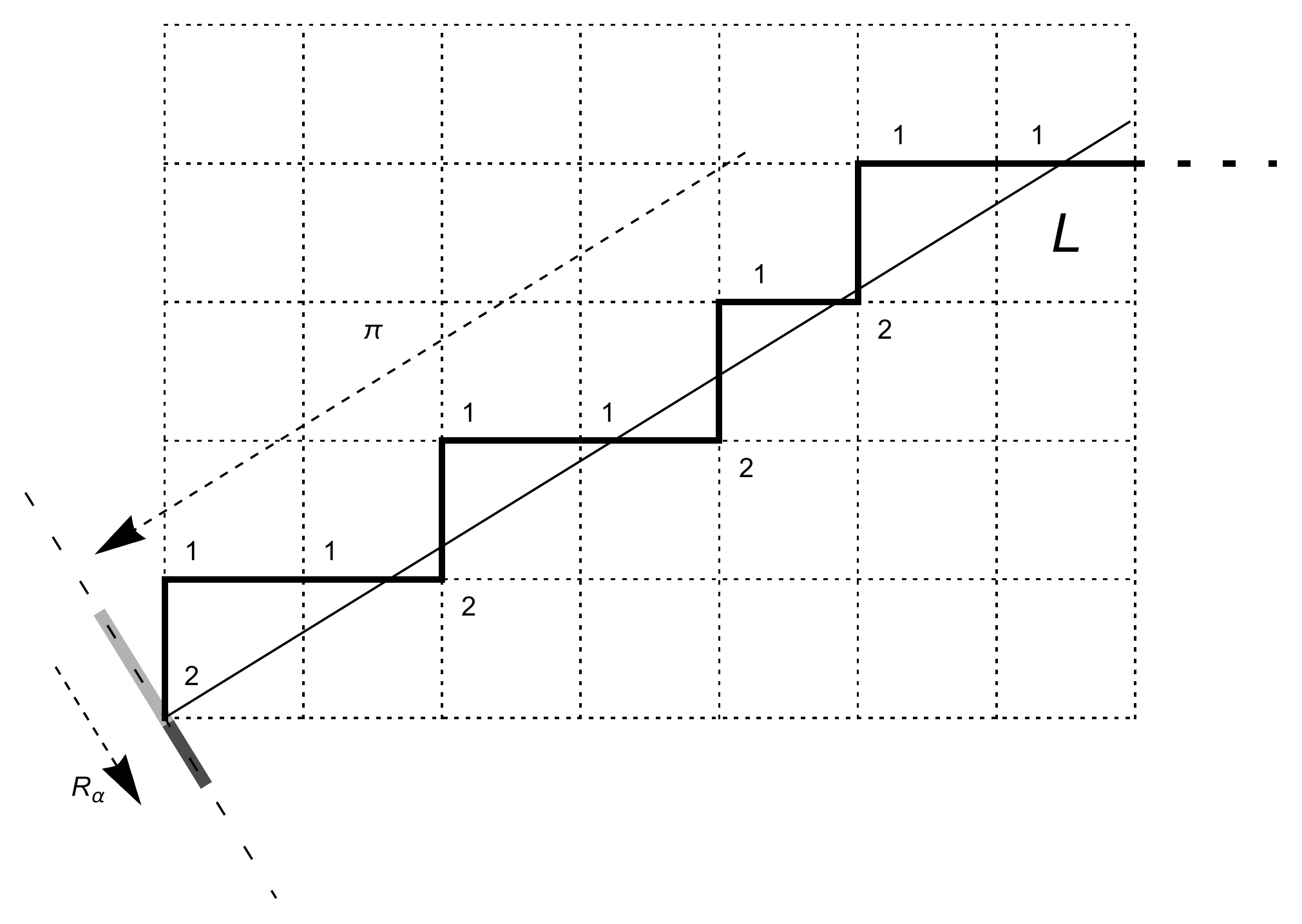}
\caption{The broken line and its projection to the Rauzy fractal. \label{fig:brokenLineRauzy}}
\end{figure}

Let $\pi$ be the projection along $L$ to the line $L^\bot$ orthogonal to $L$. If we project all points on the broken line and take the closure of the image, due to the irrationality of $\bu$ we obtain the interval 
\[
\mathcal{R}_\bu = \overline{\{\pi\mathbf{l}(p)\;:\; p \hbox{ is a prefix of }w  \}}
\]
on $L^\bot$ (the subscript $\bu$ indicates that $\Ra_\bu$ lives in the space $L^\bot=\bu^\bot$ orthogonal to $\bu$ which is an arbitrary choice; other choices will play a roll in subsequent sections). We color the part of the interval for which we write out $1$ at the associated lattice point light grey, the other part dark grey. This subdivides the interval $\mathcal{R}_\bu$ into two subintervals $\mathcal{R}_\bu(1)$ and $\mathcal{R}_\bu(2)$, where
\[
\mathcal{R}_\bu(i) = \overline{\{\pi\mathbf{l}(p)\;:\; pi \hbox{ is a prefix of }w  \}}\qquad(i=1,2).
\] 
Moreover, we see that moving a step along the broken line amounts to exchanging these two intervals in the projection: points in $\mathcal{R}_\bu(1)$ are moved downwards by a fixed vector, while points in $\mathcal{R}_\bu(2)$ are moved upwards by a fixed vector. 

Thus passing along the broken line each step amounts to exchanging the intervals $\mathcal{R}_\bu(1)$ and $\mathcal{R}_\bu(2)$ in the projection. If we identify the end points of $\mathcal{R}_\bu$ this interval exchange becomes a rotation. This is the rotation which is coded by the Sturmian sequence $w$. The union $\mathcal{R}_\bu=\mathcal{R}_\bu(1)\cup\mathcal{R}_\bu(2)$ is called the \emph{Rauzy fractal} associated with the substitution $\sigma$ (or with the sequence $\bsigma=(\sigma)$). The reason why we speak about \emph{fractals} here will be come apparent in Section~\ref{sec:Srauzy} when we define the analogs of $\Ra_{\bu}$ in a more general setting.
\end{example}

Suppose we would be given an arbitrary sequence $w\in\{1,2\}^\N$ with letter frequency vector $\bu$ whose broken line stays within bounded distance of the line $L=\mathbb{R}_+\bu$. Then we could draw a similar picture as in Figure~\ref{fig:brokenLineRauzy}. However, although the projection $\pi$ would project the vertices of the associated broken line to a bounded set, there is no reason for its closure $\mathcal{R}_\bu$ to be an interval. Also, if we use two colors as in the example above, it may well happen that the two sets $\mathcal{R}_\bu(1)$ and $\mathcal{R}_\bu(2)$ have considerable overlap. This bad behavior prevents us from seeing a rotation in the projections.

Making sure that the closure of the projection of the broken line behaves topologically well and allows a partition whose atoms are essentially different will be our main concern when we establish a theory of $S$-adic sequences that are codings of rotations on higher dimensional tori in the subsequent sections and, hence, give rise to dynamical systems that are measurably conjugate to torus rotations.

\subsection{Natural extensions and the geodesic flow on $\mathrm{SL}_2(\Z)\setminus \mathrm{SL}_2(\R)$}\label{sec:NatlGauss}

In this section we talk about natural extensions of the Gauss map and of the coding map of Sturmian sequences by substitutions. Moreover, we show how to relate these natural extensions to the geodesic flow on the space $\mathrm{SL}_2(\Z)\setminus \mathrm{SL}_2(\R)$ of unimodular two-dimensional lattices. 

So far we could relate Sturmian sequences to rotations on the circle by using the classical continued fraction algorithm. In our discussion we coded a Sturmian sequence $w$ by a sequence of substitutions $(\sigma_{i_n})$ as
\[
w=\lim_{k\to\infty}\sigma_{1}^{a_0}\circ\sigma_{2}^{a_1}\circ\sigma_{1}^{a_2}\circ\cdots\circ\sigma_{1}^{a_{2k}}(a)
\]
(see \eqref{eq:sadic2lettersMult}). In the induction process used in the proof of Theorem~\ref{th:sturmrot} we recoded $w$ by a ``desubstitution'' process. If we look at the first step of this process we produce the sequence
\[
u=\lim_{k\to\infty}\sigma_{2}^{a_1}\circ\sigma_{1}^{a_2}\circ\cdots\circ\sigma_{1}^{a_{2k}}(a).
\]
However, the mapping $w\mapsto u$ cannot be inverted since it is not possible to reconstruct $a_{0}$ from $u$. Similarly, the Gauss map $g$ cannot be inverted since $g([a_{0},a_{1},\ldots])=[a_{1},a_{2},\ldots]$, and $a_{0}$ cannot be reconstructed from the image  $[a_{1},a_{2},\ldots]$.

In this section we want to make both of these mappings bijective by constructing a geometric model for their \emph{natural extensions} (in the sense of Rohlin~\cite{Rohlin:64}). To this matter we look again at the induction used in Lemma~\ref{lem:rotCF} which is visualized once more in Figure~\ref{fig:inductionRestack}~(a). In this figure we see why this induction process cannot be reversed: the intervals $[R(0),R^2(0))$ and $[R^2(0),R^3(0))$ get lost during the induction process and cannot be reconstructed.

\begin{figure}[hh]
\includegraphics[width=0.94\textwidth]{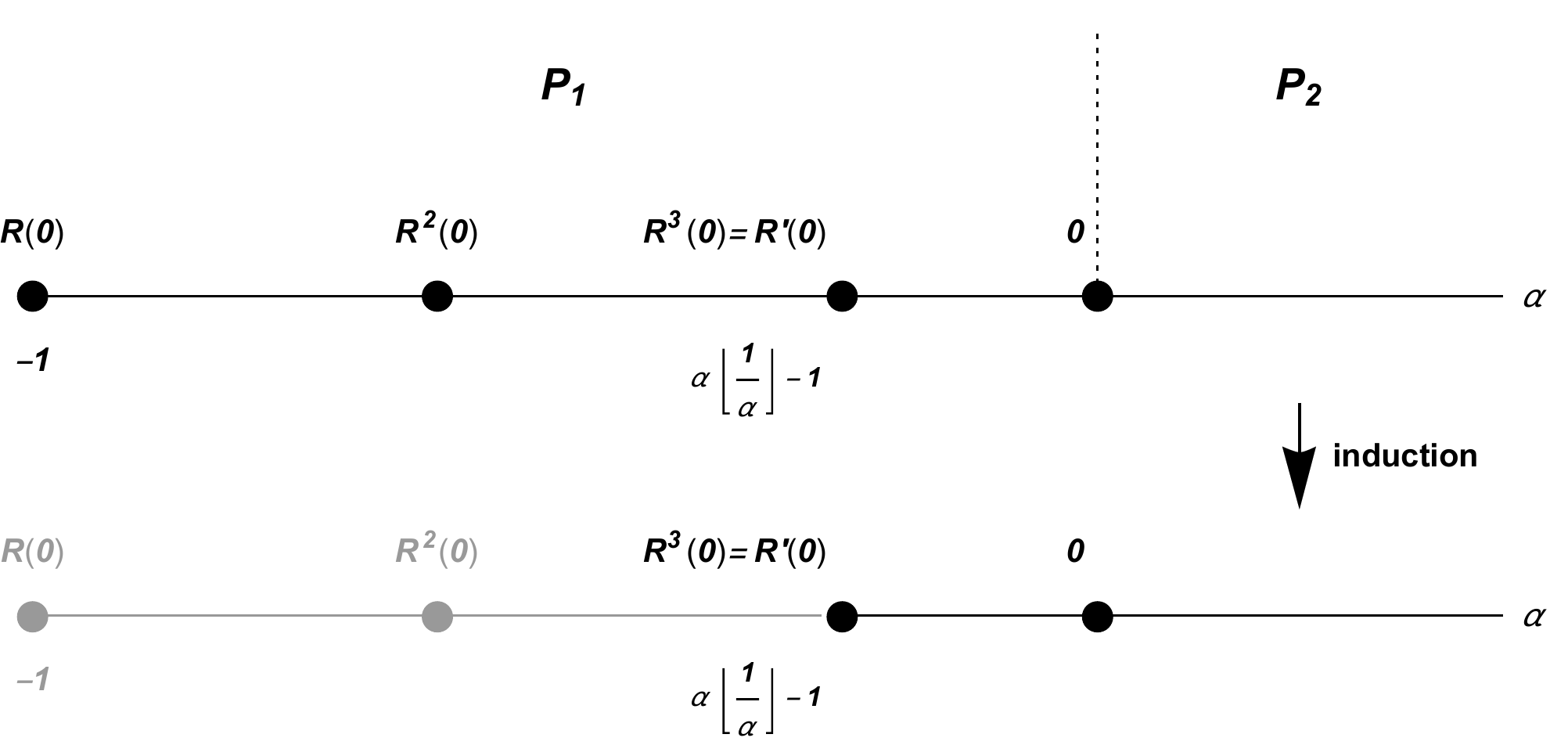}
\vskip 0.3cm
(a) Induction without restacking loses some part of the information. The intervals $[R(0),R^2(0))$ and $[R^2(0),R^3(0))$ depicted in light gray are no longer present in the induced rotation.
\vskip 1cm
\includegraphics[width=\textwidth]{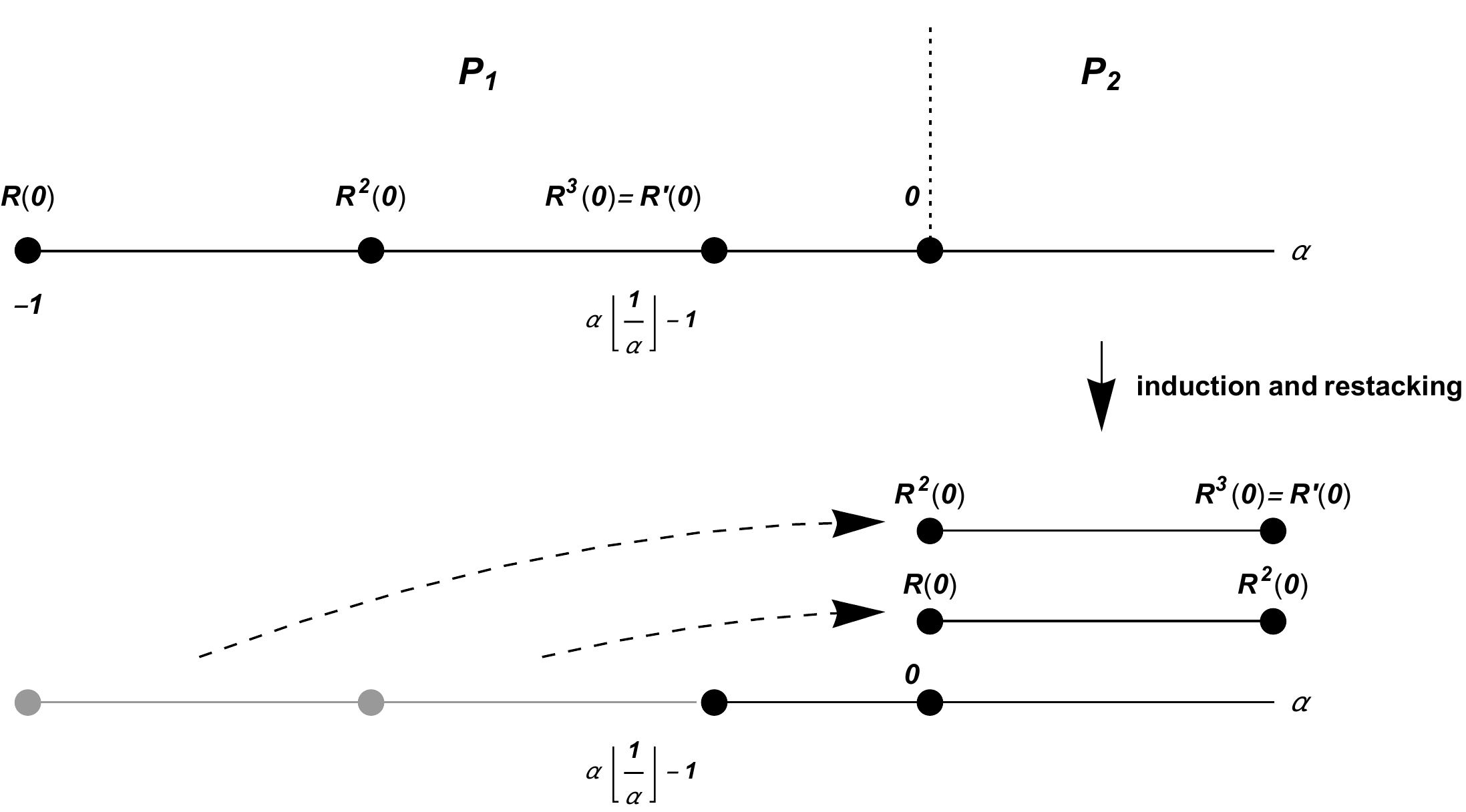}
(b) Induction together with restacking the intervals keeps all the information. The light gray intervals $[R(0),R^2(0))$ and $[R^2(0),R^3(0))$ are stacked on the longer interval of the induced rotation.
\caption{Induction without (a)  and with (b) restacking.\label{fig:inductionRestack}}
\end{figure}

A first idea on how to mend this is indicated in Figure~\ref{fig:inductionRestack}~(b): one could ``stack'' the lost intervals on the larger interval of the induced rotation. This would keep the information of the last induction step. However, acting in this way we can go back at most to the setting from which we started but not farther to the ``past''. 

\begin{figure}[hh]
\includegraphics[width=\textwidth]{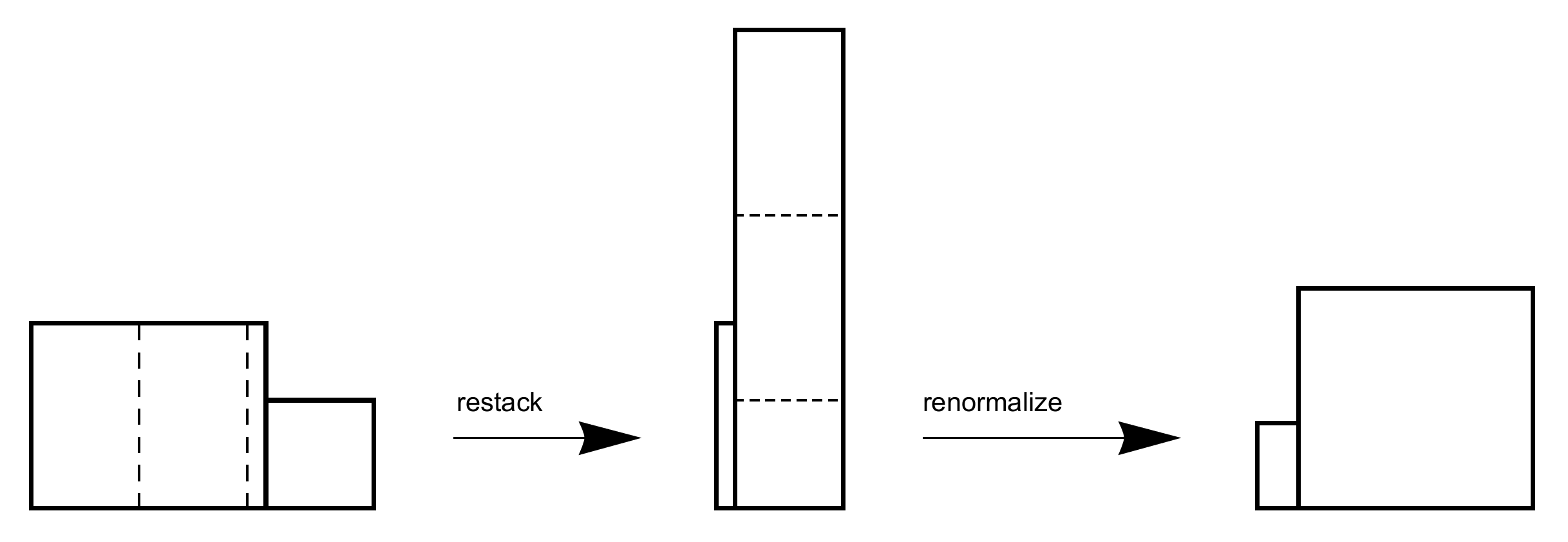}
\caption{Step 1: Restack the boxes. Step 2: Renormalize in a way that the larger box has length $1$ again.\label{fig:RestackSturmian}}
\end{figure}

To make the induction process bijective, it is more convenient to build rectangular boxes above the intervals as indicated in Figure~\ref{fig:RestackSturmian} (this approach is extensively exploited in Arnoux and Fisher~\cite{AF:01}; we follow here \cite[Section~6.6]{Fog02}). The lengths of the boxes are given by the intervals on which the induction process starts: one box is of length $1$, the other one has length $\alpha$ for some $\alpha\in(0,1)\setminus\Q$. The heights are chosen in a way that the longer rectangle is also the higher one and that the total area of the two rectangles is equal to one. The induction process can now be performed on the rectangles as indicated in Figure~\ref{fig:RestackSturmian}: let $a\times d$ be the size of the left rectangle and $b\times c$ the size of the right one. Slice the larger rectangle by vertical cuts into pieces of lengths equal to $b$ until a slice of length less than $b$ remains. Then stack all slices of length $b$ on the smaller rectangle. The result can be seen in the middle of Figure~\ref{fig:RestackSturmian}. After that renormalize the resulting pair of rectangles (as we did in the induction process on the intervals) by making it ``thinner'' and ``longer'' in a way that the length of the larger rectangle is equal to $1$ again and the area of the whole region remains $1$.

Call the resulting mapping on the rectangles $\Psi$. {\it A priori}, the mapping $\Psi$ is a mapping from a subset of $\R^4$ to a subset of $\R^4$. However, since $ad+bc=1$ and $\max\{a,b\}=1$, we can eliminate two coordinates and we are left with a mapping in two variables.

We make this precise in the following definition.

\begin{definition}[Natural extension of the gauss map, see~\cite{AF:01}]
Let $\Delta_{\mathrm{m}}$ be the set of pairs $(a\times d, b\times c)$ of rectangles of total area $1$ such that the widest one is the highest one ({\em i.e.}, $a > b \;\Leftrightarrow \; d > c$) and such that the width of the widest one is equal to $1$ ({\em i.e.}, $\max\{a,b\}=1$). Let $\Delta_{\mathrm{m},0}$ be the subset of $\Delta_{\mathrm{m}}$ with $a=1$, and $\Delta_{\mathrm{m},1}$ the subset of $\Delta_{\mathrm{m}}$ with $b=1$.

The mapping $\Psi$ is defined on $\Delta_{\mathrm{m},1}$ as
\[
(a,d) \mapsto \Big(\Big\{\frac1a \Big\}, a-da^2\Big),
\]
and similarly on $\Delta_{\mathrm{m},0}$. It is called the \emph{natural extension} of the Gauss map (which is seen in the first coordinate).
\end{definition}

\begin{remark}
The subscript ``$\mathrm{m}$'' stands for \emph{multiplicative} since we work here with the multiplicative version of the classical continued fraction algorithm defined by the Gauss map. An analogous theory exists for the additive algorithm as well, see~\cite{AF:01}.
\end{remark}

The mapping $\Psi$ is bijective as becomes clear from its geometric interpretation. Moreover, it is easy to show that $\Psi$ preserves the Lebesgue measure. By integrating away the second coordinate one can show that the invariant measure of the Gauss map is $\frac{dx}{\ln 2(1+x)}$ (see~{\it e.g.}~\cite[Chapter~3]{EW:11}). We mention that another natural extension of the Gauss map defined on the unit square is provided in~\cite{NIT:77}.
 
\begin{figure}[hh]
\includegraphics[width=0.6\textwidth]{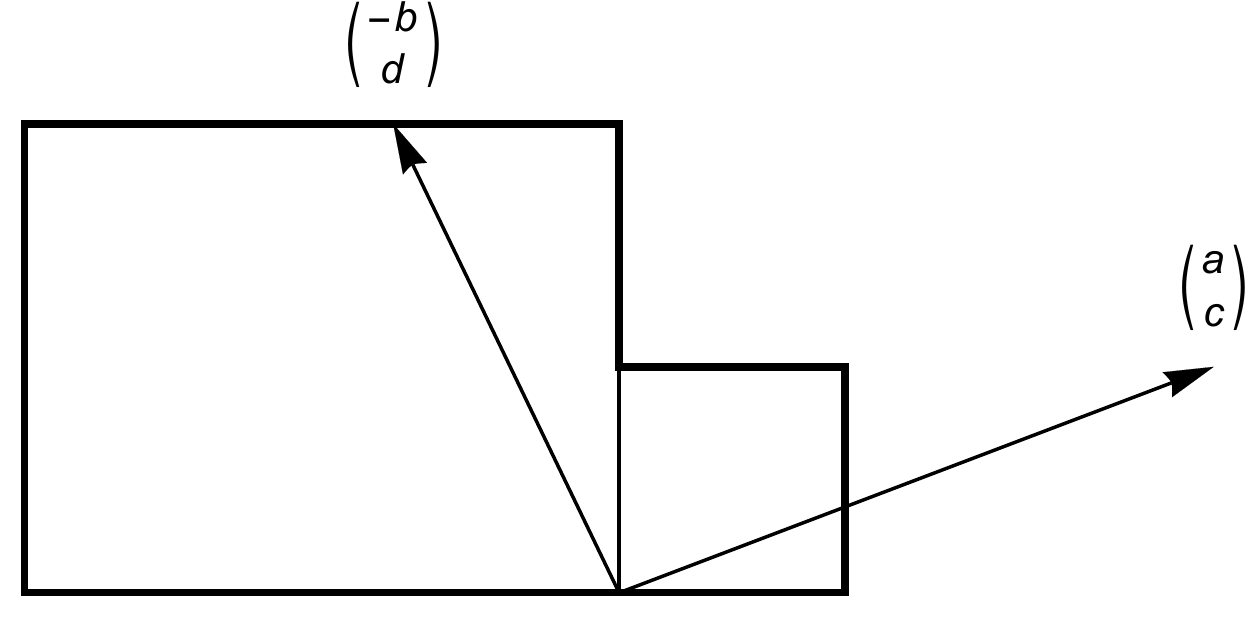}
\caption{A pair of boxes is a fundamental domain of a lattice\label{fig:latticeBasis}}
\end{figure} 
 
We can also see Sturmian sequences in the rectangular boxes. To this end note first that a pair of boxes $a\times d$ and $b\times c$ is a fundamental domain of the lattice spanned by the vectors $(a,c)^t$ and $(-b,d)^t$. This is illustrated in Figure~\ref{fig:latticeBasis} and has the consequence that the ``L-shaped'' region formed by this pair of boxes can be used to tile the plane with respect to this lattice as indicated in Figure~\ref{fig:SturmianTiling}. 
 
\begin{figure}[hh]
\includegraphics[width=0.6\textwidth]{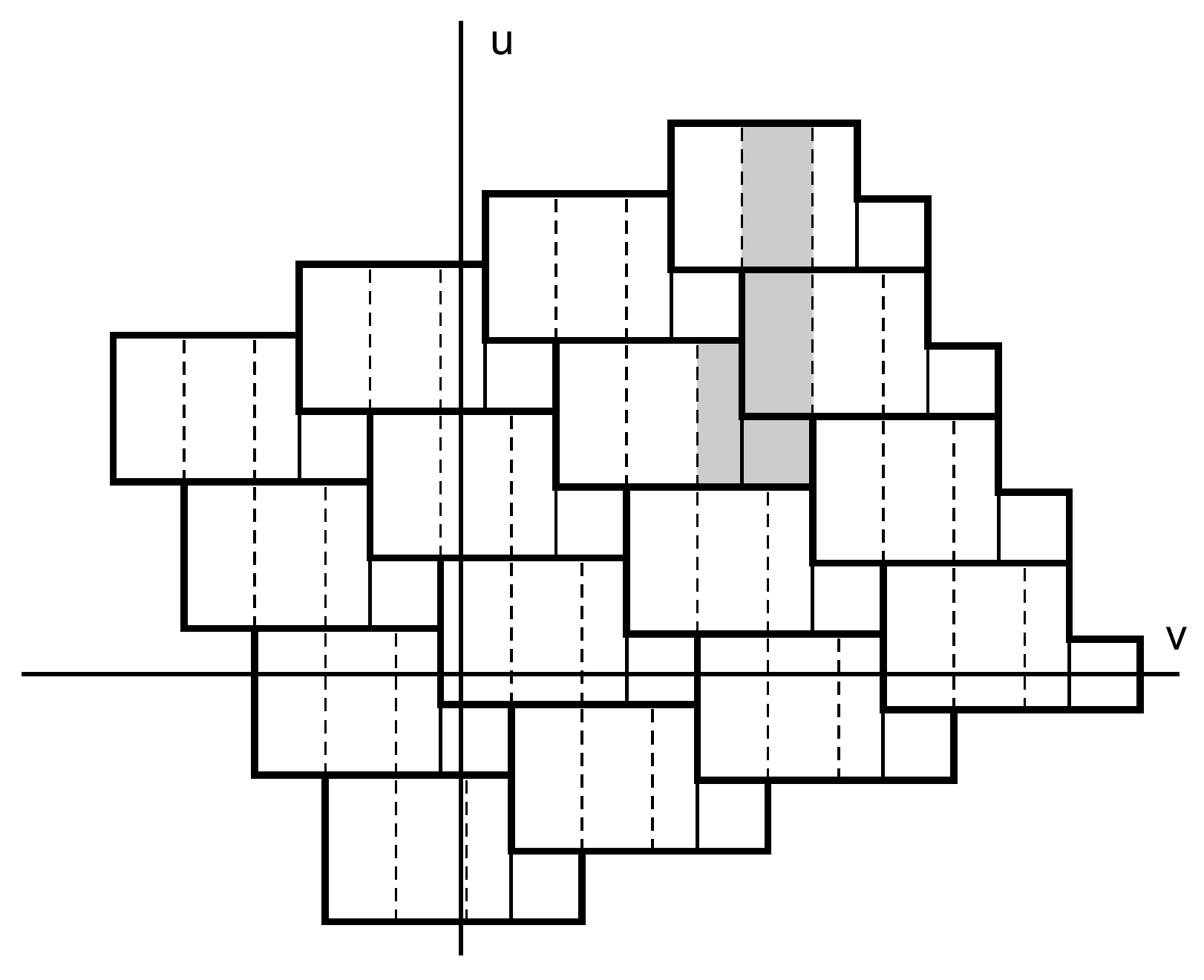}
\caption{The vertical line is coded by a  Sturmian sequence $u$, the horizontal line by a Sturmian sequence $v$. The restacking procedure desubstitutes $u$ and substitutes $v$. The shaded region is a restacked fundamental domain.\label{fig:SturmianTiling}}
\end{figure}

Let us mark a point in this tiling. If we start from this point and move upwards and write out $1$ whenever we pass through a large rectangle, and $2$, whenever we pass through a small one, we get the coding $u$ of a rotation by $\alpha$ on the interval $(-1,\alpha)$ which, by Theorem~\ref{th:sturmrot}, is a Sturmian sequence. This is indicated in Figure~\ref{fig:SturmianTiling}. In the same way we can produce a Sturmian sequence $v$ by moving horizontally.

If we restack each of the fundamental domains, according to the procedure described above, we get a new fundamental domain (indicated by the shaded region in Figure~\ref{fig:SturmianTiling}). We now code the same vertical line using this restacked region. Doing this we obtain another Sturmian sequence $u^{(1)}$ which, by the definition of the restacking process, satisfies $u= \sigma(u^{(1)})$, where $\sigma$ is the substitution defining the induction process as in the proof of Lemma~\ref{lem:rotCF}. On the other hand, looking at the horizontal line we get $v^{(-1)}=\sigma(v)$ as the new coding. Thus the restacking process corresponds to the mapping
\[
(u,v) \mapsto (u^{(1)},v^{(-1)}).
\]
As mentioned at the beginning of this section, we cannot reconstruct $u$ from $u^{(1)}$, however we \emph{can} reconstruct $(u,v)$ from $(u^{(1)},v^{(-1)})$ since the type of the Sturmian sequence $v^{(-1)}$ tells us (which power of) which of the two substitutions $\sigma_{1},\,\sigma_{2}$ from \eqref{eq:sturmsubs} we have to use to get back. This makes the coding process bijective as well. We could mark the pair of rectangles discussed above by a point $(x,y)$ and look at the itinerary of this point under the restacking process. This would give an extension $\tilde \Psi$ of the mapping $\Psi$ that is defined on the $\mathbb{T}^2$-fibers over $\Delta_{\mathrm{m}}$ (see~\cite{AF:01}).

The following remark is of particular importance. 

\begin{remark}
Regardless of the point in the ``L-shaped'' region in which we start, the ``vertical'' Sturmian sequence will always be contained in the same Sturmian system. Thus we can say that the ``L-shaped'' pairs of rectangles parametrize the Sturmian systems (which are characterized by their coding sequence according to Proposition~\ref{lem:sturmsys}~(iii)), while the ($x$-coordinates of the) points in a given region parametrize the sequences contained in this system. The same is true for the ``vertical'' Sturmian sequence w.r.t.\ the $y$-coordinates. 
\end{remark}

We also mention that the vertical line producing the coding $u$ can also be extended downwards. This yields a sequence $\tilde u \in \A^\Z$ as a coding. Such a sequence is an example of a \emph{bi-infinite Sturmian sequence} (the same can be done in the horizontal direction). Bi-infinite Sturmian sequences are studied for instance in \cite[Section~6.2]{Fog02}. It turns out that some of their properties are nicer than in our one-sided case since one no longer has troubles coming from ``the beginning'' of the sequences.

\medskip

Artin~\cite{Artin:24} observed that the continued fraction algorithm can be viewed as a Poincar\'e section of the geodesic flow on the unit tangent bundle $\mathrm{SL}_2(\Z)\setminus \mathrm{SL}_2(\R)$ of the modular surface $\mathrm{SL}_2(\Z)\setminus \mathbb{H}$. In the meantime this correspondence between the continued fraction algorithm and the geodesic flow was studied by many authors (see {\em e.g.} Series~\cite{Series:85}) and discussed in connection with our setting by Arnoux~\cite{Arnoux:94} and later by Arnoux and Fisher~\cite{AF:01}. The necessary details on the modular surface and its unit tangent bundle including an explanation why the flow ${\rm diag}(e^{t},e^{-t})$ which will come up below is a \emph{geodesic flow} on the homogeneous space $\mathrm{SL}_2(\Z)\setminus \mathrm{SL}_2(\R)$ can be found for instance in \cite{Arnoux:94} or \cite[Chapter~9]{EW:11}.

We now explain briefly how the geodesic flow on $\mathrm{SL}_2(\Z)\setminus \mathrm{SL}_2(\R)$ enters our model. We have to restack the rectangles as above and then  renormalize the lattice again. This can be done also in the following way. First multiply the basis of the lattice from the right by ${\rm diag}(e^{t},e^{-t})$ for $t$ varying from $0$ to the threshold value for which the width of the \emph{smallest} rectangle equals $1$. Then restack as above to end up at a pair of rectangles whose \emph{larger} rectangle has width $1$. Altogether, starting from a pair of rectangles drawn on the left hand side of Figure~\ref{fig:RestackSturmian} we ended up with a pair drawn on its right side. We just did the renormalization smoothly and we did it before the restacking instead of after it. 

What we do can be explained more precisely as follows:

\begin{itemize}
\item Define the set 
\[
\begin{split}
\Omega_{\mathrm{m}} =  \Omega_{\mathrm{m},0} \cup \Omega_{\mathrm{m},1}  = 
&\left\{
M=\begin{pmatrix}a & c \\ -b &d \end{pmatrix}\; :\; 
0 < a  <1 \le c, \; 0 < d < b,\; ad+bc=1
\right\}  \cup \\
& 
\left\{
M=\begin{pmatrix}a & c \\ -b &d \end{pmatrix}\; :\; 
0 < c  < 1 \le a, \; 0 < b < d, \; ad+bc=1
\right\}.
\end{split}
\]
One can show that a.e.\ lattice has exactly one basis made of row vectors of a matrix in $\Omega_{\mathrm{m}}$ (see~\cite{Arnoux:94}). Thus $\Omega_{\mathrm{m}}$ is a (measure theoretic) \emph{fundamental domain} for the action of $\mathrm{SL}_2(\Z)$ on $\mathrm{SL}_2(\R)$.

\item Start with a lattice, associate with it a basis taken from $\Omega_{\mathrm{m}}$.

\item Hit this lattice (together with the chosen basis) with the geodesic flow ${\rm diag}(e^{t},e^{-t})$, $t\ge 0$.

\item For increasing $t$ this will eventually deform the basis in a way that the width of the \emph{smaller} rectangle gets equal to $1$ (and we would leave $\Omega_{\mathrm{m}}$ when deforming this basis further). If we restack at this point we end up with a pair of rectangles contained in the \emph{Poincar\'e section} $\Delta_{\mathrm{m}}$: indeed, after restacking the \emph{larger} rectangle will have width $1$.

\item Change the basis of the lattice to the basis corresponding to the new pair of rectangles according to Figure~\ref{fig:latticeBasis}. Note that restacking \emph{does not change the lattice}, so the geodesic flow, which acts on $\mathrm{SL}_2(\Z)\setminus \mathrm{SL}_2(\R)$, is not affected by this base change. However, this restacking has the effect that it creates a new basis of the lattice that remains inside $\Omega_{\mathrm{m}}$ when it gets further deformed by the action of the flow. Thus we can repeat the procedure.

\item Repeating this procedure, the geodesic flow yields a sequence of restackings: any time the width of the smaller rectangle gets equal to $1$ by restacking, the according basis gets inside the \emph{Poincar\'e section} $\Delta_{\mathrm{m}}$. This restacking performs one step of the natural extension of the Gauss map.

\item Thus the geodesic flow on $\mathrm{SL}_2(\Z)\setminus \mathrm{SL}_2(\R)$ can be regarded as a so-called \emph{suspension flow} of the natural extension of the Gauss map.
\end{itemize}

This viewpoint has many advantages and one can prove results on continued fractions using the well-developed theory of the geodesic flow on $\mathrm{SL}_2(\Z)\setminus \mathrm{SL}_2(\R)$.

The same procedure can also be performed for \emph{pointed} pairs of rectangles (which we needed to study Sturmian sequences, see Figure~\ref{fig:SturmianTiling}). This has the effect that the geodesic flow on $\mathrm{SL}_2(\Z)\setminus \mathrm{SL}_2(\R)$ has to be replaced by the so-called \emph{scenery flow} which also takes care of the distinguished point in the ``L-shaped'' region. All this is described in detail in~\cite{AF:01}. 

We mention that similar results have been obtained for variants of the classical continued fraction algorithm. For instance, Arnoux and Schmidt~\cite{Arnoux-Schmidt:13,Arnoux-Schmidt:14} proved that the $\alpha$-continued fraction algorithm, Rosen's continued fraction algorithm as well as Veech's continued fraction algorithm can be viewed as Poincar\'e sections of a geodesic flow. The material presented in this section also forms an easy case of the wide and appealing field of interval exchange transformations and their dynamics (see {\em e.g.} Viana~\cite{Viana:06} for a survey).


\section{Problems with the generalization to higher dimensions}\label{sec:problems3}

According to Cassaigne, Ferenczi, and Zamboni~\cite{Cassaigne-Ferenczi-Zamboni:00} it was conjectured since the beginning of the 1990s that the beautiful correspondence between Sturmian sequences, continued fractions, and irrational rotations on the circle described in Section~\ref{sec:sturm} can be extended to higher dimensions. 
The same paper gives strong indications towards the wrongness of this conjecture. Indeed, in~\cite{Cassaigne-Ferenczi-Zamboni:00} Arnoux-Rauzy sequences over a three letter alphabet that are not balanced and that cannot be viewed as natural codings of rotations on the two dimensional torus with finite fundamental domain are constructed. It is the objective of the present section to explain their work and to give an account on further results by Cassaigne, Ferenczi, and Messaoudi~\cite{Cassaigne-Ferenczi-Messaoudi:08} concerning weakly mixing Arnoux-Rauzy systems as well as Arnoux-Rauzy systems with nontrivial eigenvalues. 

\subsection{Arnoux-Rauzy sequences} In an attempt to pave the way for a generalization to higher dimensions of the correspondence between combinatorics, arithmetics, and dynamical systems outlined in Section~\ref{sec:sturm}, Arnoux and Rauzy~\cite{Arnoux-Rauzy:91} defined sequences over the alphabet $\{1,2,3\}$ whose properties are inspired by Sturmian sequences.   

In the following definition a \emph{right special factor}\index{right special factor}\index{special factor!right} of a sequence $w\in\{1,2,3\}^\N$ is a factor $v$ of $w$ for which there are distinct letters $a,b\in\{1,2,3\}$ such that $va$ and $vb$ both occur in $w$. A \emph{left special factor}\index{left special factor}\index{special factor!left} is defined analogously. The definition of several other objects and notations from Section~\ref{sec:sturm} carry over from two to three letter alphabets without any change and we will use them without defining them again (we will give exact definitions for the general setting from Section~\ref{sec:genset} onwards).

\begin{definition}[Arnoux-Rauzy sequence, see~\cite{Arnoux-Rauzy:91}]\label{def:ar}\index{Arnoux-Rauzy sequence}\index{sequence!Arnoux-Rauzy}
A sequence $w\in\{1,2,3\}^\N$ is called \emph{Arnoux-Rauzy sequence} if $p_w(n)=2n+1$ and if $w$ has only one right special factor and only one left special factor for each given length $n$.
\end{definition}

Let $w$ be an Arnoux-Rauzy sequence. Let $(\Gamma_n)$ be a sequence of directed graphs defined in the following way. For each $n\in\N$ the vertices of $\Gamma_{n}$ are the factors of length $n$ of $w$. There is a directed edge from $u$ to $v$ if and only if there are letters $a,b\in\{1,2,3\}$ and a word $x\in\{1,2,3\}^*$ such that $u=ax$ and $v=xb$. Inspecting these graphs we see that two cases can occur. If the left special factor $v$ of length $n$ is also the right special factor then $\Gamma_n$ is a bouquet of three circles whose common vertex is $v$, otherwise it is a union of three circles that share the line between the vertices corresponding to the right and left special factor. An investigation of these graphs (as done in~\cite[Section~2]{Arnoux-Rauzy:91}) shows that Arnoux-Rauzy sequences are ``$S$-adic'' and we get the following analog of Proposition~\ref{prop:STsubs}.

\begin{proposition}[{see~\cite[Section~2]{Arnoux-Rauzy:91}}]
Let the \emph{Arnoux-Rauzy substitutions} $\sigma_1,\sigma_2,\sigma_3$ be defined by 
\begin{equation}\label{eq:ARsubs}
\sigma_1: 
\begin{cases} 1 \mapsto 1,\\ 2 \mapsto 12,\\ 3 \mapsto 13,\end{cases} \qquad
\sigma_2: 
\begin{cases} 1 \mapsto 21,\\ 2 \mapsto 2,\\ 3 \mapsto 23,\end{cases} \qquad
\sigma_3: 
\begin{cases} 1 \mapsto 31,\\ 2 \mapsto 32,\\ 3 \mapsto 3.\end{cases}
\end{equation}
Then for each Arnoux-Rauzy sequence $w$ there exists a sequence $\bsigma=(\sigma_{i_n})$, where $(i_n)$ takes each symbol in $\{1,2,3\}$ an infinite number of times, such that $w$ has the same language as
\begin{equation}\label{eq:ARSadic}
u=\lim_{n\to\infty}\sigma_{i_0}\circ\sigma_{i_1}\circ\cdots\circ\sigma_{i_n}(1).
\end{equation}
\end{proposition}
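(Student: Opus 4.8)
The plan is to imitate the Sturmian argument: prove a one-step ``desubstitution'' lemma that strips off a single $\sigma_i$ and returns another Arnoux-Rauzy sequence, and then iterate exactly as in Proposition~\ref{prop:STsubs}. First I would observe that an Arnoux-Rauzy sequence $w$ is recurrent, by the verbatim argument of Lemma~\ref{lem:sturmianrec}: a factor occurring only finitely often would yield a shift $\Sigma^k w$ with $p_{\Sigma^k w}(n)\le 2n$ for some $n$, forcing ultimate periodicity of $\Sigma^k w$ and hence of $w$, incompatible with $p_w(n)=2n+1$. Next I would analyze the Rauzy graph $\Gamma_1$, whose vertices are the letters and whose edges are the factors of length $2$. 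Since the unique right special factor of length $1$ must have three right extensions (as $p_w(2)-p_w(1)=2$), exactly one letter $i$ has out-degree $3$; likewise the unique left special factor of length $1$ has in-degree $3$. Recurrence forces every letter to have in- and out-degree at least $1$, so both degree multisets equal $(3,1,1)$. The in-degree-$3$ vertex must be $i$: otherwise $i$ would have in-degree $1$ with the loop $ii$ (present because $i$ has out-degree $3$) as its only incoming edge, so every $i$ would be preceded by $i$, contradicting recurrence. Hence the five factors of length $2$ are exactly $\{i1,i2,i3\}\cup\{1i,2i,3i\}$, and every letter $j\ne i$ is always immediately preceded by $i$.

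Because each $j\ne i$ is preceded by $i$, the words $\sigma_i(1),\sigma_i(2),\sigma_i(3)$ form a uniquely decodable code and $w$ can be parsed over it; adjusting the finitely many initial letters when $w$ does not begin with $i$ (which changes neither the language nor recurrence) produces $w'\in\{1,2,3\}^\N$ with $L(w)=L(\sigma_i(w'))$. The crucial step, and the one I expect to be the main obstacle, is to show that $w'$ is again an Arnoux-Rauzy sequence. Here I would carry out the ``investigation of the graphs $\Gamma_n$'' referred to above and track how the injective, synchronizing morphism $\sigma_i$ acts on special factors: each sufficiently long left (resp.\ right) special factor of $w$ desubstitutes to a left (resp.\ right) special factor of $w'$, and this is a length-respecting bijection preserving the property of admitting all three one-sided extensions. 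From this one reads off that $w'$ has, for every length, a unique left special and a unique right special factor, each $3$-fold branching, whence $p_{w'}(n+1)-p_{w'}(n)=2$ and $p_{w'}(n)=2n+1$. The delicate point is the synchronization: one must verify that the correspondence between special factors of $w$ and of $w'$ is exact, with no spurious or lost branchings, which is precisely the combinatorial content of \cite[Section~2]{Arnoux-Rauzy:91}.

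Iterating the desubstitution lemma yields Arnoux-Rauzy sequences $w=w^{(0)},w^{(1)},\dots$ and letters $i_n$ with $w^{(n)}=\sigma_{i_n}(w^{(n+1)})$, hence the coding sequence $\bsigma=(\sigma_{i_n})$. I would then show that $(i_n)$ takes each symbol of $\{1,2,3\}$ infinitely often, the analog of the ``changes infinitely often'' property in the Sturmian case. Two facts drive this: each $\sigma_i(b)$ begins with $i$, so $w^{(n)}$ begins with $i_n$; and the incidence matrix $M_i$ of $\sigma_i$ satisfies $\be_c^{t}M_i=\be_c^{t}$ for every $c\ne i$, since $\sigma_i$ neither creates nor destroys occurrences of a letter $c\ne i$. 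Suppose a letter $c$ occurred only for indices $n<N$. The sequence $(i_n)$ cannot be eventually constant, as that would make $w^{(N)}$ ultimately constant and contradict $p_{w^{(N)}}(1)=3$; using the two letters that then recur in the tail one checks that the lengths $|\sigma_{i_N}\circ\cdots\circ\sigma_{i_{N+k-1}}(a)|$ grow without bound. But for $a=i_{N+k}\ne c$ the prefix $\sigma_{i_N}\circ\cdots\circ\sigma_{i_{N+k-1}}(a)$ of $w^{(N)}$ contains no $c$, because $\be_c^{t}M_{i_N}\cdots M_{i_{N+k-1}}\be_a=\be_c^{t}\be_a=0$; letting $k\to\infty$ would make $w^{(N)}$ free of $c$, again contradicting $p_{w^{(N)}}(1)=3$. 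Hence each letter appears infinitely often in $(i_n)$.

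Finally I would establish convergence and the language identity. Since $\sigma_{i_{n+1}}\circ\cdots\circ\sigma_{i_m}(1)$ begins with $i_{n+1}$ for every $m>n$, the words $\sigma_{i_0}\circ\cdots\circ\sigma_{i_m}(1)$ all share the prefix $\sigma_{i_0}\circ\cdots\circ\sigma_{i_n}(i_{n+1})$, whose length tends to infinity because every letter occurs infinitely often in $(i_n)$; thus the limit $u=\lim_{n\to\infty}\sigma_{i_0}\circ\cdots\circ\sigma_{i_n}(1)$ exists. That $L(u)=L(w)$ then follows exactly as in Proposition~\ref{lem:sturmsys}(i): primitivity of $(M_{i_n})$ (each letter infinitely often) together with recurrence shows that every factor of $w$ occurs in $\sigma_{i_0}\circ\cdots\circ\sigma_{i_n}(b)$ for large $n$, while conversely each such word lies in $L(\sigma_{i_0}\circ\cdots\circ\sigma_{i_n}(w^{(n+1)}))=L(w)$. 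The initial letter $1$ may be replaced by any letter for the same reason, which completes the plan.
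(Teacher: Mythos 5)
Your overall route --- recurrence, analysis of the Rauzy graph $\Gamma_1$, desubstitution by $\sigma_i$, iteration, growth of the images, and the argument that each letter occurs infinitely often in $(i_n)$ --- is exactly the route the paper takes; the paper itself only sketches it, deferring the key combinatorial step (that the desubstituted sequence is again Arnoux-Rauzy) to Section~2 of Arnoux and Rauzy's original paper, precisely as you do. Your later steps (the identity $\be_c^t M_i = \be_c^t$ for $c\ne i$, the convergence of $\sigma_{i_0}\circ\cdots\circ\sigma_{i_n}(1)$, and the language identification) are sound.

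However, your very first step contains a genuine error. You claim that a factor occurring only finitely often would yield a shift $\Sigma^k w$ with $p_{\Sigma^k w}(n)\le 2n$ for some $n$, ``forcing ultimate periodicity.'' The Coven--Hedlund theorem invoked in the Sturmian case (Lemma~\ref{lem:sturmianrec}) requires $p(n)\le n$ for some $n$, not $p(n)\le 2n$; the bound $2n$ forces nothing --- every Sturmian sequence satisfies $p(n)=n+1\le 2n$ and is aperiodic. Since recurrence is used throughout your $\Gamma_1$ analysis (in-degrees at least $1$, the identification of the in-degree-$3$ vertex, and the prefix adjustment before parsing), this gap propagates. The correct argument uses the uniqueness of the \emph{left} special factor rather than the complexity theorem: the sum, over all length-$n$ factors $v$ of $w$, of the number of left extensions of $v$ equals $p_w(n+1)=2n+3$ (each factor of length $n+1$ is uniquely ``first letter plus length-$n$ suffix''); since only the unique left special factor may have more than one left extension and the alphabet has three letters, the sum can reach $2n+3$ only if that factor has exactly three left extensions and every other factor has exactly one. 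In particular no factor of $w$ --- in particular no prefix --- lacks a left extension, so every prefix occurs at some positive position, and iterating this gives recurrence (and, at the same time, the in-degree count you need for $\Gamma_1$). With this repair your proposal becomes a faithful, more detailed rendering of the paper's argument.
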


By this proposition each Arnoux-Rauzy sequence $w$ has a \emph{coding sequence} $\bsigma$ of Arnoux-Rauzy substitutions and we may define the dynamical system $(X_w,\Sigma)=(X_\bsigma,\Sigma)$ as the dynamical system associated with $w$, where $X_w=X_\bsigma$ is the set of sequences whose language equals the language of $w$ and which just depends on $\bsigma$. These dynamical systems are called \emph{Arnoux-Rauzy systems}.

Let $w$ be an Arnoux-Rauzy sequence with coding sequence $\bsigma=(\sigma_{i_n})$ and let $(M_{i_n})$ be the associated sequence of incidence matrices.
Since each symbol in $\{1,2,3\}$ occurs infinitely often in $(i_n)$ the associated sequence of incidence matrices $(M_{i_n})$ is easily seen to be primitive in the sense that for each $m\in\N$ there is $n>m$ such that $M_{i_{[m,n)}}$ is a positive matrix. Indeed, a block $M_{i_{[m,n)}}$ is primitive if and only if it contains each of the three matrices $M_1,M_2,M_3$ at least once. 

\begin{lemma}\label{lem:ARmin}
Let $w$ be an Arnoux-Rauzy sequence with coding sequence $\bsigma$. Then the dynamical system $(X_\bsigma,\Sigma)$ is minimal and uniquely ergodic.
\end{lemma}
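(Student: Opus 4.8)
The plan is to establish minimality and unique ergodicity by following exactly the same strategy as in the proof of Proposition~\ref{lem:sturmsys} for Sturmian systems, with the two-letter arguments replaced by their three-letter analogs. The two essential ingredients are already in place: the sequence $w$ is recurrent (being an Arnoux-Rauzy sequence, each factor occurs infinitely often), and the associated sequence of incidence matrices $(M_{i_n})$ is primitive, as noted immediately before the lemma. These are precisely the two properties singled out after Proposition~\ref{lem:sturmsys} as the keys to minimality and unique ergodicity, and the remark there explicitly says ``This will be the same in the general case.''

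For \emph{minimality}, I would argue that $L(v)=L(w)$ for every $v\in X_\bsigma$, which suffices since $X_\bsigma$ is the set of sequences whose language is contained in $L(w)$. Fix $u\in L(w)$. By the $S$-adic representation \eqref{eq:ARSadic} and primitivity of $(M_{i_n})$, there is $m$ such that $u$ occurs in $\sigma_{i_{[0,m)}}(1)$. Writing $w=\sigma_{i_{[0,m)}}(w^{(m)})$ with $w^{(m)}$ again an Arnoux-Rauzy sequence, one uses recurrence of $w^{(m)}$ to conclude that the letter $1$ occurs in $w^{(m)}$ with bounded gaps; applying the substitution block $\sigma_{i_{[0,m)}}$ then shows that $\sigma_{i_{[0,m)}}(1)$, and hence $u$, occurs in $w$ with bounded gaps. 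Consequently $u$ occurs in every element of the orbit closure $X_\bsigma$, which gives $L(w)\subseteq L(v)$; the reverse inclusion is automatic. This is the verbatim three-letter transcription of the minimality argument in Proposition~\ref{lem:sturmsys}(i).

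For \emph{unique ergodicity}, the cleanest route is to invoke the uniform-frequency criterion: the existence of uniform frequencies for all factors implies unique ergodicity (this is the result cited as \cite[Proposition~5.1.21]{Fog02} and referred to forward as Proposition~\ref{prop:freq_unique_ergod}). The existence of a generalized right eigenvector $\bu$ of the primitive sequence $(M_{i_n})$, via the cone-contraction property analogous to \eqref{eq:concvone2lett}, yields uniform letter and factor frequencies by the arguments of Lemmas~\ref{lem:sturmletterfreq} and~\ref{lem:sturmpatternfreq} (whose proofs are stated there to carry over to the general $S$-adic setting, see Lemma~\ref{lem:SadicFreq}). Feeding these uniform frequencies into the criterion gives unique ergodicity. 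Alternatively one can cite Boshernitzan~\cite{Boshernitzan:84}, as was done in the Sturmian case.

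The step that requires the most care is establishing the \emph{bounded-gap} property used in the minimality proof. In the Sturmian case this was supplied by the balance of $w^{(m)}$ (Proposition~\ref{prop:sturmbalance}), which forces the letter $1$ to recur with bounded gaps. For Arnoux-Rauzy sequences balance can fail—this is precisely the Cassaigne--Ferenczi--Zamboni phenomenon discussed in the surrounding text—so I would not rely on balance here. Instead the bounded-gap property should be obtained directly from primitivity together with recurrence: since $(M_{i_n})$ is primitive, $w^{(m)}$ is itself a minimal (linearly recurrent-type) sequence in which every factor, and in particular the single letter $1$, reappears within a window of bounded length. Making this uniformity precise, rather than merely deducing that $1$ occurs infinitely often, is the one point where the argument genuinely needs primitivity and is the main obstacle to a routine copy of the two-letter proof.
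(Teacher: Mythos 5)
The minimality half of your proposal has a genuine gap, and you put your finger on it yourself. Having written $w=\sigma_{i_{[0,m)}}(w^{(m)})$, you correctly observe that the Sturmian device---balance of $w^{(m)}$ forcing the letter $1$ to recur with bounded gaps---is unavailable here; this is a real obstruction, since Arnoux-Rauzy sequences can be arbitrarily imbalanced. But your replacement is circular: asserting that $w^{(m)}$ is ``minimal (linearly recurrent-type)'' so that the letter $1$ reappears in it within bounded windows is precisely the kind of statement the lemma is trying to prove, just one desubstitution level up, and you concede that making it precise is ``the main obstacle.'' The paper closes this gap with one further, very concrete application of primitivity: choose $n>m$ such that $M_{i_{[m,n)}}$ is a \emph{positive} matrix. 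Then $\sigma_{i_{[m,n)}}(b)$ contains the letter $1$ for every $b\in\{1,2,3\}$, hence $\sigma_{i_{[0,n)}}(b)=\sigma_{i_{[0,m)}}\big(\sigma_{i_{[m,n)}}(b)\big)$ contains $\sigma_{i_{[0,m)}}(1)$ and, \emph{a fortiori}, the factor $x$, for \emph{every} letter $b$. Since $w=\sigma_{i_{[0,n)}}(w^{(n)})$ is a concatenation of the words $\sigma_{i_{[0,n)}}(w^{(n)}_k)$, each of which contains $x$, the factor $x$ occurs in $w$ with gaps bounded by $2\max\{|\sigma_{i_{[0,n)}}(b)| \,:\, b\in\{1,2,3\}\}$. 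No balance, and no property of $w^{(m)}$ beyond being a sequence over $\{1,2,3\}$, is needed.

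Your primary route to unique ergodicity also has a gap. The existence of the generalized right eigenvector $\bu$ (the cone-contraction property analogous to \eqref{eq:concvone2lett}) is proved in this chapter under primitivity \emph{and recurrence} of the matrix sequence (Proposition~\ref{prop:furstmatrix}); for the Sturmian case the needed recurrence of a positive block is automatic from the two-letter structure. The coding sequence of an arbitrary Arnoux-Rauzy sequence, however, is only guaranteed to take each substitution infinitely often---it need not be recurrent---so the transcription of Lemmas~\ref{lem:sturmletterfreq} and~\ref{lem:sturmpatternfreq} is not free, and uniform frequencies for Arnoux-Rauzy sequences are in fact a nontrivial result. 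Your fallback is the correct and unconditional argument, and it is the one the paper uses: once minimality is established, unique ergodicity follows from Boshernitzan~\cite{Boshernitzan:84}, because the elements of $X_\bsigma$ have complexity $2n+1$, i.e.\ linear with slope less than~$3$. That one-line argument should be promoted from ``alternative'' to the actual proof.
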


\begin{proof}
Minimality follows if we can show that $L(v)=L(w)$ for each $v\in X_\bsigma$. This in turn holds if each factor of $w$ occurs infinitely often in $w$ with bounded gaps, which we will now prove. Let $x$ be a factor of $w$. As $w$ has the same language as the sequence $u$ in \eqref{eq:ARSadic}, by primitivity of $(M_{i_n})$ there is $m\in\N$ such that $x$ occurs in $\sigma_{i_{[0,m)}}(1)$.  Using primitivity again we see that there exists $n>m$ such that $M_{i_{[m,n)}}$ is a positive matrix. This entails that the word $\sigma_{i_{[m,n)}}(b)$ contains $1$ for any $b\in\{1,2,3\}$ and, hence, $\sigma_{i_{[0,n)}}(b)$ contains $\sigma_{i_{[0,m)}}(1)$ and, {\it a fortiori}, also $x$ for each $b\in\{1,2,3\}$. Thus $x$ occurs in $w$ infinitely often with gaps bounded by $2\max\{|\sigma_{i_{[0,n)}}(b)| \,:\, b\in\{1,2,3\}\}$.  

Unique ergodicity of $(X_\bsigma, \Sigma)$ can be derived from a general result of Boshernitzan~\cite{Boshernitzan:84} due to the fact that $(X_\bsigma, \Sigma)$ is minimal and its elements have linear complexity with slope less than $3$. 

\end{proof}

This proof implies that each Arnoux-Rauzy sequence is {\em uniformly recurrent}.

Generalizing an idea of Arnoux~\cite{Arnoux:88}, in \cite{Arnoux-Rauzy:91} it was shown that each Arnoux-Rauzy sequence $w$ can be viewed as a coding of a $6$-interval exchange transformation (by using sequences over an alphabet with only $3$ letters!) and that each Arnoux-Rauzy system can be represented by such a $6$-interval exchange. In view of a result by Katok~\cite{Katok:80} this implies that Arnoux-Rauzy systems cannot be mixing. The incidence matrices of Arnoux-Rauzy substitutions can be used to define a generalized continued fraction algorithm in the sense of Section~\ref{sec:genCF} below. However, this algorithm only works for vectors taken from a set of measure zero, the so-called \emph{Rauzy gasket}. For more on this interesting set we refer to \cite{AS:13,AHS:15b,AHS:15,DD09,Levitt:93}.

Another interesting class of sequences of complexity $2n+1$ over the alphabet $\{1,2,3\}$ has been defined recently in~\cite{CLL} and is currently subject to intensive investigation. Compared to Arnoux-Rauzy sequences it has the advantage that it is defined in terms of only two substitutions and gives rise to a continued fraction algorithm that works on a set of full measure.

\subsection{Imbalanced Arnoux-Rauzy sequences}\label{sec:ARImbalance}
To get the perfect analogy with the Sturmian case it would be desirable to represent a given Arnoux-Rauzy sequence $w$ as a natural coding of a rotation on the two-dimensional torus $\mathbb{T}^2$. In the seminal paper of Rauzy~\cite{Rauzy:82}, this was achieved for the sequence $w=\lim\sigma^n(1)$, where $\sigma$ is the famous \emph{Tribonacci substitution} defined by 
\begin{equation}\label{eq:substribo}
\sigma: 
\begin{cases} 1 \mapsto 12,\\ 2 \mapsto 13,\\ 3 \mapsto 1.\end{cases}
\end{equation}
Since $\sigma^3=\sigma_1\circ \sigma_2 \circ \sigma_3$ the sequence $w$ is an example of an Arnoux-Rauzy sequence (with periodic coding sequence). Several years ago Barge, \v{S}timac, and Williams~\cite{Barge-Stimac-Williams:13} as well as Berth\'e, Jolivet, and Siegel~\cite{Berthe-Jolivet-Siegel:12} could generalize this result and proved that each Arnoux-Rauzy sequence $w$ with periodic coding sequence is a natural coding of a rotation on $\mathbb{T}^2$ (a weaker result in this direction is already contained in \cite{Arnoux-Ito:01}). A general theory for nonperiodic sequences was established only recently, see Berth\'e, Steiner, and Thuswaldner~\cite{Berthe-Steiner-Thuswaldner}, and we will come back to this in later sections.

We recall that a sequence $w=w_0w_1\ldots\in\{1,2,3\}^\N$ is a \emph{natural coding} of a rotation $R$ on $\mathbb{T}^2$  if there exists a fundamental domain $\Omega$ of $\mathbb{T}^2$ in $\mathbb{R}^2$ together with a partition $\Omega=\Omega_1\cup\Omega_2\cup\Omega_3$ such that on each $\Omega_i$ the map $R'$ induced on $\Omega$ by the rotation $R$ acts as a translation by a vector $\mathbf{a}_i\in\R^2$ and for some point $x\in\Omega$ we have $R'^k(x)\in \Omega_{w_k}$ for each $k\in\N$
(see also Definition~\ref{def:NC}).

An Arnoux-Rauzy sequence is not always a coding of a rotation on $\mathbb{T}^2$ with \emph{bounded} fundamental domain. The reason for this is the lack of balance for some particular instances of such sequences.  Following Cassaigne, Ferenczi, and Zamboni~\cite{Cassaigne-Ferenczi-Zamboni:00} we now sketch the construction of an Arnoux-Rauzy sequence that is not balanced.

Let $C\ge 1$ be an integer. Generalizing the notion of balance from Section~\ref{sec:sturmianprop} we say that a sequence $w\in\{1,2,3\}^\N$ is \emph{$C$-balanced} if each pair of factors $(u,v)$ of $w$ having the same length satisfies $\big| |u|_a - |v|_a \big| \le C$ for each $a\in\{1,2,3\}$. The following result implies that there is no uniform $C$ that gives $C$-balance for each Arnoux-Rauzy sequence. Here a substitution is called primitive if its incidence matrix is primitive.

\begin{lemma}[{see \cite[Proposition~2.2]{Cassaigne-Ferenczi-Zamboni:00}}]\label{lem:CFZ22}
For each integer $C\ge 1$ there is a finite sequence of Arnoux-Rauzy substitutions $\sigma_{i_1},\ldots,\sigma_{i_k}$ such that $\sigma= \sigma_{i_1}\circ\cdots\circ \sigma_{i_k}$ is primitive and for each Arnoux-Rauzy sequence $w$ the Arnoux-Rauzy sequence $\sigma(w)$ is not $C$-balanced.
\end{lemma}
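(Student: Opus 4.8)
The plan is to construct, for each given $C\ge 1$, an explicit word of Arnoux-Rauzy substitutions whose composition creates an arbitrarily large imbalance in the letter $3$. The key arithmetic fact I would exploit is how the substitutions $\sigma_1$ and $\sigma_2$ act on the lengths and letter-counts of the images of $\sigma_3$. Observe from \eqref{eq:ARsubs} that $\sigma_3(3)=3$, while $\sigma_1(3)=13$ and $\sigma_2(3)=23$; more importantly, applying $\sigma_1$ or $\sigma_2$ repeatedly to a word tends to \emph{accumulate} occurrences of the letter being fixed. The strategy is to first apply $\sigma_3$ once to plant a single isolated $3$, and then apply a long power such as $(\sigma_1\circ\sigma_2)^m$ (or a suitable product of $\sigma_1$'s and $\sigma_2$'s) that leaves the $3$'s far apart in one factor but clustered in another, so that two equal-length factors of $\sigma(w)$ differ in their count of $3$ by more than $C$.

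Concretely, I would proceed as follows. First I would fix a candidate $\sigma=\sigma_{i_1}\circ\cdots\circ\sigma_{i_k}$ and verify \emph{primitivity}: by the remark preceding Lemma~\ref{lem:ARmin}, it suffices that the word $i_1\cdots i_k$ contains each of $1,2,3$ at least once, which is trivial to arrange. Second, and this is the heart of the matter, I would track two specific factors of $\sigma(w)$. For any Arnoux-Rauzy sequence $w$, the letters $1,2,3$ all appear, and one can locate a factor of $w$ of the form (roughly) a long block between two occurrences of $3$, together with a second region where $3$'s appear more densely. Applying $\sigma$ and computing the images letter-by-letter, I would exhibit two factors $u',v'$ of $\sigma(w)$ with $|u'|=|v'|$ but $\big||u'|_3-|v'|_3\big|>C$. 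The cleanest route is to choose the exponents in $\sigma$ (e.g. take $\sigma=\sigma_1^{N}\circ\sigma_2^{N}\circ\sigma_3$ or similar with $N$ depending on $C$) so that the asymmetry in how $\sigma_1$ and $\sigma_2$ pad the images of different letters forces the desired discrepancy; the imbalance grows linearly (or faster) in $N$, so choosing $N$ large enough beats any fixed $C$.

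The main obstacle I anticipate is the \emph{bookkeeping}: one must show that the two unbalanced factors genuinely \emph{occur} in $\sigma(w)$ for \emph{every} Arnoux-Rauzy sequence $w$, not just for one convenient choice. This requires knowing that certain short patterns (say, both $13$ and $23$, or a $3$ followed later by another $3$) must appear in any Arnoux-Rauzy $w$ — which follows from the fact that every letter is a letter of $w$ and from recurrence/uniform recurrence established after Lemma~\ref{lem:ARmin}, so that the relevant seed factors are guaranteed to sit inside $w$. The delicate point is getting the \emph{equal-length} constraint $|u'|=|v'|$ to hold exactly while the $3$-counts diverge; I would handle this by padding one of the two factors with an appropriately chosen adjacent block so the lengths match, using that $|\sigma(a)|$ is known explicitly for each letter $a$. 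Once the length-matching is arranged and the count discrepancy is shown to exceed $C$, the definition of $C$-balance is violated and the lemma follows; the overall difficulty is combinatorial rather than conceptual, so I would keep the computation schematic and defer the exact exponents to \cite{Cassaigne-Ferenczi-Zamboni:00}.
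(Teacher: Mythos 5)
Your plan rests on the claim that a ``flat'' composition with large powers, such as $\sigma=\sigma_1^{N}\circ\sigma_2^{N}\circ\sigma_3$, forces an imbalance that grows with $N$ for \emph{every} Arnoux-Rauzy input $w$. This is the step that fails, and it is not a bookkeeping issue but a genuinely wrong mechanism. Compute the images: $\sigma(1)=(1^N2)^N1^N3(1^N2)^N1$, $\sigma(2)=(1^N2)^N1^N31^N2$, $\sigma(3)=(1^N2)^N1^N3$. Thus $\sigma(w)$ is a concatenation of the blocks $1^N2$, $1^{N+1}2$, $1^N3$, $1^{N+1}3$: every image contains exactly one letter $3$, and the letters $2$ occur with essentially constant density $1/(N+1)$ throughout. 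If $w$ is a finitely balanced Arnoux-Rauzy sequence (say the Tribonacci word, which is $2$-balanced), a direct estimate of the number of $3$'s, $2$'s and $1$'s in two equal-length windows of $\sigma(w)$ shows the discrepancies stay $O(1)$ \emph{uniformly in} $N$: homogeneous padding by powers of a single substitution does not destroy balance, it preserves it. In other words, the imbalance of $\sigma(w)$ is governed by the imbalance of $w$ (which can be small) plus an additive constant intrinsic to $\sigma$, and for your choice of $\sigma$ that constant does not grow with $N$. So choosing $N$ large does not beat a fixed $C$, and deferring ``the exact exponents'' to \cite{Cassaigne-Ferenczi-Zamboni:00} is deferring precisely the content of the statement.

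The paper's proof (following \cite{Cassaigne-Ferenczi-Zamboni:00}) uses a different and genuinely necessary structure: an induction on the imbalance $n$, with $\sigma^{(n+1)}=\sigma_k^{n}\circ\sigma_i^{n}\circ\sigma^{(n)}$, where the distinguished letters $i,j,k$ are \emph{permuted at each step}, so the exponents grow along the composition and the direction of the discrepancy rotates. Crucially, the unbalanced pair is not produced in one shot; it is \emph{tracked} through the induction via the modified maps $\sigma_{(a,+)}(u)=\sigma_a(u)a$ and $\sigma_{(a,-)}(u)=\sigma_a(u)$ with its first letter removed (legitimate because every image under $\sigma_a$ begins with $a$, so $\sigma_a(u)a$ really occurs in $\sigma_a(w)$). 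Both new factors $u^{(n+1)}=\sigma_{(k,-)}^n\sigma_{(i,+)}^n(v^{(n)})$ and $v^{(n+1)}=\sigma_{(k,+)}^n\sigma_{(i,-)}^n(v^{(n)})$ are built from the \emph{same} word $v^{(n)}$, with the $+$ and $-$ operations exchanged; this is what makes the equal-length constraint hold automatically (your ad hoc ``pad with an adjacent block'' has no such guarantee of preserving the count discrepancy) and makes the imbalance increase by exactly one per step, independently of which Arnoux-Rauzy sequence $w$ one starts from. Your observations that primitivity only requires each of $\sigma_1,\sigma_2,\sigma_3$ to appear, and that occurrence of the seed factors follows from every letter occurring in $w$, are correct, but these are the easy parts; the amplification mechanism you propose in their place does not work.
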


\begin{proof}
We prove by induction that for each $n\ge 2$ there exist $a_n,b_n,c_n\in\N$ and a primitive composition of Arnoux-Rauzy matrices $\sigma^{(n)}$ such that for each Arnoux-Rauzy sequence $w$ the sequence $\sigma^{(n)}(w)$ contains two factors $u^{(n)}$ and $v^{(n)}$ of equal length with 
\[
\begin{pmatrix}
|u^{(n)}|_i\\
|u^{(n)}|_j\\
|u^{(n)}|_k
\end{pmatrix}=
\begin{pmatrix}
a_n\\
b_n+n\\
c_n
\end{pmatrix} \quad\hbox{and}\quad
\begin{pmatrix}
|v^{(n)}|_i\\
|v^{(n)}|_j\\
|v^{(n)}|_k
\end{pmatrix}=
\begin{pmatrix}
a_n+1\\
b_n\\
c_n+n-1
\end{pmatrix}
\]
for some choice $i,j,k$ with $\{i,j,k\}=\{1,2,3\}$. This will prove the result because $\big| |u^{(n)}|_j - |v^{(n)}|_j\big| = n$ shows that $\sigma^{(n)}(w)$ is not $(n-1)$-balanced.

For the induction start take $n=2$ and $\sigma^{(2)}=\sigma_1\sigma_2$ with $u^{(2)}=212$ and $v^{(2)}=131$. 

To perform the induction step assume that the result is true for some $n$ and let $u^{(n)}$, $v^{(n)}$, $a_n$, $b_n$, $c_n$, $i$, $j$, $k$, and $\sigma^{(n)}$ be as above. Set $\sigma^{(n+1)}=\sigma_k^n\circ\sigma_i^n\circ\sigma^{(n)}$. We now construct $u^{(n+1)}$ and $v^{(n+1)}$. Let $u$ be a nonempty factor of some Arnoux-Rauzy sequence $w$. Then for each $a\in\{1,2,3\}$ the word $\sigma_a(u)a$ is a factor of $\sigma_a(w)$ which begins with $a$. If we define $\sigma_{(a,+)}(u)=\sigma_a(u)a$ and $\sigma_{(a,-)}(u)$ as the suffix of $\sigma_a(u)$ of length $|\sigma_a(u)|-1$ ({\it i.e.}, the first letter of $\sigma_a(u)$ is canceled) we see that $u^{(n+1)}=\sigma_{(k,-)}^n\sigma_{(i,+)}^n(v_n)$ and $v^{(n+1)}=\sigma_{(k,+)}^n\sigma_{(i,-)}^n(v_n)$ are factors of $\sigma^{(n+1)}(w)$. 
Using the definition of $\sigma_{(a,+)}$ and $\sigma_{(a,-)}$ one can now check directly that
\[
\begin{pmatrix}
|u^{(n+1)}|_k\\
|u^{(n+1)}|_i\\
|u^{(n+1)}|_j
\end{pmatrix}=
\begin{pmatrix}
a_{n+1}\\
b_{n+1}+n+1\\
c_{n+1} 
\end{pmatrix} \quad\hbox{and}\quad
\begin{pmatrix}
|v^{(n+1)}|_k\\
|v^{(n+1)}|_i\\
|v^{(n+1)}|_j
\end{pmatrix}=
\begin{pmatrix}
a_{n+1}+1\\
b_{n+1}\\
c_{n+1}+n
\end{pmatrix},
\]
where
\[
\begin{pmatrix}
a_{n+1}\\
b_{n+1}\\
c_{n+1}
\end{pmatrix}
=
\begin{pmatrix}
c_{n} + n - 1 + n(a_n +n(b_n+c_n+n)+b_n)\\
a_{n} + n(b_n+c_n+n-1)\\
b_{n}
\end{pmatrix}.
\]
\end{proof}

This lemma can even be sharpened in the following way.

\begin{lemma}[{see \cite[Proposition~2.3]{Cassaigne-Ferenczi-Zamboni:00}}]\label{lem:CFZ23}
For each integer $C\ge 1$ and each composition of Arnoux-Rauzy substitutions $\sigma$ there exists a primitive composition of Arnoux-Rauzy sequences $\sigma'$ such that for each Arnoux-Rauzy sequence $w$ the Arnoux-Rauzy sequence $\sigma\circ\sigma'(w)$ is not $C$-balanced.
\end{lemma}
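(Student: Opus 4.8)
The plan is to deduce Lemma~\ref{lem:CFZ23} from Lemma~\ref{lem:CFZ22} by a direct reduction. Lemma~\ref{lem:CFZ22} already produces, for any given threshold $C$, a \emph{single} primitive composition $\tau$ of Arnoux-Rauzy substitutions with the property that $\tau(w)$ is not $C$-balanced for \emph{every} Arnoux-Rauzy sequence $w$. The extra content of Lemma~\ref{lem:CFZ23} is that the offending composition may be prescribed to begin with an \emph{arbitrary} given composition $\sigma$: we must find $\sigma'$ so that $\sigma\circ\sigma'(w)$ fails $C$-balance. So the task reduces to absorbing the fixed prefix $\sigma$ without destroying the imbalance produced downstream.

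First I would recall the mechanism behind Lemma~\ref{lem:CFZ22}: applying an Arnoux-Rauzy substitution $\sigma_a$ to a pair of factors whose letter-count vectors differ by a prescribed amount produces a new pair whose counts differ by an amount that can be made arbitrarily large by iterating (the explicit recursion for $a_{n+1},b_{n+1},c_{n+1}$ in that proof). The key structural point is that imbalance is only ever \emph{amplified}, never cancelled, by further substitution. Concretely, if $\sigma'(w)$ contains two equal-length factors $u,v$ with $\big||u|_a-|v|_a\big|=D$ for some letter $a$, then $\sigma(u)$ and $\sigma(v)$ are factors of $\sigma\circ\sigma'(w)$, they again have equal length (each letter $b$ contributes $|\sigma(b)|$ to both), and the count differences transform \emph{linearly} by the incidence matrix $M_\sigma$ of $\sigma$: the difference vector $\mathbf{l}(u)-\mathbf{l}(v)$ is carried to $M_\sigma(\mathbf{l}(u)-\mathbf{l}(v))$. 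Since $\sigma$ is a composition of Arnoux-Rauzy substitutions, $M_\sigma$ is a nonnegative integer matrix with no zero column, so a nonzero nonnegative-plus-negative difference vector cannot collapse to something with all coordinates at most $C$ once the input imbalance $D$ is large enough.

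The concrete plan is therefore: given $\sigma$ and $C$, let $M_\sigma$ be the incidence matrix of $\sigma$ and let $\kappa$ be the maximal column sum (operator-norm type bound) of $M_\sigma$; then apply Lemma~\ref{lem:CFZ22} with threshold $C' = \kappa C$ (or any bound forcing the amplified difference to exceed $C$) to obtain a primitive composition $\sigma'$ of Arnoux-Rauzy substitutions such that every $\sigma'(w)$ contains equal-length factors $u,v$ with some coordinate of $\mathbf{l}(u)-\mathbf{l}(v)$ exceeding $C'$. Then $\sigma(u),\sigma(v)$ are equal-length factors of $\sigma\circ\sigma'(w)$ whose difference vector $M_\sigma(\mathbf{l}(u)-\mathbf{l}(v))$ has a coordinate exceeding $C$, so $\sigma\circ\sigma'(w)$ is not $C$-balanced. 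Primitivity of $\sigma\circ\sigma'$ is immediate since $\sigma'$ is primitive and composing with $\sigma$ (a nonnegative substitution) preserves the property that some iterate maps every letter to a word containing every letter.

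The main obstacle is the bookkeeping in the linear-algebra step: one must verify that the large coordinate of the difference vector really survives multiplication by $M_\sigma$ rather than being annihilated by the mixing of a large positive coordinate with a large negative one. Because the two factors in Lemma~\ref{lem:CFZ22} differ in a controlled way --- one coordinate up by $n$, another down by $n-1$, as in the displayed vectors there --- the difference vector is essentially $n$ times a fixed pattern, and one checks that $M_\sigma$ applied to that fixed pattern is a fixed nonzero integer vector, so scaling by $n$ forces some coordinate past $C$. The careful point is to choose which letter's imbalance to track \emph{after} applying $M_\sigma$; I would fix any single coordinate on which the image pattern is nonzero and take $n$ (hence $C'$) large enough that this coordinate exceeds $C$, which is always possible because a nonzero integer pattern has a nonzero image under the injective (determinant $\pm1$, Arnoux-Rauzy matrices being unimodular) map $M_\sigma$.
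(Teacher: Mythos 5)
Your overall reduction---invoke Lemma~\ref{lem:CFZ22} with an enlarged constant $K$ depending on the incidence matrix $M_\sigma$, then argue that the imbalance survives the application of $\sigma$---is exactly the strategy the paper sketches for this lemma. But your execution breaks down at the one point where the real difficulty sits: the claim that $\sigma(u)$ and $\sigma(v)$ ``again have equal length (each letter $b$ contributes $|\sigma(b)|$ to both)'' is false. One has $|\sigma(u)|=\sum_{b}|u|_b\,|\sigma(b)|$, and since $u$ and $v$ have different letter counts (that is precisely the imbalance you are exploiting) while the lengths $|\sigma(b)|$ are not all equal for Arnoux-Rauzy compositions, the two images differ in length by $\langle M_\sigma(\mathbf{l}(u)-\mathbf{l}(v)),\mathbf{1}\rangle$, which is typically of the same order of magnitude as the imbalance itself. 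This is fatal as stated, because $C$-balance only constrains pairs of factors of \emph{equal} length: exhibiting two factors of different lengths whose abelianizations differ by a lot proves nothing (in any sequence whatsoever, a word and a long extension of it have wildly different abelianizations). So the linear-algebra step $\mathbf{l}(u)-\mathbf{l}(v)\mapsto M_\sigma(\mathbf{l}(u)-\mathbf{l}(v))$, correct as a computation, does not by itself produce a witness against $C$-balance of $\sigma\circ\sigma'(w)$.

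To repair the argument you must do one of two things, and this is exactly why the paper calls the proof technical. Either truncate the longer of $\sigma(u),\sigma(v)$ to the length of the shorter and control the letter counts of the removed piece---delicate, because the removed suffix has length comparable to the imbalance and can partially cancel the difference vector: for $\sigma_1$ from~\eqref{eq:ARsubs} and $\mathbf{l}(u)-\mathbf{l}(v)=(n,-n,0)^t$ one computes $M_1(\mathbf{l}(u)-\mathbf{l}(v))=(0,-n,0)^t$, so the truncated suffix of length $n$ would cancel the difference entirely if it consisted only of the letter $2$, and one needs a combinatorial argument (no factor $22$ in $\sigma_1$-images) to bound the cancellation. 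Or pass to the discrepancy formulation: Arnoux-Rauzy sequences have uniform letter frequencies $f$ (by unique ergodicity, Lemma~\ref{lem:ARmin}), finite balance is equivalent to boundedness of $\Vert\mathbf{l}(p)-|p|f\Vert_\infty$ over factors $p$ (this equivalence is the substance of the proof of Proposition~\ref{prop:Rcompact}), and the discrepancy vector of $\sigma(p)$ equals $M_\sigma d_p-\langle M_\sigma d_p,\mathbf{1}\rangle\, M_\sigma f/\langle M_\sigma f,\mathbf{1}\rangle$ where $d_p=\mathbf{l}(p)-|p|f$; this linear map is injective on $\mathbf{1}^\bot$ (if $M_\sigma d\in\mathbb{R}M_\sigma f$ with $d\in\mathbf{1}^\bot$ then $d\in\mathbb{R}f\cap\mathbf{1}^\bot=\{\mathbf{0}\}$) and hence bounded below uniformly in $f$ over the compact simplex, so large discrepancy of $\sigma'(w)$ forces large discrepancy, hence non-$C$-balance, of $\sigma\circ\sigma'(w)$. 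With such a lemma in hand your choice $K=\kappa C$ does complete the proof; without it, there is a genuine gap.
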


The proof is technical and we do not provide it here. The idea is to use Lemma~\ref{lem:CFZ22} in order to choose $\sigma'$ in a way that $\sigma'(w)$ is not $K$-balanced for each Arnoux-Rauzy sequence $w$, where $K$, which depends on the incidence matrix of $\sigma$, is so large that even after the application of $\sigma$ we cannot reach $C$-balance. 

We are now able to establish the following result.

\begin{theorem}[{see \cite[Theorem~2.4]{Cassaigne-Ferenczi-Zamboni:00}}]\label{prop:CFZ24}
There exists an Arnoux-Rauzy sequence which is not $C$-balanced for any $C\ge1$.
\end{theorem}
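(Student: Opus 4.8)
The plan is to build the desired Arnoux-Rauzy sequence by an infinite composition of substitutions, using Lemma~\ref{lem:CFZ23} as the engine to destroy balance at every scale. The target sequence will be of the form $w=\lim_{n\to\infty}\sigma^{(1)}\circ\sigma^{(2)}\circ\cdots\circ\sigma^{(n)}(1)$, where each $\sigma^{(m)}$ is a carefully chosen finite composition of Arnoux-Rauzy substitutions, and the coding sequence obtained by concatenating all the $\sigma^{(m)}$ uses each of the three letters infinitely often (so that the limit exists, is a genuine Arnoux-Rauzy sequence, and the associated matrix sequence is primitive).

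The key mechanism is to arrange, for every integer $C\ge 1$, that the prefix substitution $\Sigma_m=\sigma^{(1)}\circ\cdots\circ\sigma^{(m)}$ already forces $C$-imbalance that survives all later substitutions. First I would set $C_n=n$ and build the blocks inductively: having fixed $\sigma^{(1)},\ldots,\sigma^{(m-1)}$, I apply Lemma~\ref{lem:CFZ23} with the already-chosen prefix $\sigma=\Sigma_{m-1}$ and with $C=C_n$ (for an appropriate $n$ depending on $m$) to obtain a primitive composition $\sigma'$; I then set $\sigma^{(m)}=\sigma'$. The conclusion of Lemma~\ref{lem:CFZ23} is precisely that $\sigma\circ\sigma'(v)=\Sigma_m(v)$ is not $C_n$-balanced for every Arnoux-Rauzy sequence $v$. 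Since $w$ itself can be written as $\Sigma_m(w^{(m)})$ for the tail Arnoux-Rauzy sequence $w^{(m)}=\lim_{k}\sigma^{(m+1)}\circ\cdots\circ\sigma^{(m+k)}(1)$, applying the lemma with $v=w^{(m)}$ shows that $w$ contains two equal-length factors whose letter counts differ by more than $C_n$ in some coordinate. Letting $n\to\infty$ over the sequence of blocks then exhibits, for every prescribed $C$, a pair of factors witnessing failure of $C$-balance, so $w$ is not $C$-balanced for any $C$.

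Two bookkeeping points need care. First, I must guarantee that the infinite composition actually converges to a well-defined sequence and that $(i_n)$ takes each letter infinitely often: this is ensured because each $\sigma^{(m)}$ is primitive (by the lemma) and hence contains all three of $\sigma_1,\sigma_2,\sigma_3$, so the concatenated coding sequence is primitive and the images $\sigma^{(1)}\circ\cdots\circ\sigma^{(m)}(1)$ share longer and longer common prefixes, exactly as in the Sturmian argument of Section~\ref{sec:sturmianprop}. Second, I should confirm that $w$ is genuinely an Arnoux-Rauzy sequence, which again follows from the $S$-adic representation via Arnoux-Rauzy substitutions together with primitivity of the matrix sequence.

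The main obstacle is the uniformity over all $C$ simultaneously: a single finite block only destroys balance up to a fixed threshold, so I must chain infinitely many blocks while ensuring each new block \emph{preserves} the imbalance already created rather than washing it out. This is exactly the role of the refined Lemma~\ref{lem:CFZ23} over the cruder Lemma~\ref{lem:CFZ22}: because the lemma lets me prescribe an arbitrary prefix $\sigma$ and still produce imbalance surviving \emph{after} $\sigma$ is applied, I can read it in the direction I need, namely that $\Sigma_{m-1}\circ\sigma^{(m)}$ is imbalanced no matter what $\Sigma_{m-1}$ was. Verifying that the imbalance detected at stage $m$ (phrased for the tail $w^{(m)}$) correctly transfers to a concrete pair of factors of the single limit sequence $w$ is the technical heart, but it reduces to the observation that any finite factor of $w$ lies in $\Sigma_m(w^{(m)})$ for $m$ large enough, so the factors furnished by the lemma are literally factors of $w$.
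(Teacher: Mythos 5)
Your proposal is correct and follows essentially the same route as the paper: iteratively invoking Lemma~\ref{lem:CFZ23} with the already-constructed prefix composition to produce the next primitive block, and then passing to the infinite composition whose limit is the desired sequence. The only difference is that you spell out the bookkeeping (convergence of the limit, why it is Arnoux-Rauzy, and the transfer of imbalance from the tails $w^{(m)}$ to $w$) that the paper's terse proof leaves implicit.
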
 

\begin{proof}
By Lemma~\ref{lem:CFZ23} one can construct primitive compositions of Arnoux-Rauzy substitutions $\sigma^{(1)}, \ldots, \sigma^{(C)}$ such that $\sigma^{(1)}\circ\cdots\circ\sigma^{(C)}(w)$ is not $C$-balanced for any Arnoux-Rauzy sequence $w$. Thus
$
u=\lim_{C\to\infty}\sigma^{(1)}\circ\cdots\circ\sigma^{(C)}(w)
$
is the desired sequence.
\end{proof}

Using this proposition we are able to establish the following result of \cite{Cassaigne-Ferenczi-Zamboni:00} which strongly indicates that an unconditional generalization of the theory presented in Section~\ref{sec:sturm} is not possible. 

\begin{corollary}[{{\em cf.}~\cite[Corollary~2.6]{Cassaigne-Ferenczi-Zamboni:00}}]\label{thm:cfz00}
There exists an Arnoux-Rauzy sequence which is not a natural coding of a minimal rotation on the $2$-torus with \emph{bounded} fundamental domain.
\end{corollary}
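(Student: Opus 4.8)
The plan is to prove the contrapositive of a balance statement: any Arnoux--Rauzy sequence that \emph{is} a natural coding of a minimal rotation on $\mathbb{T}^2$ with bounded fundamental domain is necessarily $C$-balanced for some $C\ge 1$. Combined with Theorem~\ref{prop:CFZ24}, which supplies an Arnoux--Rauzy sequence that fails to be $C$-balanced for every $C$, this yields the corollary at once.

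First I would fix the geometric setup. Suppose $w$ is a natural coding of a minimal rotation $R$ on $\mathbb{T}^2=\R^2/\Lambda$, realized as translation by a vector $\bt$, with bounded fundamental domain $\Omega=\Omega_1\cup\Omega_2\cup\Omega_3$ and induced map $R'$ acting on $\Omega_i$ as translation by $\mathbf{a}_i$. Because $R'$ is the realization of the translation by $\bt$ inside the bounded fundamental domain $\Omega$, each translation vector has the form $\mathbf{a}_i=\bt-\lambda_i$ with $\lambda_i\in\Lambda$, and the three lattice vectors $\lambda_1,\lambda_2,\lambda_3$ are pairwise distinct. For the base point $x$ with $R'^k(x)\in\Omega_{w_k}$ one has, for any factor $u=w_k\cdots w_{k+\ell-1}$, the identity $R'^{k+\ell}(x)-R'^{k}(x)=\sum_{j=k}^{k+\ell-1}\mathbf{a}_{w_j}=\ell\bt-\sum_{i}|u|_i\,\lambda_i$. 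Since both endpoints lie in the bounded set $\Omega$, the left-hand side is bounded by $\operatorname{diam}(\Omega)$ uniformly in $k$ and $\ell$; hence $\sum_i|u|_i\,\lambda_i=\ell\bt+O(1)$ uniformly over all factors $u$.

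The heart of the argument is to turn this into balance. Given two factors $u,v$ of the same length $\ell$, subtracting the two estimates gives $\sum_i(|u|_i-|v|_i)\lambda_i=O(1)$, while trivially $\sum_i(|u|_i-|v|_i)=0$. Writing $d_i=|u|_i-|v|_i$ and eliminating $d_3=-d_1-d_2$, the quantity $d_1(\lambda_1-\lambda_3)+d_2(\lambda_2-\lambda_3)$ is bounded by a constant depending only on $\Omega$ and $\Lambda$. Here I would invoke the fact that, for a genuine rotation on the two-torus, the lattice vectors $\lambda_1-\lambda_3$ and $\lambda_2-\lambda_3$ are linearly independent over $\R$ (they span a finite-index sublattice of $\Lambda$); consequently the linear map $(d_1,d_2)\mapsto d_1(\lambda_1-\lambda_3)+d_2(\lambda_2-\lambda_3)$ is a bijection onto $\R^2$, so boundedness of its image forces $|d_1|,|d_2|$, and hence every $|d_i|$, to be bounded by some $C$ independent of $u,v$. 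This is precisely $C$-balance of $w$.

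Finally I would assemble the pieces. Let $w^*$ be the Arnoux--Rauzy sequence furnished by Theorem~\ref{prop:CFZ24}, which is not $C$-balanced for any $C\ge 1$. If $w^*$ were a natural coding of a minimal rotation on $\mathbb{T}^2$ with bounded fundamental domain, the previous paragraph would produce a $C$ for which $w^*$ is $C$-balanced, a contradiction; thus $w^*$ is the desired sequence. The main obstacle I anticipate is justifying the linear independence of $\lambda_1-\lambda_3$ and $\lambda_2-\lambda_3$: if these differences were collinear, the estimate would control only one linear combination of the $d_i$ rather than all of them, and one must rule this degeneracy out by arguing that it would collapse the return-vector structure to a single lattice direction, incompatible with $\Omega$ being a bounded fundamental domain of the full rank-two lattice underlying a complexity-$(2n+1)$ system.
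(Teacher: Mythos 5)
Your proposal is correct and, apart from the contrapositive packaging, it is essentially the paper's own proof: both arguments take the imbalanced Arnoux--Rauzy sequence supplied by Theorem~\ref{prop:CFZ24}, observe that for every factor $u$ the associated translation $\mathbf{a}_u=\sum_i|u|_i\,\mathbf{a}_i$ maps some point of $\Omega$ back into $\Omega$ and is therefore bounded by the diameter of $\Omega$, and then use a non-degeneracy property of the vectors $\mathbf{a}_1,\mathbf{a}_2,\mathbf{a}_3$ to convert this uniform bound into $C$-balance, contradicting the choice of the sequence. (Your decomposition $\mathbf{a}_i=\bt-\lambda_i$ is harmless but also not needed: since $\sum_i d_i=0$, one has $\sum_i d_i\lambda_i=-\sum_i d_i\mathbf{a}_i=-(\mathbf{a}_u-\mathbf{a}_v)$, so you are estimating exactly the same quantity as the paper.)

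The one substantive point of comparison is the non-degeneracy lemma, which you correctly single out as the crux. The paper derives from minimality and boundedness the stronger statement $\R_+\mathbf{a}_1+\R_+\mathbf{a}_2+\R_+\mathbf{a}_3=\R^2$; this implies your claim, because positive spanning yields a relation $t_1\mathbf{a}_1+t_2\mathbf{a}_2+t_3\mathbf{a}_3=\mathbf{0}$ with all $t_i>0$ and a one-dimensional relation space, so no nonzero zero-sum vector $(d_1,d_2,d_3)$ can be a relation --- which is precisely the linear independence of $\mathbf{a}_1-\mathbf{a}_3=\lambda_3-\lambda_1$ and $\mathbf{a}_2-\mathbf{a}_3=\lambda_3-\lambda_2$. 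Your own justification, as written, is partly circular: ``they span a finite-index sublattice of $\Lambda$'' is the claim, not a reason, and the appeal to the complexity $2n+1$ is unnecessary. The degenerate case can be excluded using only minimality and boundedness: if the two difference vectors were collinear, all three $\mathbf{a}_i$ would lie on one affine line $\mathbf{a}_3+\R v$. If $\mathbf{a}_3\notin\R v$, the partial sums $\sum_{j<k}\mathbf{a}_{w_j}=k\,\mathbf{a}_3+(\cdots)\,v$ drift unboundedly in the direction transversal to $v$, contradicting the fact that the orbit stays in the bounded set $\Omega$; if $\mathbf{a}_3\in\R v$, then all $\mathbf{a}_i\in\R v$ and the orbit is trapped in a compact line segment, whose image in $\mathbb{T}^2$ is closed with empty interior, contradicting the density of orbits of a minimal rotation. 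With that step completed, your argument goes through; note that the paper itself only dispatches its (stronger) spanning claim with ``one easily checks''.
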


\begin{proof}
By Theorem~\ref{prop:CFZ24} there is an Arnoux-Rauzy sequence $w$ which is not $C$-balanced for any $C>0$. Assume that $w$ is a natural coding of a minimal rotation on $\mathbb{T}^2$ with bounded fundamental domain $\Omega$. Each letter $j\in\{1,2,3\}$ corresponds to a translation $\mathbf{a}_j$ on $\Omega$ and, hence, to each word  $u=u_0\ldots u_{n-1} \in \{1,2,3\}^*$ there corresponds the translation $\mathbf{a}_{u}=\sum_{k=0}^{n-1}\mathbf{a}_{u_k}$ on $\Omega$. Since $\Omega$ is bounded and the rotation is minimal one easily checks that the vectors $\mathbf{a}_1,\mathbf{a}_2,\mathbf{a}_3$ 
satisfy $\R_+\mathbf{a}_1 +\R_+\mathbf{a}_2 + \R_+\mathbf{a}_3 = \R^2$.
This implies that there exists a constant $\gamma > 0$ such that two words $u,v \in \{1,2,3\}^*$ with $\big| |u|_i - |v|_i \big| \ge C$ for some $i\in\{1,2,3\}$ satisfy $\Vert\mathbf{a}_{u}-\mathbf{a}_{v} \Vert_1 > \gamma C$. 

Since $w$ is not balanced there is a letter $i\in\{1,2,3\}$ such that for each $C>0$ there exist two factors $u,v\in \{1,2,3\}^*$ of $w$ with $\big| |u|_i - |v|_i \big| \ge C$.  Thus $\Vert \mathbf{a}_{u}-\mathbf{a}_{v} \Vert_1 > \gamma C$. Since $C$ can be arbitrarily large, this difference can be made arbitrarily large. Thus one of the two vectors $\mathbf{a}_{u},\mathbf{a}_{v}$ can be made arbitrarily large. Assume w.l.o.g.\ that this is $\mathbf{a}_{u}$. Since there is an element $\mathbf{x} \in \Omega$ with $\mathbf{x}+\mathbf{a}_{u} \in \Omega$, the diameter of $\Omega$ is bounded from below by the length of $\mathbf{a}_{u}$. This contradicts the boundedness of the fundamental domain $\Omega$.
\end{proof}

\begin{remark}
We mention that in \cite[Corollary~2.6]{Cassaigne-Ferenczi-Zamboni:00} it is claimed that Corollary~\ref{thm:cfz00} is true without assuming that the fundamental domain is bounded. However, we were not able to verify this proof.
\end{remark}

\subsection{Weak mixing and the existence of eigenvalues}\label{sec:WM}
In Cassaigne, Ferenczi, and Messaoudi~\cite{Cassaigne-Ferenczi-Messaoudi:08} the authors 
give a criterion for \emph{weak mixing} for some class of Arnoux-Rauzy systems. On the other hand they provide a class of Arnoux-Rauzy systems that admit nontrivial \emph{eigenvalues}. Before we give the details, we recall the required terminology from ergodic theory (good references here are for instance Einsiedler and Ward~\cite{EW:11} or Walters~\cite{Walters:82}; we also mention Halmos~\cite{Hal60} where some concepts are illustrated in an intuitive way). 

Let $(X,T,\mu)$ be a dynamical system with invariant measure $\mu$. We say that a complex number $\lambda$ is a \emph{measurable eigenvalue}\index{measurable eigenvalue}\index{eigenvalue!measurable} of $T$  if there exists $f\in L^1(\mu)$, $f\not=0$, such that $f(Tx)=\lambda f(x)$ for $\mu$-almost every $x$. Such an $f$ is called an \emph{eigenfunction}\index{eigenfunction} for $\lambda$. For topological dynamical systems the notion of \emph{topological eigenvalue}\index{topological eigenvalue}\index{eigenvalue!topological} is defined analogously by using continuous eigenfunctions instead of functions from $L^1(\mu)$. 

The transformation $T$ is called \emph{weakly mixing}\index{weakly mixing} if for each $A,B\subset X$ of positive measure we have 
\[
\lim_{n\to\infty}\frac1n\sum_{0\le k< n}|\mu(T^{-k}(A)\cap B) - \mu(A)\mu(B)|=0. 
\]
Weak mixing is equivalent to the fact that $1$ is the only measurable eigenvalue of $T$ and the only eigenfunctions are constants (in this case the dynamical system is said to have \emph{continuous spectrum}\index{continuous spectrum}\index{spectrum!continuous}). We note that rotations are never weakly mixing. They  have \emph{pure discrete spectrum} (with will be defined in Definition~\ref{def:PDS}), meaning that they have ``a lot of eigenfunctions'' and therefore they have a completely different dynamical behavior. Indeed, from the definition of weak mixing we see that iterated preimages of each set tend to ``smear'' (or \emph{mix}) over the whole space, this is of course not the case for the iterated preimages of a rotation.
 
We now come back to the aim of this section and discuss mixing properties of Arnoux-Rauzy systems. Let
\[
u=\lim_{n\to\infty}\sigma_{i_1}^{k_1}\circ\sigma_{i_2}^{k_2}\circ\cdots\circ\sigma_{i_n}^{k_n}(1)
\]
with $i_n\not=i_{n+1}$ be an Arnoux-Rauzy sequence. We define $(n_\ell)$ to be the sequence of indices $n$ for which $i_n\not=i_{n+2}$. The sequence $u$ is uniquely defined by the sequences $(k_n)$ and $(n_\ell)$ (up to permutation of letters). The following result shows a result on weak mixing Arnoux-Rauzy systems for large partial quotients $(k_n)$. 

\begin{theorem}[{see \cite[Theorem~2]{Cassaigne-Ferenczi-Messaoudi:08}}]\label{thm:cfm08}
For an Arnoux-Rauzy sequence $w$ with coding sequence $\bsigma$ and associated sequences $(k_n)$ and $(n_\ell)$  the system $(X_\bsigma,\Sigma,\mu)$  (with $\mu$ being the unique invariant measure) is weakly mixing if 
\[
\hbox{the sequence } (k_{n_\ell+2})_{\ell\in\N} \hbox{ is unbounded},\quad \sum_{\ell\ge 1}\frac1{k_{n_\ell+1}} < \infty,\quad\hbox{and}\quad \sum_{\ell\ge 1}\frac1{k_{n_\ell}} < \infty.
\]
This implies that $(X_\bsigma,\Sigma,\mu)$ is not measurably conjugate to a rotation on $\mathbb{T}^2$.
\end{theorem}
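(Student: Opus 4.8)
The plan is to establish weak mixing by proving that $(X_\bsigma,\Sigma,\mu)$ has \emph{continuous spectrum}: the only measurable eigenvalue is $1$, with only constant eigenfunctions. As recalled above, this is equivalent to weak mixing. So I would fix a measurable eigenvalue $\lambda=e^{2\pi i\theta}$ with eigenfunction $f\in L^1(\mu)$, $f\neq 0$, satisfying $f\circ\Sigma=\lambda f$ $\mu$-almost everywhere, and aim to deduce $\theta\in\Z$. The final assertion of the theorem then follows at once: any nontrivial rotation on $\mathbb{T}^2$ has pure discrete spectrum and hence a large supply of nonconstant eigenfunctions, whereas a weakly mixing system has none, so no measurable conjugacy between them can exist. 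Throughout, $\lVert x\rVert_{\Z}=\min_{m\in\Z}|x-m|$ denotes the distance from $x$ to the nearest integer.

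First I would build the canonical system of nested Rokhlin towers furnished by the $S$-adic expansion. Put $\sigma^{(n)}=\sigma_{i_1}^{k_1}\circ\cdots\circ\sigma_{i_n}^{k_n}$. By the minimality and unique ergodicity of Lemma~\ref{lem:ARmin}, at each level $n$ the system is represented by three towers, indexed by the letters $a\in\{1,2,3\}$, whose heights are $h_n(a)=|\sigma^{(n)}(a)|$ and whose relative measures are governed by the generalized right eigenvector of $(M_{i_n})$. Collecting the heights into a vector $\mathbf{h}_n$ with components $h_n(a)$, the incidence matrices from~\eqref{eq:ARsubs} give the recursion $\mathbf{h}_{n+1}=\tr{(M_{i_{n+1}}^{k_{n+1}})}\,\mathbf{h}_n$. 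Here $\tr{(M_i^{k})}$ leaves $h_n(i)$ fixed and adds $k\,h_n(i)$ to each of the other two heights; in particular a difference $h_n(j)-h_n(j')$ with $j,j'\neq i$ is transported unchanged across the block $\sigma_i^{k}$. This is exactly why the distinguished indices $n_\ell$ (those $n$ with $i_n\neq i_{n+2}$) mark the places where the three coordinates genuinely interact, and why the hypotheses are phrased in terms of $k_{n_\ell}$, $k_{n_\ell+1}$, $k_{n_\ell+2}$.

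The heart of the argument is to convert the existence of $f$ into an arithmetic constraint on $\theta$. Since $f$ is, up to an arbitrarily small $L^2(\mu)$-error, constant on the base of each level-$n$ tower and acquires the factor $\lambda^{j}$ at the $j$-th floor, comparing the descriptions of $f$ at levels $n$ and $n+1$ forces the height increments to be almost-periods of $\lambda$. This yields a necessary condition of the familiar shape $\sum_n \max_{a}\lVert\theta\,h_n(a)\rVert_{\Z}^2<\infty$, and, more usefully, its refinement involving the transported differences $\lVert\theta\,(h_n(j)-h_n(j'))\rVert_{\Z}$. Feeding in the recursion, I would track these quantities across each triple of blocks sitting at an index $n_\ell$: the two summability hypotheses $\sum_\ell 1/k_{n_\ell}<\infty$ and $\sum_\ell 1/k_{n_\ell+1}<\infty$ make the correction terms created at $n_\ell$ and $n_\ell+1$ negligible, so that $\lVert\theta\,h_{n_\ell}(\cdot)\rVert_{\Z}$ is pinned down and $\theta$ is almost-determined by a single height. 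The unboundedness of $(k_{n_\ell+2})_\ell$ then supplies the contradiction: at infinitely many $\ell$ the quantity $\theta\,h_{n_\ell}(\cdot)$ must lie within $o(1/k_{n_\ell+2})$ of an integer, and letting $k_{n_\ell+2}\to\infty$ along a subsequence squeezes $\theta$ onto $\Z$. A final check that the eigenfunction attached to $\theta\in\Z$ is necessarily constant then yields continuous spectrum.

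I expect the main obstacle to be this third step, namely the rigorous passage from the measurable eigenfunction to a clean quantitative condition on $\theta$, together with the simultaneous bookkeeping of the three competing height differences. One must control the tower approximation errors uniformly in $L^2(\mu)$ and follow precisely how each difference $h_n(j)-h_n(j')$ is preserved or shifted by $k\,h_n(i)$ as it passes through the blocks $\sigma_i^{k}$ around an index $n_\ell$. The three hypotheses are finely tuned to act on different coordinates at once --- summability dampens the contributions at $n_\ell$ and $n_\ell+1$, while unboundedness at $n_\ell+2$ forces the collapse $\theta\in\Z$ --- and the delicate point is to verify that these effects combine cleanly rather than cancelling one another. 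Everything else (the tower construction and the concluding spectral dichotomy) is standard.
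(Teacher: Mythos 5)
Your overall route coincides with the paper's: reduce weak mixing to the absence of nontrivial measurable eigenvalues, build the nested Rokhlin towers from the directive sequence, extract an arithmetic constraint on $\theta$, and finish with the spectral dichotomy (that last step, and your recursion $\mathbf{h}_{n+1}=(M_{i_{n+1}}^{k_{n+1}})^t\,\mathbf{h}_n$ with its preserved differences, are correct). The genuine gap is at the decisive quantitative step. The necessary condition you write down, $\sum_n\max_a\lVert\theta h_n(a)\rVert_{\Z}^2<\infty$, is the criterion appropriate to \emph{linearly recurrent} systems (the Cortez--Durand--Host--Maass setting that the paper explicitly contrasts with this theorem); the systems covered here have unbounded partial quotients and are precisely not linearly recurrent, so this condition is neither justified in this setting nor strong enough. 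Even if granted, it carries no rate: it yields $\lVert\theta h_n(a)\rVert_{\Z}\to 0$ but nothing that would place $\theta\,h_{n_\ell}(\cdot)$ within $o(1/k_{n_\ell+2})$ of an integer, which is exactly what your final squeeze consumes. What the argument needs is the criterion the paper quotes from Cassaigne--Ferenczi--Messaoudi (their Proposition~10): if $\vartheta$ is a measurable eigenvalue, then $k_{n+1}\{h_n\vartheta\}\to 0$, with the multiplicative factor $k_{n+1}$.

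That factor has a specific source which your sketch passes over. Since $\sigma_i^k(j)=i^kj$ for $j\ne i$, the level-$(n+1)$ tower of a letter $j\ne i_{n+1}$ consists of $k_{n+1}$ \emph{consecutive} copies of the level-$n$ tower of $i_{n+1}$ followed by that of $j$. The eigenvalue equation therefore forces \emph{all} multiples $m\,\theta\,h_n(i_{n+1})$, $0\le m\le k_{n+1}$, to lie near integers simultaneously (up to the tower-approximation error $\varepsilon_n$), and as long as $\varepsilon_n<1/4$ no wrap-around can occur, so in fact $k_{n+1}\lVert\theta h_n(i_{n+1})\rVert_{\Z}\lesssim\varepsilon_n$. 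This is strictly stronger than your ``height increments are almost-periods of $\lambda$'', which only yields that $\lVert k_{n+1}\theta h_n(i_{n+1})\rVert_{\Z}$ is small; note $\lVert kx\rVert_{\Z}\le k\lVert x\rVert_{\Z}$ has no converse. Deriving this uniform control of the intermediate multiples, with the $L^2$ errors managed so that the conclusion holds almost everywhere, is the actual content of the cited Proposition~10 and is the part the paper describes as ``quite involved''; once it is in place, your bookkeeping over the indices $n_\ell$, $n_\ell+1$, $n_\ell+2$ and the concluding dichotomy go through as you describe.
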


The proof of this result is quite involved. In fact, to get weak mixing, by definition one has to show that there exists no measurable eigenvalue apart from $1$ for the system $(X_\bsigma,\Sigma,\mu)$. This is achieved by verifying the following criterion (see \cite[Proposition~10]{Cassaigne-Ferenczi-Messaoudi:08}): if $\vartheta$ is a measurable eigenvalue of $(X_\bsigma,\Sigma,\mu)$, then $k_{n+1}\{h_n\vartheta\}\to 0$ for $n\to\infty$. Here $h_n$ is the length of $\sigma_{i_1}^{k_1}\circ\cdots\circ\sigma_{i_{n}}^{k_n}(1)$. This criterion is proved using a sequence of nested Rohlin towers which are naturally built using the coding sequence $\bsigma$. 
As mentioned above, because an Arnoux-Rauzy system can be represented by a $6$-interval exchange, it cannot be mixing in view of Katok~\cite{Katok:80}.

To give this section a good end we mention that \cite{Cassaigne-Ferenczi-Messaoudi:08} also contains results that support the hope that at least something along the lines of Section~\ref{sec:sturm} can be done in higher dimensions. Indeed, the authors are able to exhibit criteria for the existence of nontrivial continuous eigenvalues (not equal to $1$) for Arnoux-Rauzy systems which implies that these systems have a rotation as a continuous factor. The novelty here is the fact that these systems still have unbounded partial quotients $(k_n)$. For bounded partial quotients criteria for the existence of continuous and measurable eigenvalues are provided in the more general setting of linear recurrent minimal Cantor systems in Cortez {\it et al.}~\cite{Cortez-Durand-Host-Maass:03}.

It will be our concern in the subsequent sections to exhibit $S$-adic sequences that are even measurable conjugates of rotations on tori of dimension greater than or equal to two.

\section{The general setting}\label{sec:genset}

So far we have seen some elements of the correspondence between Sturmian sequences, the classical continued fraction algorithm, and rotations on the circle. We have also reviewed some results that highlight the problems and limitations of a generalization of this nice interplay between several branches of mathematics to higher dimensions. Nevertheless, we are able to set up a quite general extension of the results contained in Section~\ref{sec:sturm}. Indeed, in the subsequent sections of this chapter we will relate sequences generated by substitutions on alphabets over $d$ letters to generalized continued fraction algorithms and to rotations on the $(d-1)$-dimensional torus. From this point on we will give exact definitions of all objects we use. This may seem redundant as some objects have already been introduced before but as the subject is quite difficult and a variety of concepts and notations is needed along the way we found it better for the reader to do it that way. 

\subsection{$S$-adic sequences}

We now define so-called \emph{$S$-adic sequences} which form analogs of sequences of the form \eqref{eq:2lettercoding} and \eqref{eq:ARSadic} for arbitrary ``coding sequences'' of substitutions over a fixed finite alphabet. To this end we need some notation.

Let $\A=\{1,2,\ldots,d\}$ be a finite \emph{alphabet} whose elements will be called \emph{letters} or \emph{symbols}. Define $\A^*$ to be the free monoid generated by $\A$ equipped with the operation of concatenation. The elements of $\A^*$, which are of the form $v=v_{0}v_{1}\ldots v_{n-1}$ with $n\in\N$ and $v_{i}\in\A$ for $i\in\{0,1,\ldots,n-1\}$, will be referred to as \emph{words}. The integer $n$, which is equal to the number of letters in the word $v$, is called the \emph{length} of $v$ and will be denoted by $|v|$. The unique word of length $0$ is called the \emph{empty word}. Let $\A^\N$ be the space of \emph{right infinite sequences} $w=w_0w_1\ldots$ with $w_i\in\A$ for each $i\in\N$. We equip $\A^\N$ with the product topology of the discrete topology on $\A$. To a sequence $w=w_0w_1\ldots \in\A^\N$ we associate a function $p_{w}:\N\to\N$ which is defined by $$n\mapsto |\{v \in\A^*\,:\, v = w_kw_{k+1}\ldots w_{k+n-1} \hbox{ for some }k\in\N  \}|.$$ The function $p_{w}$ is called the \emph{complexity function} of the sequence $w$. For more on this function we refer for instance to Cassaigne and Nicolas~\cite{CN:10}.

A \emph{substitution} $\sigma$ over the alphabet $\A$ is an endomorphism on $\A^*$ that in our setting will always assumed to be {\em nonerasing} in the sense that the image of each letter is a nonempty word taken from $\A^*$. Being a morphism, a substitution is completely defined by giving its image for each letter. Thus our previous examples of Sturmian substitutions in \eqref{eq:sturmsubs} and of Arnoux-Rauzy substitutions in \eqref{eq:ARsubs} are indeed substitutions. We can extend the domain of a substitution $\sigma$ to $\A^\N$ in a natural way by defining it symbol-wise, {\it i.e.}, by setting $\sigma(w_0w_1\ldots)= \sigma(w_0)\sigma(w_1)\ldots$ The mapping $\sigma$ defined in this way is continuous on $\A^\N$.

With each substitution $\sigma$ over the alphabet $\A$ we associate the $|\mathcal{A}|\times|\mathcal{A}|$ \emph{incidence matrix} $M_\sigma$ whose columns are the abelianized images of $\sigma(i)$ for $i\in\A$. More precisely, letting $|v|_i$ be the number of occurrences of a given letter $i\in\A$ in a word $v\in\A^*$ this matrix is given by $M_{\sigma}=(m_{ij})=(|\sigma(j)|_i)$. The incidence matrix can be seen as the \emph{abelianized} version of $\sigma$. If we define the \emph{abelianization mapping} $\mathbf{l}:\A^* \to \N^d$ by $\mathbf{l}(w)=(|w|_1,\ldots,|w|_d)^t$ (here $\mathbf{x}^t$ is the transpose of a vector $\mathbf{x}\in\mathbb{R}^d$) we have the commutative diagram
\begin{equation}\label{eq:MsigmaDiagram}
\begin{CD}
\A^* @> \sigma >> \A^* \\
@VV\mathbf{l} V @VV\mathbf{l} V\\
\N^d @> M_\sigma >> \N^d
\end{CD}
\smallskip
\end{equation}
which says that $\mathbf{l}\sigma(w)=M_\sigma\mathbf{l}(w)$ holds for each $w\in\A^*$.

We will be interested in special classes of substitutions. Let $\sigma$ be a substitution. Then  $\sigma$ is called \emph{unimodular} if $|\det M_\sigma|=1$, it is called \emph{primitive} if $M_\sigma$ is a primitive matrix ({\it i.e.}, $M_\sigma$ has a power each of whose entries is greater than zero), it is called \emph{irreducible} if $M_\sigma$ has irreducible characteristic polynomial, and it is called \emph{Pisot} if the characteristic polynomial of $M_\sigma$ is the minimal polynomial of a \emph{Pisot number}. We recall that a Pisot number is an algebraic integer $\beta>1$ whose Galois conjugates (apart from $\beta$ itself) are all smaller than $1$ in modulus.

In full generality substitutions are studied for instance in~\cite{Allouche-Shallit:03,Berstel:79,HU:79} and, in a context related to the present chapter, in~\cite{Fog02}.

We will now define the analogs of the ``coding sequences'' used in Sections~\ref{sec:sturm} and~\ref{sec:problems3} for a more general setting. We go in the reverse direction: in the mentioned earlier sections the sequence (of letters) was there first and we constructed a sequence of substitutions that generates this sequence. Now we start with a sequence of substitutions in order to define a sequence of letters.

Let $\bsigma=(\sigma_n)_{n\in \N}$ be a sequence of substitutions over a given finite alphabet $\A$.
For convenience, we will set $M_n=M_{\sigma_n}$ for the incidence matrix of $\sigma_n$ and write $\bM=(M_n)$ for the sequence of these incidence matrices. Moreover, as we will often need blocks of substitutions as well as blocks of matrices we set
\[
\sigma_{[m,n)}=\sigma_m\circ\sigma_{m+1}\circ\cdots\circ\sigma_{n-1}
\quad\hbox{and}\quad M_{[m,n)}=M_mM_{m+1}\cdots M_{n-1}
\]
for positive integers $m\le n$ (here we set $\sigma_{[n,n)}(a)=a$ for all $a\in\A$ and define $M_{[n,n)}$ to be the $|\A|\times |\A|$ identity matrix).

We associate with $\bsigma$ a sequence of languages
\[
\Lg^{(m)}_\bsigma=\{v\in\A^* \;:\; v \hbox{ is a factor of }\sigma_{[m,n)}(a) \hbox{ for some } a\in\A,\, m\le n \} \qquad(m\in\N)
\]
and call $\Lg_\bsigma=\Lg^{(0)}_\bsigma$ the \emph{language of $\bsigma$}.  Here $u\in\A^*$ is a \emph{factor} of $v\in\A^*$ if $v\in\A^*u\A^*$, or, more informally, if the word $u$ occurs somewhere as \emph{subword} in the word $v$. We will use this notation also for (right infinite) sequences later. Then $u\in\A^*$ is a \emph{factor} of $v\in\A^\N$ if $v\in\A^*u\A^\N$. The set of all factors of a sequence $v$ is called the \emph{language} of $v$. It is denoted by $L(v)$. We also introduce the notion of prefix and suffix that will be used later. A \emph{prefix} of a word $v\in\A^*$ is a word $u\in\A^*$ with $v\in u\A^*$ and a \emph{suffix} of $v\in\A^*$ is a word $u\in\A^*$ with $v\in \A^*u$. A prefix of a sequence $v\in\A^\N$ is a word $u\in\A^*$ with $v\in u\A^\N$.

After these preparations we can define \emph{$S$-adic sequences} for a given sequence of substitutions $\bsigma$. The terminology ``$S$-adic'' goes back to Ferenczi~\cite{Ferenczi:96}. In our definition we follow Arnoux, Mizutani, and Sellami~\cite{AMS:14} (see also~\cite[Section~2.2]{Berthe-Steiner-Thuswaldner}).  

\begin{definition}[$S$-adic sequence]\label{def:sadicsequence}\index{$S$-adic sequence}\index{sequence!$S$-adic}
Let $\A$ be a given finite alphabet, let $\bsigma=(\sigma_n)_{n\ge 0}$ be a sequence of substitutions over $\A$, and set $S:=\{\sigma_n\,:\, n\in \N\}$. We call a sequence $w\in\A^\N$ an \emph{$S$-adic sequence} (or a \emph{limit sequence}\index{limit sequence}\index{sequence!limit}) for $\bsigma$ if there exists a sequence $(w^{(n)})_{n\ge 0}$ of sequences $w^{(n)}\in\A^\N$ with
\begin{equation}\label{eq:limitworddesubs}
w^{(0)}=w, \quad w^{(n)}=\sigma_n(w^{(n+1)}) \quad (\hbox{for all } n\in\N).
\end{equation}
In this case we call $\bsigma$ the \emph{coding sequence} or the \emph{directive sequence} for $w$. (Note that \eqref{eq:limitworddesubs} says that $w$ can be ``desubstituted'' infinitely often).
\end{definition}

Let $S$ be a finite set of substitutions over a given alphabet $\A$. For this case $S$-adic sequences have been thoroughly studied in the literature. With Sturmian sequences and Arnoux-Rauzy sequences we already discussed two prominent  classes of $S$-adic sequences. Durand~\cite{Durand:00a,Durand:00b} proved that linearly recurrent\footnote{A sequence is called \emph{linearly recurrent} if there is a constant $K$ such that each of its factors $u$ occurs infinitely often in the sequence with gaps bounded by $K|u|$.} sequences are $S$-adic with finite $S$. Ferenczi~\cite{Ferenczi:96} and Leroy~\cite{Leroy:12} showed that a uniformly recurrent\footnote{A sequence is called \emph{uniformly recurrent} if each of its  factors occurs infinitely often in the sequence with bounded gaps.} sequence $w$ with an at most linear complexity function $p_w$ is $S$-adic with finite $S$; see also~\cite{Leroy:14}. The so-called \emph{$S$-adic conjecture} (see {\it e.g.} \cite[Section~12.1.2]{Fog02} or \cite{Durand-Leroy-Richomme:13,Leroy:12}) is also formulated for a finite set of substitutions $S$. It asks to what extent a converse of this assertion can be true, {\it i.e.}, which criteria are needed for an $S$-adic sequence $w$ to have linear complexity function $p_{w}$. Berth\'e and Labb\'e~\cite{Berthe-Labbe:15} show linearity of the complexity of $S$-adic sequences associated with the Arnoux-Rauzy-Poincar\'e multidimensional continued fraction algorithm (their bound $p_w(n)\le \frac52n + 1$ is even strong enough to conclude from Boshernitzan~\cite{Boshernitzan:84} that, like Arnoux-Rauzy sequences, these sequences pertain to uniquely ergodic dynamical systems). Arnoux, Mizutani, and Sellami~\cite{AMS:14} study $S$-adic sequences in the same context as we will do it. However, they restrict their attention to sets of substitutions $S$ whose elements have a common incidence matrix. If $S$ is a singleton, an $S$-adic sequence is called \emph{substitutive}. Substitutive sequences are very well studied (see for instance \cite{Fog02}; moreover in the paragraphs following Definition~\ref{def:SadicSystem} we review the literature on substitutive sequences related to our subject). They are strongly related to automatic sequences by \emph{Cobham's Theorem}, see {\it e.g.}~\cite[Theorem~6.3.2]{Allouche-Shallit:03}.

Generalizing Sturmian systems we introduce dynamical systems for $S$-adic sequences. To this end, for a finite alphabet $\A$ define the \emph{shift} on $\A^\N$ as $\Sigma:\A^\N\to\A^\N$ by $\Sigma(w_0w_1\ldots)=w_1w_2\ldots$

\begin{definition}[$S$-adic system]\label{def:SadicSystem}\index{$S$-adic system}
For an $S$-adic sequence $w$ over a finite alphabet $\A$ we denote by $X_w=\overline{\{\Sigma^kw\,:\, k\in\N\}}$ the orbit closure of $w$ under the action of the shift $\Sigma$. If we denote the restriction of $\Sigma$ to $X_w$ by $\Sigma$ again we call the pair $(X_w,\Sigma)$ the \emph{$S$-adic system} (or \emph{$S$-adic shift}) generated by $w$. 
\end{definition}

Alternatively, the set $X_w$ can be defined using languages by setting $X_w=\{v \in \A^\N \,:\, L(v)\subseteq L(w)\}$. The proof of the fact that both definitions of $X_w$ agree  is an easy exercise. Also the set $X_{\bsigma}=\bigcup X_{w}$, where the union is extended over all $S$-adic sequences with directive sequence $\bsigma$, and the associated dynamical system $(X_{\bsigma},\Sigma)$ are of interest.\footnote{If we impose the additional property of \emph{primitivity} on the coding sequence of a sequence $w\in\A^\N$ it turns out that $X_w$ depends only on the directive sequence $\bsigma$ defining the $S$-adic sequence $w$ and we have $X_{\bsigma}=X_{w}$. This will be worked out precisely in Section~\ref{sec:pr}.}  A recent survey on $S$-adic systems is provided in~\cite{Berthe-Delecroix}. 

In all what follows we will assume that all our substitutions and matrices are unimodular.

The case of $\bsigma=(\sigma)$, the constant sequence formed by a given unimodular substitution $\sigma$ over some alphabet $\mathcal{A}$, has been studied extensively. In this case we call $(X_{(\sigma)},\Sigma)$ a \emph{substitutive system} (see Queffelec~\cite{Queffelec:10} for a profound study of dynamical properties of these systems). The theory of Section~\ref{sec:sturm} can be generalized quite well to substitutive systems if $\sigma$ is a unimodular Pisot substitution. 
The seed for such a generalization was planted by Rauzy~\cite{Rauzy:82}. Constructing the prototype of what is now called \emph{Rauzy fractal}, he proved that the dynamical system $(X_\bsigma,\Sigma)$ is measurably conjugate to a rotation on $\mathbb{T}^2$ if $\bsigma=(\sigma)$ with $\sigma$ being the Tribonacci substitution introduced in \eqref{eq:substribo}. It was conjectured since then that each unimodular Pisot substitution $\sigma$ gives rise to a substitutive system $(X_{(\sigma)},\Sigma)$ which is measurably conjugate to a rotation on the torus. This conjecture is still open and known as \emph{Pisot (substitution) conjecture}.

In the meantime, the Pisot conjecture was studied by many people and interesting partial results have been achieved. We mention Arnoux and Ito~\cite{Arnoux-Ito:01} as well as Ito and Rao~\cite{Ito-Rao:06} who could prove the Pisot conjecture subject to some combinatorial \emph{coincidence conditions}. Conditions of this type will also play an important role in the general theory we will develop here, see Section~\ref{sec:coinc}. Recently, Barge~\cite{Barge:16b,Barge:16} made considerable progress on this subject using refinements of the notion of \emph{proximality} (see~\cite{Auslander:1988,Barge-Kellendonk:13}). For survey papers on the subject we refer {\it e.g.}\ to \cite{ABBLS,CANTBST}. For extensions of this theory to the nonunimodular case see~\cite{MinervinoThuswaldner14,Siegel:03}.

\subsection{Generalized continued fraction algorithms}\label{sec:genCF}

We now generalize the concept of continued fraction algorithm defined in Section~\ref{sec:CF} and introduce \emph{generalized continued fraction algorithms}. Standard references for these objects are Brentjes~\cite{BRENTJES} and Schweiger~\cite{Schweiger:00}. Also Labb\'e's \emph{Cheat Sheets}~\cite{Lab15} for $3$-dimensional continued fraction algorithms are highly recommended. For discussions of generalized continued fraction algorithms in a context similar to ours we refer {\it e.g.} to~\cite{Arnoux:16,AL:15,Arnoux-Nogueira,Berthe:11}.

\begin{definition}[Generalized continued fraction algorithm]\label{def:MCF}
\index{generalized continued fraction algorithm}\index{continued fraction algorithm!generalized}
For $d\ge 2$ let $X$ be a closed subset of the projective space $\mathbb{P}^{d-1}$ and let 
$\{X_i\}_{i\in I}$ be a partition of $X$ (up to a set of measure~$0$) indexed by a countable set $I$. Let $\mathcal{M}=\{M_i\,:\, i\in I\}$ be a set of unimodular $d\times d$ integer matrices (that act on $\mathbb{P}^{d-1}$ by homogeneity) satisfying $M_i^{-1}X_i \subset X$ and let $M:X\to \mathcal{M}$ given by $M(\bx)=M_i$ whenever $\bx\in X_i$. 
The \emph{generalized continued fraction algorithm} associated with this data is given by the mapping
\[
F:X\to X; \quad \bx \mapsto M(\bx)^{-1}\bx.
\]
If $I$ is a finite set, the algorithm given by $F$ is called \emph{additive}, otherwise it is called \emph{multiplicative}.
\end{definition}

Note that $F$ is defined only almost everywhere since $\{X_i\}_{i\in I}$ in general is only a partition up to measure zero. We confine ourselves to unimodular matrices. Thus the algorithms in Definition~\ref{def:MCF} are sometimes called \emph{unimodular algorithms}.
Interesting examples of nonunimodular continued fraction algorithms are provided by the \emph{$N$-continued fraction algorithm} introduced by Burger {\it et al.}~\cite{BGKWY:08} and by the \emph{Reverse algorithm}, a certain ``completion'' of the Arnoux-Rauzy algorithm studied in~\cite[Section~4]{AL:15}.

We illustrate the definition of generalized continued fraction algorithms by a classical example: \emph{Brun's continued fraction algorithm}.

\begin{example}[Brun's algorithm]\label{ex:brun}\index{Brun continued fraction algorithm}\index{continued fraction algorithm!Brun}
The linear version of Brun's algorithm is defined on the subset 
\[
X=\{[w_{1}:w_{2}:w_{3}]\;:\; 0\le w_{1}\le w_{2}\le  w_{3} \} \subset \mathbb{P}^2.
\]
It maps a vector $[w_{1}:w_{2}:w_{3}]$ to ${\rm sort}[w_{1}:w_{2}:w_{3}-w_{2}]$, {\it i.e.}, it subtracts the second largest entry from the largest one and sorts the resulting entries in ascending order. By a straightforward calculation we see that $\mathcal{M}=\{M_{1},M_{2},M_{3}\}$ with 
\begin{equation}\label{eq:brunmatrices}
M_{1} = \begin{pmatrix}0&1&0\\0&0&1\\1&0&1 \end{pmatrix},\quad
M_{2} = \begin{pmatrix}1&0&0\\0&0&1\\0&1&1 \end{pmatrix},\quad
M_{3} = \begin{pmatrix}1&0&0\\0&1&0\\0&1&1 \end{pmatrix},
\end{equation}
and that the partition $X=X_{1}\cup X_{2}\cup X_{3}$ is given by Figure~\ref{fig:brunsimplex}. 

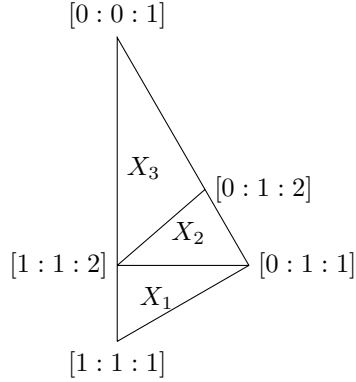
\begin{figure}
\begin{tikzpicture}[scale=3.5]
\coordinate [label={below :$[1:1:1]$}] (D) at (0,1.732/3);
\coordinate [label={above:$[0:0:1]$}] (E) at (0,1.732);
\coordinate [label={right:$[0:1:1]$}] (F) at (.5,1.732/2);
\coordinate [label={left:$[1:1:2]$}] (G) at (0,1.732/2);
\coordinate [label={right:$[0:1:2]$}] (H) at (1/3,2/3*1.732);

\draw (D) -- node[above] {} (E) -- node[right] {} (F) -- node[below] {} (D) (G)--(F) (G)--(H);
\draw (.15,1.732*3/7)node[]{$X_1$} (.1,1.732*5/7)node[]{$X_3$} 
(.27,1.732*4/7)node[]{$X_2$};
\end{tikzpicture}
\caption{The partition of $X$ induced by Brun's continued fraction algorithm.\label{fig:brunsimplex}}
\end{figure}
With this data the linear Brun continued fraction mapping can be defined according to Definition~\ref{def:MCF} by
\[
F_{B}:X\to X; \quad \bx \mapsto M_{i}^{-1}\bx \quad \hbox{ for }\bx \in X_{i}.
\]
Since $M_{1}$, $M_{2}$, and $M_{3}$ are unimodular, Brun's algorithm is a unimodular continued fraction algorithm.
As we did for the classical continued fraction algorithm in Section~\ref{sec:CF}, we can define a projective version also in the case of Brun's algorithm. This projective version is the original version of this algorithm and goes back to Brun~\cite{BRUN}. It is defined on the set 
\begin{equation}\label{eq:deltaset}
\Delta=\{(x_1,x_2)\in\R^2\,:\, 0\le x_1\le x_2 \le 1\}
\end{equation}
by 
\begin{equation}\label{eq:brunmap}
f_{\rm B}: (x_1,x_2) \mapsto 
\begin{cases}
\left(\frac{x_1}{1-x_2},\frac{x_2}{1-x_2}\right), & \hbox{for } x_2 \le \frac12, \\
\left(\frac{x_1}{x_2},\frac{1-x_2}{x_2}\right), & \hbox{for } \frac12 \le x_2 \le 1-x_1,\\
\left(\frac{1-x_2}{x_2},\frac{x_1}{x_2}\right), & \hbox{for }1-x_1 \le x_2.
\end{cases}
\end{equation}
To see that $f_{B}$ is the projective version of $F_{B}$ we use the same reasoning as in the classical case in Section~\ref{sec:CF}. 

We refer to Example~\ref{ex:brunprim} where we provide $S$-adic sequences associated with Brun's algorithm.
\end{example}

Other well-known generalized continued fraction algorithms include the Jacobi-Perron algorithm~\cite{Perron:07} and the Selmer algorithm~\cite{Selmer:61}.

\section{The importance of primitivity and recurrence}\label{sec:pr}

As indicated in Section~\ref{sec:problems3} it is not possible to generalize the results of Section~\ref{sec:sturm} to higher dimensions (or, equivalently, to alphabets of cardinality greater than two) without additional conditions on the sequence of substitutions $\bsigma$. 
In this section we will discuss two natural conditions that we will have to impose on our sequences of substitutions. The first one is \emph{primitivity}, the second one is \emph{recurrence}. Both of them will have important consequences for the underlying $S$-adic system: primitivity will imply minimality, and if we assume recurrence on top of primitivity, the system will be uniquely ergodic.

\subsection{Primitivity and minimality}
In the following definition a matrix is called \emph{nonnegative} if each of its entries is greater than or equal to zero. In a \emph{positive matrix} each entry is greater than zero.

\begin{definition}[Primitivity]\label{def:prim}\index{primitivity}
A sequence $\bM=(M_n)_{n\ge 0}$ of nonnegative integer matrices is \emph{primitive} if for each $m\in\N$ there is $n>m$ such that $M_{[m,n)}$ is a positive matrix. A sequence $\bsigma$ of substitutions is \emph{primitive} if its associated sequence of incidence matrices is primitive. 
\end{definition}

Note that primitivity of $(M_n)_{n\ge 0}$ implies primitivity of the ``shifted'' sequence $(M_{n+k})_{n\ge 0}$ for each $k\in\N$. The same applies for primitive sequences of substitutions.

Our definition of primitivity is taken from \cite[Section~2.2]{Berthe-Steiner-Thuswaldner}. It coincides with the notion of weak primitivity introduced in \cite[Definition~5.1]{Berthe-Delecroix} and with the notion of nonstationary primitivity defined in \cite[p.~339]{Fisher:09}. 
The more restrictive property of strong primitivity which is also introduced in \cite[Definition~5.1]{Berthe-Delecroix} requires that the integer $n$ in Definition~\ref{def:prim} can be chosen in a way that the difference $n-m$ is uniformly bounded in $m$. In other papers, this stronger property is called primitivity (see {\it e.g.}~\cite{Durand:00a,Durand:00b,Durand-Leroy-Richomme:13}). 

As we will see in the first result of this section, the assumption of primitivity entails \emph{minimality} of the associated $S$-adic systems. We recall the definition of this basic concept.

\begin{definition}[Minimality]\label{def:min}\index{minimality}
Let $(X,T)$ be a topological dynamical system. $(X,T)$ is called \emph{minimal} if the orbit of each point is dense in $X$, {\em i.e.,} if $\overline{\{T^nx\,:\;n\in\N\}}=X$ holds for each $x\in X$.
\end{definition}

The following lemma summarizes the consequences of primitivity for an $S$-adic system. It is proved for instance in~\cite[Proposition~2.1 and~2.2]{AMS:14}; the minimality assertion can already be found in~\cite[Lemma~7]{Durand:00a}.

\begin{proposition}\label{prop:sadicminimal}
If $\bsigma$ is a primitive sequence of substitutions, the following properties hold.
\begin{enumerate}
\item[(i)] There exists at least one and at most $|\A|$  limit sequences for $\bsigma$.
\item[(ii)] Let $w,w'$ be two $S$-adic sequences with directive sequence $\bsigma$. Then $(X_w,\Sigma)=(X_{w'},\Sigma)$. 
\item[(iii)] For a limit sequence $w$ of $\bsigma$ the $S$-adic system $(X_w,\Sigma)$ is minimal.
 \end{enumerate}
\end{proposition}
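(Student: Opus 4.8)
The plan is to establish (i) by a prefix/inverse-limit analysis of the possible first letters of the desubstitutions, and to deduce (ii) and (iii) together from a single uniform-recurrence lemma proved exactly along the lines of Lemma~\ref{lem:ARmin}. I would carry out the first letters of the desubstitutions as the organizing device, since a limit sequence $w$ is recovered from the letters $a_n:=w^{(n)}_0$ via a nested-prefix argument.

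The first step is to record that, for any sequence of letters $(a_n)_{n\ge 0}$ with $a_n$ equal to the first letter of $\sigma_n(a_{n+1})$ for every $n$ (call this an \emph{admissible first-letter path}), the word $\sigma_{[0,n)}(a_n)$ is a prefix of $\sigma_{[0,n+1)}(a_{n+1})$. These words are therefore nested, and since the substitutions are nonerasing and $\bsigma$ is primitive (Definition~\ref{def:prim}), concatenating successive positive blocks $M_{[n_k,n_{k+1})}$ forces $|\sigma_{[0,n)}(a_n)|\to\infty$. Hence $w:=\lim_{n\to\infty}\sigma_{[0,n)}(a_n)$ is a well-defined element of $\A^\N$, and setting $w^{(k)}:=\lim_{n\to\infty}\sigma_{[k,n)}(a_n)$ together with the continuity of $\sigma_k$ on $\A^\N$ gives $w^{(k)}=\sigma_k(w^{(k+1)})$, so $w$ is a limit sequence. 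Conversely, any limit sequence $w$ produces the admissible path $(w^{(n)}_0)_n$, and $w=\lim_n\sigma_{[0,n)}(w^{(n)}_0)$; thus $(a_n)\mapsto\lim_n\sigma_{[0,n)}(a_n)$ is a \emph{surjection} from admissible paths onto limit sequences, and it suffices to count admissible paths.

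Writing $p_n\colon\A\to\A$ for the map sending $b$ to the first letter of $\sigma_n(b)$, an admissible path is exactly a thread of the inverse system $(\A,p_n)$. Let $\A_n^{(\infty)}:=\bigcap_{m>n}p_np_{n+1}\cdots p_{m-1}(\A)$ be the eventual image. One checks $p_n(\A_{n+1}^{(\infty)})=\A_n^{(\infty)}$, so $|\A_n^{(\infty)}|$ is nondecreasing in $n$ and bounded by $|\A|$, hence stabilizes; beyond the stabilization index the surjections $p_n\colon\A_{n+1}^{(\infty)}\to\A_n^{(\infty)}$ are bijections, and the number of threads equals the stabilized value of $|\A_n^{(\infty)}|\le|\A|$. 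Nonemptiness of this inverse system (equivalently, a compactness/König argument on the finite alphabet) gives at least one admissible path, hence at least one limit sequence. Combined with the surjection above this proves (i). I expect this counting step to be the main obstacle: it is the only place where the global combinatorics of the directive sequence must be controlled, rather than imitating the classical two- or three-letter arguments.

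Finally, I would prove a uniform-recurrence lemma verbatim in the style of Lemma~\ref{lem:ARmin}. Given a factor $u$ of a limit sequence $w$, one has $u\in\Lg_\bsigma$, so $u$ occurs in some $\sigma_{[0,m)}(a)$; choosing $n>m$ with $M_{[m,n)}$ positive makes $\sigma_{[m,n)}(b)$ contain $a$ for every $b\in\A$, so $\sigma_{[0,n)}(b)=\sigma_{[0,m)}(\sigma_{[m,n)}(b))$ contains $u$ for every $b$. Since $w=\sigma_{[0,n)}(w^{(n)})$ is a concatenation of such blocks, $u$ recurs in $w$ with gaps bounded by $2\max_{b}|\sigma_{[0,n)}(b)|$, so $w$ is uniformly recurrent; this yields minimality (Definition~\ref{def:min}) and proves (iii). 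The same computation shows $\Lg_\bsigma\subseteq L(w)$, while $L(w)\subseteq\Lg_\bsigma$ is immediate from the prefix description of $w$, so $L(w)=\Lg_\bsigma$ for \emph{every} limit sequence $w$. Using the language description $X_w=\{v\in\A^\N\,:\,L(v)\subseteq L(w)\}$, the common value $L(w)=\Lg_\bsigma=L(w')$ gives $X_w=X_{w'}$ and hence $(X_w,\Sigma)=(X_{w'},\Sigma)$, which is (ii).
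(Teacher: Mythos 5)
Your proposal is correct and follows essentially the same route as the paper: for (i) it builds limit sequences as limits of the nested prefixes $\sigma_{[0,n)}(a_n)$ attached to first-letter paths, and for (ii) and (iii) it uses primitivity to spread any word of $\Lg_\bsigma$ into $\sigma_{[0,n)}(b)$ for every letter $b$, yielding uniform recurrence and a common language. Your inverse-limit count of admissible paths and your derivation of (ii) from the identity $L(w)=\Lg_\bsigma$ are slightly more structured packagings of the paper's terser steps (the paper counts paths by noting that $a_n$ determines $a_p$ for all $p<n$, and proves $L(w)\subseteq L(w')$ by directly comparing the two limit sequences), but the underlying mechanisms are identical.
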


\begin{proof}
To show (i) let $\bsigma=(\sigma_n)$ and for each $n\in\N$ let $\A_n$ be the set of all first letters occurring in the family $\sigma_{[0,n)}(\A)$ of words. Then $(\A_n)$ is a decreasing sequence of nonempty subsets of $\A$. Hence, there is $a\in\bigcap_{n\ge0}\A_n$. By construction there is a sequence $(a_n)$ with $a_0=a$ such that  $a_{n}$ is the first letter of $\sigma_{n}(a_{n+1})$. Moreover, 
$\sigma_{[0,n)}(a_n)$ is a prefix of $\sigma_{[0,n+1)}(a_{n+1})$. By primitivity, the lengths of these words tend to infinity which implies that
$
w=\lim_{n\to\infty}\sigma_{[0,n)}(a_na_n\ldots)
$
converges.\footnote{Only the first letter $a_{n}$ in the argument of $\sigma_{[0,n)}$ is relevant for the limit. However, since we use the topology on $\A^\N$ and $\sigma_{[0,n)}(a_{n})\not\in\A^\N$ we have to write $\sigma_{[0,n)}(a_na_n\ldots)$.} By the same reasoning (here we use that primitivity also holds for ``shifted'' sequences), we see that $w^{(m)}=\lim_{n\to\infty}\sigma_{[m,n)}(a_na_n\ldots)$ converges as well and the sequence $(w^{(m)})$ satisfies the conditions of Definition~\ref{def:sadicsequence}. Thus $w=w^{(0)}$ is an $S$-adic sequence with directive sequence $\bsigma$.

If $w$ is an $S$-adic sequence with directive sequence $\bsigma$, by Definition~\ref{def:sadicsequence} we can associate a sequence $(w^{(n)})$ with it. For $n\in\N$ let $a_{n}$ be the first letter of $w^{(n)}$. Primitivity implies that $|\sigma_{[0,n)}(a_{n})|\to\infty$ for $n\to\infty$ and, hence, the sequence $w$ is determined by the sequence $(a_{n})$. In particular, we can write  $w= \lim_{n\to\infty}\sigma_{[0,n)}(a_na_n\ldots)$. Since $a_n$ uniquely determines $a_p$ for each $p<n$, there are at most $|\A|$ possible different choices for such a sequence. 

To prove (ii) let $w$ and $w'$ be two $S$-adic sequences with directive sequence $\bsigma$. Associate the sequences $(a_n)$ and $(a_n')$, respectively, with them as above. If $u$ is a factor of $w$ then $u$ is a factor of $\sigma_{[0,m)}(a_m)$ for some $m$. By primitivity, there exists $n > m$ such that $a_m$ occurs in $\sigma_{[m,n)}(a_n')$. Thus $\sigma_{[0,m)}(a_m)$ and {\it a fortiori} also $u$ is a factor of $w'$ and, hence, $L(w)\subseteq L(w')$. Exchanging the roles of $w$ and $w'$ we can therefore conclude that $L(w)=L(w')$ which implies that $X_w=X_{w'}$.

It remains to prove (iii). This follows if we can show that $L(v)=L(w)$ for each $v\in X_w$.
This in turn holds if each factor of $w$ occurs infinitely often in $w$ with bounded gaps, which we will now prove. Let $u$ be a factor of $w$ and let $(a_n)$ be the sequence of letters associated to $w$ as above. Then $u$ is a factor of $\sigma_{[0,m)}(a_m)$ for some $m$. By primitivity, there exists $n > m$ such that $u$ is a factor of $\sigma_{[0,n)}(a)$ for each $a\in\A$. Since $w$ is an $S$-adic sequence, $w=\sigma_{[0,n)}(w^{(n)})$ holds for some $w^{(n)}\in \A^\N$. Thus $u$ occurs in $w$ infinitely often with gaps bounded by $2\max\{|\sigma_{[0,n)}(a)| \,:\, a\in\A\}$.  
\end{proof}

If $\bsigma$ is a primitive sequence of substitutions, assertion (ii) of this proposition implies that $X_{\bsigma}=X_{w}$ and, hence, $(X_\bsigma,\Sigma)=(X_w,\Sigma)$ for $w$ being an arbitrary $S$-adic sequence with directive sequence $\bsigma$.  Since we will assume primitivity throughout the remaining part of the paper we will always work with $X_\bsigma$.

\subsection{Recurrence, weak convergence, and unique ergodicity}
The next concept we introduce is \emph{recurrence}. Let $S$ be a finite set of substitutions. If we take a random sequence of substitutions $\bsigma\in S^\N$ whose elements are taken from a finite set $S$ we will almost always (w.r.t.\ any natural measure on the space $S^\N$) get a sequence $\bsigma$ each of whose patterns occurs infinitely often. This infinite repetition of patterns is made precise in the following definition.

\begin{definition}[Recurrence]\label{def:recurrence}\index{recurrence}
A sequence $\bM=(M_{n})$ of integer matrices is called \emph{recurrent} if for each $m\in \N$ there is $n\ge1$ such that $(M_0,\ldots,M_{m-1}) = (M_n,\ldots,M_{n+m-1})$.  
A sequence $\bsigma=(\sigma_{n})$ of substitutions is called \emph{recurrent} if for each $m\in \N$ there is $n\ge1$ such that $(\sigma_0,\ldots,\sigma_{m-1}) = (\sigma_n,\ldots,\sigma_{n+m-1})$.  
\end{definition}

Note that recurrence of a sequence of substitutions $\bsigma$ implies that each block of substitutions that occurs once in $\bsigma$ must occur infinitely often (the same is true for sequences of matrices). Thus recurrence of $(\sigma_n)_{n\in\N}$ implies recurrence of $(\sigma_{m+n})_{n\in\N}$ for each $m\in \N$ and an analogous statement holds for sequences of matrices. We also emphasize that a nonrecurrent sequence of substitutions may well have a recurrent sequence of incidence matrices. This is due to the fact that two different substitutions can have the same incidence matrix. 

We now study consequences of primitivity and recurrence. We start with the following result which follows from contraction properties of the \emph{Hilbert metric}, a metric on projective space that goes back to Birkhoff~\cite{Bir:57} and Furstenberg~\cite[pp.~91--95]{Furstenberg:60} (we mention \cite[Appendix~A]{Fisher:09} and \cite[Chapter~26]{Viana:06} as more recent references). A special case of this result is stated in Section~\ref{sec:sturm}, see \eqref{eq:concvone2lett}.

\begin{proposition}\label{prop:furstmatrix}
Let $\bM=(M_n)$ be a primitive and recurrent sequence of nonnegative integer matrices. Then there is a vector $\bu\in\mathbb{R}^d_{> 0}$ satisfying
\begin{equation}\label{eq:coneshrink}
\bigcap_{n\ge 0}M_{[0,n)}\mathbb{R}^d_{\ge 0} = \R_+\bu.
\end{equation}
\end{proposition}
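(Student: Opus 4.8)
The plan is to exploit the contraction properties of the Hilbert (projective) metric $d_H$ on the interior of the positive cone, exactly as indicated by the reference to Birkhoff and Furstenberg. First I would record the elementary monotonicity: since each $M_n$ is nonnegative, $M_n\mathbb{R}^d_{\ge 0}\subseteq\mathbb{R}^d_{\ge 0}$, so the cones $C_n:=M_{[0,n)}\mathbb{R}^d_{\ge 0}$ form a nested decreasing sequence $C_0\supseteq C_1\supseteq\cdots$ of closed cones. Intersecting with the cross-section $\{x:\Vert x\Vert_1=1\}$ gives nested nonempty compact sets, so $\bigcap_{n\ge0}C_n$ is a nonempty closed cone; the whole problem is to show that this cone is a single ray $\R_+\bu$ with $\bu$ strictly positive.

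Next I would produce a fixed positive block that recurs. By primitivity (applied to $m=0$) there is $n_1>0$ such that $P:=M_{[0,n_1)}$ is a positive matrix, and Birkhoff's theorem then gives that $P\mathbb{R}^d_{\ge 0}$ has finite projective diameter $\Delta(P)=:D$ and that $P$ contracts $d_H$ with coefficient $\tau:=\tanh(D/4)<1$. Here recurrence enters decisively: by the remark following Definition~\ref{def:recurrence}, the finite block $(M_0,\ldots,M_{n_1-1})$ occurs infinitely often in $\bM$, so I can extract positions $p_1<p_2<\cdots$ with $p_{j+1}\ge p_j+n_1$ and $M_{[p_j,\,p_j+n_1)}=P$ for every $j$. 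Thus $M_{[0,n)}$ contains, for large $n$, arbitrarily many non-overlapping copies of the \emph{same} positive matrix $P$.

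I would then run the contraction estimate. Applying $M_{[0,\,p_k+n_1)}=Q_1\,P\,Q_2\,P\cdots Q_k\,P$ (with the $Q_j$ the intervening nonnegative gap matrices) to $\mathbb{R}^d_{\ge 0}$ and tracking the $d_H$-diameter from the innermost factor outward: the first (innermost) copy of $P$ produces a set of diameter at most $D$, each nonnegative $Q_j$ is $d_H$-nonexpanding, and each further copy of $P$ multiplies the diameter by at most $\tau$. Hence $\mathrm{diam}_{d_H}(C_{p_k+n_1})\le\tau^{\,k-1}D$, which tends to $0$ as $k\to\infty$. Since $C_n\subseteq C_{p_k+n_1}$ for $n\ge p_k+n_1$, the diameters of the $C_n$ shrink to $0$, so $\bigcap_{n\ge0}C_n$ has projective diameter $0$, i.e.\ it is a single ray. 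As $P$ maps every nonzero nonnegative vector into $\mathbb{R}^d_{>0}$, we have $C_{n_1}=P\mathbb{R}^d_{\ge 0}\subseteq\mathbb{R}^d_{>0}\cup\{\mathbf 0\}$, so the limiting ray lies in the interior, yielding $\bu\in\mathbb{R}^d_{>0}$ with $\bigcap_{n\ge0}C_n=\R_+\bu$.

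The step I expect to be the main obstacle---more precisely, the crux where the hypotheses are genuinely used---is securing a \emph{uniform} contraction factor. Primitivity alone only yields a sequence of positive blocks whose Birkhoff coefficients $\tau_j$ could approach $1$, in which case $\prod_j\tau_j$ need not vanish and the intersection could be a higher-dimensional cone. Recurrence repairs this by forcing one fixed block $P$, with one fixed $\tau<1$, to reappear infinitely often, so that the accumulated contraction is $\tau^{k}\to0$. I would therefore take care to state Birkhoff's contraction theorem (diameter finiteness for positive matrices, nonexpansiveness for nonnegative ones, and submultiplicativity of the coefficient) precisely, since that is the analytic input on which the whole argument rests.
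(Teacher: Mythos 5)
Your proof is correct and follows essentially the same route as the paper's own argument: both use the Hilbert metric on nonnegative rays, Birkhoff's contraction property of positive matrices together with nonexpansiveness of nonnegative ones, and—crucially—recurrence to extract infinitely many non-overlapping occurrences of one fixed positive block, so that a single contraction factor $\kappa<1$ (your $\tau$) applies at each occurrence and the projective diameters of the cones $M_{[0,n)}\mathbb{R}^d_{\ge 0}$ decay geometrically to zero. Your closing observation that primitivity alone could yield positive blocks with contraction coefficients degenerating to $1$ is precisely the point at which the paper likewise invokes recurrence, so the two proofs coincide in substance.
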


\begin{proof}
To prove this result we define a metric on the space $\mathcal{W}=\{\R_{+}\bw \,:\, \bw\in\R^d_{\ge 0}\setminus\{\mathbf{0}\}\}$ of nonnegative rays through the origin by (see \cite[Appendix~A]{Fisher:09}) 
\[
d_\mathcal{W}(\R_+\bv,\R_+\bw)=\max_{1\le i,j\le d} \log\frac{v_iw_j}{v_jw_i},
\]
where $\bv=(v_1,\ldots,v_d)$ and $\bw=(w_1,\ldots,w_d)$. It can be checked by direct calculation that this is a metric on $\mathcal{W}$ which is the so-called \emph{Hilbert Metric} ({\it cf. e.g.}~\cite[Lemma~A.5]{Fisher:09} or \cite[Chapter~26]{Viana:06}).
Let $\mathrm{diam}_\mathcal{W}(A)$ be the diameter of a set $A\subset \mathcal{W}$ w.r.t.\ this metric. Then $\mathrm{diam}_\mathcal{W}(\mathcal{W})=\infty$ and $\mathrm{diam}_\mathcal{W}(M\mathcal{W})<\infty$ for every positive matrix $M$. It follows from the definitions that a nonnegative matrix $M$ is nonexpanding in the sense that $d_\mathcal{W}(M\R_+\bv,M\R_+\bw)\le d_\mathcal{W}(\R_+\bv,\R_+\bw)$ for all $\R_+\bv,\R_+\bw\in \mathcal{W}$. Moreover, one can show that each positive matrix $M$ is a contraction, {\it i.e.}, there is $\kappa<1$ (depending on $M$) such that $d_\mathcal{W}(M\R_+\bv,M\R_+\bw)\le \kappa \, d_\mathcal{W}(\R_+\bv,\R_+\bw)$ for all $\R_+\bv,\R_+\bw\in \mathcal{W}$ (see for instance~\cite{Bir:57} or \cite[Proposition~26.3]{Viana:06} for a proof of this).

We now apply these contraction properties to our setting. Since $\bM$ is primitive and recurrent, there exists a positive matrix $B$ and an integer $h>0$ such that $B=M_{[m_i,m_i+h)}$ for a sequence of positive integers $(m_i)_{i\ge 0}$ satisfying $m_i + h \le m_{i+1}$. By the preceding paragraph we get that $\mathrm{diam}_\mathcal{W}(B\mathcal{W})= \gamma$  for some $\gamma > 0$ and that $B$ is a contraction with some contraction factor $\kappa<1$. Thus for each $m\in \{m_i + h, m_{i+1}+h-1\}$ we have
\[
\mathrm{diam}_\mathcal{W} \bigg( \bigcap_{0\le n\le m}M_{[0,n)}\R^d_{\ge 0} \bigg) \le \gamma \kappa^i.
\]
Since $\kappa<1$ and $i\to\infty$ for $m\to\infty$ this yields the result. Positivity of the entries of $\bu$ follows from the primitivity of $\bM$.
\end{proof}

This result motivates the following definition.

\begin{definition}[Weak convergence and generalized right eigenvector]\label{def:weakconv}
If a sequence of nonnegative integer matrices satisfies \eqref{eq:coneshrink} for some $\bu\in\mathbb{R}^d_{\ge 0}\setminus\{\mathbf{0}\}$ we say that $\bM$ is \emph{weakly convergent} to $\bu$. In this case we call $\bu$ a \emph{generalized right eigenvector} of $\bM$. If a sequence $\bsigma$ of substitutions has a sequence of incidence matrices $\bM$ which is weakly convergent to $\bu$, we say that $\bsigma$ is \emph{weakly convergent}\index{weak convergence}\index{convergence!weak} to $\bu$ and call $\bu$ a \emph{generalized right eigenvector}\index{generalized right eigenvector} of $\bsigma$.
\end{definition}
 
Our next goal is to establish \emph{unique ergodicity} of $S$-adic systems with primitive and recurrent directive sequences. We start with a fundamental definition (and refer to \cite[\S6.5]{Walters:82} for background material on this).

\begin{definition}[Unique ergodicity]\label{def:ue}\index{unique ergodicity}
A topological dynamical system $(X,T)$ on a compact space $X$ is said to be \emph{uniquely ergodic} if there is a unique $T$-invariant Borel probability measure on $X$. 
\end{definition}

By a theorem of Krylov and Bogoliubov (see {\em e.g.}\ \cite[Corollary~6.9.1]{Walters:82}) there always exists an invariant probability measure on $(X,T)$ if $X$ is compact. 

A uniquely ergodic dynamical system is ergodic (thus the name) since otherwise there would be a $T$-invariant set $E$ with $\mu(E)\in(0,1)$ which could be used to define a second $T$-invariant Borel probability measure $\nu(B)=\frac{\mu(B\cap E)}{\mu(E)}$ on $X$. Unique ergodicity is equivalent to the fact that each point is generic in the sense that Birkhoff's ergodic theorem holds everywhere ({\it cf.} \cite[Theorem~6.19]{Walters:82}). Roughly speaking, this is true since nongeneric points (as for instance periodic points) could be used to construct a second invariant measure.

We note that unique ergodicity is close to minimality in the sense that there are many dynamical systems that either enjoy both or none of the two properties. If $(X,T)$ is uniquely ergodic with $T$-invariant measure $\mu$ having full support then minimality follows. However, there are examples of systems that have only one of these two properties. For a discussion of such examples in a context similar to ours see \cite{Ferenzi-Fisher-Talet:09} and the references given there. What happens for these examples is that although we have a primitive sequence of matrices (leading to minimality) this primitivity is so weak that it does not make the positive  cone converge to a single line as in \eqref{eq:coneshrink}. This entails that no letter frequencies exist which permits to construct many invariant measures (see also \cite{BKMS:2010,BKMS12,Fisher:09}).

It has been mentioned already in Section~\ref{sec:sturm} that the existence of uniform frequencies of letters and words in a shift $(X_w, \Sigma)$ entail unique ergodicity. We want to give the elegant proof of this result here before we use it in order to establish unique ergodicity of primitive and recurrent $S$-adic systems. To this matter we need the following definition (see Lemma~\ref{lem:sturmpatternfreq} for the special case of Sturmian sequences). 

\begin{definition}[Uniform word and letter frequencies]\label{def:wordfreq}
\index{uniform word frequencies}
\index{uniform letter frequencies}
Let $w=w_0w_1\ldots \in\A^\N$ be given and for each $k,\ell\in\N$ and each $v\in\A^*$ let $|w_k\ldots w_{k+\ell-1}|_v$ be the number of occurrences of $v$ in $w_k\ldots w_{k+\ell-1}$. We say that $w$ has \emph{uniform word frequencies} if for each $v\in\A^*$ the ratio $|w_k\ldots w_{k+\ell-1}|_v/\ell$ tends to a limit $f_v(w)$ (which does not depend on $k$) for $\ell\to\infty$ uniformly in~$k$. It has \emph{uniform letter frequencies} if this is true for each $v\in\A$.
\end{definition}

\begin{proposition}[{see~\cite[Proposition~5.1.21]{Fog02}}]\label{prop:freq_unique_ergod}
Let $w \in\A^\N$ be a sequence with uniform word frequencies and let $X_w=\overline{\{\Sigma^kw\,:\, k\in\N\}}$ be the shift orbit closure of $w$. Then $(X_w,\Sigma)$ is uniquely ergodic.
\end{proposition}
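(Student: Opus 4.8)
The plan is to show that unique ergodicity follows from the standard characterization (Proposition~6.19 in Walters, quoted in the excerpt) that $(X_w,\Sigma)$ is uniquely ergodic if and only if for every continuous function $f\in C(X_w)$ the Birkhoff averages $\frac1\ell\sum_{j=0}^{\ell-1} f(\Sigma^j x)$ converge uniformly in $x\in X_w$ to a constant. Since the cylinder indicator functions $\mathbbm{1}_{[v]}$ (for $v\in\A^*$) generate a dense subalgebra of $C(X_w)$ in the uniform norm by the Stone--Weierstrass theorem, it suffices to establish uniform convergence of the Birkhoff averages for these indicators; a standard $\varepsilon/3$ approximation argument then upgrades this to arbitrary continuous $f$. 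Thus the entire problem reduces to a statement about counting occurrences of factors.

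First I would fix $v\in\A^*$ and observe that for $x=x_0x_1\ldots\in X_w$,
\[
\sum_{j=0}^{\ell-1} \mathbbm{1}_{[v]}(\Sigma^j x) = |x_0 x_1 \ldots x_{\ell+|v|-2}|_v,
\]
the number of occurrences of $v$ in the prefix of $x$ of the appropriate length. So the Birkhoff average is essentially the occurrence-frequency of $v$ in an initial block of $x$. The key point is that the uniform word frequency hypothesis on $w$ transfers to every element of $X_w$ with the \emph{same} limit $f_v(w)$ and with uniformity that is \emph{independent of the chosen point} $x$. To see this I would use the defining property of $X_w=\overline{\{\Sigma^k w : k\in\N\}}$: any $x\in X_w$ is a limit of shifts $\Sigma^{k_i} w$, and the count $|x_0\ldots x_{\ell+|v|-2}|_v$ depends only on the first $\ell+|v|-1$ letters of $x$, which for large enough $i$ coincide with the corresponding letters of $\Sigma^{k_i}w$. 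Hence
\[
|x_0\ldots x_{\ell+|v|-2}|_v = |w_{k_i}\ldots w_{k_i+\ell+|v|-2}|_v
\]
for suitable $k_i$, and the uniform-in-$k$ convergence hypothesis on $w$ gives exactly the bound
\[
\Big| \tfrac1\ell |x_0\ldots x_{\ell+|v|-2}|_v - f_v(w)\Big| < \varepsilon
\quad\text{for all } \ell \ge N,
\]
with $N$ depending only on $v$ and $\varepsilon$, not on $x$. This is precisely uniform convergence of the Birkhoff average of $\mathbbm{1}_{[v]}$ to the constant $f_v(w)$.

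The main obstacle, and the point requiring the most care, is the interchange of limits hidden in the phrase ``the count depends only on finitely many letters.'' One must check that the uniformity in $k$ for the sequence $w$ really does pass to a uniformity in $x\in X_w$ without any loss: the subtlety is that the approximating indices $k_i$ may need to grow with $\ell$, so one cannot naively freeze a single $k_i$. The clean way around this is to note that for fixed $\ell$ the function $x\mapsto |x_0\ldots x_{\ell+|v|-2}|_v$ is \emph{continuous} (indeed locally constant) on $X_w$, so its range on $X_w$ equals its range on the dense orbit $\{\Sigma^k w\}$, and therefore the estimate valid uniformly in $k$ for $w$ is automatically valid for every $x$ with the same $N$. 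Once the uniform convergence for all cylinder indicators is in hand, the Stone--Weierstrass density and the $\varepsilon/3$ argument finish the proof, and invoking Walters' criterion yields unique ergodicity.
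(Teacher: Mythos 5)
Your proof is correct and follows essentially the same route as the paper's: both verify the uniform-Birkhoff-average criterion for unique ergodicity (\cite[Theorem~6.19]{Walters:82}) by transferring the uniform word frequencies of $w$ to every point of $X_w$ through the dense orbit, and both pass from cylinder indicators to general continuous functions by a density argument. The only difference is organizational: you transfer from the orbit to all of $X_w$ first (via local constancy of the fixed-length occurrence counts) and then invoke Stone--Weierstrass with an $\varepsilon/3$ argument, whereas the paper extends to continuous functions first and then uses the uniformity in the shift index to pass to limits $u=\lim_k \Sigma^{n_k}w$.
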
  

\begin{proof}
For every factor $v$ of $w=w_0w_1\ldots$ let $[v]$ be the \emph{cylinder} of all sequences in $X_w$ that have $v$ as a prefix. Define a function $\mu$ on these cylinders by $\mu([v])=\mu(\Sigma^{-n}[v])=f_v(w)$. Since cylinders generate the topology on $X_w$ this defines a Borel measure $\mu$ on $X_w$. Our goal is to show that every element of $X_w$ is generic in the sense of Birkhoff's ergodic theorem. To this end note first that (here $\mathbbm{1}_{Y}$ denotes the characteristic function of a set $Y\subset X_w$)
\[
\frac1N \sum_{n<N} \mathbbm{1}_{[v]}(\Sigma^{n+j}w) \to \mu([v]) = \int\mathbbm{1}_{[v]}d\mu
\]
holds uniformly in $j \in \N$ for every $v\in\A^*$ by the existence of uniform word frequencies for $w$. Since continuous functions are monotone limits of simple functions this extends to 
\begin{equation}\label{eq:ferencUnique}
\frac1N \sum_{n<N} g(\Sigma^{n+j}w) \to  \int g d\mu 
\end{equation}
uniformly in $j\in\N$ for each $g\in C(X_w)$.  By this uniform convergence, in \eqref{eq:ferencUnique} we may choose $j=n_k$ with any sequence $(n_k)$ and \eqref{eq:ferencUnique} holds uniformly in $k$. Since each $u\in X_w$ is the limit of $(\Sigma^{n_k}w)$ for some sequence $(n_k)$ this implies that
\[
\frac1N \sum_{n<N} g(\Sigma^{n}u) \to  \int g d\mu 
\]
holds for each $g\in C(X_w)$ and each $u\in X_w$. Thus each point is generic in the sense of Birkhoff's ergodic theorem which is equivalent to unique ergodicity (by~\cite[Theorem~6.19]{Walters:82} which was already mentioned above).
\end{proof}

We now show that the conditions we introduced so far imply unique ergodicity of $S$-adic systems. In view of Proposition~\ref{prop:freq_unique_ergod} we will establish the following lemma (see also \cite[Theorem~5.7]{Berthe-Delecroix}).

\begin{lemma}\label{lem:SadicFreq}
Let $\bsigma$ be a sequence of substitutions with associated sequence of incidence matrices $\bM$. If $\bM$ is primitive and recurrent then each sequence $w\in X_\bsigma$ has uniform word frequencies.
\end{lemma}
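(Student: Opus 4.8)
The plan is to reduce the statement to a property of the common language of $X_\bsigma$, then to adapt the decomposition used in the proof of Lemma~\ref{lem:sturmletterfreq}, the extra difficulty being the treatment of occurrences of a word that straddle the boundaries of substituted blocks.

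First I would record that, since $\bM$ is primitive, Proposition~\ref{prop:sadicminimal} gives that $(X_\bsigma,\Sigma)$ is minimal and that every $w\in X_\bsigma$ has one and the same language $\Lg_\bsigma$. Hence for each $\ell$ the set $\{w_k\ldots w_{k+\ell-1}:k\in\N\}$ of length-$\ell$ factors occurring in $w$ is exactly the set of all words of length $\ell$ in $\Lg_\bsigma$, independently of $w$. Consequently the assertion ``$w$ has uniform word frequencies'' is a property of $\Lg_\bsigma$ alone: it is equivalent to the existence, for each $v\in\A^*$, of a number $f_v$ such that $\sup\{\,\big||z|_v/\ell-f_v\big| : z\in\Lg_\bsigma,\ |z|=\ell\,\}\to 0$ as $\ell\to\infty$. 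Thus it suffices to prove this for one limit sequence $w$ of $\bsigma$, which by Proposition~\ref{prop:sadicminimal} I may fix together with its desubstitutions $w=\sigma_{[0,n)}(w^{(n)})$.

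Next, fix $v$ with $|v|=t$ and imitate the proof of Lemma~\ref{lem:sturmletterfreq}. Writing $w$ as the concatenation of the blocks $\sigma_{[0,n)}(w^{(n)}_j)$, any factor $z=w_k\ldots w_{k+\ell-1}$ can be written as $z=p\,\sigma_{[0,n)}(y)\,s$ with $y\in\A^*$ and $|p|,|s|\le L_n:=\max_{a\in\A}|\sigma_{[0,n)}(a)|$. Given $\varepsilon>0$ and $\ell$ large, I choose $n$ maximal with $L_n\le\varepsilon\ell$; by primitivity $\min_{a}|\sigma_{[0,n)}(a)|\to\infty$, so this $n$ also tends to infinity with $\ell$. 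Counting occurrences of $v$ in $z$, the contributions meeting the short prefix $p$ or suffix $s$ are $O(L_n)=O(\varepsilon\ell)$, and the occurrences straddling a boundary between two consecutive blocks $\sigma_{[0,n)}(y_i)\sigma_{[0,n)}(y_{i+1})$ number at most $t-1$ per boundary, hence at most $(t-1)\,\ell/\min_a|\sigma_{[0,n)}(a)|=o(\ell)$ in total. The remaining, ``internal'' occurrences amount to $\sum_i|\sigma_{[0,n)}(y_i)|_v$.

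The crux — and the step I expect to be the main obstacle — is therefore to show that the block densities $|\sigma_{[0,n)}(b)|_v/|\sigma_{[0,n)}(b)|$ converge, uniformly in $b\in\A$, to a limit $f_v$ independent of $b$; for then $\sum_i|\sigma_{[0,n)}(y_i)|_v=f_v|\sigma_{[0,n)}(y)|+o(\ell)=f_v\ell+O(\varepsilon\ell)$ and the estimates above combine to give $\big||z|_v/\ell-f_v\big|\le C\varepsilon$ uniformly, as required. For $t=1$ this uniform convergence is exactly the cone convergence of Proposition~\ref{prop:furstmatrix}, since the ratios in question are the normalized columns of $M_{[0,n)}$. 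For $t\ge 2$ the straddling occurrences obstruct a direct matrix computation, and I would instead argue on two scales: writing $\sigma_{[0,n)}(b)=\sigma_{[0,m)}(\sigma_{[m,n)}(b))$, the internal part of $|\sigma_{[0,n)}(b)|_v$ equals $\sum_{a\in\A}(M_{[m,n)})_{a,b}\,|\sigma_{[0,m)}(a)|_v$ up to a boundary error bounded, relative to the total length, by $(t-1)/\min_a|\sigma_{[0,m)}(a)|$. Applying Proposition~\ref{prop:furstmatrix} to the shifted sequence $(M_{m+k})_{k\ge 0}$ (again primitive and recurrent) shows that for fixed $m$ the normalized columns $(M_{[m,n)})_{\cdot,b}$ converge as $n\to\infty$ to the generalized right eigenvector $\bu^{(m)}$ of the shifted sequence, uniformly in $b$. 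Hence for fixed $m$ the density tends, uniformly in $b$, to the $b$-independent value $g_m:=\big(\sum_a u^{(m)}_a|\sigma_{[0,m)}(a)|_v\big)\big/\big(\sum_a u^{(m)}_a|\sigma_{[0,m)}(a)|\big)$, up to an error $\le(t-1)/\min_a|\sigma_{[0,m)}(a)|$. Letting $m\to\infty$, these nested estimates force $(g_m)$ to be Cauchy, its limit is the desired $f_v$, and one reads off the uniform convergence of the block densities. This is precisely the point where recurrence is indispensable: it is what makes Proposition~\ref{prop:furstmatrix} applicable and guarantees the eigenvectors $\bu^{(m)}$ exist, so that letter — and hence, after this bootstrap, word — frequencies are well defined.
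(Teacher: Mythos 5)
Your proposal is correct, and its outer skeleton is the paper's: the decomposition $w_k\ldots w_{k+\ell-1}=p\,\sigma_{[0,n)}(y)\,s$, the three-way occurrence count (occurrences meeting $p$ or $s$, occurrences straddling block boundaries, internal occurrences) with the same error bounds, the same limit value $g(v,m)$, and the same Cauchy argument at the end. Where you differ is the key lemma controlling the internal occurrences. The paper first proves uniform letter frequencies for the desubstituted sequences $w^{(m)}$ (Part~1 of its proof) and deduces from them the claim \eqref{eq:freqclaim}, a statement about Ces\`aro averages of $|\sigma_{[0,m)}(\,\cdot\,)|_v$ along $w^{(m)}$ that must hold uniformly in the starting position $q$; it then fixes $n$, lets $\ell\to\infty$ in the three-term estimate, and only afterwards lets $n\to\infty$ to conclude that $(g(v,n))_n$ is Cauchy. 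You instead prove, before looking at factors of $w$ at all, that the single-letter densities $|\sigma_{[0,n)}(b)|_v/|\sigma_{[0,n)}(b)|$ converge, uniformly in $b$, to a letter-independent limit $f_v$: you re-apply the block decomposition at an inner scale, writing $\sigma_{[0,n)}(b)=\sigma_{[0,m)}(\sigma_{[m,n)}(b))$, and invoke only the cone convergence of the columns of $M_{[m,n)}$, i.e.\ Proposition~\ref{prop:furstmatrix} for the shifted sequence. Your version buys a cleaner quantifier structure --- the uniformity you need ranges over the finite alphabet $\A$ rather than over all positions of an infinite sequence, and Part~1 of the paper's proof becomes the special case $|v|=1$ of your lemma instead of a separate prerequisite --- at the cost of paying the straddling-error estimate $(|v|-1)/\min_a|\sigma_{[0,m)}(a)|$ twice, once at each scale. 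Both routes use recurrence at the identical spot, namely to apply Proposition~\ref{prop:furstmatrix} to the shifted sequences $(\sigma_{m+n})_{n\in\N}$ and obtain the eigenvectors $\bu^{(m)}$, and both identify $f_v$ as the limit of the same Cauchy sequence.
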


\begin{proof}
Let $w=w_0w_1\ldots \in X_\bsigma$ be given. We follow the proof of \cite[Theorem~5.7]{Berthe-Delecroix} to establish that $w$ has uniform word frequencies.

{\it Part 1: Uniform letter frequencies.}
Since $\bM$ satisfies the conditions of Proposition~\ref{prop:furstmatrix}, it admits a generalized right eigenvector $\bu$. Let $\bu/{\Vert\bu\Vert_1}=(u_1,u_2,\ldots, u_d)^t$.  Since $w\in X_\bsigma$, for all $k,\ell,n\in \N$ we can write
\begin{equation*}
w_k\dots w_{k+\ell-1} = p \sigma_{[0,n)}(v) s
\end{equation*}
for some $p,v,s\in\A^*$, where the lengths of $p,s$ are bounded by $\max\{|\sigma_{[0,n)}(a)|\,:\,a\in \A\}$. Now for each $a\in\A$
\begin{equation}\label{eq:sadicDTeinfach}
\left| \frac{|w_k\dots w_{k+\ell-1}|_a}{\ell} -  u_a\right| \le \frac{\big| |p|_a - |p|u_a\big|}{\ell} + \frac{\big| |\sigma_{[0,n)}(v)|_a-|\sigma_{[0,n)}(v)|u_a\big|}{\ell}  + \frac{\big| |s|_a- |s|u_a\big|}{\ell}.
\end{equation}
By the convergence of the positive cone to $\bu$ in Proposition~\ref{prop:furstmatrix} we know that $|\sigma_{[0,n)}(b)|_a/|\sigma_{[0,n)}(b)|$ is close to $u_a$ for all $a,b\in\A$ if $n$ is large. Thus for each $\varepsilon > 0$ there is $N\in\N$ such that whenever $\ell\ge N$ we can choose $n$ in a way that $|p|,|s| \le \varepsilon \ell$ and $\big| |\sigma_{[0,n)}(b)|_a-|\sigma_{[0,n)}(b)|u_a \big| < \varepsilon |\sigma_{[0,n)}(b)|$ for all letters $a$ and $b$. This proves that the right hand side of \eqref{eq:sadicDTeinfach} is bounded by $3\varepsilon$ and, hence, $\lim_{\ell \to \infty}{|w_k\ldots w_{k+\ell-1}|_a}/{\ell}=u_a$ uniformly in $k$. Thus $w$ has uniform letter frequencies.

{\it Part 2: Uniform word frequencies.} For $m\in\N$ let $\bu^{(m)}$ be a right eigenvector of the shifted sequence $\bsigma^{(m)}=(\sigma_{m+n})_{n\in\N}$ and set $\bu^{(m)}/\Vert\bu^{(m)}\Vert_1=(u_1^{(m)},\ldots,u_d^{(m)})$. Such an eigenvector exists by Proposition~\ref{prop:furstmatrix} since the shifted sequence $\bsigma^{(m)}$ has a primitive and recurrent sequence of incidence matrices as well.

Fix $v\in \mathcal{L}_\bsigma$. We claim that for each $m\in\N$ and each $w^{(m)}=w_0^{(m)}w_1^{(m)}\ldots \in X_{\bsigma^{(m)}}$  we have
\begin{equation}\label{eq:freqclaim}
\lim_{j\to \infty} \frac{\sum_{i=q}^{q+j-1}|\sigma_{[0,m)}(w^{(m)}_{i})|_v}{|\sigma_{[0,m)}(w^{(m)}_q\ldots w^{(m)}_{q+j-1})|} = \frac{\sum_{a\in\A}u_a^{(m)}|\sigma_{[0,m)}(a)|_v}{\sum_{a\in\A}u_a^{(m)}|\sigma_{[0,m)}(a)|} =: g(v,m)
\end{equation}
uniformly in $q\in\N$.
This claim follows because, since $w^{(m)}$ has uniform letter frequencies $(u_1^{(m)},\ldots,u_d^{(m)})$ by Part 1, we get that 
\begin{align*}
\lim_{j\to \infty}\frac{|\sigma_{[0,m)}(w_q^{(m)}\ldots w_{q+j-1}^{(m)})|}{j}&=\sum_{a\in\A}u_a^{(m)}|\sigma_{[0,m)}(a)| \quad\hbox{and}  \\
\lim_{j\to \infty}\frac{\sum_{i=q}^{q+j-1}|\sigma_{[0,m)}(w^{(m)}_{i})|_v}{j}&=\sum_{a\in\A}u_a^{(m)}|\sigma_{[0,m)}(a)|_v
\end{align*}
uniformly in $q\in\N$.

Now we proceed similarly to Part 1. First define 
\[
m_{n}^+=\max\{|\sigma_{[0,n)}(a)|\,:\,a\in \A\}
\quad\hbox{and}\quad
m_{n}^- = \min\{|\sigma_{[0,n)}(a)|\,:\,a\in \A\},
\]
and observe that primitivity of $\bsigma$ implies that both of these quantities tend to $\infty$ for $n\to \infty$. For each $n\in\N$ choose a fixed $w^{(n)}=w^{(n)}_0w^{(n)}_1\ldots \in X_{\bsigma^{(n)}}$. As $w\in X_\bsigma$, for all $k,\ell\in \N$ we can write
\begin{equation*}
w_k\dots w_{k+\ell-1} = p \sigma_{[0,n)}(w^{(n)}_q\ldots w^{(n)}_{q+r-1}) s
\end{equation*}
for some $q,r\in\N$, where the lengths of $p,s\in\A^*$ are bounded by $m_{n}^+$. There are three possibilities for an occurrence of $v$ in $w_k\dots w_{k+\ell-1}$. Firstly, $v$ can overlap with $p$ or $s$. This can happen at most $2m_{n}^+$ times. Secondly, $v$ can have nonempty overlap with the images $\sigma_{[0,n)}(w_i^{(n)})$ and $\sigma_{[0,n)}(w_{i+1}^{(n)})$ of two consecutive letters $w^{(n)}_{i}$ and $w^{(n)}_{i+1}$ of $w^{(n)}_q\ldots w^{(n)}_{q+r-1}$. This can happen at most $|v|(r-1)\le |v|\frac{\ell}{m_{n}^-}$ times. Thirdly, $v$ can occur as a factor of  $\sigma_{[0,n)}(w^{(n)}_{i})$ for some $i\in\{q,\ldots,q+r-1\}$ which happens exactly $\sum_{i=q}^{q+r-1}|\sigma_{[0,n)}(w^{(n)}_{i})|_v$ times. 
Each of these three possibilities contributes one of the summands of the right hand side of the estimate
\begin{equation}\label{eq:3trmsabc}
\Big| \frac{|w_k\dots w_{k+\ell-1}|_v}{\ell} - g(v,n)\Big| 
\le 
\frac{2m_{n}^+}{\ell} + \frac{|v|}{m_{n}^-} + 
\bigg|
\frac{ \sum_{i=q}^{q+r-1}|\sigma_{[0,n)}(w^{(n)}_{i})|_v } {\ell} -g(v,n)
\bigg|.
\end{equation}
Letting $\ell\to\infty$ and using \eqref{eq:freqclaim} for the third term on the right this yields that
\begin{equation}\label{eq:letfreqconclusion}
\limsup_{\ell\to\infty} \Big| \frac{|w_k\dots w_{k+\ell-1}|_v}{\ell} - g(v,n)\Big| \le \frac{|v|}{m_{n}^-}.
\end{equation}
Since for $n\to\infty$ the quantity $\frac{|w_k\dots w_{k+\ell-1}|_v}{\ell}$ does not change while $\frac{|v|}{m_{n}^-} \to 0$ we conclude from \eqref{eq:letfreqconclusion} that $(g(v,n))_{n\in \N}$ is a Cauchy sequence 
converging to the frequency $f_v(w)$ of $v$ in $w$. Since $\frac{|v|}{m_{n}^-}$ does not depend on $k$ and the convergence in \eqref{eq:freqclaim} is uniform in $q$, the estimate \eqref{eq:3trmsabc} implies that $\frac{|w_k\dots w_{k+\ell-1}|_v}{\ell}\to f_v(w)$ for $\ell\to\infty$ uniformly in $k$ and the proof is finished.
\end{proof}

The following main result of this section is an immediate consequence of Proposition~\ref{prop:sadicminimal}, Proposition~\ref{prop:freq_unique_ergod}, and Lemma~\ref{lem:SadicFreq}.

\begin{theorem}\label{prop:SadicUE}
Let $\bsigma$ be a sequence of substitutions with associated sequence of incidence matrices $\bM$. If $\bM$ is primitive and recurrent then $(X_\bsigma,\Sigma)$ is minimal and uniquely ergodic.
\end{theorem}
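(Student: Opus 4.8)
The plan is to assemble the result from the three ingredients already in hand, exploiting the fact that primitivity alone yields minimality while primitivity together with recurrence yields unique ergodicity. First I would observe that primitivity of $\bM$ is, by Definition~\ref{def:prim}, exactly primitivity of $\bsigma$, so Proposition~\ref{prop:sadicminimal} applies. Part~(i) of that proposition guarantees that at least one limit sequence $w$ of $\bsigma$ exists; part~(iii) tells us that the associated $S$-adic system $(X_w,\Sigma)$ is minimal; and part~(ii), together with the remark immediately following it, identifies $X_\bsigma$ with $X_w$. Hence $(X_\bsigma,\Sigma)=(X_w,\Sigma)$ is minimal, which settles the first half of the statement.

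For unique ergodicity I would now bring recurrence into play. Because $\bM$ is both primitive and recurrent, Lemma~\ref{lem:SadicFreq} shows that every sequence in $X_\bsigma$—in particular the limit sequence $w$ fixed above—has uniform word frequencies in the sense of Definition~\ref{def:wordfreq}. Feeding this into Proposition~\ref{prop:freq_unique_ergod}, whose hypothesis is precisely the existence of uniform word frequencies, yields that the orbit-closure system $(X_w,\Sigma)$ is uniquely ergodic. Since $X_\bsigma=X_w$ by the identification above, $(X_\bsigma,\Sigma)$ is uniquely ergodic as well, completing the proof.

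At this stage the argument is genuinely a two-line deduction, so there is no serious analytic obstacle remaining; all the work has been front-loaded into the supporting results, whose proofs in turn rest on the contraction properties of the Hilbert metric in Proposition~\ref{prop:furstmatrix}. The one point that must be checked rather than merely asserted is the identification $X_\bsigma=X_w$, which is what allows statements proved for the single limit sequence $w$ to be transported to the full system $X_\bsigma=\bigcup X_{w'}$. This relies on Proposition~\ref{prop:sadicminimal}(ii), that is, on the fact that primitivity forces all limit sequences of $\bsigma$ to share a common language and hence a common orbit closure. Once that identification is recorded, minimality and unique ergodicity of $(X_\bsigma,\Sigma)$ follow immediately from the two chains of implications above.
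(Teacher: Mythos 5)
Your proposal is correct and follows essentially the same route as the paper, which states the theorem as an immediate consequence of Proposition~\ref{prop:sadicminimal} (primitivity gives minimality and the identification $X_\bsigma=X_w$), Lemma~\ref{lem:SadicFreq} (primitivity plus recurrence gives uniform word frequencies), and Proposition~\ref{prop:freq_unique_ergod} (uniform word frequencies give unique ergodicity). You merely spell out the assembly, including the identification $X_\bsigma=X_w$ that the paper records in the remark following Proposition~\ref{prop:sadicminimal}, which is exactly the right point to make explicit.
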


A proof of a similar result as Theorem~\ref{prop:SadicUE} is sketched in Berth\'e and Delecroix~\cite{Berthe-Delecroix}. Moreover, we refer to Fisher~\cite{Fisher:09} and Bezuglyi~{\it et al.}~\cite{BKMS:2010,BKMS12}, where theorems of this flavor are proved in the context of Bratteli-Vershik systems. 

\begin{example}\label{ex:brunprim}
We associate substitutions with the matrices $M_{1}$, $M_{2}$, and $M_{3}$ that came up in  \eqref{eq:brunmatrices} during the definition of Brun's continued fraction algorithm. Indeed, the substitutions
\begin{equation}\label{eq:brun}
\sigma_1 : \begin{cases} 1 \mapsto 3,  \\ 2 \mapsto 1, \\ 3 \mapsto 23, \end{cases} \quad
\sigma_2 : \begin{cases} 1 \mapsto 1,  \\ 2 \mapsto 3, \\ 3 \mapsto 23, \end{cases} \quad
\sigma_3 : \begin{cases} 1 \mapsto 1,  \\ 2 \mapsto 23, \\ 3 \mapsto 3. \end{cases} 
\end{equation}
are called \emph{Brun substitutions}\index{Brun substitutions} (see \cite[Sections~3.3 and~9.2]{Berthe-Steiner-Thuswaldner} where also the relation between these substitutions and a slightly different set of ``Brun substitutions'' studied in \cite{BBJS16} is discussed).

It is immediate that $M_{1}M_{2}M_{1}M_{2}$ is a strictly positive matrix. Thus we get the following result.

\begin{proposition}\label{prop:brunMUE}
Let $S=\{\sigma_{1},\sigma_{2},\sigma_{3}\}$ be the set of Brun substitutions and $\bsigma\in S^\N$. If $\bsigma$ is recurrent and contains the block $(\sigma_{1},\sigma_{2},\sigma_{1},\sigma_{2})$ then the associated $S$-adic system $(X_\bsigma,\Sigma)$ is minimal and uniquely ergodic.
\end{proposition}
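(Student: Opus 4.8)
The plan is to deduce the statement directly from Theorem~\ref{prop:SadicUE}, which guarantees minimality and unique ergodicity as soon as the sequence of incidence matrices $\bM=(M_n)$ associated with $\bsigma$ is \emph{primitive} and \emph{recurrent}. Thus the whole proof reduces to verifying these two properties for $\bM$ under the hypotheses that $\bsigma$ is recurrent and contains the block $(\sigma_1,\sigma_2,\sigma_1,\sigma_2)$.

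Recurrence of $\bM$ is immediate. Since the incidence matrix is a function of the substitution, an equality of blocks $(\sigma_0,\ldots,\sigma_{m-1})=(\sigma_n,\ldots,\sigma_{n+m-1})$ forces $(M_0,\ldots,M_{m-1})=(M_n,\ldots,M_{n+m-1})$, so recurrence of $\bsigma$ in the sense of Definition~\ref{def:recurrence} passes verbatim to $\bM$. Moreover, recurrence implies that any block occurring once occurs infinitely often (see the remark following Definition~\ref{def:recurrence}); applied to $(\sigma_1,\sigma_2,\sigma_1,\sigma_2)$ this yields, for every $m\in\N$, an index $k\ge m$ at which the matrix block occurs, i.e.\ $M_{[k,k+4)}=M_1M_2M_1M_2$.

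For primitivity I would fix $m$ and use such a $k\ge m$. The block $M_{[k,k+4)}=M_1M_2M_1M_2$ is strictly positive, as noted just before the proposition, so it remains to check that prepending the prefix block keeps positivity, that is, that $M_{[m,k+4)}=M_{[m,k)}\,(M_1M_2M_1M_2)$ is positive; then $n:=k+4>m$ does the job and primitivity in the sense of Definition~\ref{def:prim} follows. The only point requiring an argument --- and the main (very mild) obstacle --- is that left-multiplying a strictly positive matrix $P$ by a nonnegative matrix $A$ yields a strictly positive product exactly when $A$ has no zero row. Here $A=M_{[m,k)}$, and I would verify that it has no zero row by observing that each Brun matrix in \eqref{eq:brunmatrices} has no zero row (equivalently, every letter occurs in the image of some letter under each $\sigma_i$), together with the one-line induction that a product of nonnegative matrices without zero rows again has no zero row: if $A$ and $B$ have no zero row, then for each $i$ one picks $l$ with $A_{il}>0$ and $j$ with $B_{lj}>0$, whence $(AB)_{ij}\ge A_{il}B_{lj}>0$. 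With $M_{[m,k)}$ thus having no zero row, $M_{[m,k+4)}$ is strictly positive, so $\bM$ is primitive, and an appeal to Theorem~\ref{prop:SadicUE} finishes the proof.
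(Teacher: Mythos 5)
Your proof is correct and takes essentially the same route as the paper: recurrence yields occurrences of the block $(\sigma_1,\sigma_2,\sigma_1,\sigma_2)$ beyond any given index $m$, whence primitivity of $\bsigma$, and Theorem~\ref{prop:SadicUE} then gives minimality and unique ergodicity. The only difference is that you spell out the detail the paper leaves implicit, namely that $M_{[m,k+4)}=M_{[m,k)}\,(M_1M_2M_1M_2)$ is strictly positive because the nonnegative prefix $M_{[m,k)}$ has no zero row (each Brun matrix has none, and this property is preserved under products) --- a worthwhile, and correct, clarification.
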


\begin{proof}
Since $\bsigma$  is recurrent it contains the block $(\sigma_{1},\sigma_{2},\sigma_{1},\sigma_{2})$ infinitely often. Thus $\bsigma$ is primitive and the result follows from Theorem~\ref{prop:SadicUE}.
\end{proof}
\end{example}

\section{The importance of balance and algebraic irreducibility}

Let $\bsigma$ be a sequence of unimodular substitutions over an alphabet $\A=\{1,2,\ldots,d\}$ and let $(X_\bsigma,\Sigma)$ be the $S$-adic system defined by it. At the end of Section~\ref{sec:sturmrot} we gave some rough idea on how we want to prove that $(X_\bsigma,\Sigma)$ is measurably conjugate to a rotation on $\mathbb{T}^{d-1}$. Indeed, we wish to project the broken line (see \eqref{eq:brokensturmianline} for an example) associated with a limit sequence $w\in X_\bsigma$ to a hyperplane in $\R^d$ not containing the frequency vector $\bu$ of the sequences in $X_\bsigma$. On a natural subdivision $\mathcal{R}(1),\ldots,\mathcal{R}(d)$ of the closure $\mathcal{R}$ of this projection we want to define a domain exchange and a rotation. This is possible only if the sets $\mathcal{R}(i)$, $i\in\A$, have suitable topological properties and the mentioned subdivision has no essential overlaps. 

In the present section we will define these sets $\mathcal{R}$ and $\mathcal{R}(i)$, $i\in\A$, and discuss basic properties of them. Besides primitivity and recurrence, the crucial conditions we will have to impose on $\bsigma$ in order to get suitable properties of the sets $\Ra$ and $\Ra(i)$ will be \emph{algebraic irreducibility} of the sequence of incidence matrices of $\bsigma$ and \emph{balance} of the language $\Lg_\bsigma$. Both of these conditions will be defined and first consequences of them will be discussed. This paves the way to obtain deeper topological and measure theoretic properties of $\Ra$ and $\Ra(i)$ in Section~\ref{sec:rauzyprop}. The theory we will outline in the present as well as in the forthcoming sections is mainly due to Berth\'e, Steiner, and Thuswaldner~\cite{Berthe-Steiner-Thuswaldner} and we refer to this paper for rigorous proofs of the statements we give.

\subsection{$S$-adic Rauzy fractals}\label{sec:Srauzy}

Following Berth\'e, Steiner, and Thuswaldner~\cite[Section~2.9]{Berthe-Steiner-Thuswaldner} we will now define $S$-adic Rauzy fractals. As mentioned before, on these objects we will be able to ``see'' the rotations to which we want to (measurably) conjugate our $S$-adic systems. In the definition we will use the following notations. For a vector $\bw\in\R^d\setminus\{ {\bf 0} \}$ we write $\bw^\bot$ for the hyperplane orthogonal to $\bw$, {\it i.e.}, $\bw^{\bot}=\{\bx\in\R^d\,:\, \langle\bx,\bw\rangle=0 \}$ with $\langle \cdot,\cdot \rangle$ being the dot product on $\mathbb{R}^d$, and we equip the space $\bw^\bot$ with the $(d-1)$-dimensional Lebesgue measure $\lambda_{\bw}$. Since its orthogonal hyperplane will be of special interest later we introduce the vector $\bone=(1,\ldots, 1)^t$. 

For vectors $\bu,\bw \in \mathbb{R}^d\setminus\{{\bf 0}\}$ satisfying $\bu\not\in\bw^\bot$ we denote the projection along $\bu$ to $\bw^\bot$ by $\pi_{\bu,\bw}$.

\begin{definition}[$\boldsymbol{S}$-adic Rauzy fractal and subtiles]\index{$S$-adic Rauzy fractal}\index{Rauzy fractal!$S$-adic}
Let $\bsigma$ be a sequence of unimodular substitutions over the alphabet $\mathcal{A}$ and assume that $\bsigma$ is weakly convergent to a generalized right eigenvector $\bu\in\R^d_{>0}$. The \emph{$S$-adic Rauzy fractal} in the representation space $\bw^\bot$, $\bw\in\R_{\ge0}^{d}\setminus\{\mathbf{0}\}$, associated with $\bsigma$ is the set
\[
\mathcal{R}_\bw := \overline{ \{\pi_{\bu,\bw}\mathbf{l}(p) \;:\; p \hbox{ is a prefix of a limit sequence of } \bsigma \} }.
\]
The set $\mathcal{R}_\bw$ can be covered by the \emph{subtiles}
\begin{equation}\label{eq:subtileRi}
\mathcal{R}_\bw(i) := \overline{ \{\pi_{\bu,\bw}\mathbf{l}(p) \;:\; pi \hbox{ is a prefix of a limit sequence of } \bsigma \} } \qquad(i\in\A).
\end{equation}
For convenience we will use the notation $\Ra(i)=\Ra_\bone(i)$ and $\Ra=\Ra_\bone$.
\end{definition}

The prototype of a Rauzy fractal goes back to Rauzy~\cite{Rauzy:82} and was used there in order to show that a certain substitutive dynamical system is measurably conjugate to a rotation on the torus, see Example~\ref{ex:rauzy} below. In the meantime there exists a vast literature on Rauzy fractals. For constant sequences $\bsigma=(\sigma)$ with $\sigma$ being a Pisot substitution fundamental properties of Rauzy fractals were studied for instance by Ito and Kimura~\cite{ItoKimura91}, Holton and Zamboni~\cite{HZ:98}, Arnoux and Ito~\cite{Arnoux-Ito:01}, Canterini and Siegel~\cite{CanteriniSiegel01b}, Sirvent and Wang~\cite{SirventWang02}, Hubert and Messaoudi~\cite{HubertMessaoudi06}, and Ito and Rao~\cite{Ito-Rao:06}. Akiyama~\cite{Akiyama98,Akiyama02} and Messaoudi~\cite{Messaoudi00,Messaoudi06} consider versions of Rauzy fractals for $\beta$-numeration, in Siegel~\cite{Siegel:03}, Minervino and Thuswaldner~\cite{MinervinoThuswaldner14}, and Minervino and Steiner~\cite{Minervino-Steiner:14} Rauzy fractals with $\mathfrak{p}$-adic factors are related to nonunimodular substitutions. For versions of Rauzy fractals corresponding to substitutions with reducible incidence matrices (whose most prominent representative is the so-called ``Hokkaido Fractal'' studied by Akiyama and Sadahiro~\cite{AS:98}) we refer to \cite{Akiyama02,EiItoRao06,LM:17}. A case of a non-Pisot substitution is treated in~\cite{AFHI:11}. Surveys containing information on Rauzy fractals are provided in \cite{CANTBST,SiegelThuswaldner10} (see also \cite{ABBLS} for their relation to the Pisot substitution conjecture). An easily accessible treatment of Rauzy fractals intended for a broad  audience is given in~\cite{AH:14}.

Recently, Boyland and Severa~\cite{BS:18} considered a particular family of $S$-adic sequences associated with the so-called \emph{infimax $S$-adic family} over three letters. These sequences do not fit into our framework. 
Indeed, they have two ``expanding directions'' which entails that the authors have to project on a $1$-dimensional subspace of $\R^3$ in order to obtain compact Rauzy fractals. Their Rauzy fractals turn out to be Cantor sets which can be subdivided naturally into three subtiles whose convex hulls are intervals that intersect on their boundary points. This fact is used to show that the infimax $S$-adic systems can be geometrically represented as $3$-interval exchange transformations.

In what follows, instead of ``$S$-adic Rauzy fractal'' we will often just say ``Rauzy fractal''. This will cause no confusion. To give the reader a feeling for a Rauzy fractal and its importance in the remaining part of this chapter we provide an example. 

\begin{example}[Tribonacci substitution]\label{ex:rauzy}\index{classical Rauzy fractal}\index{Rauzy fractal!classical}
We explain the definition of a Rauzy fractal for the constant sequence $\bsigma=(\sigma)_{n\in\N}$ with $\sigma$ being the Tribonacci substitution introduced in \eqref{eq:substribo}. The sequence $\bsigma$ is easily checked to be primitive and obviously it is recurrent. Thus it admits a generalized right eigenvector $\bu$ which is just the Perron-Frobenius eigenvector of $M_\sigma$. Since each of the words $\sigma(1)$, $\sigma(2)$, and $\sigma(3)$ begins with $1$ the only limit sequence of $(\sigma)$ is given by
\[
w=\lim_{n\to\infty}\sigma^n(1) = 1213121121312121312112131213121\dots
\] 
and, hence, $\mathcal{R}_\bw := \overline{ \{\pi_{\bu,\bw}\mathbf{l}(p) \;:\; p \hbox{ is a prefix of } w\}}$ for $\bw\in\R_{\ge0}^{d}\setminus\{\mathbf{0}\}$. In Figure~\ref{fig:RauzyTribo} we illustrate the definition of $\Ra_\bu$ and its subtiles (we choose $\bw=\bu$ in this case so the occurring projection $\pi_{\bu,\bu}$ is an orthogonal projection). As mentioned before, this famous prototype of a Rauzy fractal first appears in Rauzy~\cite{Rauzy:82}.
\begin{figure}[hh]
\includegraphics[trim=0 120 0 150,clip,width=0.7\textwidth]{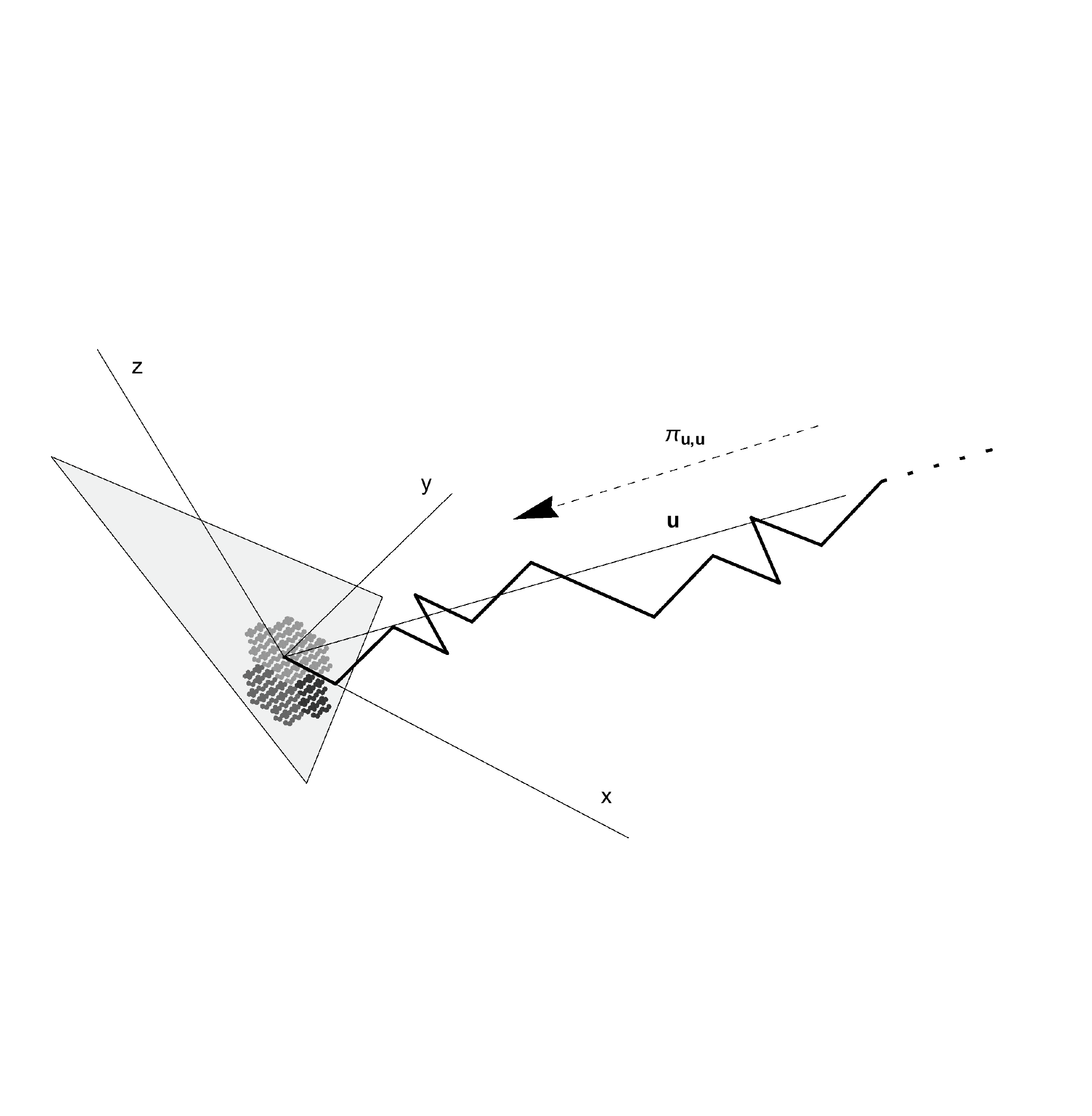}
\caption{The broken line and its projection to $\bu^\bot$ defining the Rauzy fractal $\Ra_\bu$ for the case of the Tribonacci substitution (note that only the vertices of the broken line are projected; not the whole edges).  Each of the three subtiles $\Ra_\bu(i)$ is shaded differently. The shaded triangle represents a part of the plane $\bu^\bot$ in which $\Ra_\bu$ is situated. \label{fig:RauzyTribo}}
\end{figure}

For this example it is known since Rauzy~\cite{Rauzy:82} that one can define a rotation on the Rauzy fractal using the broken line. This can be used to prove  that the substitutive system $(X_{(\sigma)},\Sigma)$ is measurably conjugate to a rotation on $\mathbb{T}^2$. We want to give an idea on how this works without going into the details. To this end it is convenient to work with $\Ra=\Ra_\bone$ and its subtiles. It was shown in \cite{Rauzy:82} that each of the three subtiles $\Ra(i)$, $i\in\{1,2,3\}$, is a compact subset of the space $\bone^\bot$ which is equal to the closure of its interior and has a boundary of $\lambda_\bone$-measure $0$. Moreover, it is proved that  these subtiles are pairwise disjoint apart from overlaps on their boundaries. Thus we can almost everywhere define a ``domain exchange'' $E$ in the following way. If we set 
\[
\tilde \Ra(i) := \overline{ \{\pi_{\bu,\bone}\mathbf{l}(pi) \;:\; pi \hbox{ is a prefix of } w \} } \qquad(i\in\{1,2,3\})
\]
we see from the definition of $\Ra(i)$ that $\tilde \Ra(i)= \Ra(i) +\pi_{\bu,\bone}\mathbf{l}(i)$ (recall that $w$ is the only limit sequence of $\bsigma$). As the Lebesgue measure $\lambda_\bone$ doesn't change under translation and we still have that $\Ra=\tilde\Ra(1)\cup\tilde\Ra(2)\cup\tilde\Ra(3)$ also the translated pieces only overlap on a set of measure $0$. The domain exchange
\[
E: \Ra \to \Ra; \quad \bx \mapsto \bx  +\pi_{\bu,\bone}\mathbf{l}(i) \quad \hbox{for }\bx\in \Ra(i)
\]
is thus well defined almost everywhere and it moves $\Ra(i)$ to $\tilde \Ra(i)$ for each $i\in\{1,2,3\}$. By what was said above, $E$ is an almost everywhere bijective symmetry. The effect of $E$ on the points of $\Ra$ is illustrated in Figure~\ref{fig:tribodomain}. As in the Sturmian case discussed in Section~\ref{sec:sturmrot}, each step on the broken line performs the domain exchange on $\Ra$.
\begin{figure}[hh]
\includegraphics[width=0.3\textwidth]{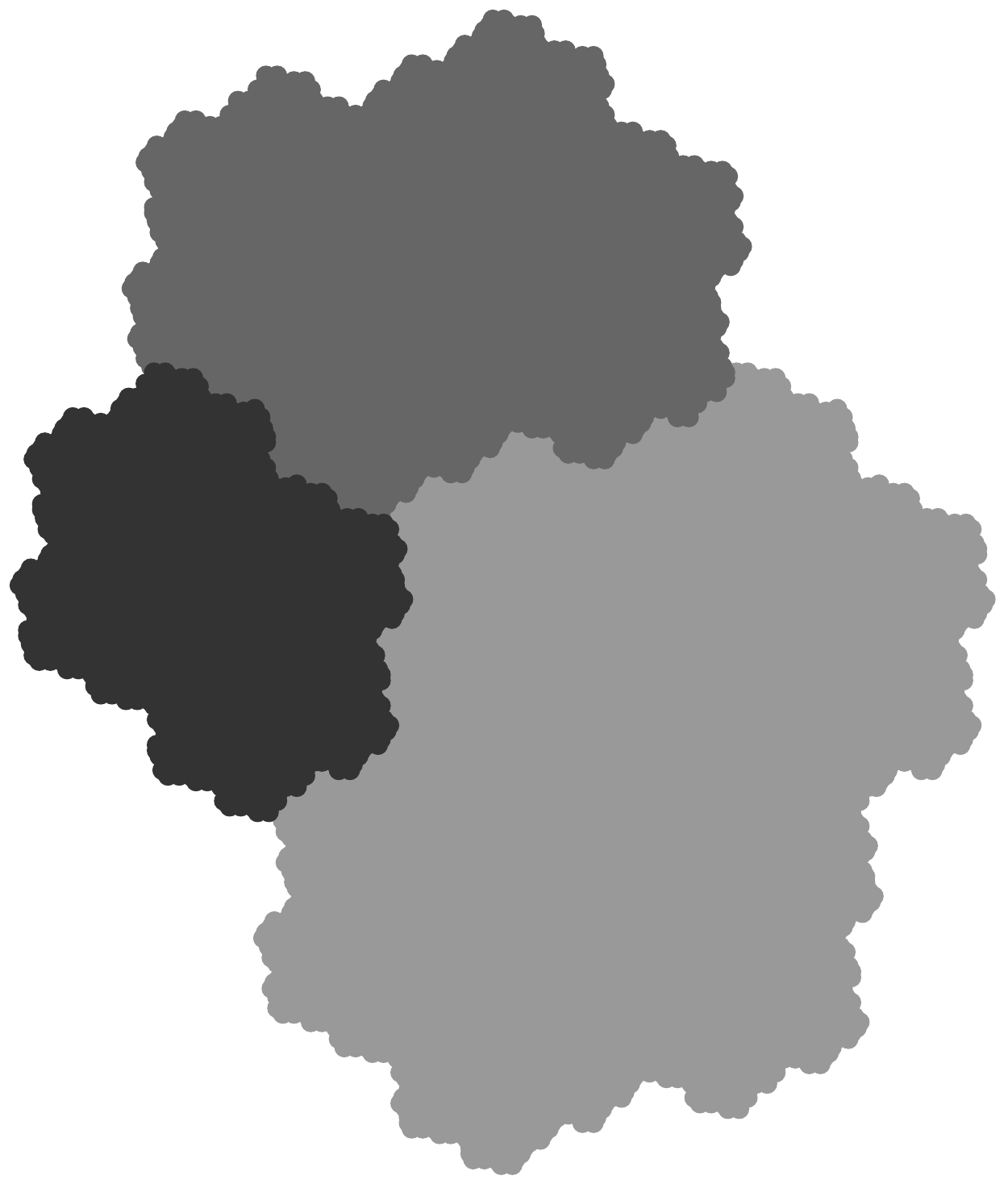}
\put(10,70){$\xrightarrow{\hskip 0.9cm E \hskip 0.9cm}$}
\hskip3cm
\includegraphics[width=0.3\textwidth]{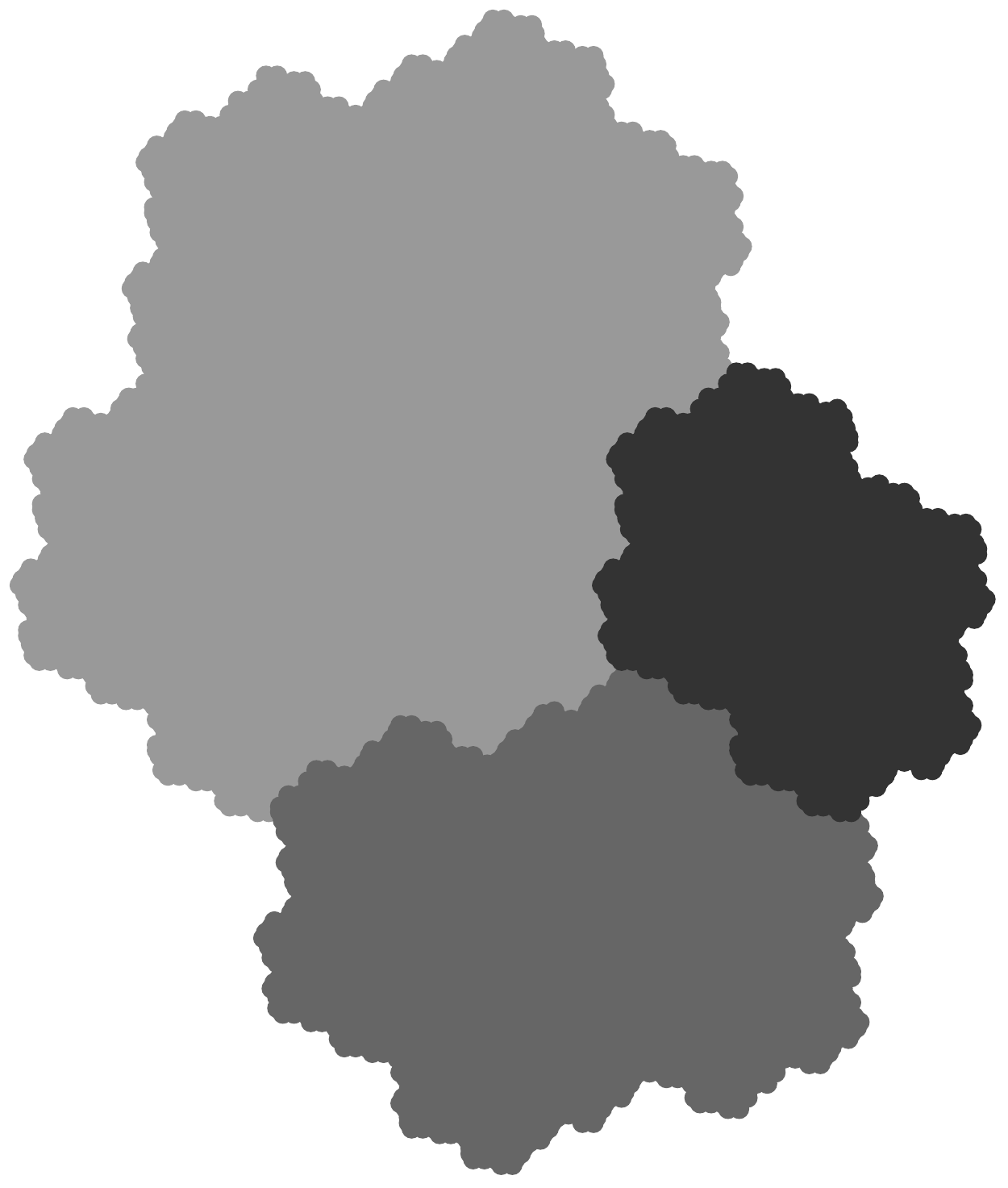}
\caption{The domain exchange on the classical Rauzy fractal associated with the Tribonacci substitution: the bright domain $\Ra(1)$ is translated by $\pi_{\bu,\bone}\mathbf{l}(1)$, the darker domain $\Ra(2)$ is translated by $\pi_{\bu,\bone}\mathbf{l}(2)$, and finally the darkest domain $\Ra(3)$ is translated by $\pi_{\bu,\bone}\mathbf{l}(3)$. The union of the translated domains gives $\Ra$ again. \label{fig:tribodomain}}
\end{figure}

The problem that remains is the transition from the domain exchange to the rotation. In the Sturmian case this was achieved by identifying the endpoints of an interval. Here things become more complicated as intervals are replaced by fractals and we have to make identifications on $\partial \Ra$. 

To settle this, Rauzy~\cite{Rauzy:82} proved that $\Ra$ forms a fundamental domain of the lattice 
$$
\Lambda=(\pi_{\bu,\bone}\mathbf{l}(1) - \pi_{\bu,\bone}\mathbf{l}(2))\Z \oplus (\pi_{\bu,\bone}\mathbf{l}(1)-\pi_{\bu,\bone}\mathbf{l}(3))\Z,
$$ 
{\it i.e.}, it forms a tiling of $\bone^\bot$ when translated by elements of $\Lambda$. Thus $\Ra$ can be seen as a subset of the $2$-torus $\bone^{\bot}/\Lambda$ and since it is a fundamental domain of $\Lambda$ it covers the torus without overlaps (apart from the boundary). This gives the desired identifications on $\partial \Ra$. If we look at the domain exchange on this torus we see that $ \pi_{\bu,\bone}\mathbf{l}(i) \equiv  \pi_{\bu,\bone}\mathbf{l}(j) \pmod {\Lambda}$ holds for $i,j\in\{1,2,3\}$. Thus on this torus all the translations performed by the domain exchange $E$ become the same and, hence, on the torus the mapping $E$ induces a rotation by $\pi_{\bu,\bone}\mathbf{l}(1)$. One can show (by defining a suitable ``representation map'' for the elements of $X_{(\sigma)}$ on the torus $\bone^\bot/\Lambda$) that $(X_{(\sigma)},\Sigma)$ is measurably conjugate to $(\bone^\bot/\Lambda, +\, \pi_{\bu,\bone}\mathbf{l}(1))$, which is a rotation on the $2$-torus. We also refer to \cite[Section~8]{Berthe-Steiner-Thuswaldner} where rigorous arguments are given in a general context (a sketch of these arguments is provided in Section~\ref{sec:rotproof} below). 
\end{example}

In the preceding example various properties of the Rauzy fractal were needed in order to get the measurable conjugacy between the substitutive system and the rotation. Our aim is to establish these conditions for $S$-adic Rauzy fractals under a set of natural conditions. Since tiling properties of $S$-adic Rauzy fractals will play an important role we will now define some collections of Rauzy fractals that will later be shown to provide \emph{tilings} in the following sense.

\begin{definition}[Multiple tiling and tiling]\index{tiling}\index{multiple tiling}
A collection $\mathcal{K}$ of subsets of a Euclidean space $\mathcal{E}$ is called a \emph{multiple tiling} of $\mathcal{E}$ if each element of $\mathcal{K}$ is a compact set which is equal to the closure of its interior, and if there is $m\in\N$ such that almost every point (w.r.t.\ Lebesgue measure) of $\mathcal{E}$ is contained in exactly $m$ elements of $\mathcal{K}$. If $m=1$ then a multiple tiling is called a \emph{tiling}. 
\end{definition}

 The collections of tiles we need in our setting are defined in terms of so-called \emph{discrete hyperplanes}\index{discrete hyperplane}. These objects were first defined and studied in the context of theoretical computer science (see \cite{Reveilles:91} and later~\cite{AAS:97,JT:06}) and have interesting connections to generalized continued fraction algorithms ({\it cf. e.g.}~\cite{BDJP:14,Fernique:06,Fer:08,ItoOtsuki94,JLP:16}). The formal definition reads as follows. Pick $\bw\in \R^d_{\ge 0}\setminus\{\mathbf{0}\}$, then (setting $\be_i=\mathbf{l}(i)$ for $i\in\A$)
 \[
 \Gamma(\bw) = \{[\bx,i] \in \Z^d\times \A \;:\; 0 \le \langle \bx,\bw \rangle < \langle \be_i,\bw \rangle\}.
 \]
This has a geometrical meaning: if we interpret the symbol $[\bx,i]\in \Z^d\times\A$ as the hypercube or ``face''
 \begin{equation}\label{eq:hypercube}
 [\bx,i] =\bigg\{
 \bx + \sum_{j\in\A\setminus\{i\}}\lambda_j\be_j \;:\; \lambda_j\in[0,1]
 \bigg\},
 \end{equation}
 the set $\Gamma(\bw)$ turns into a ``stepped hyperplane'' that approximates $\bw^\bot$ by hypercubes. In Figure~\ref{fig:steppedsurface} this is illustrated for two cases: for a rational vector $\bw$, which leads to a periodic pattern and for an irrational vector $\bw$ which yields an aperiodic one. A finite subset of a discrete hyperplane will often be called a \emph{patch}.
 
 \begin{figure}[hh]
\includegraphics[trim=0 90 0 50,clip,width=0.45\textwidth]{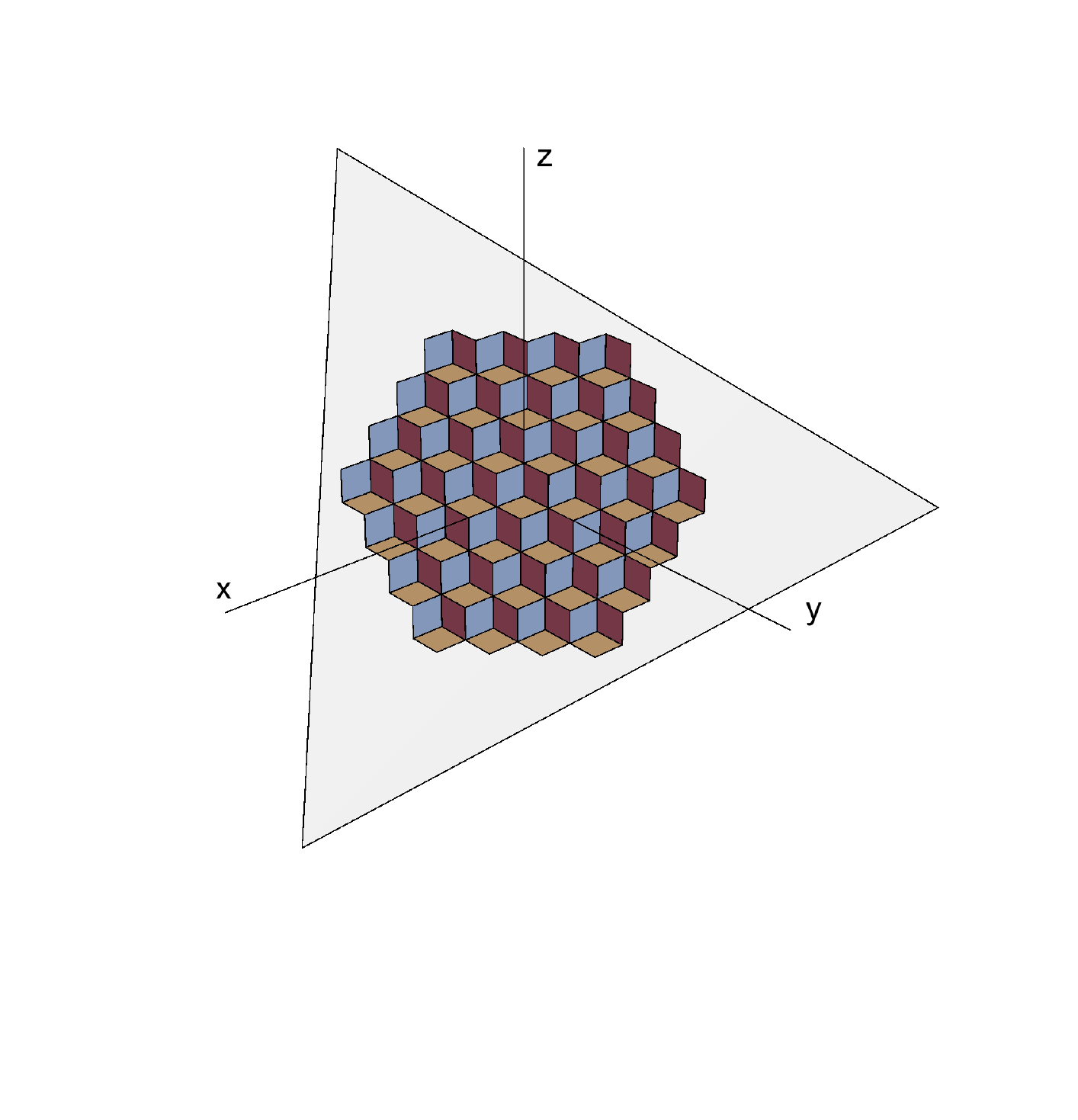}
\includegraphics[trim=0 90 0 50,clip,width=0.45\textwidth]{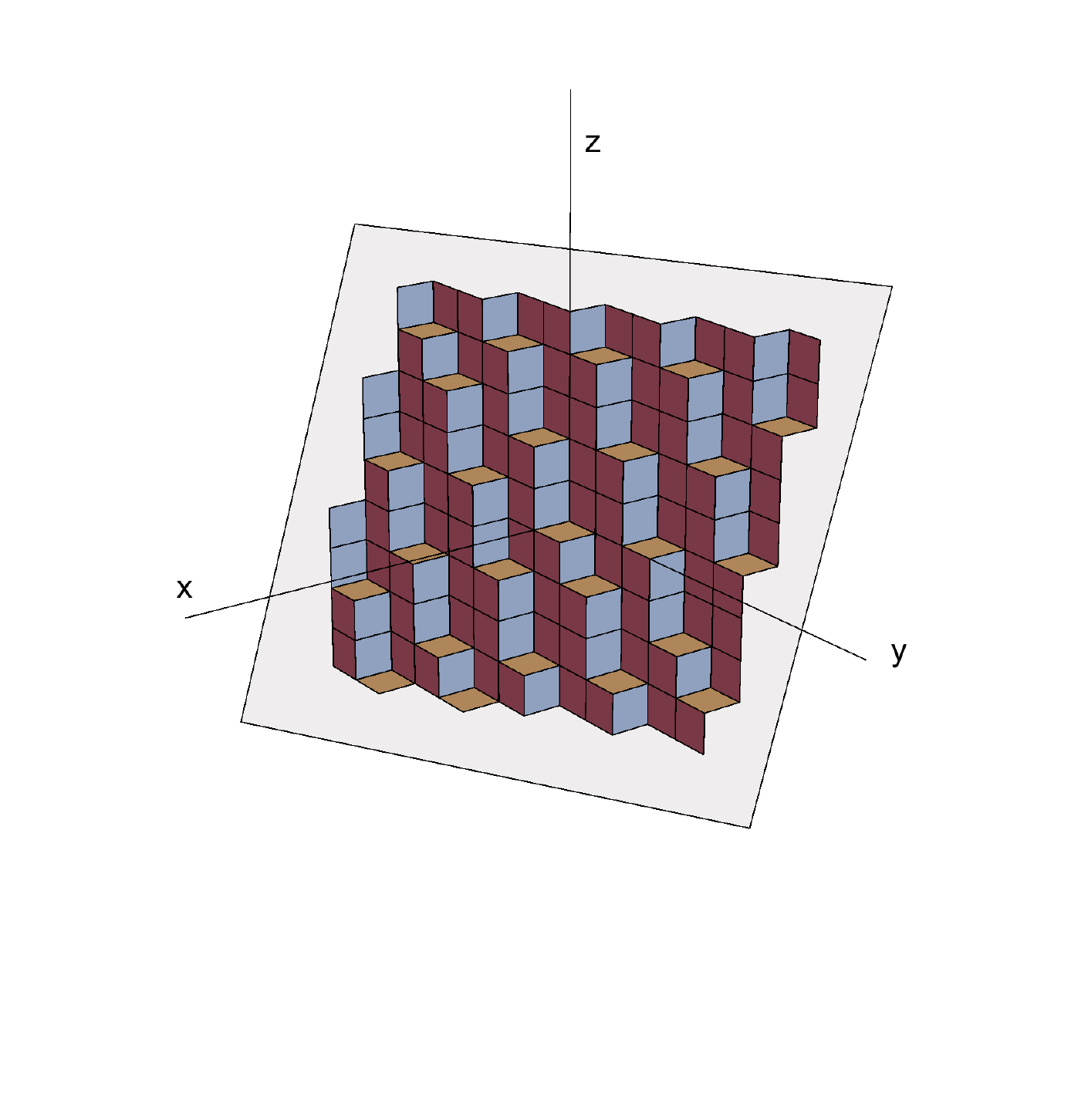}
\caption{Examples of stepped planes. On the left hand side the stepped plane $\Gamma(\bone)$, on the right hand side $\Gamma(\bu)$ with $\bu$ as in Example~\ref{ex:rauzy}. Since $\bone$ is rational the stepped plane $\Gamma(\bone)$ is periodic, while the irrationality of $\bu$ leads to an aperiodic structure in $\Gamma(\bu)$. \label{fig:steppedsurface}}
\end{figure}

Using the concept of discrete hyperplane we define the following collections of Rauzy fractals. Let $\bsigma$ be a sequence of substitutions with generalized right eigenvector $\bu\in\R_{> 0}^d$ and choose $\bw\in\R_{\ge 0}^d\setminus\{\mathbf{0}\}$. Then, following \cite[Section~2.10]{Berthe-Steiner-Thuswaldner}, we set
\begin{equation}\label{eq:cVcoll}
\Co_\bw=\{\pi_{\bu,\bw}\bx + \Ra_\bw(i)\;:\; [\bx,i]\in \Gamma(\bw)  \}.
\end{equation}
As mentioned above, we will see that each of these collections forms a tiling of the space $\bw^\bot$ under natural conditions. A special role will be played by the collection $\Co_\bone$ which will give rise to a periodic tiling of $\bone^\bot$ by lattice translates of the Rauzy fractal $\Ra$.

\subsection{Balance, algebraic irreducibility, and strong convergence}

Let $(X_\bsigma,\Sigma)$ be an $S$-adic system. As we mentioned already, the associated  Rauzy fractals can be used to prove that $(X_\bsigma,\Sigma)$ is measurably conjugate to a rotation on a torus provided that they have suitable properties. In the present section we will discuss two conditions that have to be imposed on $\bsigma$ in order to guarantee that each of the associated Rauzy fractals $\Ra_\bw$, $\bw\in\R_{\ge 0}\setminus\{\mathbf{0}\}$, as well as each of their subtiles $\Ra_\bw(i)$, $i\in\A$, is a compact set that is the closure of its interior and has a boundary of zero measure $\lambda_\bw$. 

The first property is \emph{balance} and as we will see immediately it entails compactness of $\Ra_\bw$ and its subtiles (see {\it e.g.}\ \cite{Adamczewski:03,Berthe-Delecroix} or \cite[Section~2.4]{Berthe-Steiner-Thuswaldner} for similar definitions).

\begin{definition}[Balance]\label{def:balance}\index{balance}
Let $\A$ be an alphabet and consider a pair of words $(u,v)\in\A^*\times\A^*$ of the same length. If there is $C>0$ such that $\big| |v|_i - |u|_i\big| \le C$ holds for each letter $i\in \A$, the pair $(u,v)$ is called \emph{$C$-balanced}. A language $\Lg\subset \A^*$ is called \emph{$C$-balanced} if each pair $(u,v)\in\Lg\times\Lg$ with $|u|=|v|$ is $C$-balanced. It is called \emph{finitely balanced} if it is $C$-balanced for some $C>0$.
\end{definition}

In Definition~\ref{def:sturmbalance} and in Section~\ref{sec:ARImbalance} we defined balance of an infinite sequence and applied this notion to Sturmian sequences as well as to Arnoux-Rauzy sequences. For a general $S$-adic system $(X_\bsigma,\Sigma)$ it is more convenient to look at balance of the associated language $\Lg_\bsigma$ since there might be more than one limit sequence associated with the given directive sequence $\bsigma$. Of course, by Proposition~\ref{prop:sadicminimal}~(ii) primitivity of $\bsigma$ implies that each of these limit sequences has the language $\Lg_\bsigma$ of factors.

The following result goes back essentially to \cite[Proposition~7]{Adamczewski:03} and, in the form we present it here, is contained in \cite[Lemma~4.1]{Berthe-Steiner-Thuswaldner} (in fact, the conditions that are imposed on $\bsigma$ in that paper are slightly weaker than ours). 
 
\begin{proposition}\label{prop:Rcompact}
Let $\bsigma$ be a primitive and recurrent sequence of unimodular substitutions. Then $\Ra_{\bw}$ and each of its subtiles is compact for each $\bw\in\R_{\ge0}^d\setminus\{\mathbf{0}\}$ if and only if $\Lg_\bsigma$ is finitely balanced.
\end{proposition}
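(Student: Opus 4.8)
The plan is to collapse the whole statement onto a single combinatorial fact: that finite balance of $\Lg_\bsigma$ is equivalent to the prefixes of the limit sequences staying within bounded distance of the line $\R\bu$. Throughout I would freely use that, since $\bsigma$ is primitive and recurrent, Proposition~\ref{prop:furstmatrix} provides the generalized right eigenvector $\bu\in\R^d_{>0}$, Lemma~\ref{lem:SadicFreq} yields uniform letter frequencies equal to $\mathbf{f}:=\bu/\Vert\bu\Vert_1$, and Proposition~\ref{prop:sadicminimal} guarantees that every limit sequence $w$ satisfies $L(w)=\Lg_\bsigma$, so that each factor in $\Lg_\bsigma$ actually occurs in $w$. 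The first reduction is geometric. As $\Ra_\bw$ is by definition a closure it is automatically closed, so compactness is just boundedness; and since every prefix $p$ of a limit sequence is followed by a letter one checks $\Ra_\bw=\bigcup_{i\in\A}\Ra_\bw(i)$, so $\Ra_\bw$ is compact if and only if all its subtiles are. For a prefix $p$ the vector $\mathbf{l}(p)-|p|\mathbf{f}$ lies in $\bone^\bot$ (both $\mathbf{l}(p)$ and $|p|\mathbf{f}$ have $\bone$-coordinate $|p|$), and $\pi_{\bu,\bw}(|p|\mathbf{f})=0$ because $\mathbf{f}\parallel\bu$, whence $\pi_{\bu,\bw}\mathbf{l}(p)=\pi_{\bu,\bw}(\mathbf{l}(p)-|p|\mathbf{f})$. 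The restriction of $\pi_{\bu,\bw}$ to $\bone^\bot$ is a linear isomorphism onto $\bw^\bot$: its kernel $\R\bu$ meets $\bone^\bot$ trivially since $\langle\bu,\bone\rangle=\Vert\bu\Vert_1>0$, and both spaces have dimension $d-1$ (and $\pi_{\bu,\bw}$ is well defined since $\langle\bu,\bw\rangle>0$). Hence, for one and therefore for every $\bw\in\R^d_{\ge0}\setminus\{\mathbf{0}\}$, the set $\{\pi_{\bu,\bw}\mathbf{l}(p)\}$ is bounded if and only if $\sup_p\Vert\mathbf{l}(p)-|p|\mathbf{f}\Vert<\infty$, the supremum running over prefixes of limit sequences. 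This converts the proposition into the single equivalence: $\Lg_\bsigma$ finitely balanced $\iff$ the prefix deviations are bounded.

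For the implication ``balance $\Rightarrow$ bounded deviation'' I would first establish the auxiliary estimate that $C$-balance forces $\big|\,|u|_i-|u|f_i\,\big|\le C$ for every $u\in\Lg_\bsigma$ and every $i$. To prove it, fix $u$ of length $\ell$ and, for large $M$, take a factor $V\in\Lg_\bsigma$ of length $M\ell$ (which exists by minimality); cutting $V$ into $M$ consecutive blocks of length $\ell$, each a factor, $C$-balance gives $\big|\,|V|_i-M|u|_i\,\big|\le MC$. Dividing by $|V|=M\ell$ and letting $M\to\infty$, the ratio $|V|_i/|V|$ tends to $f_i$ by uniform frequencies, so $\big|f_i-|u|_i/\ell\big|\le C/\ell$, which is the claim. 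Applied to prefixes $p$ this gives $\Vert\mathbf{l}(p)-|p|\mathbf{f}\Vert_\infty\le C$, hence bounded deviation and compactness of all $\Ra_\bw$ and their subtiles.

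For the converse, assume $\Vert\mathbf{l}(p)-|p|\mathbf{f}\Vert_\infty\le D$ for all prefixes. Any factor $u\in\Lg_\bsigma$ of length $\ell$ occurs in a fixed limit sequence $w$, say with $p$ and $pu$ both prefixes of $w$; then $\mathbf{l}(u)-\ell\mathbf{f}=(\mathbf{l}(pu)-|pu|\mathbf{f})-(\mathbf{l}(p)-|p|\mathbf{f})$ has sup-norm at most $2D$. For two factors $u,v$ of equal length the triangle inequality then yields $\big|\,|u|_i-|v|_i\,\big|\le 4D$ for each $i$, so $\Lg_\bsigma$ is $4D$-balanced. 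Combining the two directions with the geometric reduction closes the proof.

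The step I expect to be the main obstacle is the auxiliary estimate in the forward direction, i.e.\ the classical fact (essentially due to Adamczewski) that bounded letter-count discrepancy is equivalent to balance; the delicate point is passing from the purely combinatorial $C$-balance to a bound against the \emph{irrational} frequency vector $\mathbf{f}$, and it is precisely here that uniform frequencies, and hence both primitivity and recurrence through Lemma~\ref{lem:SadicFreq}, enter in an essential way. By comparison, the geometric reduction and the converse implication are elementary.
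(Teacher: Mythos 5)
Your proof is correct and follows essentially the same route as the paper's: your deviation vector $\mathbf{l}(p)-|p|\mathbf{f}$ is exactly $\pi_{\bu,\bone}\mathbf{l}(p)$, so your reduction to bounded deviation is the paper's reduction to boundedness of $\Ra$, and both directions (block-averaging combined with the frequencies from Lemma~\ref{lem:SadicFreq} for balance implies boundedness, and writing factors as differences of prefixes for the converse) are the arguments of the paper's proof. The only cosmetic differences are that you first prove the per-factor estimate $\big||u|_i-|u|f_i\big|\le C$ by cutting a long factor into blocks, where the paper averages the blocks of a limit sequence against the prefix directly, and that you spell out the linear isomorphism $\pi_{\bu,\bw}|_{\bone^\bot}$ which the paper compresses into the remark $\Ra_\bw=\pi_{\bu,\bw}\Ra$.
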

 
\begin{proof}
Since $\Ra_{\bw}$ as well as each of its subtiles is closed by definition it suffices to prove that 
\begin{equation}\label{eq:boundbal}
\Ra_{\bw} \hbox{ is bounded for each }\bw\in\R_{\ge0}^d\setminus\{\mathbf{0}\} \quad\Longleftrightarrow\quad \Lg_\bsigma \hbox{ is finitely balanced. }
\end{equation}

We start with proving \eqref{eq:boundbal} for the case $\bw=\bone$ and follow~\cite{Berthe-Steiner-Thuswaldner}. Let $\bu$ be a generalized right eigenvector for $\bsigma$ which exists by Proposition~\ref{prop:furstmatrix}. 

If $\Ra$ is bounded then there is $C>0$ such that $\Vert\pi_{\bu,\bone}\mathbf{l}(p)\Vert_\infty \le C$ for each prefix of a limit sequence of $\bsigma$. Let $u,v\in\Lg_\bsigma$ be of equal length. Then, by primitivity these words are factors of a limit sequence which entails that $\Vert\pi_{\bu,\bone}\mathbf{l}(u)\Vert_\infty,\Vert\pi_{\bu,\bone}\mathbf{l}(v)\Vert_\infty\le 2C$. As $\mathbf{l}(u)-\mathbf{l}(v)\in\bone^\bot$ this yields $\Vert\mathbf{l}(u)-\mathbf{l}(v)\Vert_\infty=\Vert\pi_{\bu,\bone}\mathbf{l}(u)-\pi_{\bu,\bone}\mathbf{l}(v)\Vert_\infty \le 4C$ and, hence, $\Lg_\bsigma$ is $4C$-balanced.

Assume now that $\Lg_\bsigma$ is $C$-balanced and let $w$ be a limit sequence of $\bsigma$. Let $p$ be a prefix of $w$ and write $w=v_0v_1\dots$ where $v_k\in\A^{*}$ with $|v_k|=|p|$ for each $k\ge 0$. By $C$-balance, $\Vert\pi_{\bu,\bone}\mathbf{l}(v_k)-\pi_{\bu,\bone}\mathbf{l}(p)  \Vert_\infty\le C$ for each $k\in\N$ and, hence, $\big\Vert\frac1n\sum_{k=0}^{n-1}\pi_{\bu,\bone}\mathbf{l}(v_k)-\pi_{\bu,\bone}\mathbf{l}(p)  \big\Vert_\infty\le C$ for each $n\in \N$. By Lemma~\ref{lem:SadicFreq} (see proof of Part 1), the letter frequencies of $w$ are given by the entries of the vector $\bu/\Vert\bu\Vert_1$ which implies that $\lim_{n\to\infty}\frac1n\sum_{k=0}^{n-1}\pi_{\bu,\bone}\mathbf{l}(v_k)={\bf 0}$ and thus
\[
\Vert\pi_{\bu,\bone}\mathbf{l}(p)  \Vert_\infty =
\bigg\Vert\lim_{n\to\infty}\frac1n\sum_{k=0}^{n-1}\pi_{\bu,\bone}\mathbf{l}(v_k)-\pi_{\bu,\bone}\mathbf{l}(p)  \bigg\Vert_\infty \le C. 
\]

This finishes the proof of \eqref{eq:boundbal} for the case $\bw=\bone$. The full statement \eqref{eq:boundbal} follows from this because $\Ra_\bw=\pi_{\bu,\bw}\Ra$, which implies that $\Ra$ is bounded if and only if $\Ra_{\bw}$ is bounded for each $\bw\in\R_{\ge0}^d\setminus\{\mathbf{0}\}$. 
\end{proof}

Our next aim is to make sure that $\Ra_\bw(i)$ has nonempty interior for each $\bw\in\R_{\ge0}^d\setminus\{\mathbf{0}\}$ and each $i\in\A$. This will require much more work. In a first step observe that we have no hope to get nonempty interior if $\bu$ has coordinates which are \emph{rationally dependent}, {\it i.e.}, if there is $\bx\in\Z^d$ such that $\langle \bx,\bu \rangle=0$. Indeed, in this case the set $\Ra_\bw$ is contained in a finite union of proper affine subspaces of $\bw^{\bot}$. We wish to exclude this case first. This is related to an irreducibility property (going back to \cite[Section~2.2]{Berthe-Steiner-Thuswaldner}) of the underlying set of incidence matrices which we define now.

\begin{definition}[Algebraic irreducibility]\label{ef:algirr}\index{algebraic irreducibility}
Let $\bM=(M_n)$ be a sequence of nonnegative integer matrices. We say that $\bM$ is \emph{algebraically irreducible} if for each $m\in\N$ there is $n>m$ such that the characteristic polynomial of $M_{[m,\ell)}$ is irreducible for each $\ell \ge n$. 

A sequence $\bsigma$ of substitutions is called \emph{algebraically irreducible} if it has a sequence of incidence matrices which is algebraically irreducible.
\end{definition}

\begin{remark}
For our purposes we can replace algebraic irreducibility by the weaker condition that for each $m\in\N$ the matrix $M_m$ is regular and there is $n>m$ such that $M_{[m,\ell)}$ does not have $1$ as eigenvalue for each $\ell \ge n$.
This condition is easier to check than algebraic irreducibility.

However, since we will always have to assume balance in our setting all but the dominant eigenvalue of large blocks $M_{[m,\ell)}$ should be inside the closed unit disk anyway ({\it cf.}~also the definition of the Pisot condition in \eqref{eq:lyapunov}). Thus this new condition is not essentially weaker than algebraic irreducibility. For this reason we work with algebraic irreducibility in the sequel.
\end{remark}

Together with other properties, algebraic irreducibility of $\bsigma$ implies rational independence of the right eigenvector. We announce this in the following lemma, whose elegant proof is taken from~\cite[Lemma~4.2]{Berthe-Steiner-Thuswaldner}.

\begin{lemma}\label{irrirr}
Let $\bsigma$ be an algebraically irreducible sequence of substitutions with finitely balanced language $\Lg_\bsigma$ that admits a generalized right eigenvector $\bu\in\R_{\ge 0}^{d}\setminus\{\mathbf{0}\}$. Then $\bu$ has rationally independent coordinates.
\end{lemma}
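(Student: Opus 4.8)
The plan is to argue by contraposition: assuming $\bu$ has rationally dependent coordinates, that is, that there is $\bx\in\Z^d\setminus\{\mathbf{0}\}$ with $\langle\bx,\bu\rangle=0$, I would produce a block $M_{[m,n)}$ of the incidence matrices having $1$ as an eigenvalue, which contradicts algebraic irreducibility as soon as $d\ge2$ (for $d=1$ the single positive coordinate is trivially rationally independent). Set $\bx^{(m)}:=M_{[0,m)}^{t}\bx\in\Z^{d}$; since every $M_{[0,m)}$ is unimodular, hence invertible, each $\bx^{(m)}\neq\mathbf{0}$. The heart of the argument is to show that the integer vectors $\bx^{(m)}$ stay \emph{bounded}, and this is exactly where balance enters.

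For the boundedness, first note that column $j$ of $M_{[0,m)}$ is $\mathbf{l}(\sigma_{[0,m)}(j))$, so the $j$-th coordinate of $\bx^{(m)}$ equals $\langle\bx,\mathbf{l}(\sigma_{[0,m)}(j))\rangle$, and $\sigma_{[0,m)}(j)\in\Lg_\bsigma$. Finite balance of $\Lg_\bsigma$ gives, via (the proof of) Proposition~\ref{prop:Rcompact}, a constant $C$ with $\Vert\pi_{\bu,\bone}\mathbf{l}(u)\Vert_\infty\le C$ for every $u\in\Lg_\bsigma$ (the bound is established there for prefixes of a limit sequence and extends to all factors by writing a factor as a difference of two prefixes). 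Writing $\mathbf{f}=\bu/\Vert\bu\Vert_1$, this reads $\mathbf{l}(u)=|u|\,\mathbf{f}+\mathbf{r}_u$ with $\Vert\mathbf{r}_u\Vert_\infty\le C$. The crucial point is that orthogonality kills the dominant term: since $\langle\bx,\mathbf{f}\rangle=\langle\bx,\bu\rangle/\Vert\bu\Vert_1=0$, I get $\langle\bx,\mathbf{l}(u)\rangle=\langle\bx,\mathbf{r}_u\rangle$, whence $|\langle\bx,\mathbf{l}(u)\rangle|\le C\Vert\bx\Vert_1$. Applying this to $u=\sigma_{[0,m)}(j)$ for each $j\in\A$ yields $\Vert\bx^{(m)}\Vert_\infty\le C\Vert\bx\Vert_1$ for all $m$, so $\{\bx^{(m)}:m\in\N\}$ is a bounded subset of $\Z^d$ and hence finite.

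Finally I would exploit the pigeonhole principle together with algebraic irreducibility. Some nonzero value $\by$ is attained by $\bx^{(m)}$ for infinitely many indices $m_1<m_2<\cdots$. Using $M_{[0,m_j)}^{t}=M_{[m_1,m_j)}^{t}M_{[0,m_1)}^{t}$ I obtain $\by=\bx^{(m_j)}=M_{[m_1,m_j)}^{t}\bx^{(m_1)}=M_{[m_1,m_j)}^{t}\by$ for every $j$, so $\by$ is a fixed vector of $M_{[m_1,m_j)}^{t}$ and $M_{[m_1,m_j)}$ has eigenvalue $1$. By algebraic irreducibility (Definition~\ref{ef:algirr}) applied at $m=m_1$ there is $n>m_1$ with $\chi_{M_{[m_1,\ell)}}$ irreducible for all $\ell\ge n$; choosing $m_j\ge n$ gives an irreducible characteristic polynomial of degree $d\ge2$ with the rational root $1$, the desired contradiction. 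The main obstacle in this plan is the boundedness step: everything else is formal, but it is the balance hypothesis, converted into the bound $\Vert\pi_{\bu,\bone}\mathbf{l}(u)\Vert_\infty\le C$ and combined with $\langle\bx,\bu\rangle=0$, that forces the orbit $(\bx^{(m)})$ to remain in a finite set rather than escaping to infinity.
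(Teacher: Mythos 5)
Your proposal is correct and follows essentially the same route as the paper's proof: both use the orthogonality $\langle\bx,\bu\rangle=0$ together with balance (via the bound $\Vert\pi_{\bu,\bone}\mathbf{l}(u)\Vert_\infty\le C$) to show the integer vectors $(M_{[0,m)})^t\bx$ stay bounded, then apply pigeonhole to find a vector fixed by some $(M_{[k,\ell)})^t$ with $\ell$ arbitrarily large, contradicting algebraic irreducibility. Your write-up merely makes explicit a few details the paper leaves implicit (the decomposition $\mathbf{l}(u)=|u|\mathbf{f}+\mathbf{r}_u$, the choice of indices in the pigeonhole step, and the remark that an irreducible polynomial of degree $d\ge 2$ cannot have $1$ as a root).
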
 
 
\begin{proof}
The proof is done by contradiction. Assume that $\bu$ has rationally dependent coordinates. Then there is $\bx\in \Z^d\setminus\{\mathbf{0}\}$ such that $\langle \bx, \bu \rangle=0$. This implies that $\langle (M_{[0,n)})^t\bx, \be_i \rangle=\langle \bx, M_{[0,n)}\be_i \rangle=\langle \bx, \mathbf{l}\sigma_{[0,n)}(i) \rangle = \langle \bx, \pi_{\bu,\bone}\mathbf{l}\sigma_{[0,n)}(i) \rangle$ is uniformly bounded in $i\in\A$ and $n\in\N$ by balance of $\Lg_\bsigma$. Thus $(M_{[0,n)})^t\bx \in \Z^d$ is bounded and, hence, there exists an integer $k$ and infinitely many $\ell>k$ with $(M_{[0,k)})^t\bx=(M_{[0,\ell)})^t\bx$. Multiplying by $((M_{[0,k)})^t)^{-1}$ we see that $\bx$ is an eigenvector of $(M_{[k,\ell)})^t$ with eigenvalue $1$. Since $\ell$ can be chosen arbitrarily large this contradicts algebraic irreducibility.
\end{proof}

In Definition~\ref{def:weakconv} the concept of weak convergence of a sequence of matrices is introduced. In what follows, we will need a stronger form of convergence, {\it viz.}\ \emph{strong convergence}. If we look back to Lemma~\ref{lem:rotCF} we see that the cascade of inductions we perform on the interval leads to smaller and smaller intervals (that are blown up by renormalization) whose lengths tend to $0$. To get an analogous behavior on $S$-adic Rauzy fractals we need to introduce a certain subdivision on them whose pieces have a diameter that tends to zero. It will turn out that strong convergence is the right condition to guarantee this behavior. We thus recall the definition of strong convergence which is well known in the theory of generalized continued fraction algorithms (see {\it e.g.}~\cite[Definition~19]{Schweiger:00}) and then derive it from the conditions we introduced so far.

\begin{definition}[Strong convergence]\index{strong convergence}\index{convergence!strong}
We say that a sequence $\bM=(M_n)$ of nonnegative integer matrices is \emph{strongly convergent} to $\bu\in\R_{\ge 0}^{d}\setminus\{\mathbf{0}\}$ if
\[
\lim_{n\to\infty} \pi_{\bu,\bone}M_{[0,n)}\be_i=\mathbf{0} \quad \hbox{for all }i\in\A. 
\]
If $\bsigma$ has a strongly convergent sequence of incidence matrices we say that $\bsigma$ is \emph{strongly convergent}.
\end{definition}

The difference between weak and strong convergence is explained and illustrated in Figure~\ref{fig:WSC}: while weak convergence of vectors can be seen on the unit ball, strong convergence takes place at their end points.

\begin{figure}[hh]
\includegraphics[trim=0 0 0 0,clip,width=0.5\textwidth]{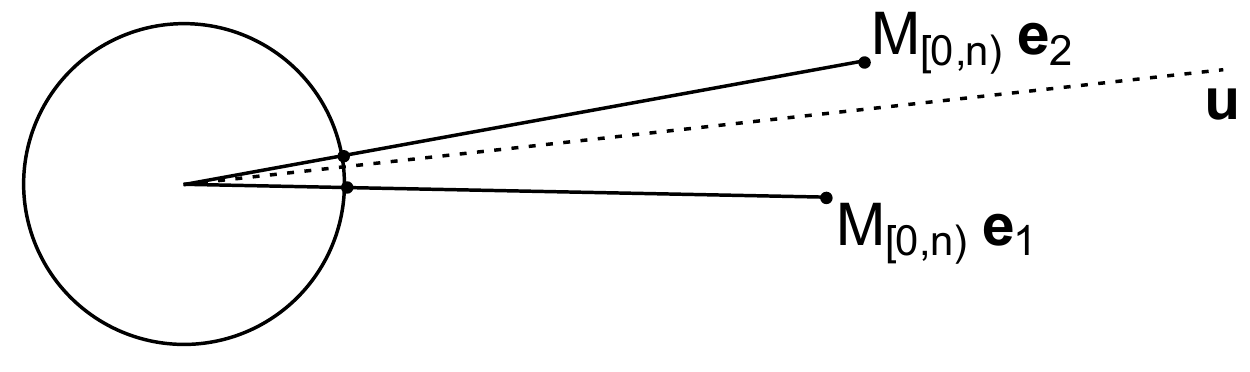}

\caption{The sequence $\bM=(M_{n})$ of matrices is weakly convergent, if the intersections of $M_{[0,n)}\be_{i}$ with the unit ball converge to the intersection of the generalized right eigenvector $\bu$ with the unit ball.
It is strongly convergent, if the minimal distance of the point $M_{[0,n)}\be_{i}$ to the ray $\R_+\bu$ converges to zero  for each $i\in\mathcal{A}$.
Summing up: weak convergence takes place on the unit ball while strong convergence concerns the end points of the vectors. \label{fig:WSC}}
\end{figure}

The following result on strong convergence will be needed in the sequel. It is the content of \cite[Proposition~4.3]{Berthe-Steiner-Thuswaldner}.

\begin{proposition}\label{prop:strongconv}
Let $\bsigma$ be a primitive, algebraically irreducible, and recurrent sequence of substitutions with finitely balanced language $\Lg_\bsigma$. Then
\[
\lim_{n\to\infty} \sup\{\Vert \pi_{\bu,\bone}M_{[0,n)}\mathbf{l}(v)\Vert_\infty\;:\; v\in\Lg_\bsigma^{(n)} \}=0.
\]
By primitivity this implies that $\bsigma$ is strongly convergent.
\end{proposition}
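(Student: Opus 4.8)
The plan is to rewrite everything in terms of the transverse deviations of the columns of $M_{[0,n)}$ from the eigendirection $\R_+\bu$, and then to show that these deviations contract to zero. Since $\bsigma$ is primitive and recurrent, Proposition~\ref{prop:furstmatrix} furnishes the generalized right eigenvector $\bu\in\R^d_{>0}$, and the same applies to every shifted sequence $\bsigma^{(n)}=(\sigma_{n+k})_{k\ge0}$, giving eigenvectors $\bu^{(n)}$. For $v\in\Lg^{(n)}_\bsigma$ one has $\sigma_{[0,n)}(v)\in\Lg_\bsigma$ and $M_{[0,n)}\mathbf{l}(v)=\mathbf{l}(\sigma_{[0,n)}(v))$, so the quantity to be estimated is $\pi_{\bu,\bone}\mathbf{l}(\sigma_{[0,n)}(v))$. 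Writing $\mathbf{d}^{(n)}_j:=\pi_{\bu,\bone}M_{[0,n)}\be_j\in\bone^\bot$ for the projected columns and using $\pi_{\bu,\bone}\bx=\bx-\frac{\langle\bx,\bone\rangle}{\langle\bu,\bone\rangle}\bu$, the nesting $\bigcap_{\ell}M_{[0,\ell)}\R^d_{\ge0}=M_{[0,n)}\bigcap_\ell M_{[n,\ell)}\R^d_{\ge0}$ shows $M_{[0,n)}\bu^{(n)}\in\R_+\bu$, whence $\sum_j f^{(n)}_j\mathbf{d}^{(n)}_j=\mathbf{0}$ for the letter frequencies $f^{(n)}=\bu^{(n)}/\Vert\bu^{(n)}\Vert_1$. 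Subtracting this vanishing combination gives the key identity
\[
\pi_{\bu,\bone}M_{[0,n)}\mathbf{l}(v)=\sum_{j\in\A}\bigl(|v|_j-|v|f^{(n)}_j\bigr)\mathbf{d}^{(n)}_j,
\]
in which $\bigl(|v|_j-|v|f^{(n)}_j\bigr)_{j}=\pi_{\bu^{(n)},\bone}\mathbf{l}(v)$. This reduces the proposition to two points: (a) the vectors $\pi_{\bu^{(n)},\bone}\mathbf{l}(v)$ stay bounded uniformly in $n$ and $v\in\Lg^{(n)}_\bsigma$; and (b) $\mathbf{d}^{(n)}_j\to\mathbf{0}$ for each $j$.

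Next I would treat the boundedness. Exactly as in the proof of Proposition~\ref{prop:Rcompact}, finite balance of $\Lg_\bsigma$ together with the letter frequencies $\bu/\Vert\bu\Vert_1$ (Lemma~\ref{lem:SadicFreq}, Part~1) yields $\Vert\pi_{\bu,\bone}\mathbf{l}(w)\Vert_\infty\le C$ for every $w\in\Lg_\bsigma$; applied to $w=\sigma_{[0,n)}(v)$ this already shows that the supremum in the proposition is bounded by $C$ for all $n$. Equivalently, the transverse maps $B_{0,n}:=\pi_{\bu,\bone}M_{[0,n)}|_{\bone^\bot}$ form a uniformly bounded, multiplicatively composing family (one checks $B_{0,n}=B_{0,m}\circ\pi_{\bu^{(m)},\bone}M_{[m,n)}|_{\bone^\bot}$, since $B_{0,m}$ annihilates $\R\bu^{(m)}$), and since $\mathbf{d}^{(n)}_j=B_{0,n}\,\pi_{\bu^{(n)},\bone}\be_j$ with $\pi_{\bu^{(n)},\bone}\be_j$ bounded, point (b) is equivalent to $\Vert B_{0,n}\Vert\to0$. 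Point (a) is the corresponding boundedness statement for the shifted systems, which I would deduce from the finite balance of the languages $\Lg^{(n)}_\bsigma$; one shows that this is inherited, with a uniform constant, from the balance of $\Lg_\bsigma$ under primitivity and the contraction established below.

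The heart of the matter is proving $\Vert B_{0,n}\Vert\to0$. Expanding $\det M_{[0,n)}=\pm1$ in the splitting $\R^d=\R\bu^{(n)}\oplus\bone^\bot\to\R\bu\oplus\bone^\bot$, in which $M_{[0,n)}$ is block–triangular with scalar block $\mu_n$ (the factor by which $\bu^{(n)}$ is sent onto $\bu$) and transverse block $B_{0,n}$, gives $|\det B_{0,n}|\asymp\mu_n^{-1}\to0$, because primitivity forces $\mu_n\to\infty$. Thus $B_{0,n}$ is volume–contracting while remaining bounded, so its smallest singular values tend to $0$; the obstacle is to rule out a persistent transverse direction along which $B_{0,n}$ neither grows nor decays. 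This is precisely where algebraic irreducibility is indispensable: such a neutral direction would, by the integrality and unimodularity of $M_{[0,n)}$ and the rational independence of $\bu$ established in Lemma~\ref{irrirr}, combine with the recurrence of $\bsigma$ to produce a recurring block $M_{[m,\ell)}$ possessing a transverse eigenvalue of modulus one — equivalently, after the normalization above, the eigenvalue $1$ for some $M_{[m,\ell)}^t$ — contradicting algebraic irreducibility (compare the Remark following Definition~\ref{ef:algirr}). Excluding this forces contraction in every transverse direction, i.e.\ $\Vert B_{0,n}\Vert\to0$. Feeding this back through (a) and the identity of the first paragraph yields the stated uniform limit, the case $v=i$ is exactly strong convergence $\pi_{\bu,\bone}M_{[0,n)}\be_i\to\mathbf{0}$, and primitivity then upgrades strong convergence of $\bM$ to that of $\bsigma$. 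The whole difficulty is thus concentrated in this final contraction step: balance alone yields only non-expansion, and turning non-expansion into genuine contraction to zero in all directions is exactly what algebraic irreducibility — through the exclusion of eigenvalue one for large recurring blocks — provides.
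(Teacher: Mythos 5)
Your reduction identity $\pi_{\bu,\bone}M_{[0,n)}\mathbf{l}(v)=\sum_{j}\bigl(|v|_j-|v|f^{(n)}_j\bigr)\mathbf{d}^{(n)}_j$ and the determinant observation $|\det B_{0,n}|\asymp\mu_n^{-1}\to 0$ are both correct, but the two steps carrying the whole weight of the argument are asserted rather than proved, and neither holds the way you indicate. First, your point (a) is exactly the statement that the shifted languages $\Lg^{(n)}_\bsigma$ are $C$-balanced with a constant \emph{uniform in} $n$. This does not follow from balance of $\Lg_\bsigma$: pushing balance of $\Lg_\bsigma$ back through $\sigma_{[0,n)}$ only gives balance of $\Lg^{(n)}_\bsigma$ with a constant of order $\Vert M_{[0,n)}^{-1}\Vert$, which typically blows up with $n$; uniform balance of the shifted languages is a genuinely stronger property (this is precisely why Theorem~\ref{th:RauzyProperties} has to assume $C$-balance of $\Lg^{(n+\ell)}_\bsigma$ as a separate hypothesis rather than deducing it from balance of $\Lg_\bsigma$). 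Your appeal to ``the contraction established below'' to recover (a) is circular: the contraction argument needs (a), since the coefficients in your identity must stay bounded, and it also needs $\sup_n\Vert B_{0,n}\Vert<\infty$, which likewise does not follow from $\Vert B_{0,n}\,\pi_{\bu^{(n)},\bone}\mathbf{l}(v)\Vert_\infty\le C$ unless the vectors $\pi_{\bu^{(n)},\bone}\mathbf{l}(v)$, $v\in\Lg^{(n)}_\bsigma$, contain a spanning set of $\bone^\bot$ that is non-degenerate uniformly in $n$ --- the same unproved uniformity again.

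Second, and more fundamentally, the exclusion of a ``neutral transverse direction'' is the heart of the proposition, and your mechanism for it fails. Non-contraction of the products $B_{0,n}$ along some direction does not produce an exact eigenvector of any finite block: in Lemma~\ref{irrirr} the argument works because the \emph{integer} vectors $(M_{[0,n)})^t\bx$ stay bounded, so pigeonhole forces a repetition and hence a literal eigenvalue $1$; a non-contracted real direction yields no bounded integer orbit, so no integrality-plus-recurrence argument applies. Moreover, even granting a recurring block $M_{[m,\ell)}$ with a transverse eigenvalue of modulus one, this would \emph{not} contradict algebraic irreducibility: irreducible characteristic polynomials can have roots of modulus $1$ (Salem polynomials do), ``modulus one'' is not ``equal to $1$'', and no power trick repairs your ``equivalently'' when the eigenvalue is $e^{i\theta}$ with $\theta/\pi$ irrational --- indeed the Remark after Definition~\ref{ef:algirr} is careful to say only that balance should force the non-dominant eigenvalues into the \emph{closed} unit disk. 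This is exactly why the paper's proof takes a different route: it shows that the prefix projections $\mathcal{S}_n$ converge in Hausdorff metric to the compact set $\tilde\Ra$ (compact by balance of $\Lg_\bsigma$ alone), that the translates $\pi_{\bu,\bone}M_{[0,n)}\mathbf{l}(p)+\mathcal{S}_n$ sit inside $\tilde\Ra$ --- which forces those particular translation vectors to $\mathbf{0}$ --- and then uses algebraic irreducibility, balance, and recurrence to manufacture a set $P$ of prefixes whose abelianizations contain a basis of $\R^d$, synchronized along a common infinite set of indices; convergence on a basis then yields convergence of the whole operator. Your linear-algebraic skeleton would need an entirely different, and currently missing, mechanism to upgrade ``volume contraction plus boundedness'' to ``norm contraction''.
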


The proof of this result is quite tricky. We give a sketch to illustrate the ideas and refer to \cite[Proposition~4.3]{Berthe-Steiner-Thuswaldner} for details.

\begin{proof}[Sketch]
Let $w$ be a limit sequence of $\bsigma$. By primitivity we may apply Proposition~\ref{prop:sadicminimal}~(ii) to the shifted sequence $(\sigma_{n},\sigma_{n+1},\sigma_{n+2},\ldots)$. Thus the language $\Lg_\bsigma^{(n)}$ is equal to the language $L(w^{(n)})$ of factors of the $n$-th ``desubstitution'' $w^{(n)}$ of $w$ (see \eqref{eq:limitworddesubs}), {\it i.e.}, each $v\in\Lg_\bsigma^{(n)}$ satisfies $\mathbf{l}(v)=\mathbf{l}(p)-\mathbf{l}(q)$, where $p$ and $q$ are prefixes of $w^{(n)}$. Thus it suffices to prove
\begin{equation}\label{eq:43htp}
\lim_{n\to\infty} \sup\{\Vert \pi_{\bu,\bone}M_{[0,n)}\mathbf{l}(p)\Vert_\infty \;:\; p \hbox{ is a prefix of }w^{(n)}  \}=0.
\end{equation}

Let $(i_n)$ be the sequence of first letters of $w^{(n)}$ and choose $\varepsilon>0$ arbitrary. Set
\[
\mathcal{S}_n=  \{\pi_{\bu,\bone}\mathbf{l}(p) \;:\; p \hbox{ is a prefix of } \sigma_{[0,n)}(i_n) \} \quad\hbox{and}\quad
\tilde \Ra := \overline{ \{\pi_{\bu,\bone}\mathbf{l}(p) \;:\; p \hbox{ is a prefix of } w \} }. 
\] 
Then $\mathcal{S}_n \to \tilde\Ra$ for $n\to\infty$ in Hausdorff metric. Since, on the other hand, $\pi_{\bu,\bone}M_{[0,n)}\mathbf{l}(p)+\mathcal{S}_n\subset \tilde\Ra$ holds for each $p\in\A^*$ such that $pi_n$ is a prefix $w^{(n)}$ we obtain
\begin{equation}\label{eq:sadicprop43first}
\Vert \pi_{\bu,\bone}M_{[0,n)}\mathbf{l}(p)\Vert_\infty<\varepsilon
\end{equation}
for  each $p\in\A^*$ such that $pi_n$ is a prefix $w^{(n)}$ for a large enough $n$. We have to prove \eqref{eq:sadicprop43first} for arbitrary prefixes $p$ of $w^{(n)}$. If $N(p)=\{n\in\N\,:\, pi_n \hbox{ is a prefix of } w^{(n)}\}$ is infinite then \eqref{eq:sadicprop43first} yields
\begin{equation}\label{eq:npeq}
\lim_{n\in N(p),\, n\to\infty } \Vert\pi_{\bu,\bone}M_{[0,n)}\mathbf{l}(p)\Vert_\infty =0.
\end{equation}
Using algebraic irreducibility and balance by some tricky arguments it is now possible to find a set $P$ of prefixes of $w$ such that the abelianizations $\mathbf{l}(P)$ contain a basis of $\R^d$ and $N(P)=\bigcap_{p\in P} N(p)$ is an infinite set (moreover, the elements of $P$ can by ``synchronized'' in a certain way by using the recurrence of $\bsigma$). This implies that \eqref{eq:npeq} is true for each $p\in P$ when $N(p)$ is replaced by $N(P)$, {\it i.e.},
\begin{equation*}\label{eq:npeq2}
\lim_{n\in N(P),\, n\to\infty } \Vert\pi_{\bu,\bone}M_{[0,n)}\mathbf{l}(p)\Vert_\infty =0 \qquad(p\in P).
\end{equation*}
Since $\mathbf{l}(P)$ contains a basis of $\R^d$ we gain 
\begin{equation}\label{eq:npeq3}
\lim_{n\in N(P),\, n\to\infty } \Vert\pi_{\bu,\bone}M_{[0,n)}\bx\Vert_\infty =0 \qquad(\bx\in \mathbb{R}^d).
\end{equation}
Using primitivity and recurrence again, equation \eqref{eq:43htp} can be obtained using \eqref{eq:sadicprop43first} and \eqref{eq:npeq3}. This again requires some work and we omit the details.
\end{proof}

\section{Properties of $S$-adic Rauzy fractals}\label{sec:rauzyprop}

Based on the results of the previous section we will now study deeper properties of $S$-adic Rauzy fractals. In particular, the present section is devoted to the illustration of the proof of the following result from Berth\'e, Steiner, and Thuswaldner~\cite[Theorem~3.1~(ii)]{Berthe-Steiner-Thuswaldner}.
 \begin{theorem}\label{th:RauzyProperties}
Let $S$ be a finite set of unimodular substitutions over a finite alphabet $\A$ and let $\bsigma=(\sigma_n)$ be a primitive and algebraically irreducible sequence of substitutions taken from the set $S$. Assume that there is $C>0$ such that for every $\ell\in\N$ there exists $n\ge 1$ such that $(\sigma_n,\ldots,\sigma_{n+\ell-1})=(\sigma_0,\ldots,\sigma_{\ell-1})$ and the language $\Lg_\bsigma^{(n+\ell)}$ is $C$-balanced.

Then each subtile $\Ra(i)$, $i\in\A$, of the Rauzy fractal $\Ra$ is a nonempty compact set which is equal to the closure of its interior and has a boundary whose Lebesgue measure $\lambda_\bone$ is zero.
\end{theorem}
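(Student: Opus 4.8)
The plan is to deduce the four required properties (nonempty, compact, equal to the closure of its interior, null boundary) from the structural results already available, the engine being the strong convergence of Proposition~\ref{prop:strongconv}. First I would record that the hypotheses are exactly tailored to put all earlier machinery at our disposal: the condition that the initial block $(\sigma_0,\ldots,\sigma_{\ell-1})$ recurs is recurrence in the sense of Definition~\ref{def:recurrence}, and together with the $C$-balance of the tails $\Lg_\bsigma^{(n+\ell)}$ it guarantees that $\Lg_\bsigma$ itself is finitely balanced. Hence Proposition~\ref{prop:furstmatrix} supplies a generalized right eigenvector $\bu\in\R_{>0}^d$, which by Lemma~\ref{irrirr} has rationally independent coordinates, Theorem~\ref{prop:SadicUE} gives minimality and unique ergodicity, and Proposition~\ref{prop:strongconv} gives strong convergence together with the uniform decay $\varepsilon_n:=\sup\{\Vert\pi_{\bu,\bone}M_{[0,n)}\mathbf{l}(v)\Vert_\infty : v\in\Lg_\bsigma^{(n)}\}\to 0$. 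Compactness of each $\Ra(i)$ is then immediate from Proposition~\ref{prop:Rcompact}, and nonemptiness is clear since primitivity forces every letter $i$ to occur in a limit sequence.

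\emph{The set equation.} Next I would establish the graph-directed structure. Writing $\Ra^{(n)}_\bw(j)$ for the subtiles of the shifted sequence $\bsigma^{(n)}=(\sigma_{n+k})_{k\ge0}$ (whose eigenvector is $M_{[0,n)}^{-1}\bu$ and whose representation space is $(M_{[0,n)}^t\bw)^\bot$) and using the intertwining relation $\pi_{\bu,\bone}M_{[0,n)}=M_{[0,n)}\,\pi_{M_{[0,n)}^{-1}\bu,\,M_{[0,n)}^t\bone}$, desubstitution of a prefix $q$ (with $qi$ a prefix of a limit sequence) as $q=\sigma_{[0,n)}(r)p$ yields
\[
\Ra(i)=\bigcup_{\substack{j\in\A,\ p\,:\ pi\text{ prefix of }\sigma_{[0,n)}(j)}}\Big(M_{[0,n)}\,\Ra^{(n)}_{M_{[0,n)}^t\bone}(j)+\pi_{\bu,\bone}\mathbf{l}(p)\Big).
\]
By Proposition~\ref{prop:strongconv} each piece on the right has diameter at most $2\varepsilon_n\to 0$, and by primitivity, for all large $n$ every letter $j$ occurs as such a child of every $i$; this is the mechanism by which information propagates between levels.

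\emph{Nonempty interior and positive measure.} The projected faces $\pi_{\bu,\bone}[\bx,i]$ of the discrete hyperplane $\Gamma(\bone)$ tile $\bone^\bot$; applying the dual substitution repeatedly and letting $n\to\infty$, strong convergence collapses these faces onto the subtiles and shows that the collection $\Co_\bone$ covers $\bone^\bot$ (the same holds at every level for $\bsigma^{(n)}$). Since this is a countable cover of the Baire space $(M_{[0,n)}^t\bone)^\bot$ by compact sets, some subtile $\Ra^{(n)}_{M_{[0,n)}^t\bone}(j_0)$ has nonempty interior; feeding it into the set equation and invoking primitivity to make $j_0$ a child of a given $i$ shows, because $M_{[0,n)}$ is a linear isomorphism of representation spaces, that every $\Ra(i)$ has nonempty interior and hence positive measure $\lambda_\bone$.

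\emph{The null boundary---the main obstacle---and closure of interior.} The delicate point is that $\lambda_\bone(\partial\Ra(i))=0$. Iterating the set equation expresses $\Ra(i)$ as a union of at most $|\A|\max_a|\sigma_{[0,n)}(a)|$ pieces, each of diameter $\le 2\varepsilon_n$, and a point of $\partial\Ra(i)$ can only lie in a piece that abuts a neighbouring tile of the covering $\Co_\bone$. Finite balance bounds the set of relative positions of neighbouring tiles, so the number of neighbour types is finite and the number of boundary-adjacent pieces grows strictly more slowly than the total; balancing this sub-exponential growth against the genuine contraction encoded in $\varepsilon_n$ yields a box-counting estimate showing that the upper box dimension of $\partial\Ra(i)$ is strictly below $d-1$, whence $\lambda_\bone(\partial\Ra(i))=0$. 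This quantitative trade-off between the uniform decay rate of Proposition~\ref{prop:strongconv} and the $C$-balance furnished along the recurrence times is exactly where the full strength of the hypotheses is consumed, and I expect it to be the hardest step. Once the boundary is null, $\lambda_\bone(\Ra(i))=\lambda_\bone(\operatorname{int}\Ra(i))$, and the self-affine structure---every point of $\Ra(i)$ is a limit of shrinking affine copies of full subtiles, each of which carries interior points of $\Ra(i)$---gives $\Ra(i)=\overline{\operatorname{int}\Ra(i)}$, completing the proof.
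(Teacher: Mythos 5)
Your setup is sound and matches the paper: compactness and nonemptiness via Proposition~\ref{prop:Rcompact} and primitivity, the set equation via desubstitution (Proposition~\ref{prop:seteq}), the covering of $\bone^\bot$ plus Baire category to produce one subtile with nonempty interior, primitivity to propagate interior to every $\Ra(i)$, and the shrinking-pieces argument for $\Ra(i)=\overline{\mathrm{int}\,\Ra(i)}$ (which, as in Proposition~\ref{prop:intclos}, needs no null boundary). The genuine gap is in the null-boundary step, which you rightly flag as the crux. Your box-counting argument hinges on the assertion that ``the number of boundary-adjacent pieces grows strictly more slowly than the total,'' deduced from the finiteness of neighbour types. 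This does not follow: finitely many relative positions of neighbouring tiles says nothing about how many level-$n$ pieces meet $\partial\Ra(i)$. For fractal tiles the count of boundary pieces grows at an exponential rate of its own, and proving that this rate is strictly smaller than that of the full subdivision is a theorem even in the stationary self-affine case, where it uses a boundary-graph spectral argument \emph{and} the fact that the subdivision is non-overlapping. Neither ingredient is available here: there is no single substitution matrix whose spectrum one could compare (the $M_{[0,n)}$ need not have comparable singular values along the sequence), and measure-disjointness of the pieces in the set equation is Proposition~\ref{prop:TilingSetEq}, which the paper proves \emph{after} Theorem~\ref{th:RauzyProperties}, deducing it from the multiple-tiling property (Proposition~\ref{prop:multiTiling}) whose proof invokes $\lambda_\bone(\partial\Ra(i))=0$. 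So repairing your count with disjointness would be circular; and without disjointness (or at least a lower bound $\lambda_\bone(P)\gtrsim\varepsilon_n^{d-1}$ for the pieces) a bound on the number of boundary pieces cannot be converted into a bound on $\lambda_\bone(\partial\Ra(i))$ at all.

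What the paper does instead, and what your proposal is missing, is to use the recurrence hypothesis structurally rather than quantitatively. The return times $n_k$ (with matching initial blocks $(\sigma_0,\ldots,\sigma_{\ell_k-1})$ and uniformly $C$-balanced tail languages) are used to construct a generalized left eigenvector $\bv$ and to prove Hausdorff convergence $\Ra_\bv^{(n_k)}(i)\to\Ra_\bv(i)$ (Proposition~\ref{prop:RauzyHausdorff}). This yields Lemma~\ref{lem:intsubtile}: there is a level $\ell$ such that every subtile, at level $0$ and also at every level $n_k$, contains a level-$(n_k+\ell)$ piece in its \emph{interior}. Subdividing, discarding one interior piece per tile at each scale, and iterating gives $\lambda_\bv(\partial\Ra_\bv(i))\le(1-m)\prod_{k}(1-m^{(n_k)})\,\lambda_\bv(\Ra_\bv(i))$, and the recurrence structure (the same primitive block $M_{[\ell_k-h,\ell_k)}$ at each return) keeps the ratios $m^{(n_k)}$ bounded away from zero, so the product vanishes. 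This ``definite proportion of interior at every scale'' argument is what replaces your unproven counting claim, and it is precisely where the recurrence-with-balance hypothesis is consumed.
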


\begin{remark}\label{rem:commentThmRot}\mbox{}
\begin{enumerate}
\item[(i)] We can see that the assumptions of this theorem contain all the properties we discussed in the previous subsections. We could have used the stronger assumption that $\bsigma$ is primitive, recurrent, algebraically irreducible, and has $C$-balanced language $\Lg_\bsigma^{(n)}$ for each $n\in\N$. However, although this assumption is more handy and holds for many natural examples it would lead to a measure zero subset of the set of ``all'' sequences $\bsigma$. The conditions we give in Theorem~\ref{th:RauzyProperties} will turn out to be ``generic'' in the sense that they are true for ``almost all'' sequences $\bsigma$. All this will be made precise when we develop a metric counterpart of our theory in Section~\ref{sec:metric}.

\item[(ii)]  Let $\sigma$ be a substitution on the alphabet $\A$. It is easy to prove that for each $C>0$ there is $C'>0$ such that $\sigma(w)$ is $C'$-balanced for each $C$-balanced sequence $w\in\A^\N$. Applying this to the substitution $\sigma=\sigma_{[0,n+\ell)}$ for some $n,\ell$ with balanced language $\Lg_\bsigma^{(n+\ell)}$ we see that we can choose the constant $C$ in Theorem~\ref{th:RauzyProperties} in a way that also $\Lg_\bsigma$ is $C$-balanced. We will always assume that $C$ is chosen in this way in the sequel.

\item[(iii)] We confine ourselves to finite sets $S$ of substitutions to keep things as simple as possible. With a bit more effort it is possible to generalize Theorem~\ref{th:RauzyProperties} to infinite sets $S$. This is of interest because infinite sets $S$ correspond to multiplicative continued fraction algorithms like the important Jacobi-Perron algorithm or an acceleration of the Arnoux-Rauzy algorithm proposed recently by Avila, Hubert, and Skripchenko~\cite{AHS:15}. This more general setting is treated in \cite{Berthe-Steiner-Thuswaldner}.
\end{enumerate}
\end{remark}

Theorem~\ref{th:RauzyProperties} will enable us to study tiling properties of $\Ra_\bw$ and its subtiles which will finally lead to the measurable conjugacy of $(X_\bsigma,\Sigma)$ to a rotation. 

The proof of Theorem~\ref{th:RauzyProperties} is quite long and technical and we refer to \cite[Section~6]{Berthe-Steiner-Thuswaldner} for details. Our aim here is to illustrate the main ideas in a way that is hopefully more accessible to a broader readership than the original research paper. First we will establish a set equation for the subtiles $\Ra_\bw(i)$, $i\in\A$, of $\Ra_\bw$ that governs certain subdivisions of $\Ra_\bw(i)$. Using this set equation we will be able to establish the properties of $S$-adic Rauzy fractals stated in Theorem~\ref{th:RauzyProperties}. 

Theorem~\ref{th:RauzyProperties} has a number of predecessors. For instance, Lagarias and Wang~\cite{Lagarias-Wang:96a} proved that each \emph{self-affine tile} $\mathcal{T}$ is the closure of its interior and $\partial \mathcal{T}$ has Lebesgue measure zero. For substitutive Rauzy fractals the according result was proved by Sirvent and Wang~\cite{SirventWang02}. However, in all these cases the sets have strong self-affinity properties which are no longer present in our setting. We therefore need new ideas and more efforts to get the desired results (in particular, the proof of the fact that the boundary of an $S$-adic Rauzy fractal has measure zero will need quite some work).

\subsection{Set equations for $S$-adic Rauzy fractals and dual substitutions}\label{sec:seteq}

The first important tool in the proof of Theorem~\ref{th:RauzyProperties} will be a \emph{set equation} for the subtiles $\mathcal{R}_\bw(i)$, $\bw\in\R_{\ge 0}^d\setminus\{\mathbf{0}\}$ and $i\in\A$, of a sequence $\bsigma$ of unimodular substitutions as well as for related subtiles associated with ``shifts'' of $\bsigma$. This set equation equips the sets $\mathcal{R}_\bw(i)$ with a subdivision structure that is governed by $\bsigma$. We now give an idea on how this works.

Let $\bsigma=(\sigma_n)$ be a primitive and recurrent sequence of unimodular substitutions over the alphabet $\A$ with generalized right eigenvector $\bu\in\R_{> 0}^d$ and choose $\bw\in\R_{\ge 0}^d\setminus\{\mathbf{0}\}$. In all what follows, keep in mind the definition of the subtile $\Ra_\bw(i)$ from \eqref{eq:subtileRi}. We choose a limit sequence $w$ of $\bsigma$ and associate with it the sequence $(w^{(n)})$ of its ``desubstitutions'' according to \eqref{eq:limitworddesubs}. 

Consider the set $\{\pi_{\bu,\bw}\mathbf{l}(p)\;:\; pi \hbox{ is a prefix of }w \}$ and observe that by the definition of a limit sequence each $p\in\mathcal{A}^*$ for which $pi$ is a prefix of $w$ can be written as $p=\sigma_0(p')p_0$ with $p_0i$ a prefix of $\sigma_0(j)$ for some $j\in\A$ and $p'j$ some prefix of $w^{(1)}$. Using this decomposition of $p$ we obtain the decomposition
\begin{equation}\label{eq:set1}
\begin{split}
\{\pi_{\bu,\bw}\mathbf{l}(p)\;:\; &pi \hbox{ is a prefix of }w \} =\\
&\bigcup_{\begin{subarray}{c} 
 j\in \A,\, p_{0} \in \A^*\\
\sigma_0(j)=p_0i\A^*
\end{subarray}}
\{\pi_{\bu,\bw}\mathbf{l}(p_0) + \pi_{\bu,\bw}(\mathbf{l}\sigma_0(p')) \;:\; p'j \hbox{ is a prefix of }w^{(1)} \}.
\end{split}
\end{equation}
From \eqref{eq:MsigmaDiagram} we see that $\mathbf{l}\sigma_0(p')=M_0\mathbf{l}(p')$. Moreover, direct calculation (see~\cite[Lemma~5.2]{Berthe-Steiner-Thuswaldner}) yields that $\pi_{\bu,\bw} M_0 = M_0\pi_{M_0^{-1}\bu,M_0^{t}\bw}$. Inserting this in \eqref{eq:set1} we gain
\begin{equation*}\label{eq:set2}
\begin{split}
\{\pi_{\bu,\bw}\mathbf{l}(p)\;:\; &pi \hbox{ is a prefix of }w \} =\\
&\bigcup_{\begin{subarray}{c} 
j\in \A,\, p_{0} \in \A^*\\
\sigma_0(j)=p_0i\A^*
\end{subarray}}
\pi_{\bu,\bw}\mathbf{l}(p_0) + M_0\{ \pi_{M_0^{-1}\bu,M_0^{t}\bw}\mathbf{l}(p') \;:\; p'j \hbox{ is a prefix of }w^{(1)} \}.
\end{split}
\end{equation*}
Taking the union over all (finitely many, by Proposition~\ref{prop:sadicminimal}) limit sequences of $\bsigma$ and taking the closure we obtain by \eqref{eq:subtileRi} that
\begin{equation}\label{eq:set3}
\begin{split}
&\Ra_\bw(i)
=\\
&\bigcup_{\begin{subarray}{c} 
j\in \A,\, p_{0} \in \A^* \\
\sigma_0(j)=p_0i\A^*
\end{subarray}}
\pi_{\bu,\bw}\mathbf{l}(p_0) + M_0\overline{\{ \pi_{M_0^{-1}\bu,M_0^{t}\bw}\mathbf{l}(p') \;:\; p'j \hbox{ is a prefix of some limit sequence of } (\sigma_{n+1}) \}}.
\end{split}
\end{equation}
We now inspect the closures in the union in \eqref{eq:set3}. Looking at the definition of subtiles in \eqref{eq:subtileRi} we see that these are subtiles of the Rauzy fractal corresponding to the shifted sequence $(\sigma_{n+1})_{n\ge 0}$ of $\bsigma$. Indeed, it follows from Proposition~\ref{prop:furstmatrix} that $M_0^{-1}\bu$ is the right eigenvector of this shifted sequence. This motivates the following definitions.

For $k\in\N$ let 
\begin{equation}\label{eq:nproj}
\pi^{(k)}_{\bu,\bw} = \pi_{M_{[0,k)}^{-1}\bu,M_{[0,k)}^{t}\bw},
\end{equation}
denote the subtiles of the shifted sequence of substitutions $(\sigma_{n+k})_{n\in\N}$ which live in the hyperplane $(M_{[0,k)}^{t}\bw)^\bot$ by 
\begin{equation}\label{eq:RnRauzyfr}
\Ra_\bw^{(k)}(i) := \overline{\{ \pi^{(k)}_{\bu,\bw}\mathbf{l}(p') \;:\; p'j \hbox{ is a prefix of some limit sequence of } (\sigma_{n+k})_{n\in\N} \}},
\end{equation}
and set $\Ra_\bw^{(k)}=\bigcup_{i\in\A}\Ra_\bw^{(k)}(i)$.
Together with these notations \eqref{eq:set3} can be generalized by using similar arguments as we used in its proof. The generalized form of \eqref{eq:set3} reads as follows (for a detailed proof see~\cite[Proposition~5.6]{Berthe-Steiner-Thuswaldner}).

\begin{proposition}[The set equation]\label{prop:seteq}\index{Rauzy fractal!set equation}
Let $\bsigma$ be a primitive and recurrent sequence of unimodular substitutions with generalized right eigenvector $\bu$. Then for each $[\bx,i]\in\Z^d\times\A$ and every $k,\ell\in\N$ with $k <\ell$ we have
\begin{equation}\label{eq:seteq}
\pi^{(k)}_{\bu,\bw}\bx + \Ra_\bw^{(k)}(i)=
\bigcup_{[\by,j]\in E_1^*(\sigma_{[k,\ell)})[\bx,i]} M_{[k,\ell)}(\pi^{(\ell)}_{\bu,\bw}\by + \Ra_\bw^{(\ell)}(j)),
\end{equation}
where
\begin{equation}\label{eq:dualGeomSubs}
E_1^*(\sigma)[\bx,i] = 
\{
[M_\sigma^{-1}(\bx + \mathbf{l}(p)),j]\;:\; j\in\A,\, p\in\A^* \hbox{ such that }pi\hbox{ is a prefix of }\sigma(j)
\}.
\end{equation}
\end{proposition}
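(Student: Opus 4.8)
The plan is to prove the set equation \eqref{eq:seteq} by induction on the block length $\ell - k$, reducing the general case to the single-substitution case $\ell = k+1$. The base case $\ell - k = 1$ is exactly the computation carried out in \eqref{eq:set1}--\eqref{eq:set3} above, just rewritten with a general lattice translate $\pi^{(k)}_{\bu,\bw}\bx$ added on both sides and with the shifted index $k$ in place of $0$. Concretely, I would start from the defining expression \eqref{eq:RnRauzyfr} for $\pi^{(k)}_{\bu,\bw}\bx + \Ra_\bw^{(k)}(i)$, take a prefix $p'$ of a limit sequence $w^{(k)}$ of the shifted sequence $(\sigma_{n+k})$ with $p'i$ a prefix, and desubstitute once: write $p' = \sigma_k(q)q_0$ where $q_0 i$ is a prefix of $\sigma_k(j)$ for the appropriate letter $j$, and $qj$ is a prefix of $w^{(k+1)}$. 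Abelianizing gives $\mathbf{l}(p') = \mathbf{l}(q_0) + M_k\mathbf{l}(q)$, and the commutation identity $\pi^{(k)}_{\bu,\bw} M_k = M_k\,\pi^{(k+1)}_{\bu,\bw}$ (which is the instance of \cite[Lemma~5.2]{Berthe-Steiner-Thuswaldner} quoted as $\pi_{\bu,\bw}M_0 = M_0\pi_{M_0^{-1}\bu,M_0^t\bw}$, applied after the matrices $M_{[0,k)}$) lets me pull $M_k$ out in front. Comparing the indexing set $\{[M_{\sigma_k}^{-1}(\bx+\mathbf{l}(q_0)),j] : q_0 i \text{ a prefix of } \sigma_k(j)\}$ against Definition \eqref{eq:dualGeomSubs} shows it is precisely $E_1^*(\sigma_k)[\bx,i]$, which establishes the base case.

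For the inductive step I would use the semigroup (functoriality) property of the dual map, namely $E_1^*(\sigma_{[k,\ell)}) = E_1^*(\sigma_k)\circ E_1^*(\sigma_{[k+1,\ell)})$, together with the multiplicativity $M_{[k,\ell)} = M_k\,M_{[k+1,\ell)}$. Assuming \eqref{eq:seteq} holds for the block $[k+1,\ell)$, I apply the base case to rewrite $\pi^{(k)}_{\bu,\bw}\bx + \Ra_\bw^{(k)}(i)$ as a union of sets $M_k(\pi^{(k+1)}_{\bu,\bw}\by + \Ra_\bw^{(k+1)}(j))$ over $[\by,j]\in E_1^*(\sigma_k)[\bx,i]$; then I substitute the inductive hypothesis for each term $\pi^{(k+1)}_{\bu,\bw}\by + \Ra_\bw^{(k+1)}(j)$ and collapse the double union using the composition law for $E_1^*$. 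The only genuinely algebraic content is verifying that $E_1^*$ is an antihomomorphism of the appropriate monoid so that the composition of dual maps lands exactly on $E_1^*(\sigma_{[k,\ell)})[\bx,i]$; this is a bookkeeping check that prefixes compose correctly, i.e.\ that $p$ is a prefix of $\sigma_k\sigma_{[k+1,\ell)}(j)$ decomposes uniquely through an intermediate prefix.

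I expect the main obstacle to be the careful justification of taking closures, since the subtiles $\Ra_\bw^{(k)}(i)$ are defined as closures of sets of projected abelianizations. The raw set identity \eqref{eq:set1} holds before closures are taken, but I would need the finite union on the right-hand side of \eqref{eq:seteq} to commute with the closure operation; this is legitimate because a \emph{finite} union of closed sets is closed and equals the closure of the union, and because $E_1^*(\sigma_{[k,\ell)})[\bx,i]$ is a finite set (each $\sigma_n$ being a substitution, only finitely many prefixes $p$ with $pi$ a prefix of $\sigma(j)$ arise). A secondary subtlety is the passage from a \emph{single} limit sequence to the union over \emph{all} limit sequences of the shifted directive sequence, which is where primitivity enters via Proposition~\ref{prop:sadicminimal} to guarantee there are at most $|\A|$ such sequences; recurrence is used to ensure the shifted sequences $(\sigma_{n+k})$ are again primitive and recurrent so that the generalized right eigenvector $M_{[0,k)}^{-1}\bu$ of Proposition~\ref{prop:furstmatrix} exists and the projections $\pi^{(k)}_{\bu,\bw}$ in \eqref{eq:nproj} are well defined. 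Since the statement asserts only a set equation and not yet any measure-disjointness of the pieces, I would not need the balance hypothesis here; that will be invoked later when proving the subdivision in \eqref{eq:seteq} is measure-disjoint.
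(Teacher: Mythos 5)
Your proposal is correct and takes essentially the same route as the paper: the computation \eqref{eq:set1}--\eqref{eq:set3} is exactly your base case (the translate $\pi^{(k)}_{\bu,\bw}\bx$ being harmless bookkeeping, since $\pi^{(k)}_{\bu,\bw}M_k = M_k\pi^{(k+1)}_{\bu,\bw}$ turns it into a common summand on both sides), and the paper's ``generalized by using similar arguments'' is what your induction on $\ell-k$ makes explicit. Your points about closures commuting with the finite union, about the at most $|\A|$ limit sequences, and about balance being unnecessary for this purely set-theoretic statement are all accurate. One slip should be fixed: $E_1^*$ reverses composition, so from $\sigma_{[k,\ell)}=\sigma_k\circ\sigma_{[k+1,\ell)}$ the correct law is $E_1^*(\sigma_{[k,\ell)})=E_1^*(\sigma_{[k+1,\ell)})\circ E_1^*(\sigma_k)$ --- this is the identity $E_1^*(\tau)E_1^*(\sigma)=E_1^*(\sigma\tau)$ recorded in Section~\ref{sec:howto} --- and not $E_1^*(\sigma_k)\circ E_1^*(\sigma_{[k+1,\ell)})$ as you wrote. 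The slip is self-correcting in your argument: applying the base case first and then the inductive hypothesis indexes the double union by $E_1^*(\sigma_{[k+1,\ell)})(E_1^*(\sigma_k)[\bx,i])$, which equals $E_1^*(\sigma_{[k,\ell)})[\bx,i]$ precisely by the antihomomorphism property you yourself identify as the key check; but the displayed formula must be corrected. Two minor remarks: unimodularity is what keeps the translates integral through the induction (so that $\by=M_k^{-1}(\bx+\mathbf{l}(p_0))\in\Z^d$ and the inductive hypothesis applies to $[\by,j]\in\Z^d\times\A$), and the induction can be bypassed entirely by desubstituting through the whole block at once, since limit sequences satisfy $w^{(k)}=\sigma_{[k,\ell)}(w^{(\ell)})$ by telescoping \eqref{eq:limitworddesubs}; the one-step computation applied verbatim to the single substitution $\sigma_{[k,\ell)}$ then yields \eqref{eq:seteq} directly from \eqref{eq:dualGeomSubs}, which is presumably what the paper's reference to ``similar arguments'' intends.
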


The elements in the union on the right hand side of \eqref{eq:seteq} are called the \emph{level $(\ell-k)$ subtiles} of  $\pi^{(k)}_{\bu,\bw}\bx + \Ra_\bw^{(k)}(i)$. The collection of all the elements in the union is called the \emph{$(\ell-k)$-th subdivision} of $\pi^{(k)}_{\bu,\bw}\bx + \Ra_\bw^{(k)}(i)$. This will often be applied for the case $k=0$. In Figure~\ref{fig:seteq} the set equation is illustrated for the situation discussed in Example~\ref{ex:ARseteq}.

The \emph{dual geometric realization}\index{dual geometric realization}  $E_1^*(\sigma)$ of a substitution $\sigma$ defined in \eqref{eq:dualGeomSubs} will turn out to be useful when we define so-called \emph{coincidence conditions} in Section~\ref{sec:coinc}. If we regard the pairs $[\bx,i]$ as hypercubes as we did in \eqref{eq:hypercube} this dual also has a geometric meaning. We explain this in the following example.

\begin{example}
Let $\sigma$ be the Tribonacci substitution defined in \eqref{eq:substribo}. Then by direct computation we see that $E_1^*(\sigma)$ is given by 
\begin{equation*}\label{eq:e1starTribo}
\begin{split}
E_1^*(\sigma)[\mathbf{0},1]&=\{[\mathbf{0},1],[\mathbf{0},2],[\mathbf{0},3]\},\\
E_1^*(\sigma)[\mathbf{0},2]&=\{[(0,0,1)^t,1]\},\\
E_1^*(\sigma)[\mathbf{0},3]&=\{[(0,0,1)^t,2]\}
\end{split}
\end{equation*}
together with the obvious fact that $E_1^*(\sigma)[\bx,i]=M_\sigma^{-1}\bx+E_1^*(\sigma)[\mathbf{0},i]$. One can extend the definition of $E_1^*(\sigma)$ to subsets of $Y\subset \Z^d\times \A$ in a natural way by setting 
\[
E_1^*(\sigma)Y=\bigcup_{[\bx,i]\in Y}E_1^*(\sigma)[\bx,i]. 
\]
Using this extension we can then iterate $E_1^*(\sigma)$. The geometric interpretation of $E_1^{*}(\sigma)^{12}[\mathbf{0},1]$ is depicted in Figure~\ref{fig:RauzyStep}. 
\begin{figure}[hh]
\includegraphics[trim=0 40 0 60,clip,width=0.7\textwidth]{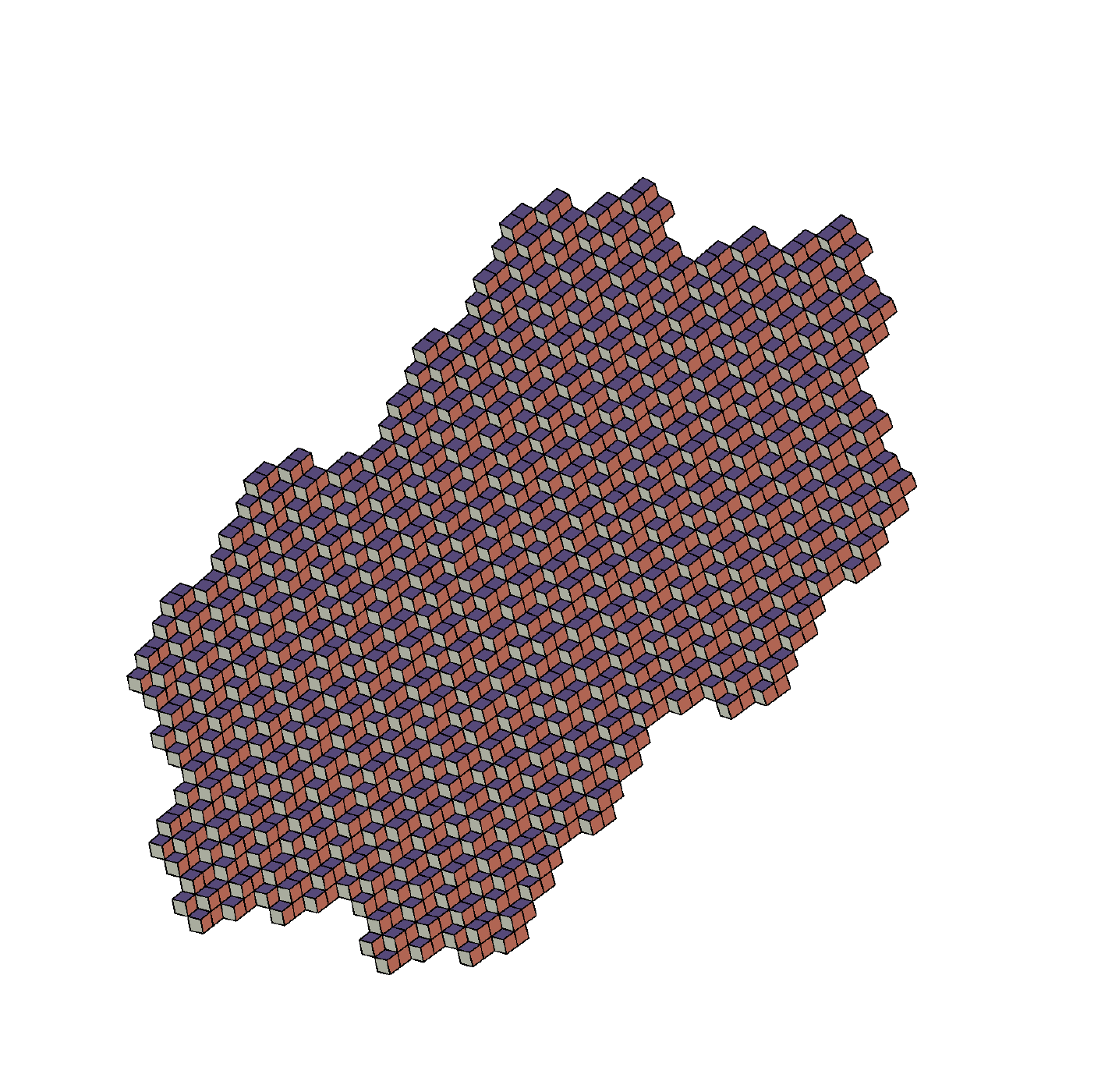}
\caption{An approximation of $\Ra$ using $E_1^*(\sigma)$.
\label{fig:RauzyStep}}
\end{figure}
It is not by accident that this image is a good approximation of (an affine image of) the classical Rauzy fractal corresponding to $\sigma$ depicted in Figure~\ref{fig:tribodomain}. In fact, $E_1^{*}(\sigma)$ can even be used to give an alternative definition of $\Ra$, see for example \cite{Arnoux-Ito:01,CANTBST}.
\end{example}

The dual $E_1^*(\sigma)$ and its higher dimensional generalizations have been investigated thoroughly in connection with the study of substitutive dynamical systems and their Rauzy fractals (see \cite{Arnoux-Ito:01,CANTBST,Ei:03,Ito-Rao:06,LM:17,SAI:01}). We need a result of Fernique~\cite{Fernique:06} that shows how $E_1^*(\sigma)$ behaves with respect to discrete hyperplanes. Before we state it we introduce some notation. Let $\bsigma$ be a sequence of substitutions with generalized right eigenvector $\bu\in\R^d_{>0}$ and let a fixed vector $\bw\in \R_{\ge 0}^d\setminus\{\mathbf{0}\}$ be given (such that the Rauzy fractal $\Ra_\bw$ can be defined). Then, motivated by the projections \eqref{eq:nproj} we needed in the formulation of the set equation we set
\begin{equation}\label{eq:ukwk}
\bu^{(k)} = (M_{[0,k)})^{-1} \bu, \qquad \bw^{(k)} = (M_{[0,k)})^t \bw\qquad\qquad{(k\in\N)}.
\end{equation}

\begin{lemma}\label{lem:fernique}
Let $\bsigma=(\sigma_n)$ be a sequence of unimodular substitutions. Then for all $k< \ell$ the following assertions hold.
\begin{enumerate}
\item[(i)] $M_{[k,\ell)}(\bw^{(\ell)})^\bot=(\bw^{(k)})^\bot$,
\item[(ii)] $E_1^*(\sigma_{[k,\ell)})\Gamma(\bw^{(k)})=\Gamma(\bw^{(\ell)})$,
\item[(iii)] for distinct pairs $[\bx,i],[\bx',i']\in \Gamma(\bw^{(k)})$ the images $E_1^*(\sigma_{[k,\ell)})[\bx,i]$ and $E_1^*(\sigma_{[k,\ell)})[\bx',i']$ are disjoint patches of $\Gamma(\bw^{(\ell)})$.
\end{enumerate} 
\end{lemma}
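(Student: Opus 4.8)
The plan is to deduce all three claims from their one-step versions and then iterate, so I first record the transpose bookkeeping. Since $M_{[0,\ell)}=M_{[0,k)}M_{[k,\ell)}$, one has $\bw^{(\ell)}=(M_{[0,\ell)})^t\bw=(M_{[k,\ell)})^t\bw^{(k)}$, and in particular $\bw^{(k+1)}=M_k^t\bw^{(k)}$ with $M_k=M_{\sigma_k}$; as the incidence matrices are nonnegative, every $\bw^{(k)}$ again lies in $\R^d_{\ge0}$, which I use throughout. Part (i) is then immediate: for $\by\in(\bw^{(\ell)})^\bot$ one computes $\langle M_{[k,\ell)}\by,\bw^{(k)}\rangle=\langle\by,(M_{[k,\ell)})^t\bw^{(k)}\rangle=\langle\by,\bw^{(\ell)}\rangle=0$, so $M_{[k,\ell)}(\bw^{(\ell)})^\bot\subseteq(\bw^{(k)})^\bot$; since $M_{[k,\ell)}$ is invertible by unimodularity and both sides are $(d-1)$-dimensional, the inclusion is an equality.

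For (ii) and (iii) I would first settle a single substitution $\sigma=\sigma_k$, writing $M=M_\sigma$ and $\bw=\bw^{(k)}$, and prove that $E_1^*(\sigma)$ restricts to a bijection from $\Gamma(\bw)$ onto $\Gamma(M^t\bw)=\Gamma(\bw^{(k+1)})$. For the inclusion, take a face $[\bx,i]\in\Gamma(\bw)$ and a generator $[M^{-1}(\bx+\mathbf{l}(p)),j]$ of $E_1^*(\sigma)[\bx,i]$, and write $\sigma(j)=p\,i\,s$. Using $\langle M^{-1}\bz,M^t\bw\rangle=\langle\bz,\bw\rangle$ and $\langle\be_j,M^t\bw\rangle=\langle M\be_j,\bw\rangle=\langle\mathbf{l}(\sigma(j)),\bw\rangle$, the defining inequalities $0\le\langle\bx+\mathbf{l}(p),\bw\rangle<\langle\mathbf{l}(\sigma(j)),\bw\rangle$ drop out of $\langle\bx,\bw\rangle\ge0$, $\langle\mathbf{l}(p),\bw\rangle\ge0$, the strict bound $\langle\bx,\bw\rangle<\langle\be_i,\bw\rangle$, and $\langle\mathbf{l}(s),\bw\rangle\ge0$; hence $E_1^*(\sigma)\Gamma(\bw)\subseteq\Gamma(M^t\bw)$. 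For surjectivity, given $[\by,j]\in\Gamma(M^t\bw)$ set $\bz=M\by$, so that $0\le\langle\bz,\bw\rangle<\langle\mathbf{l}(\sigma(j)),\bw\rangle$; writing $\sigma(j)=j_1\cdots j_m$ and $S_r=\sum_{t\le r}\langle\be_{j_t},\bw\rangle$, the least $r$ with $S_r>\langle\bz,\bw\rangle$ yields $p=j_1\cdots j_{r-1}$, a letter $i=j_r$, and $\bx=\bz-\mathbf{l}(p)$ with $[\bx,i]\in\Gamma(\bw)$ and $[\by,j]\in E_1^*(\sigma)[\bx,i]$ (minimality of $r$ forces $\langle\be_i,\bw\rangle=S_r-S_{r-1}>0$, which handles possible vanishing coordinates of $\bw$).

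For injectivity, giving (iii) at one step, suppose $[\by,j]$ lies in both $E_1^*(\sigma)[\bx,i]$ and $E_1^*(\sigma)[\bx',i']$; then $\bx+\mathbf{l}(p)=\bx'+\mathbf{l}(p')$ with $pi$ and $p'i'$ prefixes of the one word $\sigma(j)$. The equal-length case gives $p=p'$, $i=i'$, $\bx=\bx'$ at once, while if $|p|<|p'|$ (and symmetrically $|p|>|p'|$) then $\bx-\bx'=\be_i+\mathbf{l}(\tilde{q})$ for some word $\tilde{q}$, so $\langle\bx-\bx',\bw\rangle\ge\langle\be_i,\bw\rangle$, contradicting $\langle\bx,\bw\rangle<\langle\be_i,\bw\rangle$ combined with $\langle\bx',\bw\rangle\ge0$. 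To pass to $\sigma_{[k,\ell)}$ I would establish the anti-functoriality $E_1^*(\sigma\circ\tau)=E_1^*(\tau)\circ E_1^*(\sigma)$, verified from $M_{\sigma\circ\tau}=M_\sigma M_\tau$ and the unique decomposition of a prefix $P$ of $\sigma(\tau(k))$ with $Pi$ a prefix as $P=\sigma(q)\,p$, where $qj$ is a prefix of $\tau(k)$ and $pi$ a prefix of $\sigma(j)$.

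Then (ii) follows by induction on $\ell-k$, via $E_1^*(\sigma_{[k,\ell)})\Gamma(\bw^{(k)})=E_1^*(\sigma_{[k+1,\ell)})\big(E_1^*(\sigma_k)\Gamma(\bw^{(k)})\big)=E_1^*(\sigma_{[k+1,\ell)})\Gamma(\bw^{(k+1)})=\Gamma(\bw^{(\ell)})$, and (iii) propagates because each stage is a bijection of discrete planes, so disjoint inputs keep disjoint image patches. I expect the main obstacle to be the one-step bijection — pinning down surjectivity and disjointness simultaneously while allowing $\bw$ to have zero coordinates — together with a clean proof of anti-functoriality; once these are in place, part (i) and the induction are routine.
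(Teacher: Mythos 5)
Your proposal is correct. Note, though, that the paper does not actually prove (ii) and (iii): it declares (i) immediate from the definition of $\bw^{(k)}$ and cites Fernique's theorem (and a lemma of Arnoux--Ito) for the other two assertions, remarking only that the cited proof ``just uses the definition of discrete hyperplane and checks the required conditions.'' What you have written is precisely that verification, carried out in the standard way: a one-step statement (the inclusion $E_1^*(\sigma)\Gamma(\bw)\subseteq\Gamma(M^t\bw)$ via the identities $\langle M^{-1}\bz,M^t\bw\rangle=\langle\bz,\bw\rangle$ and $M\be_j=\mathbf{l}(\sigma(j))$, surjectivity via the partial sums $S_r$ along $\sigma(j)$, and disjointness via the prefix comparison forcing $\langle\bx-\bx',\bw\rangle\ge\langle\be_i,\bw\rangle$), followed by the anti-functoriality $E_1^*(\sigma\circ\tau)=E_1^*(\tau)\circ E_1^*(\sigma)$, which rests on the unique factorization $P=\sigma(q)p$ of a proper prefix of $\sigma(\tau(k))$, and an induction on $\ell-k$. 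All three ingredients check out, including the careful handling of possibly vanishing coordinates of $\bw$ in the surjectivity step and the observation that disjointness propagates through the composition because each stage sends distinct faces to disjoint patches. So your argument is a legitimate self-contained substitute for the external references; what it buys is independence from Fernique's paper, at the cost of the bookkeeping the survey deliberately avoids.
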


\begin{proof}
Assertion~(i) is an immediate consequence of the definition of $\bw^{(k)}$, assertions~(ii) and (iii) are the content of \cite[Theorem~1]{Fernique:06}. Their proof is a bit tedious, however, it just uses the definition of discrete hyperplane and checks the required conditions (assertion~(iii) is essentially already contained in \cite[Lemma~3]{Arnoux-Ito:01}).
\end{proof}

Combining Proposition~\ref{prop:seteq} and Lemma~\ref{lem:fernique} we get the following result in which we use the notation
\[
\Co_\bw^{(k)}=\{\pi_{\bu,\bw}\bx + \Ra^{(k)}_\bw(i)\;:\; [\bx,i]\in \Gamma(\bw^{(k)})  \} \qquad(k\in\N)
\]
for the collection of the subtiles associated with the shifted sequence $(\sigma_{n+k})_{n\in\N}$ of $\bsigma$. 

\begin{proposition}\label{prop:seteqII}
Let $\bsigma$ be a primitive and recurrent sequence of unimodular substitutions with generalized right eigenvector $\bu$. Then for each $[\bx,i]\in\Z^d\times\A$ and every $k,\ell\in\N$ with $k <\ell$ we have
\[
\bigcup_{[\bx,i]\in \Gamma(\bw^{(k)})}\pi_{\bu,\bw}\bx + \Ra_\bw^{(k)}(i)=
\bigcup_{[\by,j]\in \Gamma(\bw^{(\ell)})} M_{[k,\ell)}(\pi^{(\ell)}_{\bu,\bw}\by + \Ra_\bw^{(\ell)}(j)).
\]
The collection $M_{[k,\ell)}\Co_\bw^{(\ell)}$ is a refinement of $\Co_\bw^{(k)}$ in the sense that each element of the latter is a finite union of elements of the former.
\end{proposition}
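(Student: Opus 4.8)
The plan is to obtain both assertions by assembling the term-wise set equation of Proposition~\ref{prop:seteq} over the whole discrete hyperplane $\Gamma(\bw^{(k)})$ and then invoking the structural properties of $E_1^*$ recorded in Lemma~\ref{lem:fernique}. First I would apply Proposition~\ref{prop:seteq} separately to each face $[\bx,i]\in\Gamma(\bw^{(k)})$, which rewrites the corresponding tile as the finite union
\[
\pi^{(k)}_{\bu,\bw}\bx + \Ra_\bw^{(k)}(i)=\bigcup_{[\by,j]\in E_1^*(\sigma_{[k,\ell)})[\bx,i]} M_{[k,\ell)}\bigl(\pi^{(\ell)}_{\bu,\bw}\by + \Ra_\bw^{(\ell)}(j)\bigr).
\]
By Lemma~\ref{lem:fernique}~(ii) every $[\by,j]$ occurring here satisfies $[\by,j]\in E_1^*(\sigma_{[k,\ell)})\Gamma(\bw^{(k)})=\Gamma(\bw^{(\ell)})$, so each summand on the right is exactly the image under $M_{[k,\ell)}$ of an element $\pi^{(\ell)}_{\bu,\bw}\by + \Ra_\bw^{(\ell)}(j)$ of $\Co_\bw^{(\ell)}$. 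This already yields the refinement statement: every element of $\Co_\bw^{(k)}$ is a finite union of elements of $M_{[k,\ell)}\Co_\bw^{(\ell)}$.

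Next I would take the union of the displayed identity over all $[\bx,i]\in\Gamma(\bw^{(k)})$. The left-hand side becomes the union of the elements of the collection $\Co_\bw^{(k)}$, while on the right the index set collapses via
\[
\bigcup_{[\bx,i]\in\Gamma(\bw^{(k)})} E_1^*(\sigma_{[k,\ell)})[\bx,i]=E_1^*(\sigma_{[k,\ell)})\Gamma(\bw^{(k)})=\Gamma(\bw^{(\ell)}),
\]
again by Lemma~\ref{lem:fernique}~(ii). Hence the double union on the right reduces to a single union over $[\by,j]\in\Gamma(\bw^{(\ell)})$ of the sets $M_{[k,\ell)}\bigl(\pi^{(\ell)}_{\bu,\bw}\by + \Ra_\bw^{(\ell)}(j)\bigr)$, which is precisely the right-hand side of the asserted equation. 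This establishes the displayed set identity.

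For the identity of sets one only needs the covering property in Lemma~\ref{lem:fernique}~(ii), since a union absorbs any repetition. The disjointness in Lemma~\ref{lem:fernique}~(iii) is what makes the correspondence bijective: distinct source faces $[\bx,i]\neq[\bx',i']$ produce disjoint patches $E_1^*(\sigma_{[k,\ell)})[\bx,i]$ and $E_1^*(\sigma_{[k,\ell)})[\bx',i']$, so no face $[\by,j]\in\Gamma(\bw^{(\ell)})$ arises twice. This upgrades the refinement to an honest, non-redundant subdivision in which each tile of $\Co_\bw^{(k)}$ is cut into the $M_{[k,\ell)}$-images of a well-defined, pairwise distinct family of tiles of $\Co_\bw^{(\ell)}$; this stronger form is what the later tiling arguments will rely on.

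In truth the proposition is a bookkeeping consequence of two results whose proofs carry the real weight, namely the set equation (Proposition~\ref{prop:seteq}) and Fernique's lemma (Lemma~\ref{lem:fernique}). The only point requiring genuine care in the assembly is the passage from the term-wise expansion to a clean global union over $\Gamma(\bw^{(\ell)})$, and this is exactly what the surjectivity~(ii) and disjointness~(iii) of $E_1^*(\sigma_{[k,\ell)})$ on the discrete hyperplane guarantee; once these are in hand, no further estimates are needed.
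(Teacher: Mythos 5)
Your proof is correct and follows exactly the route the paper intends: the paper gives no separate argument for Proposition~\ref{prop:seteqII} beyond the remark that it follows by ``combining Proposition~\ref{prop:seteq} and Lemma~\ref{lem:fernique}'', and your assembly---applying the set equation face-by-face over $\Gamma(\bw^{(k)})$, collapsing the double union via $E_1^*(\sigma_{[k,\ell)})\Gamma(\bw^{(k)})=\Gamma(\bw^{(\ell)})$ from Lemma~\ref{lem:fernique}~(ii), and invoking part~(iii) for the non-redundancy of the refinement---is precisely that combination, carried out carefully.
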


The following lemma shows that the set equation subdivides Rauzy fractals into sets whose diameter eventually tends to zero (see~\cite[Lemma~5.5]{Berthe-Steiner-Thuswaldner}).

\begin{lemma} \label{l:smallsubtiles}
Let $\boldsymbol{\sigma} = (\sigma_n)\in S^{\mathbb{N}}$ be a primitive, algebraically irreducible, and recurrent sequence of unimodular substitutions with balanced language $\mathcal{L}_{\boldsymbol{\sigma}}$, and let $\mathbf{w} \in \mathbb{R}_{\ge 0}^d \setminus \{\mathbf{0}\}$. Then 
\[
\lim_{n\to\infty} M_{[0,n)} \mathcal{R}^{(n)}_\mathbf{w} = \{\mathbf{0}\}.
\]
\end{lemma}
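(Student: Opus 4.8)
The plan is to unwind the definition of $\Ra^{(n)}_\bw$, commute the matrix $M_{[0,n)}$ through the renormalizing projection $\pi^{(n)}_{\bu,\bw}$ so that each generating point of $M_{[0,n)}\Ra^{(n)}_\bw$ becomes $\pi_{\bu,\bw}M_{[0,n)}\mathbf{l}(p')$ for a prefix $p'$ of a limit sequence of the shifted directive sequence $(\sigma_{m+n})_{m\in\N}$, and then to invoke the strong convergence estimate of Proposition~\ref{prop:strongconv} to conclude that all these points have norm tending to $0$ uniformly. I read the assertion as convergence in the Hausdorff metric; since $\mathbf{0}$ always lies in $M_{[0,n)}\Ra^{(n)}_\bw$, this uniform shrinking is exactly what is required.

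First I would record the commutation identity $\pi_{\bu,\bw}M_{[0,n)}=M_{[0,n)}\pi^{(n)}_{\bu,\bw}$, where $\pi^{(n)}_{\bu,\bw}=\pi_{M_{[0,n)}^{-1}\bu,\,M_{[0,n)}^{t}\bw}$ as in \eqref{eq:nproj}. This is the $n$-fold iterate of the elementary relation $\pi_{\bu,\bw}M=M\pi_{M^{-1}\bu,M^{t}\bw}$, valid for any invertible $M$ (see \cite[Lemma~5.2]{Berthe-Steiner-Thuswaldner}), which follows at once from the fact that conjugating the projection $\pi_{M^{-1}\bu,M^{t}\bw}$ by $M$ sends its kernel $\R M^{-1}\bu$ to $\R\bu$ and its image $(M^{t}\bw)^\bot$ to $\bw^\bot$. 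Applying this with $M=M_{[0,n)}$ (invertible, since every substitution is unimodular) and recalling \eqref{eq:RnRauzyfr}, and using that the continuous linear bijection $M_{[0,n)}$ commutes with the closure of the (bounded) generating set, I obtain
\[
M_{[0,n)}\Ra^{(n)}_\bw=\overline{\{\pi_{\bu,\bw}M_{[0,n)}\mathbf{l}(p')\;:\;p'\hbox{ is a prefix of a limit sequence of }(\sigma_{m+n})_{m\in\N}\}}.
\]

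Next I would reduce the general representation space $\bw^\bot$ to the case $\bw=\bone$ treated in Proposition~\ref{prop:strongconv}. Since both $\pi_{\bu,\bw}$ and $\pi_{\bu,\bone}$ are projections along $\R\bu$, one has $\pi_{\bu,\bw}=\pi_{\bu,\bw}\circ\pi_{\bu,\bone}$ (because $\pi_{\bu,\bw}$ kills $\R\bu$), so writing $L$ for the fixed bounded restriction of $\pi_{\bu,\bw}$ to $\bone^\bot$ gives $\pi_{\bu,\bw}\by=L\,\pi_{\bu,\bone}\by$ for all $\by\in\R^d$. A prefix $p'$ of a limit sequence of $(\sigma_{m+n})_{m\in\N}$ is in particular a factor of such a sequence, hence $p'\in\Lg^{(n)}_\bsigma$. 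Therefore
\[
\sup\{\Vert z\Vert_\infty : z\in M_{[0,n)}\Ra^{(n)}_\bw\}\le \Vert L\Vert\,\sup\{\Vert\pi_{\bu,\bone}M_{[0,n)}\mathbf{l}(v)\Vert_\infty : v\in\Lg^{(n)}_\bsigma\},
\]
and by Proposition~\ref{prop:strongconv} the right-hand side tends to $0$ as $n\to\infty$. Finally, the empty prefix yields $\mathbf{0}\in\Ra^{(n)}_\bw$ and hence $\mathbf{0}\in M_{[0,n)}\Ra^{(n)}_\bw$, so the Hausdorff distance of $M_{[0,n)}\Ra^{(n)}_\bw$ to $\{\mathbf{0}\}$ equals precisely $\sup\{\Vert z\Vert_\infty : z\in M_{[0,n)}\Ra^{(n)}_\bw\}$, which establishes the claim.

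The whole deduction is short once Proposition~\ref{prop:strongconv} is in hand, so there is no genuine obstacle at this level — the real difficulty has been absorbed into the strong-convergence statement. The only points deserving care are the projection commutation identity, which is what matches the contracting matrix $M_{[0,n)}$ with the \emph{shrinking} vectors $\pi_{\bu,\bone}M_{[0,n)}\mathbf{l}(v)$ rather than with growing ones, and the observation that prefixes of the shifted limit sequences lie in $\Lg^{(n)}_\bsigma$, which is exactly the index set over which the supremum in Proposition~\ref{prop:strongconv} is taken.
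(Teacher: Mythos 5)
Your proof is correct and follows essentially the same route as the paper's: both rest on the commutation identity $M_{[0,n)}\pi^{(n)}_{\bu,\bw}=\pi_{\bu,\bw}M_{[0,n)}$, the factorization $\pi_{\bu,\bw}=\pi_{\bu,\bw}\,\pi_{\bu,\bone}$, and Proposition~\ref{prop:strongconv} applied over $\Lg^{(n)}_\bsigma$. You simply spell out the details (closure, boundedness of the restriction of $\pi_{\bu,\bw}$ to $\bone^\bot$, and the Hausdorff-metric reading) that the paper leaves implicit.
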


\begin{proof}
As $M_{[0,n)} \pi_{\mathbf{u},\mathbf{w}}^{(n)} = \pi_{\mathbf{u},\mathbf{w}}\, M_{[0,n)}$  and $\pi_{\mathbf{u},\mathbf{w}} = \pi_{\mathbf{u},\mathbf{w}}\, \pi_{\mathbf{u},\mathbf{1}}$, we conclude that $M_{[0,n)} \pi_{\mathbf{u},\mathbf{w}}^{(n)}\, \mathbf{l}(v) = \pi_{\mathbf{u},\mathbf{w}}\, \pi_{\mathbf{u},\mathbf{1}}\, M_{[0,n)}\, \mathbf{l}({v})$ for all $v\in\Lg_\bsigma^{(n)}$. Now, the result follows from Proposition~\ref{prop:strongconv} and the definition of $\mathcal{R}^{(n)}_\mathbf{w}$ in \eqref{eq:RnRauzyfr}.

\end{proof}

We explain the concepts and results of this section in the following example.

\begin{example}\label{ex:ARseteq}
Recall the definition of the Arnoux-Rauzy substitutions $\sigma_1,\sigma_2,\sigma_3$ from \eqref{eq:ARsubs} and consider a sequence 
\[
\bsigma=
(\sigma_1,\sigma_2,\sigma_3, \sigma_1,\sigma_2,\sigma_3,\sigma_1,\sigma_2,\sigma_3,
\dots),
\]
where the dots ``$\dots$'' mean that the sequence is continued in a way that $\bsigma$ is primitive and recurrent (if we start with three blocks of the form $\sigma_1,\sigma_2,\sigma_3$ it turns out that $\Ra_\bw$ is close to the classical Rauzy fractal studied in Example~\ref{ex:rauzy} in Hausdorff metric, which, of course, doesn't say anything about its topological properties or tiling properties; we just did it this way to get nice pictures in Figure~\ref{fig:seteq}).

\begin{figure}[hh]
\includegraphics[trim=0 0 0 0,clip,width=0.30\textwidth]{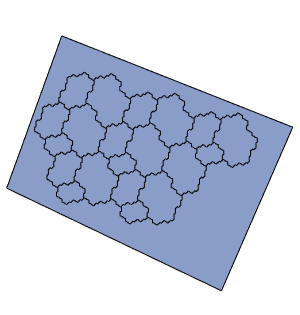}\hskip 3cm
\includegraphics[trim=0 0 0 0,clip,width=0.20\textwidth]{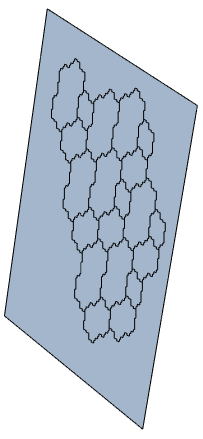}
\\
\mbox{} \hskip 2cm (a) \hskip 6cm (b)
\\
\vskip -0.9cm
\hskip 3cm\includegraphics[trim=0 0 0 0,clip,width=0.30\textwidth]{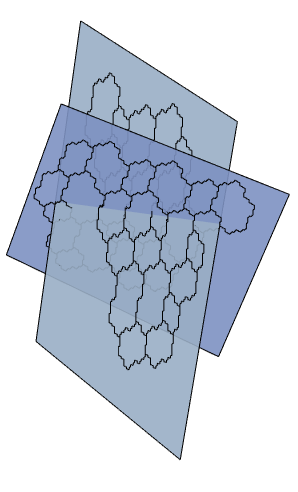}\\
\mbox{} \hskip 5cm (c)
\\
\vskip -1cm
\includegraphics[trim=0 0 0 0,clip,width=0.30\textwidth]{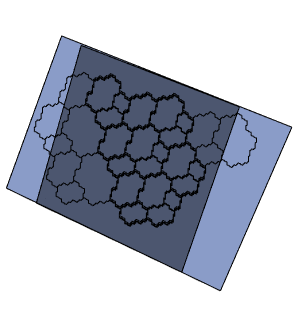}\hskip 2cm
\includegraphics[trim=0 0 0 0,clip,width=0.30\textwidth]{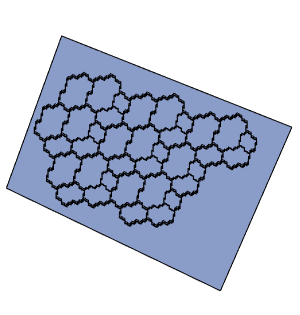}\\
\mbox{} \hskip 2cm (d) \hskip 6cm (e)
\caption{An illustration of the set equation. (a) shows a patch $P_0$ of the collection $\Co_\mathbb{\bw}=\Co^{(0)}_\mathbb{\bw}$, (b) contains a patch $P_1$ of $\Co^{(1)}_\bw$. In (c) $P_0$ and $P_1$ are drawn together to illustrate that they lie in different planes. In (d) the matrix $M_0$ is applied to $P_1$: the image $M_0P_1$ is located in the same plane as $P_0$ and, according to the set equation, forms a subdivision of some tiles of $P_0$. The subdivision of $P_0$ in patches of $M_0\Co^{(1)}_\mathbb{\bw}$ is shown in (e) for the whole patch $P_0$.
\label{fig:seteq}}
\end{figure}

In Figure~\ref{fig:seteq}~(a) we show a patch $P_0$ of the collection $\Co_\bw$ (for some convenient vector $\bw\in\R^3_{\ge 0}\setminus\{\mathbf{0}\}$) of subtiles associated with $\bsigma$, while Figure~\ref{fig:seteq}~(b) shows a patch $P_1$ of the collection $\Co_\bw^{(1)}$ associated with the shifted sequence
\[
\bsigma^{(1)}=
(\sigma_2,\sigma_3, \sigma_1,\sigma_2,\sigma_3,\sigma_1,\sigma_2,\sigma_3,
\dots).
\]
Note that, since $\bw$ and $\bw^{(1)}$ are not collinear, these patches live in two different planes which is illustrated in Figure~\ref{fig:seteq}~(c).

In this setting, the set equation in Proposition~\ref{prop:seteq} says that each element of the collection $\Co_\bw$ can be viewed as the union of elements from $M_0\Co_\bw^{(1)}$. In other words, if we take the patch $P_1$ depicted in Figure~\ref{fig:seteq}~(b) and apply the linear mapping $M_0$ to it, the resulting patch $M_0P_1$ lies in the same plane $\bw^\bot$ as the collection $\Co_\bw$ and some elements of $P_0$ are unions of elements from $M_0P_1$. In Figure~\ref{fig:seteq}~(d) this is illustrated: the image of the patch $P_1$ from Figure~\ref{fig:seteq}~(b) under the mapping $M_0$ is subdividing some parts of $P_0$. Figure~\ref{fig:seteq}~(e) illustrates that, according to Proposition~\ref{prop:seteqII}, each element of $\Co_\bw$ is a union of elements from $M_0\Co_\bw^{(1)}$.

Note that in Figure~\ref{fig:seteq} the collections $\Co_\bw$ and $\Co^{(1)}_\bw$ are depicted as tilings and the patches of $M_0\Co^{(1)}_\bw$ subdivide the elements of $\Co_\bw$ without overlap. This is the situation we ``dream'' of. So far, we only know that elements of  $\Co_\bw$ are unions of elements of $M_0\Co^{(1)}_\bw$. To realize this ideal situation we need to work more.
\end{example}

\subsection{An $S$-adic Rauzy fractal is the closure of its interior}

The present section is devoted to the interior of the subtiles. We start with a covering result taken from~\cite[Proposition~6.2]{Berthe-Steiner-Thuswaldner}. In its statement we use the following terminology. Let $\mathcal{K}$ be a collection of subsets of a set $D$. The \emph{covering degree} of $\mathcal{K}$ (in $D$) is the largest number $m$ having the property that each $x\in D$ is contained in at least $m$ elements of $\mathcal{K}$.

\begin{lemma}\label{lem:covDegInc}
Let $\bsigma$ be a sequence of unimodular substitutions and $\bw\in\R_{\ge 0}\setminus\{\mathbf{0}\}$. If $\bsigma$ is primitive, recurrent, algebraically irreducible, and has finitely balanced language $\Lg_\bsigma$ then $\Co_\bw^{(n)}$ covers $(\bw^{(n)})^\bot$ with finite covering degree for each $n\in\N$. The covering degree of $\Co_\bw^{(n)}$ increases monotonically with $n$.
\end{lemma}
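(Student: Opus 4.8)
The plan is to prove the three assertions contained in the statement one at a time: that $\Co_\bw^{(n)}$ covers $(\bw^{(n)})^\bot$, that the covering degree is finite at each level, and that it is nondecreasing in $n$. The covering and finiteness rest on the relation between the tiles and the integer points of the discrete hyperplane $\Gamma(\bw^{(n)})$, whereas the monotonicity will follow from the set equation of Proposition~\ref{prop:seteq} combined with the disjointness in Lemma~\ref{lem:fernique}~(iii).

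\textbf{Covering.} I would first settle the case $n=0$ and then transport it. Fix $\mathbf{z}\in\bw^\bot$. By Lemma~\ref{irrirr} the coordinates of $\bu$ are rationally independent, i.e.\ $\R\bu$ contains no nonzero integer vector; this is exactly the condition guaranteeing that $\pi_{\bu,\bw}(\Z^d)$ is dense in $\bw^\bot$. Using this density together with $\langle\bu,\bw\rangle>0$, one chooses $\bx'\in\Z^d$ with $\pi_{\bu,\bw}\bx'$ arbitrarily close to $\mathbf{z}$ and with $t:=\langle\bx',\bw\rangle\ge 0$ (when $\bw$ has vanishing coordinates this last requirement needs the extra observation that among the lattice points projecting near $\mathbf{z}$ the quantity $\langle\cdot,\bw\rangle$ is unbounded, by local finiteness). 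Let $w=w_0w_1\cdots$ be a limit sequence of $\bsigma$ and let $s_k=\langle\mathbf{l}(w_0\cdots w_{k-1}),\bw\rangle$; by primitivity some coordinate of $\bw$ is positive and the corresponding letter occurs infinitely often, so $s_k\to+\infty$. Taking $k$ minimal with $s_{k+1}>t$ and setting $p=w_0\cdots w_{k-1}$, $i=w_k$, $\bx=\bx'-\mathbf{l}(p)$, one checks $0\le\langle\bx,\bw\rangle=t-s_k<\langle\be_i,\bw\rangle$, so $[\bx,i]\in\Gamma(\bw)$, while $\pi_{\bu,\bw}\bx'=\pi_{\bu,\bw}\bx+\pi_{\bu,\bw}\mathbf{l}(p)\in\pi_{\bu,\bw}\bx+\Ra_\bw(i)$ by \eqref{eq:subtileRi}. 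Thus $\mathbf{z}$ is a limit of points of $\bigcup\Co_\bw$, which is closed by the local finiteness below, so $\bigcup\Co_\bw=\bw^\bot$. For general $n$, Proposition~\ref{prop:seteqII} with $k=0$, $\ell=n$ gives $\bigcup\Co_\bw=M_{[0,n)}\bigcup\Co_\bw^{(n)}$, and since $M_{[0,n)}$ maps $(\bw^{(n)})^\bot$ bijectively onto $\bw^\bot$ by Lemma~\ref{lem:fernique}~(i), we obtain $\bigcup\Co_\bw^{(n)}=(\bw^{(n)})^\bot$.

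\textbf{Finite covering degree.} This is a local finiteness statement. Each subtile $\Ra_\bw^{(n)}(i)$ is bounded: applying \eqref{eq:seteq} with $\bx=\mathbf{0}$, $k=0$, $\ell=n$ exhibits, for a suitable $i$ and $\by$, the inclusion $M_{[0,n)}\Ra_\bw^{(n)}(j)\subseteq\Ra_\bw(i)-M_{[0,n)}\pi^{(n)}_{\bu,\bw}\by$, whose right-hand side is compact by Proposition~\ref{prop:Rcompact} (here $\Lg_\bsigma$ is finitely balanced); inverting the fixed map $M_{[0,n)}$ shows each level-$n$ subtile lies in a ball $B(\mathbf{0},r_n)$. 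On the other hand, if a tile $\pi^{(n)}_{\bu,\bw}\bx+\Ra_\bw^{(n)}(i)$ meets $B(\mathbf{0},R)$ then $\pi^{(n)}_{\bu,\bw}\bx\in B(\mathbf{0},R+r_n)$, and since such $\bx$ satisfy $0\le\langle\bx,\bw^{(n)}\rangle<\max_i\langle\be_i,\bw^{(n)}\rangle$ and project boundedly along $\bu^{(n)}$ (note $\langle\bu^{(n)},\bw^{(n)}\rangle=\langle\bu,\bw\rangle>0$), they form a bounded subset of $\R^d$, hence a finite set of lattice points. Therefore only finitely many tiles meet any ball, so $\Co_\bw^{(n)}$ is locally finite; every point lies in finitely many tiles, the covering degree is finite, and $\bigcup\Co_\bw^{(n)}$ is closed, as used above.

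\textbf{Monotonicity.} Put $\mathcal{A}=\Co_\bw^{(n)}$ and $\mathcal{B}=M_{[n,n+1)}\Co_\bw^{(n+1)}$. As $M_{[n,n+1)}$ is a linear bijection $(\bw^{(n+1)})^\bot\to(\bw^{(n)})^\bot$, the collections $\mathcal{B}$ and $\Co_\bw^{(n+1)}$ have the same covering degree, so it suffices to compare $\mathcal{A}$ and $\mathcal{B}$. By Lemma~\ref{lem:fernique}~(ii),(iii) we have the disjoint decomposition $\Gamma(\bw^{(n+1)})=\bigsqcup_{[\bx,i]\in\Gamma(\bw^{(n)})}E_1^*(\sigma_n)[\bx,i]$, so every face of $\Gamma(\bw^{(n+1)})$, equivalently every tile of $\mathcal{B}$, is associated with exactly one tile of $\mathcal{A}$; moreover the set equation \eqref{eq:seteq} with $k=n$, $\ell=n+1$ expresses each $A\in\mathcal{A}$ as the union of the tiles of $\mathcal{B}$ indexed by its $E_1^*(\sigma_n)$-image. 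Now fix any $\mathbf{z}\in(\bw^{(n)})^\bot$ and let $A_1,\dots,A_m$ be the tiles of $\mathcal{A}$ containing it, $m=\mu_{\mathcal{A}}(\mathbf{z})$. For each $k$, since $A_k$ equals the union of its refining $\mathcal{B}$-tiles, there is $B_k\in\mathcal{B}$ with $B_k\subseteq A_k$ and $\mathbf{z}\in B_k$; because the index sets $E_1^*(\sigma_n)[\bx_k,i_k]$ are pairwise disjoint, the faces carrying $B_1,\dots,B_m$ are pairwise distinct, hence so are the $B_k$, giving $\mu_{\mathcal{B}}(\mathbf{z})\ge m=\mu_{\mathcal{A}}(\mathbf{z})$. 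As this holds for every point, $\deg\mathcal{B}\ge\deg\mathcal{A}$, and therefore $\deg\Co_\bw^{(n+1)}\ge\deg\Co_\bw^{(n)}$.

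I expect the main obstacle to be the covering statement for $n=0$: it is the only step that genuinely uses algebraic irreducibility (through the density of $\pi_{\bu,\bw}(\Z^d)$ furnished by Lemma~\ref{irrirr}), and the selection of $\bx'$ with $\langle\bx',\bw\rangle\ge 0$ must be handled with care when $\bw$ has zero entries. The monotonicity argument, though short, depends entirely on the disjointness in Lemma~\ref{lem:fernique}~(iii); without it two distinct $\mathcal{A}$-tiles could share a refining $\mathcal{B}$-tile and the multiplicity comparison would break down.
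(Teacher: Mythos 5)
Your proof is correct, and for the central covering step it takes a genuinely different route from the paper's. The paper never places individual points inside tiles: it applies Proposition~\ref{prop:seteqII} at \emph{every} level $n\ge n_0$ to rewrite $\bigcup_{T\in\Co_\bw}T$ as in \eqref{eq:denseunion}, identifies the set of all translation points arising this way with $\{\pi_{\bu,\bw}\bz \;:\; \bz\in\Z^d,\ 0\le\langle\bz,\bw\rangle\}$ (dense by Lemma~\ref{irrirr}, the same fact you use), and then needs Lemma~\ref{l:smallsubtiles} --- hence strong convergence, Proposition~\ref{prop:strongconv} --- to conclude that high-level tiles shrink to points, so that the union of tiles is dense and, being closed by local finiteness, equals $\bw^\bot$. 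Your broken-line argument instead shows directly that every projected lattice point of non-negative height lies in a level-$0$ tile; this is more elementary (no strong convergence and no set equation are needed for the case $n=0$) and even slightly stronger, since the dense set is actually covered rather than merely approximated. Your finiteness step is also a genuine addition rather than a routine check: compactness of the level-$n$ subtiles does \emph{not} follow from Proposition~\ref{prop:Rcompact} applied to the shifted sequence, because balance is only assumed for $\Lg_\bsigma$ and not for $\Lg_\bsigma^{(n)}$, and your transport through the set equation is the correct fix. Likewise, your monotonicity argument via the disjointness in Lemma~\ref{lem:fernique}~(iii) spells out exactly what the paper compresses into the single phrase that this assertion ``follows from the set equation''.

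Two small corrections. First, rational independence of the coordinates of $\bu$ means $\bu^\bot\cap\Z^d=\{\mathbf{0}\}$, not ``$\R\bu$ contains no nonzero integer vector''; the latter is strictly weaker and does not imply density of $\pi_{\bu,\bw}(\Z^d)$ in $\bw^\bot$. The slip is harmless because Lemma~\ref{irrirr} supplies the correct property, from which density does follow. Second, your parenthetical justification for finding $\bx'$ with $\langle\bx',\bw\rangle\ge 0$ only yields that the heights of lattice points projecting near $\mathbf{z}$ are unbounded in absolute value; to produce a point of non-negative height one should either take differences of two such points (a vector of positive height and small projection, which can then be added to a nearby lattice point) or invoke density of the forward ray $\R_+\bu$ modulo $\Z^d$. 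This gap is shared with the paper, which asserts the same density statement without proof.
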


\begin{proof}
We prove the covering property for $\Co_\bw$. The covering property for $\Co_\bw^{(n)}$ as well as the monotonicity of the covering degree follow from this by the set equation in Proposition~\ref{prop:seteq}. 

By Proposition~\ref{prop:seteqII} with the choices $k=0$ and $\ell=n\ge n_0$ we know that
\begin{equation}\label{eq:denseunion}
\bigcup_{T\in \Co_\bw}T= \bigcup_{n\ge n_0}\bigcup_{T\in \Co_\bw^{(n)}}M_{[0,n)}T= 
\bigcup_{n\ge n_0} \bigcup_{[\by,j]\in \Gamma(\bw^{(n)})} M_{[0,n)}(\pi^{(n)}_{\bu,\bw}\by + \Ra_\bw^{(n)}(j))
\end{equation}
holds for each $n_0\in \N$. Because $\Co_\bw$ is a locally finite collection of compact sets it suffices to show that $\bigcup_{T\in \Co_\bw}T$ is dense in $\bw^{\bot}$. To prove this we show that
the right hand side of \eqref{eq:denseunion} is dense in $\bw^{\bot}$ for each $n_0\in \N$. To see this note that by the definition of the discrete hyperplane $\Gamma(\bw^{(n)})$ the set of translates in this union satisfies (recall from \eqref{eq:ukwk} that $\bw^{(n)} = (M_{[0,n)})^t\bw$)
\[
\begin{split}
\{M_{[0,n)} \pi^{(n)}_{\bu,\bw}\by\;:\;[\by,j]\in & \Gamma(\bw^{(n)})\} \\
=&
\{
\pi_{\bu,\bw}M_{[0,n)}\by \;:\; \by \in\Z^d,\, 0\le \langle \by,(M_{[0,n)})^t\bw \rangle \le \max_{i\in\A}\langle \be_i,(M_{[0,n)})^t\bw\rangle\} \\
=&
\{
\pi_{\bu,\bw}\bz \;:\; \bz\in\Z^d,\, 0\le \langle\bz,\bw \rangle \le \max_{i\in\A}\langle M_{[0,n)}\be_i , \bw\rangle
\}.
\end{split}
\]
As $\bu$ has rationally independent coordinates by Lemma~\ref{irrirr}, the set 
\[
\{\pi_{\bu,\bw}\bz \;:\; \bz\in\Z^d,\, 0\le \langle\bz,\bw \rangle\}
\]
is dense in $\bw^\bot$. Since $\max_{i\in\A}\langle M_{[0,n)}\be_i, \bw \rangle\to\infty$ for $n\to\infty$ by primitivity, this yields that
\begin{equation}\label{eq:dd2}
\bigcup_{n\ge n_0} \bigcup_{[\by,j]\in \Gamma(\bw^{(n)})} M_{[0,n)} \pi^{(n)}_{\bu,\bw}\by
=\{\pi_{\bu,\bw}\bz \;:\; \bz\in\Z^d,\, 0\le \langle\bz,\bw \rangle\}.
\end{equation}
is dense in $\bw^{\bot}$ for each $n_0\in \N$. Because $n_0$ was arbitrary and $\lim_{n\to \infty} M_{[0,n)}\Ra_\bw^{(n)}(i)=\{\mathbf{0}\}$ by Lemma~\ref{l:smallsubtiles} this implies that the right hand side of \eqref{eq:denseunion} is dense in $\bw^{\bot}$ for each $n_0\in \N$ and we are done.
\end{proof}

From this result we get the assertion on the interiors of $S$-adic Rauzy fractals.

\begin{proposition}\label{prop:intclos}
Let $\bsigma$ be a sequence of unimodular substitutions over the alphabet $\A$ and $\bw\in\R_{\ge 0}\setminus\{\mathbf{0}\}$. If $\bsigma$ is primitive, recurrent, algebraically irreducible, and has finitely balanced language $\Lg_\bsigma$,  then $\Ra(i)$ is the closure of its interior for each $i\in\A$.
\end{proposition}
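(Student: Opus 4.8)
The plan is to prove the (formally more general) statement that for every $\bw\in\R_{\ge 0}^d\setminus\{\mathbf{0}\}$, every $n\ge0$ and every $i\in\A$ the subtile $\Ra^{(n)}_\bw(i)$ is the closure of its relative interior in $(\bw^{(n)})^\bot$; the assertion about $\Ra(i)=\Ra_\bone(i)$ is the case $\bw=\bone$, $n=0$. I would split the argument into two parts. \emph{Step~1:} show that every such subtile has \emph{nonempty} relative interior. \emph{Step~2:} upgrade this to the closure-of-interior property by means of the subdivision furnished by the set equation, together with the shrinking of diameters from Lemma~\ref{l:smallsubtiles}. Since each $\Ra^{(n)}_\bw(i)$ is compact (hence closed) by Proposition~\ref{prop:Rcompact}, only the inclusion $\Ra^{(n)}_\bw(i)\subseteq\overline{\mathrm{int}\,\Ra^{(n)}_\bw(i)}$ needs work.

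For \emph{Step~1}, the key input is Lemma~\ref{lem:covDegInc}: under the stated hypotheses $\Co_\bw^{(n)}$ covers $(\bw^{(n)})^\bot$ with finite (in particular positive) covering degree, so $(\bw^{(n)})^\bot=\bigcup_{T\in\Co_\bw^{(n)}}T$ is a countable union of compact sets of the complete metric space $(\bw^{(n)})^\bot$. By the Baire category theorem some tile, hence some subtile $\Ra^{(n)}_\bw(j_n)$, has nonempty relative interior. To propagate this to \emph{all} letters I would invoke the set equation (Proposition~\ref{prop:seteq}) with $k=0$, $\ell=n$, applied to the relevant shift: $\Ra_\bw(i)=\bigcup_{[\by,j]\in E_1^*(\sigma_{[0,n)})[\mathbf{0},i]}M_{[0,n)}(\pi^{(n)}_{\bu,\bw}\by+\Ra^{(n)}_\bw(j))$. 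By primitivity one may choose $n$ so large that $M_{[0,n)}$ is a positive matrix; then $\sigma_{[0,n)}(j)$ contains the letter $i$ for every $j\in\A$, so every letter $j$ occurs among the pairs of $E_1^*(\sigma_{[0,n)})[\mathbf{0},i]$. In particular the Baire letter $j_n$ occurs, and since $M_{[0,n)}$ is a linear isomorphism of $(\bw^{(n)})^\bot$ onto $\bw^\bot$ (Lemma~\ref{lem:fernique}(i)), it carries the relatively open set $\mathrm{int}\,\Ra^{(n)}_\bw(j_n)$ to a relatively open set inside $\Ra_\bw(i)$. Hence $\mathrm{int}\,\Ra_\bw(i)\neq\emptyset$; as the same reasoning applies verbatim to every shifted sequence $(\sigma_{m+k})_k$ (primitivity and the covering of Lemma~\ref{lem:covDegInc} being available for all shifts), every $\Ra^{(n)}_\bw(i)$ has nonempty interior.

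For \emph{Step~2}, fix $x\in\Ra_\bw(i)$. By the set equation (Proposition~\ref{prop:seteq}, $k=0$) the finitely many level-$n$ subtiles $M_{[0,n)}(\pi^{(n)}_{\bu,\bw}\by+\Ra^{(n)}_\bw(j))$, $[\by,j]\in E_1^*(\sigma_{[0,n)})[\mathbf{0},i]$, cover $\Ra_\bw(i)$, so $x$ lies in one of them, call it $S_n$. Each such $S_n$ is a translate of the linear image $M_{[0,n)}\Ra^{(n)}_\bw(j)$, hence has nonempty relative interior by Step~1 and the isomorphism property of $M_{[0,n)}$; moreover $S_n\subseteq\Ra_\bw(i)$, so by monotonicity of the interior $\mathrm{int}\,S_n\subseteq\mathrm{int}\,\Ra_\bw(i)$. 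Choosing $y_n\in\mathrm{int}\,S_n\subseteq\mathrm{int}\,\Ra_\bw(i)$ and using $x,y_n\in S_n$ gives $\Vert x-y_n\Vert\le\mathrm{diam}(S_n)\le\mathrm{diam}\big(M_{[0,n)}\Ra^{(n)}_\bw\big)$, which tends to $0$ as $n\to\infty$ by Lemma~\ref{l:smallsubtiles}. Therefore $x\in\overline{\mathrm{int}\,\Ra_\bw(i)}$, and since $x$ was arbitrary the subtile equals the closure of its interior.

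The main obstacle is entirely concentrated in Step~1, namely guaranteeing nonempty interior. Step~2 is then a soft topological consequence, needing only that subtiles sit inside $\Ra_\bw(i)$, that they have interior, and that their diameters shrink. The two quantitative hypotheses of the theorem enter precisely here: balance of $\Lg_\bsigma$ and algebraic irreducibility are what make Lemma~\ref{lem:covDegInc} true (through compactness and the rational independence of $\bu$ via Lemma~\ref{irrirr}), so that the Baire argument has a genuine covering to feed on, while primitivity is what lets one transport interior from the one letter produced by Baire to every letter. I would therefore present Step~1 as the heart of the proof and keep Step~2 brief.
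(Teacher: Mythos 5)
Your proposal is correct and follows essentially the same route as the paper's proof: Baire category applied to the covering from Lemma~\ref{lem:covDegInc} gives one subtile with nonempty interior at each level, primitivity together with the set equation (Proposition~\ref{prop:seteq}) propagates nonempty interior to every letter, and then the subdivision into level-$n$ subtiles with diameters shrinking to zero (Lemma~\ref{l:smallsubtiles}) shows the interior points are dense. The only cosmetic difference is organizational: the paper pulls the Baire letter from a higher level $k>n$ into $\Ra^{(n)}_\bw(i)$, while you push the level-$n$ Baire letter down into $\Ra_\bw(i)$ and appeal to shifts, which amounts to the same argument.
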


\begin{proof}
Choose some $\bw\in\R_{\ge 0}\setminus\{\mathbf{0}\}$. By Lemma~\ref{lem:covDegInc} the collection $\Co_\bw^{(n)}$ is a locally finite covering of $(\bw^{(n)})^\bot$ by compact sets for each $n\in\N$. Thus by Baire's theorem for each $n\in \N$ there is $i_n\in \A$ such that ${\rm int}(\Ra^{(n)}_\bw(i_n))\not=\emptyset$. By primitivity of $\bsigma$ the set equation in Proposition~\ref{prop:seteq} implies that each $\Ra^{(n)}_\bw(i)$ contains $\Ra^{(k)}_\bw(i_k)$ for some $k>n$. Thus for each $n\in\N$ and each $i\in\A$ we have ${\rm int}(\Ra^{(n)}_\bw(i))\not=\emptyset$.

For each $i\in \A$ and each $n\in \N$, Proposition~\ref{prop:seteq} yields a subdivision of $\Ra_\bw(i)$ in translates of sets of the form $M_{[0,n)}\Ra_\bw^{(n)}(j)$, $j\in\A$. The diameters of these sets tend to $0$ by Lemma~\ref{l:smallsubtiles}. Since they all contain inner points, the set of inner points of $\Ra_\bw(i)$ is dense in $\Ra_\bw(i)$. In other words,  $\Ra_\bw(i)$ is the closure of its interior. The result now follows by taking $\bw=\bone$.
\end{proof}

\subsection{The generalized left eigenvector}\label{sec:genEV}

Let $\bsigma$ be a primitive and recurrent sequence of unimodular substitutions. 
If we look at the set equation in Proposition~\ref{prop:seteq} for $k=0$ and $\ell=n$ we see that it subdivides the sets $\Ra_\bw(i)$, $i\in\A$, into translates of sets of the form $\Ra_\bw^{(n)}(j)$, $j\in \A$. In the well-studied substitutive case $\Ra_\bw^{(n)}(i)=\Ra_\bw(i)$ holds for each $n$, {\it i.e.}, the sets $\Ra_\bw(i)$ are subdivided into small copies of themselves. This fact is crucial in most of the proofs of properties of substitutive Rauzy fractals (see {\it e.g.}~\cite{SiegelThuswaldner10}). In our case, in general the sets $\Ra_\bw^{(n)}$ are not only different for each $n\in\N$, but also live in different hyperplanes $(\bw^{(n)})^\bot$ of $\mathbb{R}^n$.

In what follows we want to deal with this problem by choosing a strictly increasing sequence $(n_k)$ of integers such that $\Ra_\bw^{(n_k)}(i)$ is at least getting closer and closer to  $\Ra_\bw(i)$ in Hausdorff metric when $k\to\infty$. 

To this matter let $\bsigma$ be a sequence of substitutions that satisfies the assumptions of Theorem~\ref{th:RauzyProperties}. We now successively choose subsequences of the integers to get the desired properties.

\begin{enumerate}
\item[(a)] Consider the set equation in Proposition~\ref{prop:seteq} for the choices $k=0$, $\ell=m$ and  $k=n$, $\ell=n+m$. Look at the subdivision of $\Ra_\bw(i)$ and $\Ra_\bw^{(n)}(i)$. We can hope to get $\Ra_\bw(i)$ and $\Ra_\bw^{(n)}(i)$ close to each other in Hausdorff metric if they have the same subdivision structure. From Proposition~\ref{prop:seteq}  we see that these subdivision structures are the same if 
$(\sigma_0,\ldots,\sigma_{m-1}) = (\sigma_n,\ldots,\sigma_{n+m-1})$.
Since $\bsigma$ is recurrent, there exist strictly increasing sequences $(n_k)$ and $(\ell_k)$ such that 
\begin{equation}\label{eq:cond1}
(\sigma_0,\ldots,\sigma_{\ell_k-1}) = (\sigma_{n_k},\ldots,\sigma_{n_k+\ell_k-1}).
\end{equation}
By recurrence and primitivity it is possible to choose $(n_k)$ and $(\ell_k)$ in a way that there is some $h$ such that $M_{[\ell_k-h,\ell_k)}$ is the same primitive matrix for all $k\in\N$.

\item[(b)] We know that $M_{[0,\ell_k)}\Ra_\bw^{(\ell_k)}(j)$ tends to $\{\mathbf{0}\}$ in Hausdorff metric for $k\to\infty$ by Lemma~\ref{l:smallsubtiles} so that the subdivision corresponding to the choice $k=0$, $\ell=\ell_k$ in the set equation gives a subdivision of $\Ra_\bw(j)$ into sets whose diameter tends to $0$ for $k\to\infty$. However, if we consider $\Ra_\bw^{(n_k)}(i)$, there is no reason for $M_{[n_k,n_k+\ell_k)}\Ra_\bw^{(n_k+\ell_k)}(j)=M_{[0,\ell_k)}\Ra_\bw^{(n_k+\ell_k)}(j)$ to tend to $\{\mathbf{0}\}$ unless $\Ra_\bw^{(n_k+\ell_k)}(j)$ is bounded uniformly in $k$. To this end we need to assume that $\Lg_{\bsigma}^{(n_k+\ell_k)}$ is $C$-balanced as this implies that $\Ra_\bw^{(n_k+\ell_k)}(j)$ is indeed bounded by Proposition~\ref{prop:Rcompact}. In view of the conditions imposed on $\bsigma$ in Theorem~\ref{th:RauzyProperties} it is, however, possible to change the sequence $(n_k)$ and $(\ell_k)$ chosen in (a) in a way that also $\Lg_{\bsigma}^{(n_k+\ell_k)}$ is $C$-balanced for $C\in\N$ not depending on $k$.

\item[(c)] Still (a) and (b) give us no reason for $\Ra_\bw^{(n_k)}$ living in a hyperplane $\bw^{(n_k)}$ close to $\bw$ which is needed in order to get $\Ra_\bw^{(n_k)}$ close to $\Ra_\bw$ in Hausdorff metric.  By the compactness of the space of directions in $\R^d$, using the Hilbert metric from Proposition~\ref{prop:furstmatrix} it is possible to exhibit a vector $\bv\in\R_{\ge 0}\setminus\{\mathbf{0}\}$ for which there exists subsequences of $(n_k)$ and $(\ell_k)$ (called $(n_k)$ and $(\ell_k)$ again) such that $\lim_{k\to\infty}\bv^{(n_k)}/\Vert\bv^{(n_k)}\Vert_1=\bv/\Vert\bv\Vert_1$. Here we set $\bv^{(n)}=(M_{[0,n)})^t \bv$.
\end{enumerate}

Summing up, if the conditions of Theorem~\ref{th:RauzyProperties} are in force we can choose sequences $(n_k)$ and $(\ell_k)$ satisfying (a), (b), and (c). The vector $\bv$ defined in (c) deserves special attention.

\begin{definition}[Generalized left eigenvector]\index{generalized left eigenvector}
A vector $\bv$ as in (c) is called a \emph{generalized left eigenvector} of $\bsigma$.
\end{definition}

Sequences $(n_k)$ and $(\ell_k)$ associated with $\bsigma$ in the above way will just be called \emph{associated sequences} for $\bsigma$ in the sequel (they are related to the property \emph{PRICE} of \cite[Definition~5.8]{Berthe-Steiner-Thuswaldner}).
It turns out that associated sequences are suitable for our purposes. In particular, we get the following result (we refer to \cite[Proposition~5.12]{Berthe-Steiner-Thuswaldner} for details).

\begin{proposition}\label{prop:RauzyHausdorff}
Let $\bsigma$ be a sequence of substitutions that admits associated sequences $(n_k)$ and $(\ell_k)$ and has a generalized left eigenvector $\bv$. Then for each 
$i\in\A$
\[
\lim_{k\to\infty}\Ra_\bv^{(n_k)}(i)=\Ra_\bv(i)
\]
in Hausdorff metric.
\end{proposition}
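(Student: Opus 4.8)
The plan is to compare the two sets by applying the set equation of Proposition~\ref{prop:seteq} to each of them at the common ``depth'' $\ell_k$ and then estimating the resulting pieces against one another. Write $\Ra_\bv(i)=\Ra_\bv^{(0)}(i)$ and apply \eqref{eq:seteq} once with $k=0,\ \ell=\ell_k$ and once with $k=n_k,\ \ell=n_k+\ell_k$. By condition (a) of Section~\ref{sec:genEV} we have $\sigma_{[0,\ell_k)}=\sigma_{[n_k,n_k+\ell_k)}$, so both subdivisions are governed by the \emph{same} dual map $E_1^*(\sigma_{[0,\ell_k)})$ and the \emph{same} matrix $A_k:=M_{[0,\ell_k)}=M_{[n_k,n_k+\ell_k)}$. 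Hence both unions run over the identical finite index set $\mathcal{I}_k=E_1^*(\sigma_{[0,\ell_k)})[\mathbf{0},i]$, the $[\by,j]$-piece of $\Ra_\bv(i)$ being $A_k(\pi^{(\ell_k)}_{\bu,\bv}\by+\Ra_\bv^{(\ell_k)}(j))$ and that of $\Ra_\bv^{(n_k)}(i)$ being $A_k(\pi^{(n_k+\ell_k)}_{\bu,\bv}\by+\Ra_\bv^{(n_k+\ell_k)}(j))$. Since the Hausdorff distance of two finite unions indexed by the same set is at most the maximal Hausdorff distance of corresponding members, it suffices to bound, uniformly in $[\by,j]\in\mathcal{I}_k$, the distance between these two pieces, which splits into a \emph{translation term} and two \emph{diameter terms}.

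For the diameter terms I would show that both families of subtile pieces collapse to a point. Using the intertwining identity $\pi_{\bu,\bv}M=M\pi_{M^{-1}\bu,M^t\bv}$ one computes $A_k\pi^{(\ell_k)}_{\bu,\bv}\by=\pi_{\bu,\bv}\mathbf{l}(p)$ and $A_k\pi^{(n_k+\ell_k)}_{\bu,\bv}\by=\pi^{(n_k)}_{\bu,\bv}\mathbf{l}(p)$, where $\mathbf{l}(p)=A_k\by$ is the abelianization of a prefix $p$ of $\sigma_{[0,\ell_k)}(j)$; thus the two pieces are small neighbourhoods of $\pi_{\bu,\bv}\mathbf{l}(p)$ and of $\pi^{(n_k)}_{\bu,\bv}\mathbf{l}(p)$. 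The first subtile piece $A_k\Ra_\bv^{(\ell_k)}(j)=M_{[0,\ell_k)}\Ra_\bv^{(\ell_k)}(j)$ has diameter tending to $0$ directly by Lemma~\ref{l:smallsubtiles}, since $\ell_k\to\infty$. For the second piece $M_{[n_k,n_k+\ell_k)}\Ra_\bv^{(n_k+\ell_k)}(j)$ I would invoke condition (b): as $\Lg_\bsigma^{(n_k+\ell_k)}$ is $C$-balanced with $C$ independent of $k$, Proposition~\ref{prop:Rcompact} applied to the shifted sequence $\bsigma^{(n_k+\ell_k)}$ bounds $\Ra_\bv^{(n_k+\ell_k)}(j)$ uniformly in $k$; it then remains to see that $M_{[n_k,n_k+\ell_k)}$ contracts this uniformly bounded set to $\{\mathbf{0}\}$, which I would extract from Proposition~\ref{prop:strongconv} together with the synchronisation built into (a) (the recurring positive block $M_{[\ell_k-h,\ell_k)}$), yielding a contraction rate on the hyperplane $(\bv^{(n_k+\ell_k)})^\bot$ uniform along the diagonal.

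For the translation term I would prove that $\pi^{(n_k)}_{\bu,\bv}\to\pi_{\bu,\bv}$ fast enough. The projection directions converge: by (a) both $\bu$ and the right eigenvector $\bu^{(n_k)}=M_{[0,n_k)}^{-1}\bu$ of $\bsigma^{(n_k)}$ lie in the cone $M_{[0,\ell_k)}\R^d_{\ge0}$, whose Hilbert diameter tends to $0$ by the weak convergence underlying Proposition~\ref{prop:furstmatrix}, so $\bu^{(n_k)}\to\bu$ in direction; the representation hyperplanes converge because $\bv^{(n_k)}/\Vert\bv^{(n_k)}\Vert_1\to\bv/\Vert\bv\Vert_1$ by condition (c). Hence $\Vert\pi^{(n_k)}_{\bu,\bv}-\pi_{\bu,\bv}\Vert\to0$; applying this to $\mathbf{l}(p)$ and controlling the large component of $\mathbf{l}(p)$ along $\bu$ by the finite balance of $\Lg_\bsigma$ (so that $\pi_{\bu,\bv}\mathbf{l}(p)$ stays bounded) gives $\Vert\pi^{(n_k)}_{\bu,\bv}\mathbf{l}(p)-\pi_{\bu,\bv}\mathbf{l}(p)\Vert\to0$, uniformly over the finitely many prefixes of each depth. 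Combining the translation term with the two vanishing diameter terms then yields $d_{\mathrm{H}}(\Ra_\bv(i),\Ra_\bv^{(n_k)}(i))\to0$.

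The hard part will be the two diagonal estimates — the contraction of $M_{[n_k,n_k+\ell_k)}$ on the uniformly bounded second subtile, and the competition in the translation term between the exponential growth of $\Vert\mathbf{l}(p)\Vert$ and the exponential decay of $\Vert\pi^{(n_k)}_{\bu,\bv}-\pi_{\bu,\bv}\Vert$. Both are precisely where conditions (a)--(c) (synchronised recurrence with a common positive block, uniform balance, and convergence of the left-eigenvector directions) are indispensable, since without uniformity in $k$ the rates need not cooperate; making them do so is the technical heart of the argument, and here I would follow the bookkeeping of \cite[Proposition~5.12]{Berthe-Steiner-Thuswaldner}.
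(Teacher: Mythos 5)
Your proposal is correct and follows essentially the same route as the paper's proof: both use the synchronisation \eqref{eq:cond1} from (a) to write $\Ra_\bv(i)$ and $\Ra_\bv^{(n_k)}(i)$ as unions over the same index set $E_1^*(\sigma_{[0,\ell_k)})[\mathbf{0},i]$ with the same matrix $M_{[0,\ell_k)}$, kill the two diameter terms via Lemma~\ref{l:smallsubtiles} and the uniform balance of (b), and reduce the remaining translation term to $\Vert\pi_{\bu,\bv}M_{[0,\ell_k)}\by-\pi^{(n_k)}_{\bu,\bv}M_{[0,\ell_k)}\by\Vert_\infty$, which vanishes using the convergence of $(\bv^{(n_k)})^\bot$ to $\bv^\bot$ from (c). Your intertwining computation and the explicit growth-versus-decay discussion for the translation term simply spell out what the paper's sketch likewise defers to \cite[Proposition~5.12]{Berthe-Steiner-Thuswaldner}.
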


\begin{proof}[Sketch]
By \eqref{eq:cond1} in (a) the sets $\Ra_\bv(i)$ and $\Ra^{(n_k)}_\bv(i)$ have the same subdivision structure governed by $E_1^{*}(\sigma_{[0,\ell_k)})$ for $k\in\N$. More precisely,
\begin{equation}\label{eq:comp0nk}
\begin{split}
\Ra_\bv(i) &= \bigcup_{[\by,j]\in E_1^{*}(\sigma_{[0,\ell_k)})[\mathbf{0},i]}
M_{[0,\ell_k)}(\pi^{(\ell_k)}_{\bu,\bv}\by + \Ra_\bv^{(\ell_k)}(j)),\\
\Ra^{(n_k)}_\bv(i) &= \bigcup_{[\by,j]\in E_1^{*}(\sigma_{[0,\ell_k)})[\mathbf{0},i]}
M_{[0,\ell_k)}(\pi^{(n_k+\ell_k)}_{\bu,\bv}\by + \Ra_\bv^{(n_k+\ell_k)}(j)).
\end{split}
\end{equation}
By Proposition~\ref{prop:strongconv} the sets $M_{[0,\ell_k)}\Ra_\bv^{(\ell_k)}(j)$ tend to $\{\mathbf{0}\}$ in Hausdorff metric for $k\to\infty$. With more effort, using the balance conditions of (b) and the convergence properties of (c), one can also show that the sets  $M_{[0,\ell_k)}\Ra_\bv^{(n_k+\ell_k)}(j)$ tend to $\{\mathbf{0}\}$ in Hausdorff metric for $k\to\infty$. So replacing all these sets by $\{\mathbf{0}\}$ on the right hand side of \eqref{eq:comp0nk} changes the sets on the left hand side of \eqref{eq:comp0nk} only very little in Hausdorff metric for large $k\in\N$.
Thus for large $k\in\N$ the Hausdorff distance between $\Ra_\bv^{(n_k)}(i)$ and $\Ra_\bv(i)$ is (up to an error tending to $0$ for $k\to\infty$) bounded by 
\[
\begin{split}
&\max\left\{
\Vert M_{[0,\ell_k)}(\pi^{(\ell_k)}_{\bu,\bv}\by-\pi^{(n_k+\ell_k)}_{\bu,\bv}
\by\Vert_\infty \;:\;[\by,j]\in
E_1^{*}(\sigma_{[0,\ell_k)})[\mathbf{0},i]
\right\}
\\
&\hskip 3cm =\max\left\{
\Vert\pi_{\bu,\bv}M_{[0,\ell_k)}\by-\pi^{(n_k)}_{\bu,\bv}M_{[0,\ell_k)}\by\Vert_\infty \;:\; [\by,j]\in E_1^{*}(\sigma_{[0,\ell_k)})[\mathbf{0},i] 
\right\}.
\end{split}
\]
One can now show that the latter maximum tends to $0$ for $k\to\infty$.  Here one uses that by the definition of the generalized left eigenvector in (c) the hyperplanes $(\bv^{(n_k)})^{\bot}$ converge to $\bv^{\bot}$.
\end{proof}

\subsection{An $S$-adic Rauzy fractal has a boundary of measure zero}

We now turn to the boundary of an $S$-adic Rauzy fractal. We start with a result on level $\ell$ subtiles contained in the interior of a given subtile whose detailed proof is contained in \cite[Lemma~6.6]{Berthe-Steiner-Thuswaldner}.

\begin{lemma}\label{lem:intsubtile}
Let $\bsigma$ be a sequence of unimodular substitutions that satisfies the properties of Theorem~\ref{th:RauzyProperties} and let associated sequences $(n_k)$, $(\ell_k)$, and a generalized  left eigenvector $\bv$ be given. 

Then there is $\ell\in\N$ such that for each $i,j\in\A$ there is $[\by,j]\in E_1^*(\sigma_{[0,\ell)})[\mathbf{0},i]$ such that 
\begin{enumerate}
\item[(i)] $M_{[0,\ell)}(\pi^{(\ell)}_{\bu,\bv}\by + \Ra_\bv^{(\ell)}(j)) \subset {\rm int}(\Ra_\bv(i) )$,
\item[(ii)] $M_{[0,\ell)}(\pi^{(n_k+\ell)}_{\bu,\bv}\by + \Ra_\bv^{(n_k+\ell)}(j)) \subset {\rm int}(\Ra^{(n_k)}_\bv(i) )$ for each sufficiently large $k\in\N$.
\end{enumerate}
Moreover, the covering degree of $\Co_\bv^{(n)}$ does not depend on $n$. 
\end{lemma}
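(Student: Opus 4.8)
The plan is to read off the subdivision of $\Ra_\bv(i)$ from the set equation of Proposition~\ref{prop:seteq} with $k=0$, namely $\Ra_\bv(i)=\bigcup_{[\by,j]\in E_1^*(\sigma_{[0,\ell)})[\mathbf{0},i]} M_{[0,\ell)}(\pi^{(\ell)}_{\bu,\bv}\by+\Ra_\bv^{(\ell)}(j))$, and to combine it with three facts already available: every subtile $\Ra^{(n)}_\bv(j)$ has nonempty interior (established inside the proof of Proposition~\ref{prop:intclos}); the level $\ell$ subtiles have diameters tending to $0$ as $\ell\to\infty$ (Lemma~\ref{l:smallsubtiles} together with Proposition~\ref{prop:strongconv}); and $\Ra^{(n_k)}_\bv(i)\to\Ra_\bv(i)$ in Hausdorff metric along the associated sequences (Proposition~\ref{prop:RauzyHausdorff}). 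Throughout I would also use that, by primitivity, $M_{[m,\ell)}$ is positive for $\ell-m$ large, so that \emph{every} letter occurs in $\sigma_{[m,\ell)}(j)$ for every $j\in\A$.

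First I would prove (i). Fix $i$ and pick an open ball $B\subset{\rm int}(\Ra_\bv(i))$, which exists by Proposition~\ref{prop:intclos}. Since the level $\ell$ subtiles subdivide $\Ra_\bv(i)$ and their diameters tend to $0$, for $\ell$ large any level $\ell$ subtile $S$ meeting the center of $B$ is entirely contained in $B$, hence in ${\rm int}(\Ra_\bv(i))$. To prescribe the \emph{type} $j$ I would subdivide $S$ once more: the set equation applied to the shifted sequence over a block $[\ell_0,\ell)$ refines $S$ into pieces indexed by $E_1^*(\sigma_{[\ell_0,\ell)})$ applied to the face of $S$, and by positivity of $M_{[\ell_0,\ell)}$ there is, among these, a piece of type $j$ for \emph{each} $j\in\A$. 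By the composition of the dual maps encoded in Proposition~\ref{prop:seteqII}, such a piece is a level $\ell$ subtile $M_{[0,\ell)}(\pi^{(\ell)}_{\bu,\bv}\by+\Ra^{(\ell)}_\bv(j))\subset S\subset{\rm int}(\Ra_\bv(i))$. As $\A$ is finite, a single $\ell$ can be made to serve all pairs $(i,j)$ at once (refine every chosen $S$ to a common largest level, then enlarge $\ell-\ell_0$).

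For (ii) I would transport this configuration to $\Ra^{(n_k)}_\bv(i)$. By the block matching in property~(a), $(\sigma_0,\ldots,\sigma_{\ell-1})=(\sigma_{n_k},\ldots,\sigma_{n_k+\ell-1})$ once $\ell\le\ell_k$ (true for large $k$), so the subdivision of $\Ra^{(n_k)}_\bv(i)$ is indexed by the \emph{same} set $E_1^*(\sigma_{[0,\ell)})[\mathbf{0},i]$, and the piece indexed by the chosen $[\by,j]$ is the genuine subtile $M_{[0,\ell)}(\pi^{(n_k+\ell)}_{\bu,\bv}\by+\Ra^{(n_k+\ell)}_\bv(j))$ of $\Ra^{(n_k)}_\bv(i)$. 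These pieces are translates of prototiles $M_{[0,\ell)}\Ra^{(n_k+\ell)}_\bv(j)$ by $M_{[0,\ell)}\pi^{(n_k+\ell)}_{\bu,\bv}\by$, and since $\bv^{(n_k)}\to\bv$ in direction by~(c) and $\bv^{(n_k+\ell)}=(M_{[0,\ell)})^t\bv^{(n_k)}$ with $M_{[0,\ell)}$ fixed, the projections converge and (by the shifted version of Proposition~\ref{prop:RauzyHausdorff}) the prototiles converge in Hausdorff metric; hence the whole finite local patch of subtiles around our chosen one converges to the corresponding patch in $\Ra_\bv(i)$. Because in $\Ra_\bv(i)$ the chosen subtile is surrounded by a fixed finite collection of neighboring level $\ell$ subtiles whose union contains a neighborhood of it, the converging configuration covers a neighborhood for large $k$ as well, placing the subtile in ${\rm int}(\Ra^{(n_k)}_\bv(i))$. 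The "moreover'' then follows by combining the monotonicity of the covering degree (Lemma~\ref{lem:covDegInc}) with the convergence $\Co^{(n_k)}_\bv\to\Co_\bv$: the maximal number of pieces covering a generic point is a local integer that agrees in the limit, so the covering degree at level $n_k$ equals that at level $0$, and a nondecreasing integer sequence which equals its initial value along an infinite subsequence is constant.

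I expect the genuine obstacle to be exactly the passage from $S\subset{\rm int}(\Ra_\bv(i))$ to $S^{(k)}\subset{\rm int}(\Ra^{(n_k)}_\bv(i))$. Hausdorff convergence of the individual pieces does \emph{not} by itself prevent an interior gap from opening inside $\Ra^{(n_k)}_\bv(i)$ (a sequence of sets equal to the closure of their interiors can converge in Hausdorff metric while developing holes), and at this stage we only know from Lemma~\ref{lem:covDegInc} that $\Co_\bv$ has finite covering degree, \emph{not} that it tiles. The resolution must therefore be combinatorial rather than purely metric: the index set $E_1^*(\sigma_{[0,\ell)})[\mathbf{0},i]$ and the adjacency relations within $\Gamma(\bv)$ are identical for $\Ra_\bv(i)$ and $\Ra^{(n_k)}_\bv(i)$, so the specific neighbors that surround $S$ persist, with convergent shapes, as the neighbors surrounding $S^{(k)}$, and it is this fixed surrounding patch — not the ambient set as a whole — that continues to cover a full neighborhood.
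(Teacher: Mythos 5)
Your part (i) and your ``moreover'' do follow the paper's lines: (i) is exactly the paper's argument (nonempty interior from Proposition~\ref{prop:intclos}, shrinking level~$\ell$ subtiles from Lemma~\ref{l:smallsubtiles}, primitivity to prescribe the type~$j$), and the constancy of the covering degree is obtained in the paper, as by you, from the monotonicity in Lemma~\ref{lem:covDegInc} combined with convergence of the collections. Two things are missing, however. The smaller one: the convergence $\Co^{(n_k)}_\bv\to\Co_\bv$ you invoke is not free. The paper derives it from a patch-openness property of discrete hyperplanes (a patch of $\Gamma(\bw)$ occurs, up to translation, relatively densely in every $\Gamma(\tilde\bw)$ with $\tilde\bw$ close to $\bw$), and this only produces a translation $\by_k\in\Z^d$ such that $\Gamma(\bv)$ and $\Gamma(\bv^{(n_k)})-\by_k$ share a large patch around the origin; the convergence holds for the translated collections $\Co^{(n_k)}_\bv-\pi^{(n_k)}_{\bu,\bv}\by_k$, not for $\Co^{(n_k)}_\bv$ itself, and this translation must be carried through the whole argument.

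The serious gap is your resolution of the obstacle that you yourself identify for (ii). You correctly observe that Hausdorff convergence of the matched subtiles cannot prevent gaps from opening; but your proposed fix --- that the ``fixed surrounding patch'' of combinatorially matched neighbours ``continues to cover a full neighbourhood'' --- is the same non sequitur one scale down: identical index sets impose no constraint on the shapes beyond Hausdorff convergence, so gaps can still open between the matched neighbours. What closes the argument in the paper is the covering-degree device, which converts covering multiplicity into interiority: for a locally finite collection of compact sets with covering degree~$m$, every point covered \emph{exactly} $m$ times lies in the interior of each element containing it (this is observation~(ii) in the proof of Proposition~\ref{prop:multiTiling}, and it is what the paper means when it says that equal covering degrees plus convergence force inner points to be close to inner points). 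Accordingly, in (i) the subtile $S$ must be placed not in an arbitrary ball of ${\rm int}(\Ra_\bv(i))$ but in a ball all of whose points are covered exactly $m$ times by $\Co_\bv$; such balls exist and recur by the local-finiteness argument. Then for large $k$ every point of the matched subtile $S^{(k)}$ is covered at least $m$ times by $\Co^{(n_k)}_\bv$ (here the ``moreover'' enters as an \emph{input} to (ii), not as a corollary of it), and at most $m$ times, since an $(m{+}1)$-fold covered point $z^{(k)}\in S^{(k)}$ would, by Hausdorff convergence of the finitely many nearby tiles together with $S^{(k)}\to S$, yield a point of $S$ covered $m{+}1$ times by $\Co_\bv$, contradicting the choice of the ball. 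Hence every point of $S^{(k)}$ is exactly $m$-covered, so $S^{(k)}$ avoids $\partial\Ra^{(n_k)}_\bv(i)$, which is (ii). Without this mechanism your passage from $S\subset{\rm int}(\Ra_\bv(i))$ to $S^{(k)}\subset{\rm int}(\Ra^{(n_k)}_\bv(i))$ does not go through.
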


\begin{proof}[Sketch]
Since the conditions in the lemma imply that ${\rm int}(\Ra_\bv(i) ) \not=\emptyset$ (see Proposition~\ref{prop:intclos}) and that the diameter of $M_{[0,\ell)}\Ra_\bv^{(\ell)}(j)$ becomes arbitrarily small for large $\ell$ (see Lemma~\ref{l:smallsubtiles}), assertion~(i) follows easily from primitivity.

The fact that $\ell$ and $\by$ can be chosen in a way that (i) and (ii) hold simultaneously is more difficult to prove. By Proposition~\ref{prop:RauzyHausdorff} we get that $\Ra^{(n_k)}_\bv(i) \to \Ra_\bv(i)$ in Hausdorff metric. Moreover, $\Ra_\bv(i)$ and $\Ra^{(n_k)}_\bv(i)$ have the same subdivision structure for $\ell_k$ steps. This implies that the ``inner structure'' of these tiles is similar for large $k$. However, as inner points are not respected by the Hausdorff metric, technical difficulties occur and also the ``outer structure'', {\it i.e.}, the structure of the collections $\Co_\bv$ and $\Co^{(n_k)}_\bv$ has to be exploited. One can show that if a patch $P$ occurs in a discrete hyperplane $\Gamma(\bw)$, then translates of $P$ occur relatively densely in each discrete hyperplane $\Gamma(\tilde\bw)$ provided that  $\Vert\bw-\tilde\bw\Vert_\infty$ is small enough. In particular, containment of translates of a patch $P$ is an open property of discrete hyperplanes. Thus, if $k$ is large then at level $n_k$ there is a translation $\by_k\in\Z^d$ such that the sets $\Gamma(\bv)$ and $\Gamma(\bv^{(n_k)})-\by_k$ have a large patch around the origin in common. 

Summing up, this means that the collections $\Co^{(n_k)}_\bv-\pi^{(n_k)}_{\bu,\by}\by_k$ converge\footnote{In \cite{Radin-Wolff:92} a space of tilings is equipped with a topology by saying that two tilings are close to each other if their tiles are close to each other in Hausdorff metric inside a large ball around the origin. Although $\Co_\bv$ and $\Co^{(n_k)}_\bv$ are no tilings, an analogous topology can be used here: $\Co_\bv$ and $\Co^{(n_k)}_\bv-\pi^{(n_k)}_{\bu,\by}\by_k$ are said to be close to each other if $\Gamma(\bv)$ and $\Gamma(\bv^{(n_k)})-\by_k$ coincide inside a large ball $B$ around the origin and the tiles associated to an element of $[\by,i]\in \Gamma(\bv) \cap B$ in each of these two collections are close to each other in Hausdorff metric.} 
to $\Co_\bv$ for $k\to\infty$. This implies that the covering degree of $\Co^{(n_k)}_\bv-\pi^{(n_k)}_{\bu,\by}\by_k$ is less than or equal to the covering degree of $\Co_\bv=\Co_\bv^{(0)}$ for $k$ large enough. Since the covering degree of $\Co^{(n)}_\bv$ is monotonically increasing in $n$ by Lemma~\ref{lem:covDegInc}, the last assertion of the lemma follows. 

The fact that the collections $\Co^{(n_k)}_\bv-\pi^{(n_k)}_{\bu,\by}\by_k$ converge to $\Co_\bv$ and have the same covering degree can now be used to show that inner points of elements of $\Co_\bv$ are close to inner points of elements of $\Co^{(n_k)}_\bv-\pi^{(n_k)}_{\bu,\by}\by_k$ for large $k$. Using this together with the fact that $\Ra_\bv(i)$ and $\Ra^{(n_k)}_\bv(i)$ have the same subdivision structure for $\ell_k$ steps, one can show that (i) and (ii) holds simultaneously as claimed. 
\end{proof}

After these preparations we can also prove the result on the measure of the boundary of $S$-adic Rauzy fractals announced in Theorem~\ref{th:RauzyProperties}.

\begin{proposition}\label{prop:measurezero}
Let $\bsigma$ be a sequence of unimodular substitutions over the alphabet $\A$ that satisfies the assertions of Theorem~\ref{th:RauzyProperties}. Then $\lambda_{\bone}(\partial\Ra(i))=0$ for each $i\in\A$.
\end{proposition}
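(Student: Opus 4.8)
The plan is to prove the statement first for the generalized left eigenvector $\bv$, i.e.\ to show $\lambda_{\bv}(\partial\Ra_\bv(i))=0$ for each $i\in\A$, and then to transfer this to $\bone$. The transfer is harmless: by the identity $\Ra_\bw=\pi_{\bu,\bw}\Ra$ used in the proof of Proposition~\ref{prop:Rcompact} (applied in both directions), the projection along $\bu$ restricts to a linear isomorphism between the transverse hyperplanes $\bv^\bot$ and $\bone^\bot$, and such a map sends $\partial\Ra_\bv(i)$ onto $\partial\Ra_\bone(i)$ and carries $\lambda_\bv$-null sets to $\lambda_\bone$-null sets. Hence it suffices to work with $\bv$, where the machinery of Section~\ref{sec:genEV} (associated sequences, Proposition~\ref{prop:RauzyHausdorff}, and Lemma~\ref{lem:intsubtile}) is available.

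For the main estimate I would iterate the set equation of Proposition~\ref{prop:seteq} with $k=0$. Fix $i\in\A$ and, for $p\in\N$, subdivide $\Ra_\bv(i)$ into its level-$p\ell$ subtiles, where $\ell$ is the integer furnished by Lemma~\ref{lem:intsubtile}. Call a subtile in this subdivision a \emph{boundary subtile} if it is not contained in $\mathrm{int}(\Ra_\bv(i))$. Since the subtiles cover $\Ra_\bv(i)$, every point of $\partial\Ra_\bv(i)$ lies in a boundary subtile; moreover, if a subtile $T$ satisfies $T\subseteq\mathrm{int}(\Ra_\bv(i))$ then its whole sub-subdivision stays inside $\mathrm{int}(\Ra_\bv(i))$ (an open subset of $\Ra_\bv(i)$ lies in its interior), so boundary subtiles can only be spawned by boundary subtiles. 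Writing $\beta_p$ for the total $\lambda_\bv$-measure of the boundary subtiles at level $p\ell$, this gives $\lambda_\bv(\partial\Ra_\bv(i))\le\beta_p$ for every $p$.

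The heart of the argument is to produce a $\delta>0$, independent of $p$, with $\beta_{p+1}\le(1-\delta)\beta_p$; the claim then follows since $\beta_p\to 0$. Each boundary subtile $T$ at level $p\ell$ is, up to the linear map $M_{[0,p\ell)}$ and a translation, a shifted subtile $\Ra_\bv^{(p\ell)}(j)$, and Lemma~\ref{lem:intsubtile} (together with Proposition~\ref{prop:RauzyHausdorff}) guarantees that subdividing $T$ once more by $\ell$ steps produces a sub-subtile lying in $\mathrm{int}(T)\subseteq\mathrm{int}(\Ra_\bv(i))$, i.e.\ an \emph{interior} sub-subtile that is peeled off the boundary. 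One then wants this interior piece to carry a definite fraction $\ge\delta$ of $\lambda_\bv(T)$, uniformly in $T$ and $p$; summing over all boundary subtiles of level $p\ell$ yields the contraction. Uniformity here is not circular because the ambient substitutions are drawn from the \emph{finite} set $S$ and, along the associated sequences, the shifted collections $\Co_\bv^{(n_k)}$ converge to $\Co_\bv$ in the tiling topology used in the proof of Lemma~\ref{lem:intsubtile}, so only finitely many local configurations occur in the limit.

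The step I expect to be the main obstacle is precisely the passage from ``an interior sub-subtile exists'' to ``the boundary sub-subtiles lose a uniform proportion of the measure.'' Because we are in the non-self-affine $S$-adic setting, the subdivisions furnished by the set equation are a priori only \emph{coverings}, and overlaps inflate the summed measure $\sum_{T'}\lambda_\bv(T')$ of the sub-subtiles of $T$ above $\lambda_\bv(T)$, which would ruin the naive contraction. This is exactly where the last assertion of Lemma~\ref{lem:intsubtile}---that the covering degree of $\Co_\bv^{(n)}$ does \emph{not} depend on $n$---must enter: combined with the refinement property of Proposition~\ref{prop:seteqII} (passing to a finer level can only raise pointwise multiplicity) and the convergence $\Co_\bv^{(n_k)}\to\Co_\bv$, the constant covering degree forces the subdivisions to be asymptotically measure-disjoint, so that they behave like genuine multiple tilings and the peeled-off interior mass is not compensated by spurious overlap. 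Making this quantitative---extracting the uniform $\delta$---is the technical crux, and for the full details I would follow \cite[Section~6]{Berthe-Steiner-Thuswaldner}.
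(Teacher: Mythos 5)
Your reduction of the problem to the hyperplane $\bv^\bot$ and your overall ``peel off interior subtiles and iterate'' strategy agree with the paper, but the core iteration step has a genuine gap. You subdivide $\Ra_\bv(i)$ at the levels $\ell, 2\ell, 3\ell,\ldots$ and claim that every boundary subtile $T$ at level $p\ell$, being a translate of $M_{[0,p\ell)}\Ra_\bv^{(p\ell)}(j)$, yields after $\ell$ further subdivision steps a sub-subtile contained in ${\rm int}(T)$, ``by Lemma~\ref{lem:intsubtile} together with Proposition~\ref{prop:RauzyHausdorff}''. Lemma~\ref{lem:intsubtile} does not provide this: part~(i) gives an interior sub-subtile only at level $0$, and part~(ii) gives it only at the levels $n_k$ of the associated sequence, for $k$ sufficiently large. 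In the non-stationary $S$-adic setting there is no self-affinity that propagates the level-$0$ statement to all levels: for a generic $n$ the tile $\Ra_\bv^{(n)}(j)$ lives in a hyperplane $(\bv^{(n)})^\bot$ bearing no relation to $\bv^\bot$, and its $\ell$-step subdivision is governed by the block $\sigma_{[n,n+\ell)}$, which need not match $\sigma_{[0,\ell)}$; nothing then forces one of its sub-subtiles into its interior, let alone with a uniform measure proportion $\delta$. The recurrence times $n_k$ --- where the subdivision structure of $\bsigma$ repeats by \eqref{eq:cond1} and the collections $\Co_\bv^{(n_k)}$ converge to $\Co_\bv$ --- are exactly the levels at which the property is available, and this is precisely why the paper's proof iterates along the subsequence $(n_k)_{k\in K}$ rather than along multiples of $\ell$: it peels one interior level-$\ell$ subtile, subdivides the remaining level-$\ell$ subtiles into level-$n_k$ subtiles, peels from each of those a level-$(n_k+\ell)$ interior piece via Lemma~\ref{lem:intsubtile}~(ii), and so on, arriving at $\lambda_\bv(\partial\Ra_\bv(i))\le(1-m)\prod_{k\in K}(1-m^{(n_k)})\,\lambda_\bv(\Ra_\bv(i))$.

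Your justification of the uniform $\delta$ is also off target: finiteness of $S$ and ``finitely many local configurations'' does not bound the measure ratios from below, since the tiles $\Ra_\bv^{(n)}(j)$ range over infinitely many shapes even though the substitution blocks repeat. In the paper the uniform lower bound on $m^{(n_k)}$ comes from Proposition~\ref{prop:RauzyHausdorff} (so that numerators and denominators of the ratios $m^{(n_k)}_{ij}$ converge) combined with the choice of the associated sequences guaranteeing that $M_{[\ell_k-h,\ell_k)}$ is one fixed primitive matrix for all $k$ (see (a) in Section~\ref{sec:genEV}). On the positive side, your observation that the set-equation subdivisions are a priori only coverings, and that the constant covering degree from Lemma~\ref{lem:intsubtile} must be invoked to control overlaps, is a real issue that the rigorous proof in \cite{Berthe-Steiner-Thuswaldner} addresses; but resolving it would not repair your scheme, because the missing interior-subtile property at the levels $p\ell$ is independent of the overlap problem.
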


\begin{proof}[Sketch]
Choose $\ell\in\N$ and the sequences $(n_k)$ and $(\ell_k)$ as in Lemma~\ref{lem:intsubtile} and consider $\Ra_\bv(i)$ for some $i\in\A$ (see Figure~\ref{fig:measure0}~(a)), where $\bv$ is a generalized left eigenvector of $\bsigma$. Then subdivide $\Ra_\bv(i)$ into its level $\ell$ subtiles as shown in Figure~\ref{fig:measure0}~(b). According to Lemma~\ref{lem:intsubtile}~(i) there is at least one level $\ell$ subtile $M_{[0,\ell)}(\pi^{(\ell)}_{\bu,\bv}\by+\Ra_\bv^{(\ell)}(j))$ which is a subset of ${\rm int}(\Ra_\bv(i))$; this is indicated with a black boundary in Figure~\ref{fig:measure0}~(b). Letting $m_{ij}=\lambda_{\bv}(M_{[0,\ell)}\Ra_\bv^{(\ell)}(j))/\lambda_\bv(\Ra_\bv(i))$ and $m=\min\{m_{ij}\,:\, i,j\in\A\}$  we therefore gain 
\[
\lambda_{\bv} (\partial \Ra_\bv(i)) = \lambda_{\bv} (\Ra_\bv(i)\setminus {\rm int}(\Ra_\bv(i))) \le (1-m)\lambda_\bv(\Ra_\bv(i)).
\]
Now we subdivide all level $\ell$ subtiles of $\Ra_\bv(i)$ apart from $M_{[0,\ell)}(\pi^{(\ell)}_{\bu,\bv}\by+\Ra_\bv^{(\ell)}(j))$ in level $n_{k}$ subtiles where $k$ is chosen in a way that $n_{k}\ge \ell$. This is illustrated in Figure~\ref{fig:measure0}~(c).

\begin{figure}
\includegraphics[angle=90, trim=0 0 0 0,clip,width=0.40\textwidth]{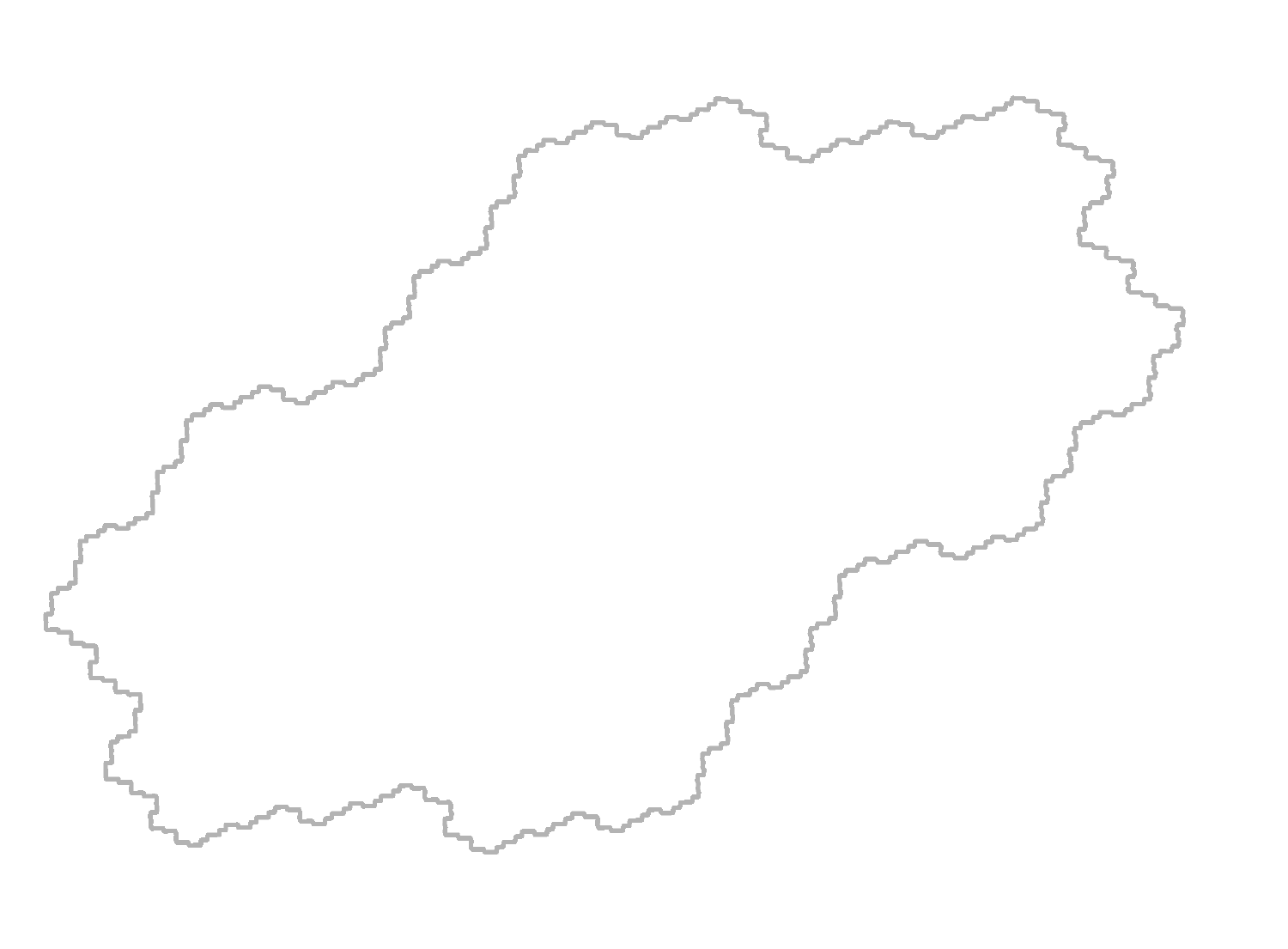}
\includegraphics[angle=90, trim=0 0 0 0,clip,width=0.40\textwidth]{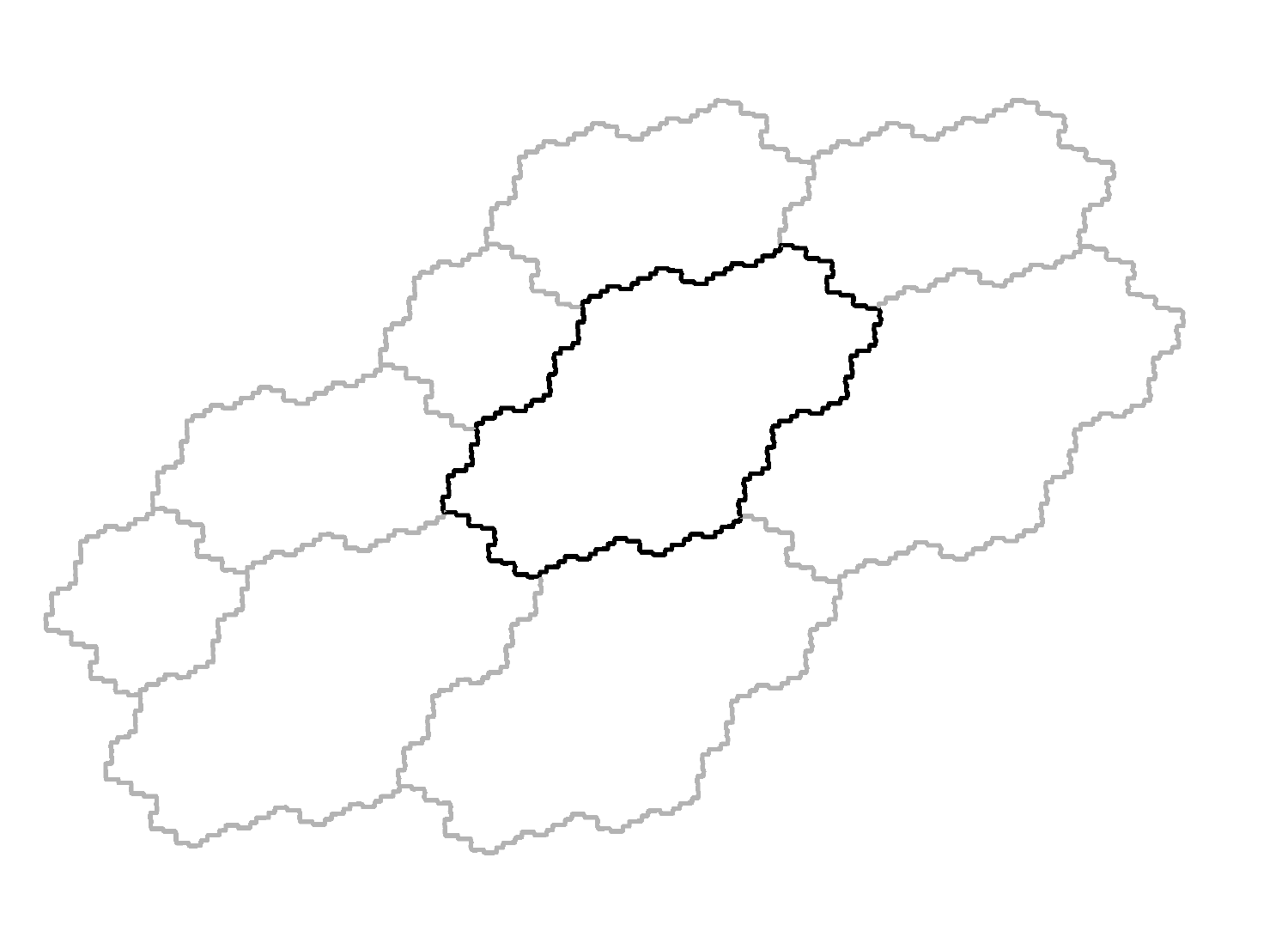}
\\
\mbox{} \hskip 2.5cm (a) \hskip 5.5cm (b)
\\
\includegraphics[angle=90, trim=0 0 0 0,clip,width=0.40\textwidth]{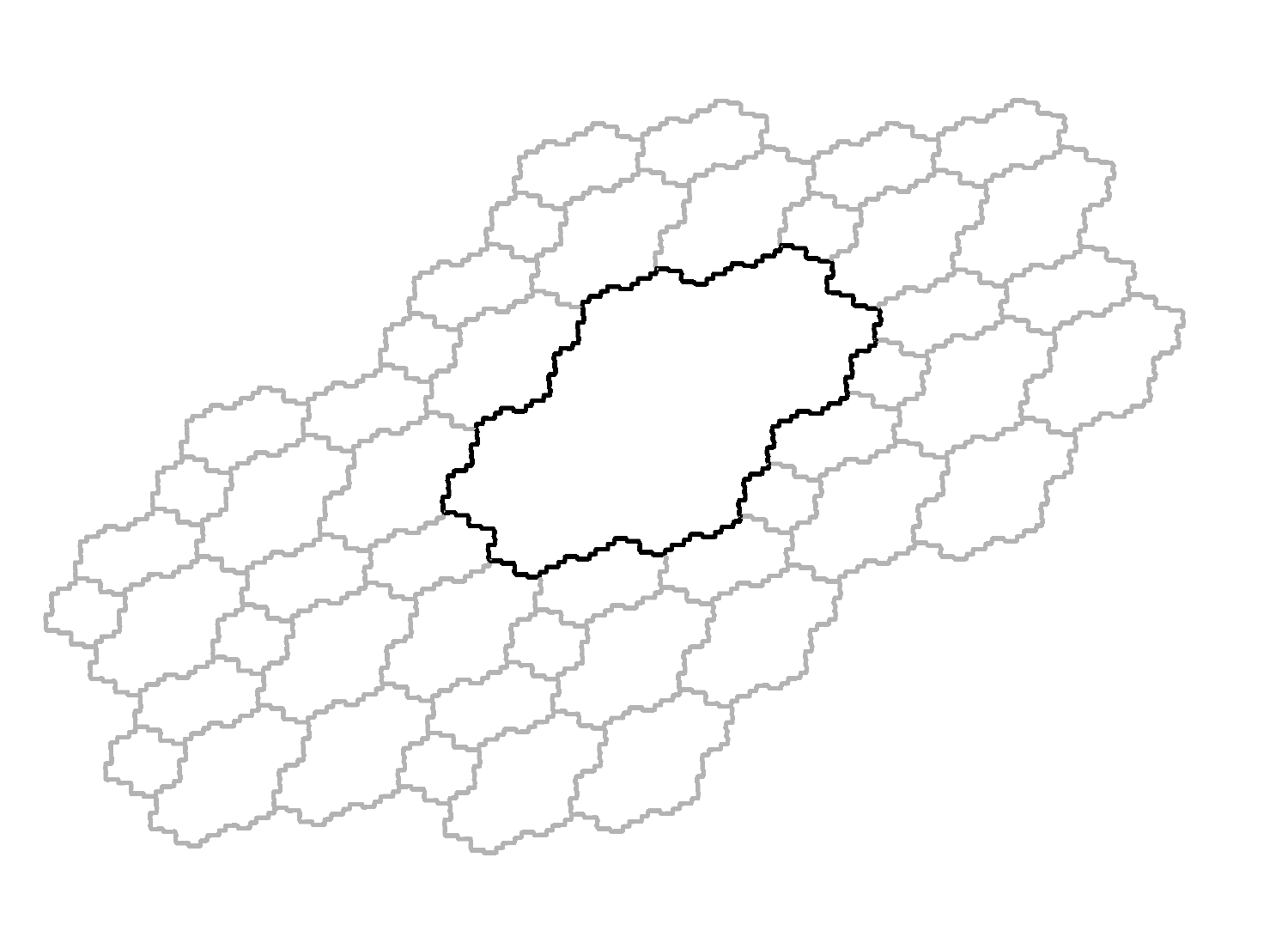}
\includegraphics[angle=90, trim=0 0 0 0,clip,width=0.40\textwidth]{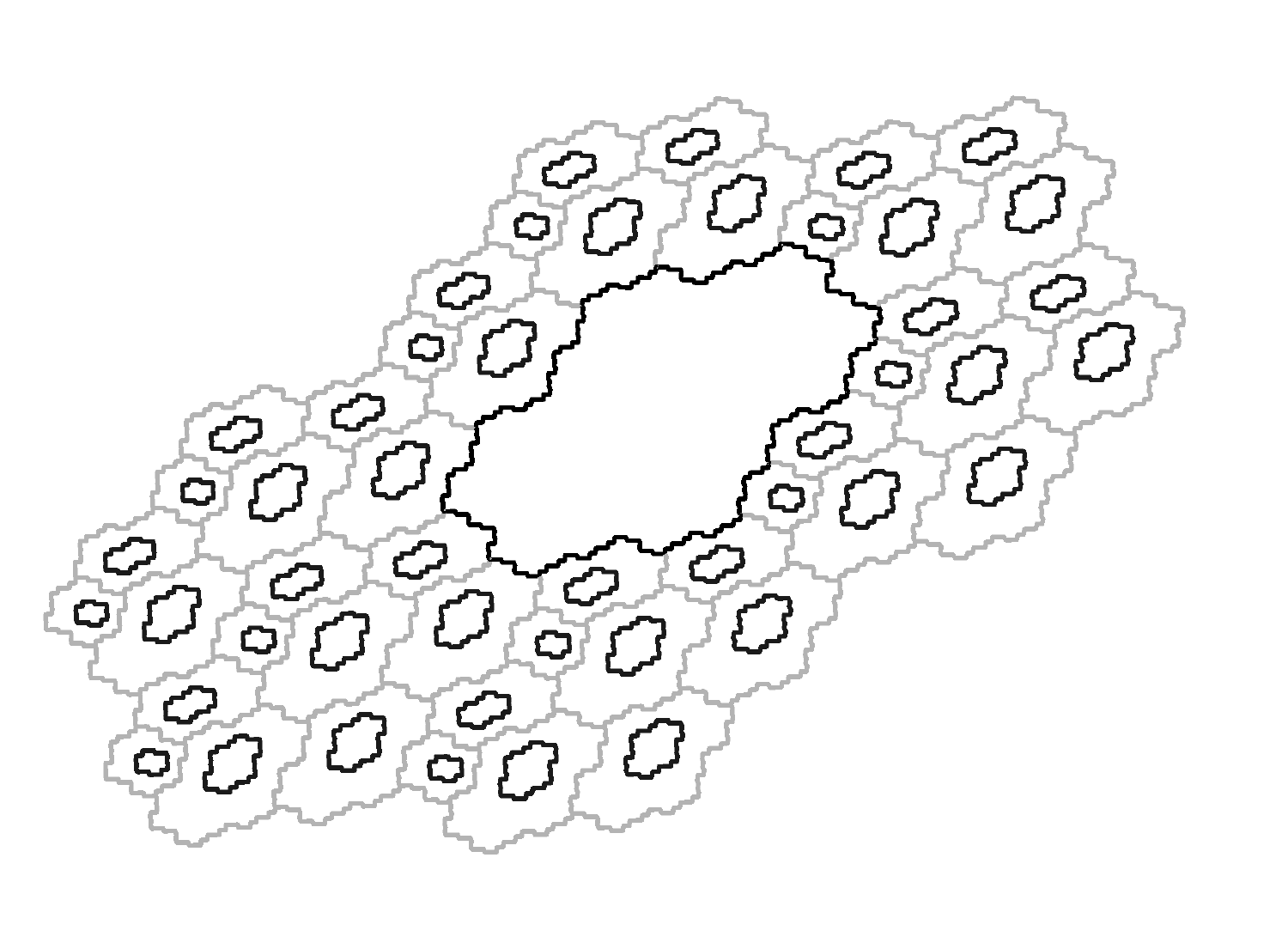}\\
\mbox{} \hskip 2.5cm (c) \hskip 5.5cm (d)
\caption{Illustration of the proof of Proposition~\ref{prop:measurezero}. In (a) a subtile $\Ra_\bv(i)$, $i\in \A$, is shown. In (b) we see the $\ell$-th subdivision of $\Ra_\bv(i)$. The level $\ell$ subtile contained in ${\rm int}(\Ra_\bv(i))$ has black boundary. In (c) all other level $\ell$ subtiles are further subdivided in level $n_k$ subtiles. Each of them contains a level $n_k+\ell$ subtile in its interior. These level $n_k+\ell$ subtiles, which \emph{a fortiori} are also contained in ${\rm int}(\Ra_\bv(i))$, are depicted in (d) also with black boundary. 
\label{fig:measure0}}
\end{figure}

We iterate this procedure: each level $n_{k}$ subtile $R_{n_k}$ we got in this way is subdivided in level $n_{k}+\ell$ subtiles. By Lemma~\ref{lem:intsubtile}~(ii) one of these level $n_{k}+\ell$ subtiles lies in the interior of $R_{n_k}$ (see Figure~\ref{fig:measure0} (d) for an illustration of this) and, {\it a fortiori}, in the interior of $\Ra_\bv(i).$ If we set 
\[
m_{ij}^{(n_k)}=\frac{\lambda_{\bv}(M_{[0,n_k+\ell)}\Ra_\bv^{(n_k+\ell)}(j))}{\lambda_\bv(M_{[0,n_k)}\Ra_\bv^{(n_k)}(i))} 
=\frac{\lambda_{\bv}(M_{[n_k,n_k+\ell)}\Ra_\bv^{(n_k+\ell)}(j))}{\lambda_\bv(\Ra_\bv^{(n_k)}(i))}
= \frac{\lambda_{\bv}(M_{[0,\ell)}\Ra_\bv^{(n_k+\ell)}(j))}{\lambda_\bv(\Ra_\bv^{(n_k)}(i))}
\]
(note that the last equation follows from recurrence of $\bsigma$ if $k$ is chosen large enough) and $m^{(n_k)}=\min\{m^{(n_k)}_{ij}\,:\, i,j\in\A\}$ we obtain 
\[
\lambda_{\bv} (\partial \Ra_\bv(i)) \le (1-m)(1-m^{(n_k)})\lambda_\bv(\Ra_\bv(i)).
\]
Iterating this further we get for some infinite set $K\subset \N$ that
\begin{equation}\label{eq:m0_1}
\lambda_{\bv} (\partial \Ra_\bv(i)) \le (1-m)\prod_{k\in K}(1-m^{(n_k)})\lambda_\bv(\Ra_\bv(i)).
\end{equation}
One can show that $m^{(n_k)}$ is uniformly bounded away from $0$. To this end one needs Proposition~\ref{prop:RauzyHausdorff}  and the fact that $\ell_k$ is chosen in a way that there is some $h$ such that $M_{[\ell_k-h,\ell_k)}$ is the same primitive matrix for all $k\in\N$ (see (a) in Section~\ref{sec:genEV}).
Now \eqref{eq:m0_1} yields $\lambda_{\bv} (\partial \Ra_\bv(i))=0$  and, hence, $\lambda_{\bone} (\partial \Ra(i))=0$.
\end{proof}

Proposition~\ref{prop:intclos} and Proposition~\ref{prop:measurezero} imply Theorem~\ref{th:RauzyProperties}.

\section{Tilings, coincidence conditions, and combinatorial issues}

We now turn to tiling conditions of Rauzy fractals. Already in the substitutive case combinatorial conditions like the \emph{strong coincidence condition} (see {\it e.g.}~\cite{Arnoux-Ito:01}) or the \emph{super coincidence condition} and its variants ({\it cf.}~\cite{Barge-Kwapisz:06,CANTBST,Ito-Rao:06}) have to be imposed in order to gain all the tiling results on Rauzy fractals required for our purposes. Here we discuss an $S$-adic version of these concepts and establish a variety of tiling results. For detailed proofs we refer again to Berth\'e, Steiner, and Thuswaldner~\cite{Berthe-Steiner-Thuswaldner}. As before, our aim is to discuss the main ideas and to make these ideas understandable without going into all the technical details.

\subsection{Multiple tiling and inner subdivision of the subtiles}

In this section we prove tiling properties of Rauzy fractals that hold without further combinatorial conditions. Our first result contains a multiple tiling property of the collections of Rauzy fractals $\Co_\bv$ defined in \eqref{eq:cVcoll}.

\begin{proposition}\label{prop:multiTiling}
Let $\bsigma=(\sigma_n)$ be a primitive and algebraically irreducible sequence of unimodular substitutions. Assume that there is $C>0$ such that for every $\ell\in\N$ there exists $n\ge 1$ such that $(\sigma_n,\ldots,\sigma_{n+\ell-1})=(\sigma_0,\ldots,\sigma_{\ell-1})$ and the language $\Lg_\bsigma^{(n+\ell)}$ is $C$-balanced.

If $\bv$ is a generalized left eigenvector of $\bsigma$ then the collection $\Co_\bv$ forms a multiple tiling of the hyperplane $\bv^{\bot}$.
\end{proposition}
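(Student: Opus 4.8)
The plan is to establish the multiple tiling property by exhibiting a finite, positive covering degree that is constant across the whole hyperplane and is witnessed almost everywhere. First I would recall from Lemma~\ref{lem:covDegInc} that under the present hypotheses the collection $\Co_\bv$ covers $\bv^\bot$ with a finite covering degree, say $m$, and from Lemma~\ref{lem:intsubtile} that the covering degree of $\Co_\bv^{(n)}$ does \emph{not} depend on $n$. The definition of multiple tiling requires two things: that each element of $\Co_\bv$ is compact and equals the closure of its interior, and that there is a fixed $m\in\N$ such that almost every point of $\bv^\bot$ lies in exactly $m$ elements of $\Co_\bv$. The first requirement is exactly Theorem~\ref{th:RauzyProperties} (applied with $\bw=\bv$, which is legitimate since that theorem holds for the Rauzy fractal in any representation space and its subtiles), giving compactness, the closure-of-interior property, and a boundary of $\lambda_\bv$-measure zero for each subtile $\Ra_\bv(i)$.

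The core of the argument is to show that the covering degree is achieved \emph{almost everywhere}, not merely as a minimum. Here I would argue as follows. Since each subtile has a boundary of Lebesgue measure zero (Proposition~\ref{prop:measurezero} together with $\Ra_\bv=\pi_{\bu,\bv}\Ra$), the union of all boundaries of all tiles in the locally finite collection $\Co_\bv$ is a countable union of null sets, hence a null set; call its complement $G$, a set of full measure consisting of points that lie in the interior of every tile containing them. For a point $x\in G$, the number of tiles of $\Co_\bv$ containing $x$ is locally constant, so it defines a measurable function $N:G\to\N$ which is bounded by $m$. The plan is to show $N$ is almost everywhere equal to its maximum. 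To do this I would use the set equation and the refinement relation of Proposition~\ref{prop:seteqII}: the collection $M_{[0,n)}\Co_\bv^{(n)}$ is a refinement of $\Co_\bv$, and conversely the subdivision coming from \eqref{eq:seteq} lets one transport covering multiplicities between level $0$ and level $n$. Because the covering degree of $\Co_\bv^{(n)}$ is independent of $n$ (Lemma~\ref{lem:intsubtile}, last sentence), the proportion of $\bv^\bot$ covered with exactly the maximal multiplicity cannot decrease under subdivision; running $n\to\infty$ and using that the level-$n$ subtiles shrink to points (Lemma~\ref{l:smallsubtiles}) forces the set of points covered with submaximal multiplicity to have measure zero.

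The hard part will be making precise the ``transport of multiplicity'' between levels $0$ and $n$, i.e.\ showing that a point covered with submaximal multiplicity at level $0$ forces, via the set equation and Lemma~\ref{lem:fernique} (which guarantees that the dual images of distinct faces of $\Gamma(\bv^{(k)})$ are disjoint and exhaust $\Gamma(\bv^{(\ell)})$), a corresponding submaximal covering at every level, thereby producing a positive-measure defect that survives the Hausdorff convergence $\Ra_\bv^{(n_k)}(i)\to\Ra_\bv(i)$ of Proposition~\ref{prop:RauzyHausdorff}. The disjointness in Lemma~\ref{lem:fernique}~(iii) is what prevents accidental cancellation of multiplicities in the subdivision, and the convergence of the collections $\Co_\bv^{(n_k)}$ to $\Co_\bv$ established in the proof of Lemma~\ref{lem:intsubtile} is what lets one compare the covering degree at the shifted levels with that at level $0$. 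Once it is shown that the set where $N<m$ is null, the two conditions in the definition of multiple tiling are both verified and the conclusion follows.

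I would close by remarking that this argument yields only a \emph{multiple} tiling; promoting it to a genuine tiling (degree $m=1$) is precisely where the coincidence conditions of the next section enter, so no attempt to force $m=1$ should be made here.
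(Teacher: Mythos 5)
Your overall skeleton --- finite covering degree, refinement via the set equation, shrinking subtiles, null boundaries, contradiction --- is the same as the paper's, but two of your central steps fail as stated. First, you have the covering degree backwards. By the paper's definition (used in Lemma~\ref{lem:covDegInc}), the covering degree $m$ is the largest integer such that \emph{every} point of $\bv^\bot$ lies in \emph{at least} $m$ elements of $\Co_\bv$; hence your multiplicity function satisfies $N\ge m$ everywhere, the set $\{N<m\}$ is empty, and proving it null proves nothing. The entire content of the proposition is that $\{N>m\}$ is null, so the plan ``show $N$ equals its maximum a.e.'' starts from the wrong inequality. Relatedly, your ``transport of multiplicity'' goes the wrong way: what Lemma~\ref{lem:fernique}~(iii) together with Proposition~\ref{prop:seteqII} gives is that multiplicity with respect to the refinement $M_{[0,n_k)}\Co^{(n_k)}_\bv$ \emph{dominates} multiplicity with respect to $\Co_\bv$, so the usable implication is that \emph{exactly-$m$} coverage at the refined level pushes down to \emph{at-most-$m$} coverage for $\Co_\bv$ --- not that submaximal coverage at level $0$ forces submaximal coverage at every level.

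Second, and this is the essential missing idea: shrinking alone does not force the good regions into the bad ball. The paper's proof hinges on the repetitivity of patches of discrete hyperplanes (its step~(i), \cite[Lemma~6.5]{Berthe-Steiner-Thuswaldner}): the tiles meeting a point covered exactly $m$ times are determined by a finite patch of $\Gamma(\bv)$, such patches recur relatively densely in $\Gamma(\bv^{(n_k)})$ for large $k$ (using the directional convergence $\bv^{(n_k)}\to\bv$, the convergence of collections from the proof of Lemma~\ref{lem:intsubtile}, and the constancy of the covering degree), and therefore balls of a \emph{fixed} radius covered exactly $m$ times by $\Co^{(n_k)}_\bv$ occur with a \emph{uniformly bounded gap} in $(\bv^{(n_k)})^\bot$. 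Only because of this uniform relative denseness does the contraction $M_{[0,n_k)}$ (Lemma~\ref{l:smallsubtiles}) produce, for large $k$, points covered at most $m$ times by $\Co_\bv$ inside \emph{every} ball of fixed radius --- in particular inside the ball of multiplicity $\ge m+1$ that a positive-measure exceptional set would yield via Proposition~\ref{prop:measurezero}. Your substitutes --- ``the proportion covered with maximal multiplicity cannot decrease under subdivision'' and ``a positive-measure defect that survives the Hausdorff convergence'' of Proposition~\ref{prop:RauzyHausdorff} --- cannot close this step: Hausdorff convergence of subtiles carries no information about interior multiplicities, and without relative denseness the contracted exactly-$m$ regions may simply miss the bad ball. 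With the multiplicity bookkeeping reoriented and the repetitivity argument added, your outline becomes the paper's proof; without them it does not go through.
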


\begin{proof}[Sketch]
Let $(\ell_k)$ and $(n_k)$ be associated sequences for $\bsigma$. We subdivide the proof in seven observations. In the sequel $B_X(\bx,\varepsilon)$ denotes an open ball in a metric space $X$ centered at $\bx$ with radius $\varepsilon$. 

\begin{enumerate}
\item[(i)] 
Let $\bw\in \R_{\ge0}^d\setminus\{\mathbf{0}\}$.
As mentioned in the proof of Lemma~\ref{lem:intsubtile} one can show that each patch $P\subset \Gamma(\bw)$ is \emph{repetitive} in the following sense: there exists $\delta_P>0$ and a radius $r_P>0$ such that for each $\tilde \bw\in \R_{\ge0}^d\setminus\{\mathbf{0}\}$ with $\Vert\tilde \bw- \bw\Vert_\infty<\delta_P$ and each  $\bz$ with $[\bz,i]\in\Gamma(\tilde\bw)$ a translate of $P$ occurs in $\Gamma(\tilde\bw)\cap B_{\R^n}(\bz,r_P)$. This means that each patch occurring in a discrete hyperplane $\mathcal{D}$ occurs uniformly repetitively in each hyperplane $\mathcal{D}'$ which is close enough to $\mathcal{D}$. This general property of discrete hyperplanes is proved in \cite[Lemma~6.5]{Berthe-Steiner-Thuswaldner}.

\item[(ii)] Let $m$ be the covering degree of $\Co_\bv$. Then each point $\bx \in \bv^\bot$ which is covered exactly $m$ times by elements of $\Co_\bv$ is not contained in the boundary of any element of $\Co_\bv$. Suppose this was wrong and let $R_1, \ldots, R_m \in \Co_\bv$ be the elements containing $\bx$. Since $\Co_\bv$ is a locally finite union of compact sets there is $\varepsilon > 0$ such that $B_{\bv^{\bot}}(\bx,\varepsilon)$ doesn't intersect any $R\in \Co_\bv \setminus \{R_1, \ldots, R_m \}$. By assumption $\bx \in \partial R_i$ for some $1\le i \le m$. Thus there is $\by\in B_{\bv^{\bot}}(\bx,\varepsilon)$ with $\by\not \in R_i$ and, hence, $\by$ is covered by at most $m-1$ elements of $\Co_\bv$, a contradiction.

\item[(iii)] 
Choose $\bx$ which is covered exactly $m$ times by elements of $\Co_\bv$. Since the elements of $\Co_\bv$ are uniformly bounded, the set of elements of $\Co_\bv$ which contain $\bx$ is contained in a set $\{\pi_{\bu,\bv}\bx + \Ra_\bv(i)\;:\; [\bx,i]\in P\}$, where $P$ is a patch of $\Gamma(\bv)$ which is chosen so large that, regardless of how the elements of $\Gamma(\bv)$ continue outside $P$, they will not contribute elements of $\Co_\bv$ containing $\bx$ because they are bounded and located ``too far away'' from $\bx$. Thus, whenever we encounter a translate $P+\bt$ of $P$ in $\Gamma(\bv)$, the point $\bx+\pi_{\bu,\bv}\bt$ will be covered $m$ times by elements of $\Co_\bv$ as well. Thus by (i) and (ii) there exist $r_m$ and $r'_m$ such that in each ball of radius $r_m'$ the hyperplane $\bv^\bot$ contains a ball of radius $r_m$ that is covered by exactly $m$ elements of $\Co_\bv$.

\item[(iv)] $\Co^{(n_k)}_\bv$ converges to $\Co_\bv$ in a sense described in the proof of Lemma~\ref{lem:intsubtile}. Thus by (i) the radii $r_m$ and $r'_m$ in (iii) can be chosen in a way that in each ball of radius $r_m'$ the hyperplane $(\bv^{(n_k)})^\bot$ contains a ball of radius $r_m$ that is covered by exactly $m$ elements of $\Co^{(n_k)}_\bv$ for $k$ large enough.

\item[(v)] Suppose that $\Co_\bv$ is not a multiple tiling. Then there is a set $X\subset \bv^\bot$ with $\lambda_\bv(X)>0$ which is covered at least $m+1$ times. Since the boundaries of the elements of $\Co_\bv$ have measure $0$ by Proposition~\ref{prop:measurezero}, there is $\bx$, which is covered a least $m+1$ times and which is not contained in the boundary of any element of $\Co_\bv$. Thus there is $\varepsilon >0$ such that $B_{\bv^{\bot}}(\bx,\varepsilon)$ is covered at least $m+1$ times.

\item[(vi)] Suppose that $\Co_\bv$ is not a multiple tiling. By analogous arguments as in (iii), by (v) there exist $r_{m+1}$ and $r'_{m+1}$ such that in each ball of radius $r_{m+1}'$ the hyperplane $\bv^\bot$ contains a ball of radius $r_{m+1}$ that is covered by at least $m+1$ elements of $\Co_\bv$.

\item[(vii)] By Proposition~\ref{prop:seteqII} each element of $\Co_\bv$ can be subdivided into elements of $M_{[0,n_k)}\Co^{(n_k)}_\bv$. The diameters of the elements of $M_{[0,n_k)}\Co^{(n_k)}_\bv$ tend to $0$ for $k\to\infty$ by Lemma~\ref{l:smallsubtiles} and the balls of radius $r_{m}'$ occurring in (iv) are shrunk by $M_{[0,n_k)}$ to ellipsoids contained in balls of radius less than $r_{m+1}$. Thus by (iv) we can chose $k$ so large that in each ball of radius $r_{m+1}$ in $\bv^\bot$ there are points which are covered exactly $m$ times by $M_{[0,n_k)}\Co^{(n_k)}_\bv$. Thus, by Proposition~\ref{prop:seteqII}, in each ball of radius $r_{m+1}$ there are points which are covered at most $m$ times by $\Co_\bv$. This contradicts (vi) and the result follows.
\end{enumerate}
\end{proof}

This result can be generalized to $\Co_\bw$ for arbitrary $\bw\in\R_{\ge 0}^d\setminus\{\mathbf{0}\}$. To establish this generalization one needs to show first that the measures of the subtiles of $\Ra_\bv$ are determined by
\[
(\lambda_\bv(\Ra_\bv(1)),\ldots,\lambda_\bv(\Ra_\bv(d)))=
m(\lambda_\bv(\pi_{\bu,\bv}[\mathbf{0},1]), \ldots,\lambda_\bv(\pi_{\bu,\bv}[\mathbf{0},d])),
\]
where $m$ is the covering degree of the multiple tiling $\Co_\bv$. This can be proved along similar lines as in the substitutive case, see \cite[Lemma~2.3]{Ito-Rao:06}. Using this, measure theoretical considerations lead to the following generalization of Proposition~\ref{prop:multiTiling}.

\begin{proposition}\label{prop:multiTiling2}
Let $\bsigma=(\sigma_n)$ be a primitive and algebraically irreducible sequence of unimodular substitutions. Assume that there is $C>0$ such that for every $\ell\in\N$ there exists $n\ge 1$ such that $(\sigma_n,\ldots,\sigma_{n+\ell-1})=(\sigma_0,\ldots,\sigma_{\ell-1})$ and the language $\Lg_\bsigma^{(n+\ell)}$ is $C$-balanced.

Then for each $\bw\in \R_{\ge0}^d\setminus\{\mathbf{0}\}$ the collection $\Co_\bw$ forms a multiple tiling of the hyperplane $\bw^{\bot}$.
\end{proposition}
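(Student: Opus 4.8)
The plan is to promote the multiple tiling property of $\Co_\bv$ (Proposition~\ref{prop:multiTiling}), which is available for the generalized left eigenvector $\bv$, to every direction $\bw$, using the measures of the subtiles as the bridge. Throughout, write $f_\bw(\bx)=\#\{T\in\Co_\bw : \bx\in T\}$ for the covering multiplicity. By Lemma~\ref{lem:covDegInc} the collection $\Co_\bw$ is a locally finite covering of $\bw^\bot$ of finite covering degree; since balance makes all subtiles uniformly bounded, $f_\bw$ is bounded, and it is integer-valued almost everywhere because each $\partial\Ra_\bw(i)$ has measure zero (Proposition~\ref{prop:measurezero}, transported to $\bw^\bot$ via the isomorphism below). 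Let $m$ denote the covering degree of the multiple tiling $\Co_\bv$. The goal then reduces to showing $f_\bw=m$ for $\lambda_\bw$-almost every $\bx$.

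First I would establish the measure identity
\[
\lambda_\bv(\Ra_\bv(i)) = m\,\lambda_\bv(\pi_{\bu,\bv}[\mathbf{0},i])\qquad(i\in\A).
\]
The combinatorial faces $\pi_{\bu,\bv}[\bx,i]$, $[\bx,i]\in\Gamma(\bv)$, project to a genuine tiling of $\bv^\bot$ (covering degree $1$), whereas $\Co_\bv$ is a multiple tiling of degree $m$ over the \emph{same} index set $\Gamma(\bv)$; comparing the two collections type by type gives the identity. Obtaining it for each individual $i$, and not only after summation, is the Ito--Rao-style part (compare \cite[Lemma~2.3]{Ito-Rao:06}): one propagates the relation through the set equation of Proposition~\ref{prop:seteqII} along the associated sequences, using $\Ra_\bv^{(n_k)}(i)\to\Ra_\bv(i)$ from Proposition~\ref{prop:RauzyHausdorff} to pass to the limit.

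Next I would transfer the identity to arbitrary $\bw$ and compute the average of $f_\bw$. Since $\pi_{\bu,\bw}=\pi_{\bu,\bw}\,\pi_{\bu,\bone}$, the restriction $\pi_{\bu,\bw}|_{\bone^\bot}\colon\bone^\bot\to\bw^\bot$ is a linear isomorphism sending $\Ra(i)$ to $\Ra_\bw(i)$ and $\pi_{\bu,\bone}[\mathbf{0},i]$ to $\pi_{\bu,\bw}[\mathbf{0},i]$; it scales $(d-1)$-dimensional Lebesgue measure by a single fixed Jacobian, so the ratio $\lambda_\bw(\Ra_\bw(i))/\lambda_\bw(\pi_{\bu,\bw}[\mathbf{0},i])$ is independent of $\bw$. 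Evaluating at $\bw=\bv$ yields $\lambda_\bw(\Ra_\bw(i))=m\,\lambda_\bw(\pi_{\bu,\bw}[\mathbf{0},i])$ for every $\bw$ and every $i$. Because $\Gamma(\bw)$ projects to a tiling of $\bw^\bot$ (here the rational independence of $\bu$ from Lemma~\ref{irrirr} guarantees the relevant density), the densities $\rho_i$ of type-$i$ faces satisfy $\sum_i\rho_i\,\lambda_\bw(\pi_{\bu,\bw}[\mathbf{0},i])=1$, and the subtile anchors inherit exactly these densities; hence the spatial average of $f_\bw$ equals $\sum_i\rho_i\,\lambda_\bw(\Ra_\bw(i))=m$.

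The main obstacle is the final upgrade: a bounded, almost-everywhere integer multiplicity with average $m$ need not be constant. To force $f_\bw\equiv m$ I would adapt the repetitivity-and-subdivision mechanism of the proof of Proposition~\ref{prop:multiTiling}. One subdivides $\Co_\bw$ by the refinement $M_{[0,n_k)}\Co_\bw^{(n_k)}$ of Proposition~\ref{prop:seteqII}, whose tiles shrink to points by Lemma~\ref{l:smallsubtiles}; using that any patch of a discrete hyperplane recurs relatively densely and persists under small perturbations of the normal direction, one shows that a region covered more than $m$ times would recur densely and, after the zoom by $M_{[0,n_k)}$, would collide with an exact-$m$ covering supplied by the reference collection. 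The delicate points, which I expect to be the hardest, are to control how multiplicities transform under the non-conformal maps $M_{[0,n_k)}$ and to produce, at level $n_k$, balls covered exactly $m$ times: here one exploits that the shifted collections $\Co_\bw^{(n_k)}$ are governed by the same combinatorial subdivision $E_1^*(\sigma_{[0,\ell_k)})$ as $\Co_\bv$ and approach it in the local tiling topology, so that the already established multiple tiling of $\Co_\bv$ furnishes the required exact-$m$ regions. Combining $f_\bw\le m$ almost everywhere with average $m$ then gives $f_\bw=m$ almost everywhere, i.e.\ $\Co_\bw$ is a multiple tiling of $\bw^\bot$.
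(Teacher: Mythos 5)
Your proposal follows the same route the paper indicates: the Ito--Rao-style measure identity $\lambda_\bv(\Ra_\bv(i))=m\,\lambda_\bv(\pi_{\bu,\bv}[\mathbf{0},i])$, its transport to arbitrary $\bw$ via the Jacobian of the isomorphism $\pi_{\bu,\bw}|_{\bone^\bot}$, and then the ``measure theoretical considerations'' which you flesh out with the average-coverage computation and the refinement/repetitivity mechanism of Proposition~\ref{prop:multiTiling}. Two steps, however, need repair or explicit justification. First, the closing inference ``$f_\bw\le m$ a.e.\ together with average $m$ gives $f_\bw=m$ a.e.''\ is not valid as stated: on the infinite-volume hyperplane $\bw^\bot$ a deficiency set of positive Lebesgue measure but zero asymptotic density (for instance one contained in a single ball, with $f_\bw=m$ outside it) is compatible with both hypotheses, so the average alone cannot force constancy. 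The missing ingredient is the same repetitivity you invoke for the upper bound: if $f_\bw\le m-1$ on a set of positive measure, then (boundaries being null by Proposition~\ref{prop:measurezero}, and $\Co_\bw$ being locally finite) this holds on an open ball; the local configuration of tiles around that ball is determined by a finite patch of $\Gamma(\bw)$, which recurs relatively densely by observation~(i) in the proof of Proposition~\ref{prop:multiTiling}, so the deficiency set has positive lower density and only then does the average $m$ yield a contradiction.

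Second, your collision argument presumes that the shifted collections $\Co_\bw^{(n_k)}$ approach the reference multiple tiling $\Co_\bv$ for \emph{every} $\bw$, whereas the construction in Section~\ref{sec:genEV} only guarantees $\bv^{(n_k)}\to\bv$ projectively for the particular vector $\bv$. What you need is that $\bw^{(n_k)}=(M_{[0,n_k)})^t\bw\to\bv$ projectively for all $\bw\in\R_{\ge0}^d\setminus\{\mathbf{0}\}$. This is true, but it is not a consequence of weak convergence: Proposition~\ref{prop:furstmatrix} concerns the untransposed products $M_{[0,n)}$, whose image cones are nested, while the transposed products are multiplied in the opposite order and their cones are not nested. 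The correct justification is Hilbert-metric contraction: by recurrence a fixed positive block occurs in $\bsigma$ infinitely often, so its transpose occurs as a factor of $(M_{[0,n)})^t$ arbitrarily often, whence the projective diameter of $(M_{[0,n)})^t\R_{\ge0}^d$ tends to zero and all directions $\bw^{(n_k)}$ collapse onto that of $\bv^{(n_k)}$, which converges to $\bv$ by the choice of $(n_k)$. This independence of the starting vector is precisely what makes the single reference collection $\Co_\bv$ usable for every direction $\bw$, and it must be stated and proved; with this and the repair above, your argument does carry out the plan that the paper delegates to \cite{Berthe-Steiner-Thuswaldner} and \cite[Lemma~2.3]{Ito-Rao:06}.
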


It remains to show that this multiple tiling is actually a tiling. As we will see later, additional assumptions are needed to prove this. However, there is one tiling result which holds without additional assumptions. This result, which concerns the ``inner tiling'' of $\Ra_\bw(i)$ by the set equation \eqref{eq:seteq} will be proved next.

\begin{proposition}\label{prop:TilingSetEq}
Let $\bsigma=(\sigma_n)$ be a primitive and algebraically irreducible sequence of unimodular substitutions. Assume that there is $C>0$ such that for every $\ell\in\N$ there exists $n\ge 1$ such that $(\sigma_n,\ldots,\sigma_{n+\ell-1})=(\sigma_0,\ldots,\sigma_{\ell-1})$ and the language $\Lg_\bsigma^{(n+\ell)}$ is $C$-balanced.

Then the unions in the set equation \eqref{eq:seteq} of Proposition~\ref{prop:seteq} are disjoint in measure.
\end{proposition}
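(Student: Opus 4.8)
The plan is to read ``disjoint in measure'' as the statement that, for a fixed $[\bx,i]$, the level $(\ell-k)$ subtiles on the right-hand side of \eqref{eq:seteq} pairwise overlap only in sets of $\lambda$-measure zero, and to deduce this from a covering-degree count built on the multiple tiling property of Proposition~\ref{prop:multiTiling2}. Since $E_1^*(\sigma_{[k,\ell)})[\bx,i]=M_{[k,\ell)}^{-1}\bx+E_1^*(\sigma_{[k,\ell)})[\mathbf 0,i]$ and the whole union is then merely translated by $\pi^{(k)}_{\bu,\bw}\bx$, translation invariance of Lebesgue measure lets me assume $[\bx,i]=[\mathbf 0,i]\in\Gamma(\bw^{(k)})$. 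First I would record the two collections involved: $\Co_\bw^{(k)}$, which by Proposition~\ref{prop:multiTiling2} applied to the shifted sequence $(\sigma_{n+k})_n$ (primitivity, algebraic irreducibility, and the recurrence/balance hypothesis all pass to shifts) is a multiple tiling of $(\bw^{(k)})^\bot$ of some covering degree $m^{(k)}$; and the refinement $M_{[k,\ell)}\Co_\bw^{(\ell)}$, which by Lemma~\ref{lem:fernique}(i) is the image of the multiple tiling $\Co_\bw^{(\ell)}$ under the linear bijection $M_{[k,\ell)}\colon(\bw^{(\ell)})^\bot\to(\bw^{(k)})^\bot$ and hence a multiple tiling of $(\bw^{(k)})^\bot$ of the same covering degree $m^{(\ell)}$.

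Next I would exploit the combinatorial bookkeeping of Lemma~\ref{lem:fernique}(ii),(iii): every face $[\by,j]\in\Gamma(\bw^{(\ell)})$ lies in $E_1^*(\sigma_{[k,\ell)})[\bx',i']$ for exactly one $[\bx',i']\in\Gamma(\bw^{(k)})$. By Proposition~\ref{prop:seteqII} this says precisely that each element of $M_{[k,\ell)}\Co_\bw^{(\ell)}$ occurs in the set-equation decomposition of exactly one element of $\Co_\bw^{(k)}$. Now fix $\bz\in(\bw^{(k)})^\bot$ generic for both collections, i.e.\ lying in the boundary of no element of $\Co_\bw^{(k)}$ nor of $M_{[k,\ell)}\Co_\bw^{(\ell)}$; such $\bz$ form a set of full measure, because both boundary unions are $\lambda$-null by Proposition~\ref{prop:measurezero} (applied to $\bsigma$ and to its shift, the latter pushed forward by the null-set-preserving map $M_{[k,\ell)}$). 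Then $\bz$ lies in exactly $m^{(k)}$ parents and in exactly $m^{(\ell)}$ refining pieces, and assigning to each refining piece through $\bz$ its unique parent gives a well-defined surjection of the $m^{(\ell)}$ pieces onto the $m^{(k)}$ parents (surjectivity because a parent is the union of its refining pieces, so $\bz$ must sit in one of them). Hence $m^{(\ell)}\ge m^{(k)}$, and this map is a bijection --- equivalently, each parent contains exactly one refining piece through $\bz$, which is exactly the disjointness at $\bz$ --- if and only if $m^{(k)}=m^{(\ell)}$.

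It therefore remains to show $m^{(k)}=m^{(\ell)}$, and this is the step I expect to be the crux. Monotonicity (Lemma~\ref{lem:covDegInc}) already yields $m^{(k)}\le m^{(\ell)}$ for $k<\ell$, so what is needed is the reverse inequality, i.e.\ constancy of the covering degree along the tower of shifts. For the generalized left eigenvector $\bv$ this constancy is exactly the final assertion of Lemma~\ref{lem:intsubtile}, so for $\bw=\bv$ the count closes immediately and the set-equation pieces are disjoint in measure. To pass from $\bv$ to an arbitrary $\bw\in\R_{\ge0}^d\setminus\{\mathbf 0\}$ I would invoke the measure computation recorded after Proposition~\ref{prop:multiTiling}, which exhibits the covering degree as the intrinsic constant $m$ determined by the ratios $\lambda_\bv(\Ra_\bv(i))/\lambda_\bv(\pi_{\bu,\bv}[\mathbf 0,i])$ and is thus independent of the representation vector; the same reasoning applied to each shifted sequence gives $m^{(k)}=m^{(\ell)}=m$ for every $\bw$. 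Finally I would note that disjointness in measure is in any case transported between different $\bw$ by the linear bijections $\pi_{\bu,\bw}$ between the hyperplanes $\bone^\bot$ and $\bw^\bot$ (projection along $\bu$ preserves $\lambda$-null sets), so the general case follows from $\bw=\bv$. The only genuinely delicate point is thus the equality of covering degrees; everything else is the multiple-tiling count together with the face-bijection of Lemma~\ref{lem:fernique}.
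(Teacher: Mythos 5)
Your proposal is correct and takes essentially the same route as the paper's own proof: apply Proposition~\ref{prop:multiTiling2} (to the shifted sequences) so that both $\Co_\bw^{(k)}$ and its refinement $M_{[k,\ell)}\Co_\bw^{(\ell)}$ given by Proposition~\ref{prop:seteqII} are multiple tilings, and then observe that equality of their covering degrees forces each tile to be subdivided without overlaps. The paper compresses your generic-point counting and the covering-degree bookkeeping into a few lines, so your write-up — correctly identifying the constancy of the degree along shifts (Lemma~\ref{lem:intsubtile}) and its independence of the representation vector (the measure-ratio remark after Proposition~\ref{prop:multiTiling}) as the crux — just makes explicit what the paper leaves implicit.
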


\begin{proof}
From Proposition~\ref{prop:multiTiling2} we know that $\Co_\bw$ is a multiple tiling for each $\bw\in \R_{\ge0}^d\setminus\{\mathbf{0}\}$ with multiplicity $m$ not depending on $\bw$. Together with Proposition~\ref{prop:seteqII} this implies that the $(\ell-k)$-th subdivisions of all the tiles in the multiple tiling $\Co_{\bw^{(k)}}$ form a multiple tiling $M_{[k,\ell)}\Co_{\bw^{(\ell)}}$ of the same covering degree (for all $k,\ell\in\N$ with $k<\ell$). This is possible only if each tile of $\Co_{\bw^{(k)}}$ is tiled without overlaps by elements of $M_{[k,\ell)}\Co_{\bw^{(\ell)}}$. This proves the result.
\end{proof}

\subsection{Coincidence conditions and tiling properties}\label{sec:coinc}

Let $\bsigma$ be a sequence of unimodular substitutions over an alphabet $\A$. In view of Example~\ref{ex:rauzy} in order to prove that $(X_\bsigma,\Sigma)$ is measurably conjugate to a rotation on a torus we need two properties of the associated Rauzy fractal $\Ra$. Firstly, the subtiles $\Ra(i)$, $i\in\A$, need to be disjoint in measure and secondly, the Rauzy fractal itself has to be a fundamental domain of a (well-chosen) torus. The latter property is equivalent to the fact that $\Ra$ admits a lattice tiling of $\bv^{\bot}$. Setting $\bw=\bone$ in Proposition~\ref{prop:multiTiling2} we obtain that $\Co_\bone$ is a multiple tiling of $\bone^\bot$. Since the discrete hyperplane $\Gamma(\bone)$ can be written as $\Gamma(\bone) = \{[\bx,i] \,:\, \langle\bx, \bone\rangle=0,\ i\in\A \}$ we see that 
\[
\bigcup_{R\in\Co_\bone}R = \bigcup_{[\bx,i] \in \Gamma(\bone)} \pi_{\bu,\bone}\bx + \Ra(i)
= \bigcup_{\bx \in \Z^d\,:\, \langle\bx, \bone\rangle=0} \pi_{\bu,\bone}\bx + \Ra.
\]
Thus $\Ra$ is a covering of $\bone^\bot$ w.r.t.\ the lattice $\{\bx \in \Z^d\,:\, \langle\bx, \bone\rangle=0\}$ and we have to prove that the elements of the union on the right hand side are measure disjoint to get tiling properties of $\Ra$.

We have therefore three types of unions which we want to be disjoint in measure:
\begin{enumerate}
\item[(i)] The unions of subtiles on the right hand side of the set equation \eqref{eq:seteq}.
\item[(ii)] The union $\Ra = \Ra(1)\cup\dots\cup \Ra(d)$.
\item[(iii)] The union $\bone^\bot= \bigcup_{\bx \in \Z^d\,:\, \langle\bx, \bone\rangle=0} \pi_{\bu,\bone}\bx + \Ra=\bigcup_{[\bx,i] \in \Gamma(\bone)} \pi_{\bu,\bone}\bx + \Ra(i)$.
\end{enumerate} 

The elements of the unions in (i) are disjoint in measure by Proposition~\ref{prop:TilingSetEq}. One can use this fact in order to prove that the unions in (ii) are disjoint in measure as well. However, to make this proof work we need an additional assumption on $\bsigma$.

\begin{definition}[Strong coincidence condition]\index{strong coincidence condition}
A sequence $\bsigma$ of substitutions over an alphabet $\A$ satisfies the \emph{strong coincidence condition} if there is $\ell\in\N$ such that for each pair $(j_1,j_2)\in\A^2$ there are $i\in\A$ and $p_1,p_2\in\A^*$ with $\mathbf{l}(p_1)=\mathbf{l}(p_2)$ such that $\sigma_{[0,\ell)}(j_1)\in p_1i\A^*$ and $\sigma_{[0,\ell)}(j_2)\in p_2i\A^*$.
\end{definition}

This definition has an easy geometric meaning: it says that the broken lines associated with $\sigma_{[0,\ell)}(j_1)$ and $\sigma_{[0,\ell)}(j_2)$ have at least one line segment in common for each pair $(j_1,j_2)\in\A^2$. 

\begin{example}
Figure~\ref{fig:coinc} shows that the strong coincidence condition is satisfied for the constant sequence $\bsigma=(\sigma)$ with $\sigma(1)=121$, $\sigma(2)=21$. Because we are in a case with a two letter alphabet we only have to deal with the instance $(j_1,j_2)=(1,2)$.
\begin{figure}[hh]
\includegraphics[width=0.05\textwidth]{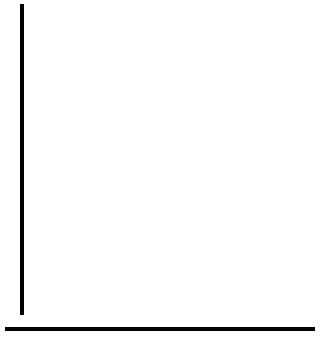}\qquad
\includegraphics[width=0.1\textwidth]{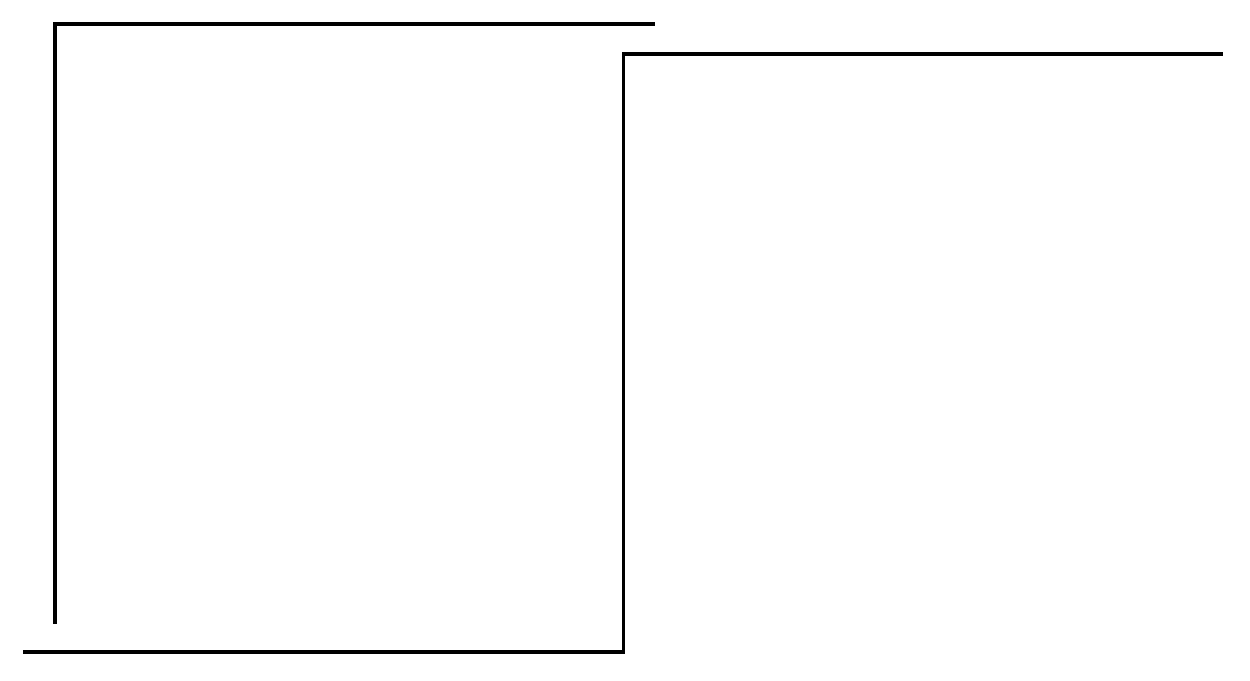}\qquad
\includegraphics[width=0.25\textwidth]{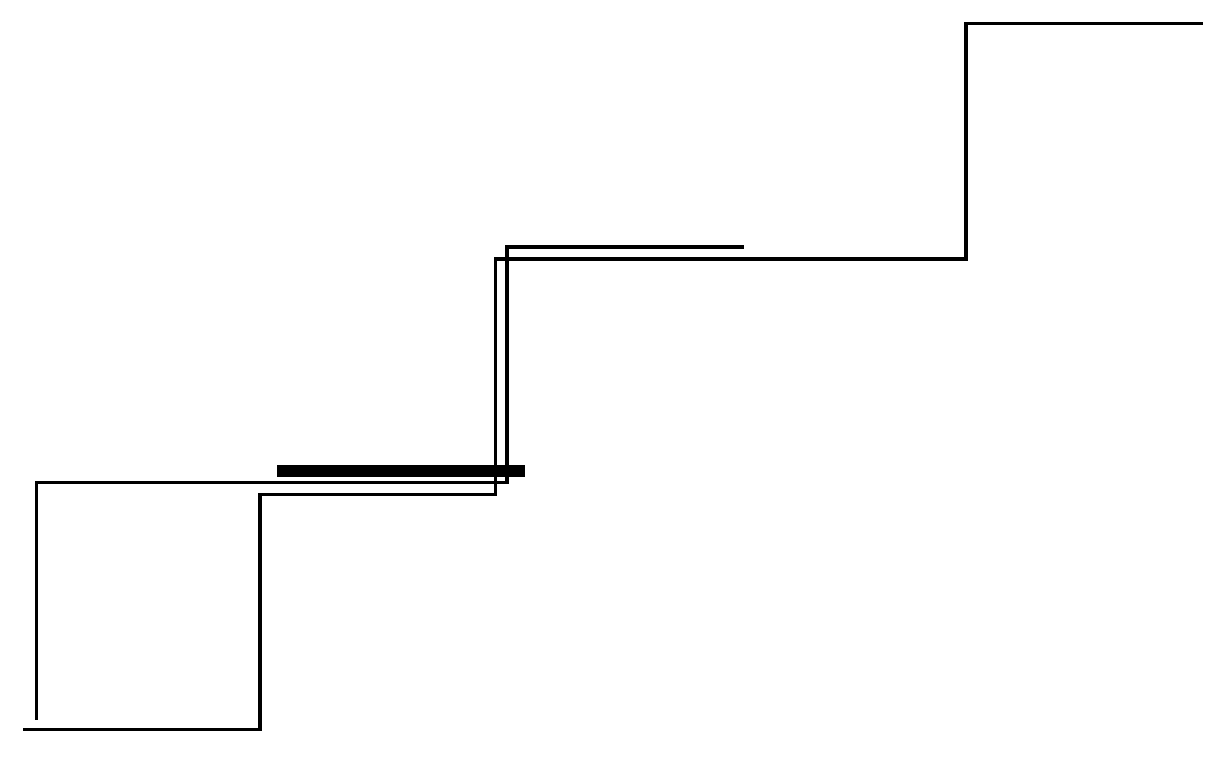} \\[3mm]
{\small \hskip -0.5cm $i$ \hskip 1.15cm $\sigma_{[0,1)}(i)$ \hskip 1cm $\sigma_{[0,2)}(i)$}
\caption{The broken lines associated with $i$, $\sigma_{[0,1)}(i)$, and $\sigma_{[0,2)}(i)$ for $i\in\{1,2\}$. Coincidence is indicated by the bold line. \label{fig:coinc}}
\end{figure}
\end{example}

Using the strong coincidence condition we get the following result.

\begin{proposition}\label{prop:TilingSubtiles}
Let $\bsigma=(\sigma_n)$ be a primitive and algebraically irreducible sequence of unimodular substitutions. Assume that there is $C>0$ such that for every $\ell\in\N$ there exists $n\ge 1$ such that $(\sigma_n,\ldots,\sigma_{n+\ell-1})=(\sigma_0,\ldots,\sigma_{\ell-1})$ and the language $\Lg_\bsigma^{(n+\ell)}$ is $C$-balanced.

If the strong coincidence condition holds then the subtiles $\Ra(i)$, $i\in\A$, are disjoint in measure.
\end{proposition}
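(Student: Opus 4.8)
The plan is to fix two distinct letters $a,b\in\A$ and to show that $\Ra_\bv(a)$ and $\Ra_\bv(b)$ are disjoint in measure, where $\bv$ is a generalized left eigenvector of $\bsigma$ equipped with associated sequences $(n_k),(\ell_k)$ as in Section~\ref{sec:genEV}; the assertion for $\Ra(i)=\Ra_\bone(i)$ then follows because the set equation (Proposition~\ref{prop:seteq}), the inner tiling property (Proposition~\ref{prop:TilingSetEq}) and the strong coincidence condition are available for every $\bw\in\R_{\ge0}^d\setminus\{\mathbf{0}\}$. Throughout I would use that the boundaries $\partial\Ra_\bv(i)$ are $\lambda_\bv$-null (Proposition~\ref{prop:measurezero}), so that it suffices to control the overlap of the interiors.

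The heart of the argument is a \emph{coincidence step}. Applying the strong coincidence condition to the pair $(a,b)$ produces $\ell\in\N$, a letter $i\in\A$, and words $p_1,p_2\in\A^*$ with $\mathbf{l}(p_1)=\mathbf{l}(p_2)$ such that $p_1i$ is a prefix of $\sigma_{[0,\ell)}(a)$ and $p_2i$ is a prefix of $\sigma_{[0,\ell)}(b)$. By the definition \eqref{eq:dualGeomSubs} of $E_1^*$, both $[M_{[0,\ell)}^{-1}\mathbf{l}(p_1),a]$ and $[M_{[0,\ell)}^{-1}\mathbf{l}(p_2),b]$ lie in $E_1^*(\sigma_{[0,\ell)})[\mathbf{0},i]$, so the corresponding level $\ell$ subtiles both occur in the subdivision of $\Ra_\bv(i)$ furnished by the set equation \eqref{eq:seteq}. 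Using $\mathbf{l}(p_1)=\mathbf{l}(p_2)$ together with the projection identity $\pi_{\bu,\bv}M_{[0,\ell)}=M_{[0,\ell)}\pi^{(\ell)}_{\bu,\bv}$, these two pieces are precisely $\bt+M_{[0,\ell)}\Ra_\bv^{(\ell)}(a)$ and $\bt+M_{[0,\ell)}\Ra_\bv^{(\ell)}(b)$ with the \emph{common} translation $\bt=\pi_{\bu,\bv}\mathbf{l}(p_1)$. Since the pieces in this subdivision are disjoint in measure by Proposition~\ref{prop:TilingSetEq}, cancelling the translation and the invertible map $M_{[0,\ell)}$ shows that $\Ra_\bv^{(\ell)}(a)$ and $\Ra_\bv^{(\ell)}(b)$ are disjoint in measure. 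Thus coincidence forces measure-disjointness of the level $\ell$ subtiles of the pair $(a,b)$.

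What remains — and this is the main obstacle — is to descend from level $\ell$ back to level $0$. I would set this up as a contraction over the recurrence levels, in close analogy with the boundary estimate of Proposition~\ref{prop:measurezero}. Refining $\Ra_\bv(a)$ and $\Ra_\bv(b)$ by the set equation, and using that the pieces inside a single subtile are measure-disjoint, one expresses $\lambda_\bv(\Ra_\bv(a)\cap\Ra_\bv(b))$ as a sum of overlaps between a piece of $\Ra_\bv(a)$ indexed by some $(p,c)$ and a piece of $\Ra_\bv(b)$ indexed by some $(q,c')$. A short combinatorial check shows that a term with $c=c'$ and $\mathbf{l}(p)=\mathbf{l}(q)$ would force $p=q$ and hence $a=b$; so only genuinely off-diagonal overlaps between \emph{distinct} tiles survive. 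Because $\bsigma$ is recurrent, the coincidence prefix $(\sigma_0,\dots,\sigma_{\ell-1})$ recurs at each $n_k$, so the same witnesses apply to every shifted sequence $\bsigma^{(n_k)}$ and, exactly as in the coincidence step above, resolve at each recurrence level a definite fraction of the overlap into coincident (hence measure-disjoint) pieces. As in Proposition~\ref{prop:measurezero}, the delicate point is to bound this annihilated fraction uniformly below by some $\delta>0$: here one invokes Proposition~\ref{prop:RauzyHausdorff} (so that $\Ra_\bv^{(n_k)}(i)$ is Hausdorff-close to $\Ra_\bv(i)$), the uniform $C$-balance along $(n_k+\ell_k)$, and the fixed primitive block $M_{[\ell_k-h,\ell_k)}$ from Section~\ref{sec:genEV}. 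Iterating over $k$ then yields $\lambda_\bv(\Ra_\bv(a)\cap\Ra_\bv(b))\le\prod_{k}(1-\delta)=0$.

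The genuinely hard part is therefore not the coincidence step, which is a direct consequence of the set equation and Proposition~\ref{prop:TilingSetEq}, but the transfer of measure-disjointness from the deep level $\ell$, where coincidence lives, down to level $0$. The two obstructions are that the subtiles at different levels live in different hyperplanes $(\bv^{(n)})^\bot$ and that two distinct tiles of a multiple tiling may legitimately overlap; both are overcome by the contraction argument above, which reuses the uniform estimates already developed for the boundary measure in Proposition~\ref{prop:measurezero}.
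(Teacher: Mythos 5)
Your coincidence step is correct and is exactly the paper's mechanism: the two witnesses of strong coincidence give two same-position, distinct-letter pieces in the subdivision of $\Ra_\bv(i)$, and Proposition~\ref{prop:TilingSetEq} then forces $\Ra_\bv^{(\ell)}(a)$ and $\Ra_\bv^{(\ell)}(b)$ to be disjoint in measure. The genuine gap is your descent step. Decomposing $\Ra_\bv(a)\cap\Ra_\bv(b)$ by the set equation at depth $n$ produces two kinds of terms: same-position overlaps, i.e.\ translates of $M_{[0,n)}\bigl(\Ra_\bv^{(n)}(c)\cap\Ra_\bv^{(n)}(c')\bigr)$ with $c\ne c'$, which your coincidence step annihilates, and different-position overlaps, i.e.\ translates of $M_{[0,n)}\bigl(\Ra_\bv^{(n)}(c)\cap(\pi^{(n)}_{\bu,\bv}\bz+\Ra_\bv^{(n)}(c'))\bigr)$ with $\bz\ne\mathbf{0}$, which it does not touch. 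The latter cannot be made to contract: under the hypotheses of the proposition the collection $\Co_\bv$ is only known to be a \emph{multiple} tiling (Proposition~\ref{prop:multiTiling}), and if its covering degree is $m>1$ --- a case not excluded here, since a genuine tiling requires the strictly stronger geometric coincidence condition --- then distinct-position tiles really do overlap in positive measure. Consequently, if $\lambda_\bv(\Ra_\bv(a)\cap\Ra_\bv(b))$ were positive, then after the coincidence step this whole overlap would sit (up to a null set) inside different-position piece overlaps; the fraction ``resolved'' at each recurrence level is $0$, not some $\delta>0$, and the proposed inequality $\lambda(\text{overlap})\le(1-\delta)\,\lambda(\text{overlap})$ has no derivation --- it is equivalent to the statement being proved. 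The analogy with Proposition~\ref{prop:measurezero} breaks precisely here: the boundary estimate works because each subtile contains a deeper subtile in its interior, so the ``leftover'' is nested inside the subdivision of a \emph{single} tile and recurses onto itself, whereas your overlap involves cross terms between two different subdivisions, and these cross terms do not recurse onto same-position overlaps.

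The paper avoids descent altogether. It first observes that strong coincidence at level $\ell$ propagates to every level $n\ge\ell$: apply the condition to the pair of first letters of $\sigma_{[\ell,n)}(a)$ and $\sigma_{[\ell,n)}(b)$; the resulting witnesses are prefixes of $\sigma_{[0,n)}(a)$ and $\sigma_{[0,n)}(b)$ with equal abelianization. Hence your coincidence argument applies at the levels of the recurrence sequence itself and yields $\lambda\bigl(\Ra_\bv^{(n_k)}(a)\cap\Ra_\bv^{(n_k)}(b)\bigr)=0$ for all large $k$. It then transfers this disjointness to level $0$ not through the set equation but through convergence: along $(n_k)$ one has $\Ra_\bv^{(n_k)}(i)\to\Ra_\bv(i)$ in Hausdorff metric (Proposition~\ref{prop:RauzyHausdorff}) and also in measure (\cite[Lemma~6.8]{Berthe-Steiner-Thuswaldner}), and these two facts together imply that measure disjointness passes to the limit; Hausdorff convergence alone would not suffice. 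Your proposal already contains all the needed ingredients --- you note that the witnesses apply to every shifted sequence $\bsigma^{(n_k)}$, and you invoke Proposition~\ref{prop:RauzyHausdorff} --- but it deploys them inside a contraction scheme that cannot close; rerouting them as a limit argument repairs the proof and recovers the paper's.
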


\begin{proof}[Sketch]
Let $(n_k)$ and $(\ell_k)$ be the associated sequences of $\bsigma$. 
Let $\Ra(j_1)$ and $\Ra(j_2)$ be two subtiles with $j_1,j_2\in\A$ distinct and assume that the strong coincidence condition holds with $\ell\in\N$. By the definition of the dual $E_1^*$ in \eqref{eq:dualGeomSubs} this implies that for $k$ satisfying $n_k\ge \ell$ there is $\bz_k\in\Z^d$ and a letter $i\in\A$ such that $[\bz_k,j_1],[\bz_k,j_2]\in E_1^*(\sigma_{[0,n_k)})[\mathbf{0},i]$. Thus the set equation 
\begin{equation*}
\Ra(i)=
\bigcup_{[\by,j]\in E_1^*(\sigma_{[0,n_k)})[0,i]} M_{[0,n_k)}(\pi^{(n_k)}_{\bu,\bone}\by + \Ra^{(n_k)}(j)),
\end{equation*}
(see \eqref{eq:seteq})
contains $M_{[0,n_k)}(\bz_k + \Ra^{(n_k)}(j_1))$ and $M_{[0,n_k)}(\bz_k + \Ra^{(n_k)}(j_2))$ in the union on the right hand side. Proposition~\ref{prop:TilingSetEq} now implies that $\Ra^{(n_k)}(j_1)$ and $\Ra^{(n_k)}(j_2)$ are disjoint in measure. Since this is true for arbitrarily large $k$, using results along the line of Proposition~\ref{prop:RauzyHausdorff} (in particular, \cite[Lemma~6.8]{Berthe-Steiner-Thuswaldner}) this implies that $\Ra(j_1)$ and $\Ra(j_2)$ are disjoint in measure as well. 
\end{proof}

What we did in the proof of Proposition~\ref{prop:TilingSubtiles} can be explained in a simple way. If the strong coincidence condition holds, each intersection of the subtiles $\Ra(j_1) \cap \Ra(j_2)$ can be realized as an intersection of two elements in the union on the right hand side of the set equation \eqref{eq:seteq}. Since we know that the elements in the union of the set equation are measure disjoint, the same is true for $\Ra(j_1)$ and $\Ra(j_2)$. More briefly: in case of strong coincidence the elements in the union in (ii) are special cases of the elements in some union in (i).

The same strategy can be used in order to prove that the unions in (iii) are measure disjoint. To this end we need another type of coincidence condition.

\begin{definition}[Geometric coincidence condition]\label{def:geomcoinc}\index{geometric coincidence condition}
A sequence $\bsigma$ of unimodular substitutions over an alphabet $\A$ satisfies the \emph{geometric coincidence condition} if the following is true. For each $r>0$ there is $n_0\in\N$ such that for each $n\ge n_0$ the set $E_1^*(\sigma_{[0,n)})[\mathbf{0},i_n]$ contains a ball of radius $r$ of the discrete hyperplane $\Gamma((M_{[0,n)})^t\bone)$ for some $i_n\in\A$. 
\end{definition}

Along similar lines as Proposition~\ref{prop:TilingSubtiles} one can prove the following tiling criterion for Rauzy fractals (see \cite[Proposition~7.9]{Berthe-Steiner-Thuswaldner}).

\begin{proposition}\label{prop:tilingR}
Let $\bsigma=(\sigma_n)$ be a primitive and algebraically irreducible sequence of unimodular substitutions. Assume that there is $C>0$ such that for every $\ell\in\N$ there exists $n\ge 1$ such that $(\sigma_n,\ldots,\sigma_{n+\ell-1})=(\sigma_0,\ldots,\sigma_{\ell-1})$ and the language $\Lg_\bsigma^{(n+\ell)}$ is $C$-balanced. Then the following assertions are equivalent.

\begin{enumerate}
\item[(i)] The collection $\Co_\bone$ forms a tiling of $\bone^\bot$.
\item[(ii)] The sequence $\bsigma$ satisfies the geometric coincidence condition.
\item[(iii)] The sequence $\boldsymbol{\sigma}$ satisfies the strong coincidence condition and for each $r > 0$ there exists $n_0 \in \mathbb{N}$ such that $\bigcup_{i\in\mathcal{A}} E_1^*(\sigma_{[0,n)})[\mathbf{0},i]$ contains a ball of radius~$r$ of $\Gamma((M_{[0,n)})^t\, \mathbf{1})$ for all $n \ge n_0$.
\item[(iv)] The sequence $\bsigma$ satisfies the following effective condition: There are $n \in \mathbb{N}$, $i \in \mathcal{A}$, and $\mathbf{z} \in \mathbb{R}^d$, such that 
\[
\big\{[\mathbf{y},j] \in \Gamma((M_{[0,n)})^t\, \mathbf{1}):\, 
\|\pi_{(M_{[0,n)})^{-1}\mathbf{u},\mathbf{1}} (\mathbf{y} - \mathbf{z})\| 
\le C\big\} \subset E_1^*(\sigma_{[0,n)})[\mathbf{0},i],
\]
where $C \in \mathbb{N}$ is chosen in a way that $\mathcal{L}_{\boldsymbol{\sigma}}^{(n)}$ is $C$-balanced.
\end{enumerate}
\end{proposition}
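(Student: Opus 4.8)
The plan is to funnel all four statements through a single quantity: the covering degree $m$ of the multiple tiling $\Co_\bone$ provided by Proposition~\ref{prop:multiTiling2}. Since a multiple tiling is a tiling precisely when $m=1$, assertion~(i) is literally the statement $m=1$, and the whole proposition reduces to proving that each of the combinatorial conditions (ii)--(iv) is equivalent to $m=1$. The bridge I would set up once and for all is the dictionary furnished by the set equation \eqref{eq:seteq}: subdividing a subtile $\Ra(i)$ down to level $n$ (take $k=0$, $\ell=n$, $\bx=\mathbf{0}$) replaces it by the copies $M_{[0,n)}(\pi^{(n)}_{\bu,\bone}\by+\Ra^{(n)}_\bone(j))$ indexed exactly by the faces $[\by,j]\in E_1^*(\sigma_{[0,n)})[\mathbf{0},i]$, and by Lemma~\ref{lem:fernique} the patches $E_1^*(\sigma_{[0,n)})[\mathbf{0},i]$, $i\in\A$, are pairwise disjoint subsets of $\Gamma((M_{[0,n)})^t\bone)$. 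Thus ``which faces of the refined discrete hyperplane carry the subtiles meeting a given point'' is a purely combinatorial datum governed by the dual maps.

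For the core equivalence (i)$\Leftrightarrow$(ii) I would argue via this disjointness. Fix a generic point $\bx\in\bone^\bot$ that is covered exactly $m$ times by $\Co_\bone$ and lies off every boundary (possible since boundaries are $\lambda_\bone$-null by Proposition~\ref{prop:measurezero}). The $m$ tiles covering $\bx$ come from $m$ faces of $\Gamma(\bone)$, and because the tiles are uniformly bounded (balance, Proposition~\ref{prop:Rcompact}) these faces, refined to level $n$, produce $m$ faces $[\by_1,j_1],\dots,[\by_m,j_m]$ of $\Gamma((M_{[0,n)})^t\bone)$ lying within a bounded neighbourhood of $\bx$, one in each of $m$ distinct images $E_1^*(\sigma_{[0,n)})[\bx_s,i_s]$ (distinct by Lemma~\ref{lem:fernique}(iii)). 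If the geometric coincidence condition holds, then for large $n$ a ball of $\Gamma((M_{[0,n)})^t\bone)$ larger than that neighbourhood sits inside a single patch $E_1^*(\sigma_{[0,n)})[\mathbf{0},i_n]$; placing $\bx$ over this ball forces all $m$ faces into that single patch, contradicting their membership in $m$ disjoint patches unless $m=1$. Hence (ii)$\Rightarrow$(i). For the converse, when $\Co_\bone$ tiles, the refined collection $M_{[0,n)}\Co^{(n)}_\bone$ also tiles (Proposition~\ref{prop:seteqII}); a ball lying in the nonempty interior of a single subtile $\Ra(i)$ (Proposition~\ref{prop:intclos}) then has all its level-$n$ faces drawn from $E_1^*(\sigma_{[0,n)})[\mathbf{0},i]$, and as $n\to\infty$ these interior balls, together with the shrinking of $M_{[0,n)}\Ra^{(n)}_\bone(j)$ to points (Lemma~\ref{l:smallsubtiles}), yield arbitrarily large discrete-hyperplane balls inside one patch, which is (ii).

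The remaining equivalences are reformulations of (ii). For (ii)$\Leftrightarrow$(iii) the second clause of (iii) only asks that the union $\bigcup_{i\in\A}E_1^*(\sigma_{[0,n)})[\mathbf{0},i]$ contain a large ball, which is weaker than a single patch containing one; the strong coincidence condition is exactly the device that upgrades the union to a single patch, since after one further block $\sigma_{[0,\ell)}$ any two letters share a common face, so the separate patches get absorbed into one $E_1^*(\sigma_{[0,n+\ell)})[\mathbf{0},i]$ (and strong coincidence is also what Proposition~\ref{prop:TilingSubtiles} needs to make the $\Ra(i)$ measure-disjoint, the (ii)-part of the tiling). Conversely a single-patch ball trivially gives the union ball and, by the argument of Proposition~\ref{prop:TilingSubtiles}, strong coincidence. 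For (ii)$\Leftrightarrow$(iv) I would observe that (iv) is just (ii) witnessed effectively at one level with the explicit radius~$C$: since $\Lg^{(n)}_\bsigma$ is $C$-balanced, each subtile $\Ra^{(n)}_\bone(j)$ fits inside a $\|\cdot\|\le C$ ball (Proposition~\ref{prop:Rcompact}), so a $C$-ball of faces around $\bz$ is precisely wide enough to shield the central region; one then bootstraps from a single witnessing $n$ to all radii by applying $\sigma_{[0,m)}$ and invoking Lemma~\ref{lem:fernique}(ii), recovering the geometric coincidence condition, while (ii) with $r=C$ returns (iv).

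The main obstacle I anticipate is precisely the transfer carried out in (i)$\Leftrightarrow$(ii): converting the clean combinatorial disjointness of the dual-map images into a statement about the measure-theoretic covering degree. The difficulty is that the relevant subtiles $\Ra^{(n)}_\bone(j)$ change with $n$ and live in the varying hyperplanes $(\bw^{(n)})^\bot$, so making the ``place $\bx$ over the coincidence ball'' step rigorous requires the full limiting apparatus---constancy of the covering degree along $\Co^{(n)}_\bv$ (Lemma~\ref{lem:intsubtile}), Hausdorff convergence $\Ra^{(n_k)}_\bv(i)\to\Ra_\bv(i)$ (Proposition~\ref{prop:RauzyHausdorff}), shrinking diameters (Lemma~\ref{l:smallsubtiles}), and null boundaries (Proposition~\ref{prop:measurezero})---to be orchestrated simultaneously and uniformly in the basepoint.
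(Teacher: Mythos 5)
Your overall architecture (reduce all four statements to the covering degree $m$ of the multiple tiling from Proposition~\ref{prop:multiTiling2}, and use the set equation plus Lemma~\ref{lem:fernique} as the dictionary between tiles and faces) is consistent with the machinery the chapter builds, and your direction (i)$\Rightarrow$(ii) is essentially correct: there the contraction works in your favour, since all faces in a fixed discrete ball around a face whose base point lies deep inside ${\rm int}(\Ra(i))$ have base points and tiles squeezed into that interior ball for large $n$. The genuine gap is in (ii)$\Rightarrow$(i), which is the heart of the proposition. The step ``placing $\bx$ over this ball forces all $m$ faces into that single patch'' is unjustified and, as stated, false. A generic point $\by \in \bone^\bot$ lying over the coincidence ball is covered by refined tiles $M_{[0,n)}(\pi^{(n)}_{\bu,\bone}\bz + \Ra^{(n)}_\bone(j))$ whose base points $\pi_{\bu,\bone}M_{[0,n)}\bz$ are close to $\by$; but closeness of base points does not force the faces $[\bz,j]$ to lie in the discrete ball. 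The obstruction is exactly the contraction you rely on elsewhere: since $\Vert M_{[0,n)}\vert_{(\bone^{(n)})^\bot}\Vert \to 0$ (Proposition~\ref{prop:strongconv}, Lemma~\ref{l:smallsubtiles}), faces that are arbitrarily far apart in $\Gamma((M_{[0,n)})^t\bone)$ can have base points within any prescribed $\varepsilon$ of each other, and this gets worse precisely as $n$ grows, which is the regime you need. So the $m$ tiles covering your point may come from faces far outside the coincidence ball, and no contradiction with the disjointness of the patches arises. Your closing paragraph diagnoses the difficulty as one of ``orchestrating the limiting apparatus uniformly''; the real issue is that the Euclidean face metric in which the coincidence ball of (ii) is measured is the wrong metric for any shielding argument carried out in $\bone^\bot$ after applying $M_{[0,n)}$.

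The repair is exactly condition (iv), which you treat as a mere effective restatement but which is the actual workhorse: its ball is measured in the projection metric $\Vert\pi_{(M_{[0,n)})^{-1}\bu,\bone}(\by-\bz)\Vert$ with radius equal to the balance constant $C$ of $\Lg^{(n)}_\bsigma$, and this is calibrated so that shielding works \emph{before} the contraction is applied. Indeed, $C$-balance of $\Lg_\bsigma^{(n)}$ confines $\Ra^{(n)}_\bone(j)$ to $\pi^{(n)}_{\bu,\bone}(\overline{B}(0,C))$ (proof of Proposition~\ref{prop:Rcompact} applied to the shifted sequence), and since $\pi^{(n)}_{\bu,\bone}\circ\pi_{\bu^{(n)},\bone}=\pi^{(n)}_{\bu,\bone}$, a tile $\pi^{(n)}_{\bu,\bone}\by+\Ra^{(n)}_\bone(j)$ of $\Co^{(n)}_\bone$ can contain the point $\pi^{(n)}_{\bu,\bone}\bz$ only if $\Vert\pi_{\bu^{(n)},\bone}(\by-\bz)\Vert\le C$, hence only if $[\by,j]$ lies in the patch $E_1^*(\sigma_{[0,n)})[\mathbf{0},i]$. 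Then a.e.\ point near $\pi^{(n)}_{\bu,\bone}\bz$ is covered only by tiles of a single set-equation union, which are measure-disjoint by Proposition~\ref{prop:TilingSetEq}; since the covering degree of $\Co^{(n)}_\bone$ equals that of $\Co_\bone$ (Propositions~\ref{prop:seteqII} and~\ref{prop:TilingSetEq}), this forces $m=1$. The correct route is thus (ii)$\Rightarrow$(iv)$\Rightarrow$(i), where (ii)$\Rightarrow$(iv) itself needs a comparison of the two face metrics (the projection ball of radius $C$ sits inside a Euclidean face ball of radius $r^*(C)$ uniformly in $n$ once a positive block has occurred); your claim that ``(ii) with $r=C$ returns (iv)'' glosses over this. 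Two smaller points: deriving strong coincidence from (ii) is not ``by the argument of Proposition~\ref{prop:TilingSubtiles}'' (that proposition consumes strong coincidence, it does not produce it) --- one must unpack \eqref{eq:dualGeomSubs} to find faces $[\by,j_1]$, $[\by,j_2]$ at a common position inside the single patch; and note that the chapter itself does not prove this proposition but defers to \cite[Proposition~7.9]{Berthe-Steiner-Thuswaldner}, where the peculiar formulation of (iv) is precisely the tell-tale sign of where the real work lies.
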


An (essentially) more restrictive condition than the geometric coincidence condition and its variants in Proposition~\ref{prop:tilingR} is the following one. 

\begin{definition}[Geometric finiteness property]\label{def:geomfin}
A sequence $\bsigma$ of unimodular substitutions over an alphabet $\A$ satisfies the \emph{geometric finiteness property} if for each $r > 0$ there is $n_0\in \mathbb{N}$ such that $\bigcup_{i\in\mathcal{A}} E_1^*(\sigma_{[0,n)})[\mathbf{0},i]$ contains the ball $\{[\mathbf{x},i] \in \Gamma((M_{[0,n)})^t\, \mathbf{1}):\, \|\mathbf{x}\| \le r\}$ for all $n \ge n_0$. 
\end{definition}

The geometric finiteness property implies that $\bigcup_{i\in\mathcal{A}} E_1^*(\sigma_{[0,n)})[\mathbf{0},i]$ generates a whole discrete plane for $n\to\infty$, and that $\mathbf{0}$ is an inner point of  the Rauzy fractal $\Ra$ (as is proved in \cite[Proposition~7.10]{Berthe-Steiner-Thuswaldner}). 
 It is immediate that together with the strong coincidence condition the geometric finiteness property is more restrictive than the condition in Proposition~\ref{prop:tilingR}~(iii). The name \emph{geometric finiteness property} comes from the fact that it is related to certain finiteness properties in number representations w.r.t.\ positional number systems (see for instance~Barat {\it et al.}~\cite{BBLT:06} for a survey on these objects). By Proposition~\ref{prop:tilingR}~(iii) strong coincidence plus geometric finiteness imply that $\Co_\bone$ forms a tiling of $\bone^\bot$.

\subsection{How to check geometric coincidence and geometric finiteness?}\label{sec:howto}

In most cases it is easy to check strong coincidence of a sequence $\bsigma=(\sigma_n)$ of substitutions over an alphabet $\A$. For instance, this property trivially holds if $\sigma_0(i)$ starts with the same letter for each $i\in\A$.  However, it is {\it a priori} not so clear how to check geometric coincidence or geometric finiteness and although there is an effective criterion for geometric coincidence contained in Proposition~\ref{prop:tilingR}~(iv) this is only suitable for checking single instances. Geometric coincidence asserts that a large piece of a discrete hyperplane can be generated by the dual substitution $E_1^*(\sigma_{[0,n)})$ acting on $[\mathbf{0},i_n]$ if $n$ is large. If geometric finiteness holds, even a whole discrete hyperplane can be generated by the patches $E_1^*(\sigma_{[0,n)})\bigcup_{i\in\mathcal{A}}[\mathbf{0},i]$ for $n\to\infty$. The idea of generating discrete hyperplanes in this way using sequences of substitutions coming from generalized continued fraction algorithms goes back to Ito and Ohtsuki~\cite{ItoOtsuki94}. More recently, Berth\'e {\it et al.}~\cite{BBJS16,Berthe-Jolivet-Siegel:12} provide a systematic study on how to check geometric coincidence as well as geometric finiteness. While \cite{Berthe-Jolivet-Siegel:12} concentrates on Arnoux-Rauzy substitutions, the more general treatment in \cite{BBJS16} uses substitutions related to the Brun as well as the Jacobi-Perron algorithm as guiding examples.  In this section we give a brief discussion of their ideas which are centered around an ``annulus property'' of stepped hyperplanes generated by $E_1^*(\sigma_{[0,n)})$.

Let $\bsigma=(\sigma_n)$ be a sequence of unimodular substitutions  over an alphabet $\A$ and let $S=\{\sigma_n\,:\, n\in\N\}$. The fact that $\bsigma$ satisfies the geometric coincidence condition in Definition~\ref{def:geomcoinc} roughly says that the patch $E_1^*(\sigma_{[0,n)})[\mathbf{0},i_n]$ contains a larger and larger ball when $n$ is growing. In this section, for the sake of simplicity, we will deal with the geometric finiteness property. Indeed, we will assume that this ball is centered at the origin and instead of $[\mathbf{0},i_n]$ we will use $\mathcal{U}=\bigcup_{i\in\mathcal{A}}[\mathbf{0},i]$ as our ``seed''.  So we want to show that for each $R > 0$ there is $n_0 \in \mathbb{N}$ such that $ E_1^*(\sigma_{[0,n)})\mathcal{U}$ contains the ball $\{[\mathbf{x},i] \in \Gamma((M_{[0,n)})^t\, \mathbf{1}):\, \|\mathbf{x}\| \le R\}$ for all $n \ge n_0$. 

Following \cite{BBJS16} we shall reformulate the geometric finiteness property in a more combinatorial way. Let $P$ be a patch of a discrete hyperplane containing $\mathcal{U}$ and interpret its elements as faces as in \eqref{eq:hypercube}. Then the \emph{minimal combinatorial radius} 
${\rm rad}(P)$ of $P$ is equal to the length $\ell$ of the shortest sequence of faces $[\bx_1,j_1],\ldots,[\bx_\ell,j_\ell] \in P$ satisfying $[\bx_1,j_1]\in\mathcal{U}$, $[\bx_\ell,j_\ell]$ contains a part of the boundary of $P$ (regarded as a topological manifold), and $[\bx_k,j_k]\cap[\bx_{k+1},j_{k+1}]\not=\emptyset$ for $1\le k\le \ell-1$.
Intuitively, ${\rm rad}(P)$ is the minimal distance between
$\mathbf{0}$ and the boundary of $P$. For instance, one easily checks that the minimal combinatorial radius of the patch on the left hand side of Figure~\ref{fig:steppedsurface} is equal to six. Clearly a sequence $\bsigma$ enjoys the geometric finiteness property if and only if ${\rm rad}\left(E_1^*(\sigma_{[0,n)})\mathcal{U} \right)$ tends to $\infty$ for $n\to\infty$.

Let $P_{[m,n)}=E_1^*(\sigma_{[m,n)})\mathcal{U}$. We have to show that 
the minimal combinatorial radii of the patches $P_{[0,n)}$ tend to $\infty$ for $n\to\infty$.  Since the patches  $P_{[0,n)}$ can have complicated shapes there is no obvious way to do this. One approach to prove this property goes back to Ito and Ohtsuki~\cite{ItoOtsuki94} and makes use of ``annuli''. Let $\ell<m<n$ and suppose that $\mathcal{U}\subset E_1^*(\sigma)\mathcal{U}$ holds for each $\sigma\in S$ (this is not a crucial assumption and, if it is not true, can often be gained by blocking the substitutions of the sequence $\bsigma$). Then $P_{[m,n)}\subset P_{[\ell,n)}$ holds by the definition of $E_1^*(\sigma)$ (note in particular that $E_1^*(\tau)E_1^*(\sigma)=E_1^*(\sigma\tau)$ for $\sigma,\tau\in S$). The idea is to make sure that whenever $(\sigma_\ell,\ldots, \sigma_{m-1})$ is of a certain shape then $P_{[\ell,n)} \setminus P_{[m,n)}$ contains an annulus of positive width. One can then show that if $(\sigma_0,\ldots, \sigma_{n})$ contains the block $(\sigma_\ell,\ldots, \sigma_{m-1})$ for $k$ times, the patch $P_{[0,n)}$ contains $k$ ``concentric'' annuli and has a minimal combinatorial radius greater than or equal to $k$.

To achieve this we first search for a block $(\sigma_0,\ldots,\sigma_{m-1})$ such that $A=P_{[0,m)}\setminus \mathcal{U}$ contains an annulus of positive width, {\it i.e.}, $\partial P_{[0,m)} \cap \mathcal{U}=\emptyset$. If $\bsigma$ is recurrent, the block $(\sigma_0,\ldots,\sigma_{m-1})$ occurs infinitely often in $\bsigma$. Let $(n_j)$ with $n_0=0$ and $n_j\ge n_{j-1}+m$ be an increasing sequence such that $(\sigma_{n_j},\ldots,\sigma_{n_j+m-1})=(\sigma_0,\ldots,\sigma_{m-1})$. Fix $k\in \N$ and set $A_0=P_{[0,n_k+m)}\setminus P_{[m,n_k+m)}$ and $A_j:= P_{[n_{j-1}+m,n_k+m)} \setminus P_{[n_j+m,n_k+m)}$ for $j\ge1$. Then 
\begin{equation}\label{eq:annul}
\begin{split}
P_{[0,n_k+m)} &= (P_{[0,n_k+m)}\setminus P_{[m,n_k+m)}) \cup P_{[m,n_k+m)} \\
&= A_0  \cup P_{[m,n_k+m)} \\
&= A_0 \cup (P_{[m,n_k+m)}\setminus P_{[n_1+m,n_k+m)}) \cup P_{[n_1+m,n_k+m)}\\
&=  A_0 \cup A_1 \cup P_{[n_1+m,n_k+m)} \\
&=  A_0 \cup A_1 \cup (P_{[n_1+m,n_k+m)} \setminus P_{[n_2+m,n_k+m)}) \cup P_{[n_2+m,n_k+m)}\\
&=  A_0 \cup A_1 \cup A_2 \cup P_{[n_2+m,n_k+m)}\\&= \cdots = A_0 \cup \dots \cup A_k \cup \mathcal{U}.
\end{split}
\end{equation}
Because 
\[
\begin{split}
A_j&=P_{[n_{j-1}+m,n_k+m)} \setminus P_{[n_j+m,n_k+m)} \\
&\supset P_{[n_j,n_k+m)} \setminus P_{[n_j+m,n_k+m)}\\
&= E_1^*(\sigma_{[n_j+m,n_k+m)}) (P_{[n_j,n_j+m)}\setminus\mathcal{U})\\
&=E_1^*(\sigma_{[n_j+m,n_k+m)}) A
\end{split}
\]
for $j\ge 1$ (the last step comes from the recurrence property; the case $j=0$ follows along similar lines) each $A_j$ contains some image of $A$ under $E_1^*$. 
If the annulus $A$ has certain ``covering properties'' that are described in detail in \cite{BBJS16,Berthe-Jolivet-Siegel:12}, one can show that images of $A$ under $E_1^*$ are annuli of positive width as well. Thus such an annulus of positive width is contained in each of the pairwise disjoint subsets $A_0,\ldots,A_k$ of $P_{[0,n_k+m)}$ and therefore \eqref{eq:annul} implies that the patch $P_{[0,n_k+m)}$ contains a ``concentric'' annulus for each of the $k+1$ (non overlapping) occurrence of the block $(\sigma_0,\ldots,\sigma_{m-1})$ in $(\sigma_0,\ldots,\sigma_{n_k+m-1})$. 
Since an application of $E_1^*$ maps disjoint annuli to disjoint annuli also $P_{[0,n)}=E_1^*(\sigma_{[n_k+m,n)})P_{[0,n_k+m)}$ with $n_k+m\le n < n_{k+1}+m$ contains $k+1$ such ``concentric'' annuli. 
Thus if $n\to\infty$, the number of such annuli in $P_{[0,n)}$ tends to $\infty$. Since the above-mentioned covering properties of $A$ imply that $A_0 \cup \dots \cup A_k\cup \mathcal{U}=P_{[0,n_k+m)}$ is simply connected for each $k\in\N$ and that the same is true for all the patches $P_{[0,n)}$ (see~\cite{Berthe-Jolivet-Siegel:12}), we gain that the minimal combinatorial radii of the patches $P_{[0,n)}$ tend to $\infty$ for $n\to\infty$.

The following example shows that this method can be used in order to prove geometric finiteness for large classes of sequences of substitutions.

\begin{figure}[hh]
\includegraphics[trim=0 160 0 60,clip,width=1\textwidth]{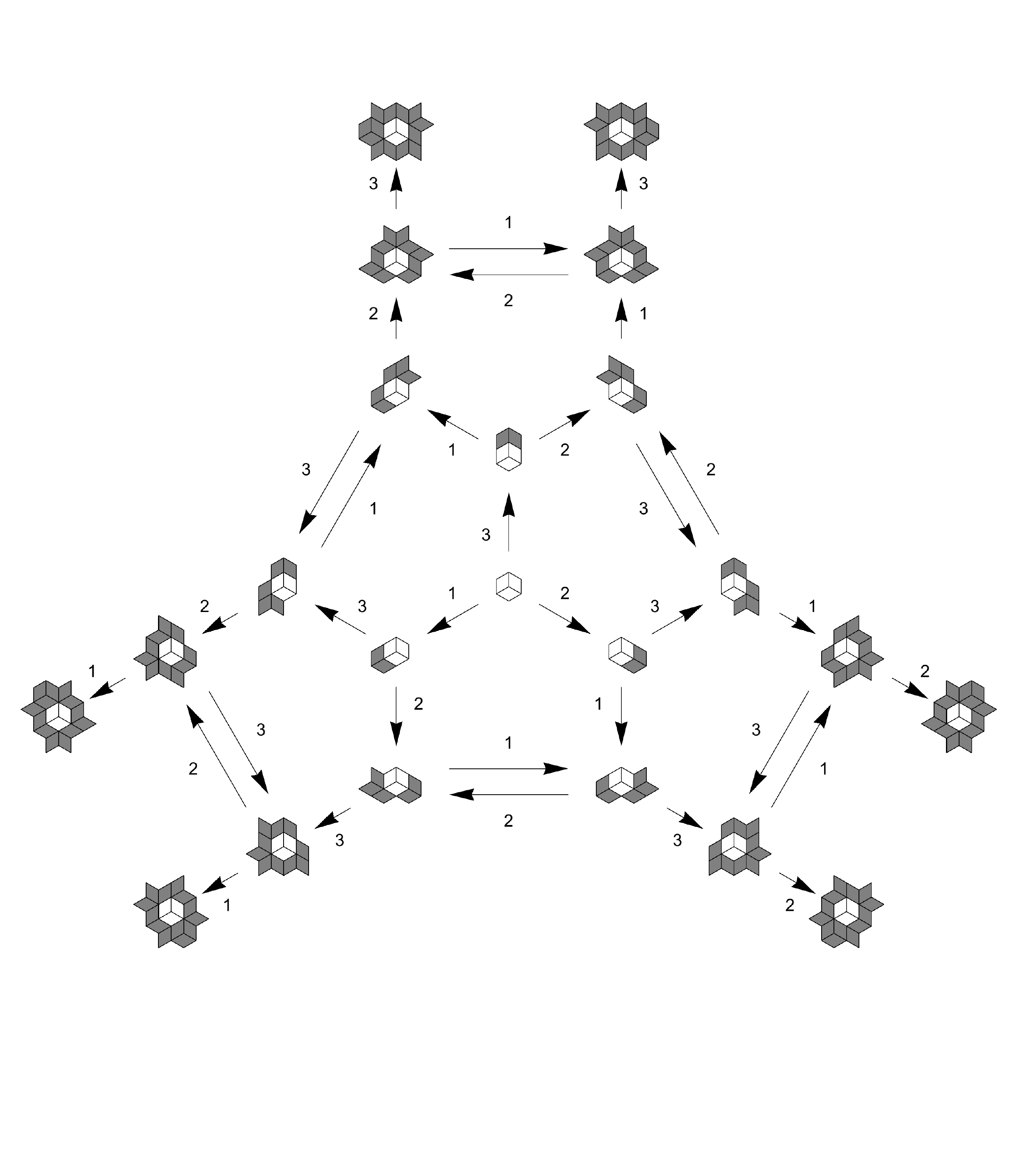}
\caption{An illustration of the annulus property for sequences of Arnoux-Rauzy substitutions.
\label{fig:TimoAnnulus}}
\end{figure}

\begin{example}\label{ex:timoAR}
We want to illustrate the construction of the annulus $A$ around $\mathcal{U}$ for the case of sequences of Arnoux-Rauzy substitutions $\bsigma=(\sigma_n)$ (all details for this case can be found in \cite{Berthe-Jolivet-Siegel:12}). Suppose that $\bsigma$ is a recurrent sequence of Arnoux-Rauzy substitutions which contains each of the three Arnoux-Rauzy substitutions \eqref{eq:ARsubs}. Then, by recurrence, $\bsigma$ contains a block $(\sigma_0,\ldots, \sigma_{m-1})$ in which each Arnoux Rauzy substitution occurs at least twice.
In the graph depicted in Figure~\ref{fig:TimoAnnulus} the action of $E_1^*$ on $\mathcal{U}$ is illustrated.\footnote{We note that in \cite{Berthe-Jolivet-Siegel:12} the dual $E_1^*(\sigma)$ is defined using suffixes of the images of $\sigma$ instead of prefixes. Nevertheless, this difference does not change the behavior of $E_1^*(\sigma)$ significantly and in Figure~\ref{fig:TimoAnnulus} we get the same image as the authors obtained in \cite[Figure~1]{Berthe-Jolivet-Siegel:12}.} The vertices of this graph are patches and there is an edge $P_1\xrightarrow{i} P_2$ if $P_2\subset E_1^*(\sigma_i)P_1$. Thus each vertex has an outgoing edge for each $i\in\{1,2,3\}$ (loops and outgoing edges of patches that contain an annulus of positive width around $\mathcal{U}$ are suppressed). Examining the graph we see that $E_1^*(\sigma_{[k,n)})\mathcal{U}$ contains an annulus around $\mathcal{U}$ of positive width whenever the block $(\sigma_{k},\ldots,\sigma_{n-1})$ contains at least two occurrences of each Arnoux-Rauzy substitution. Thus, $P_{[0,m)}$ is a patch which contains $\mathcal{U}$ together with an annulus $A$ of positive width around it. 

If one proves that the annulus $A$ has the above-mentioned covering properties (which was done in  \cite{Berthe-Jolivet-Siegel:12}) one can iterate this procedure as indicated above and prove that $P_{[0,n)}$ is simply connected and contains a growing number of ``concentric'' annuli for growing $n$. Thus
the minimal combinatorial radius of $P_{[0,n)}$ tends to $\infty$ for $n\to\infty$ and, hence, $\bsigma$ has the geometric finiteness property.  
\end{example}

Summing up, in Example~\ref{ex:timoAR} we have sketched a proof of the following result.

\begin{proposition}\label{prop:ARgeometric}
Let $\bsigma$ be a sequence of Arnoux-Rauzy substitutions. If $\bsigma$ is recurrent and contains each of the three Arnoux-Rauzy substitutions then $\bsigma$ satisfies the geometric finiteness property.
\end{proposition}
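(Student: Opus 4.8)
The plan is to follow the annulus method outlined in Section~\ref{sec:howto} and sketched in Example~\ref{ex:timoAR}, making that argument precise. Recall the reformulation given there: with $\mathcal{U}=\bigcup_{i\in\A}[\mathbf{0},i]$ and $P_{[0,n)}=E_1^*(\sigma_{[0,n)})\mathcal{U}$, the sequence $\bsigma$ enjoys the geometric finiteness property if and only if the minimal combinatorial radius ${\rm rad}(P_{[0,n)})$ tends to infinity as $n\to\infty$. So it suffices to produce, for arbitrarily large $k$, patches $P_{[0,n)}$ that contain $k$ pairwise disjoint ``concentric'' annuli around $\mathcal{U}$ and are simply connected.

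First I would verify the two structural prerequisites for the construction. The inclusion $\mathcal{U}\subset E_1^*(\sigma_i)\mathcal{U}$ for each $i\in\{1,2,3\}$ is a direct check from the definition \eqref{eq:dualGeomSubs} of $E_1^*$ applied to the Arnoux-Rauzy substitutions \eqref{eq:ARsubs}; this guarantees the nesting $P_{[m,n)}\subset P_{[\ell,n)}$ for $\ell<m<n$ used throughout (together with $E_1^*(\tau)E_1^*(\sigma)=E_1^*(\sigma\tau)$). Next, since $\bsigma$ is recurrent and contains each of the three Arnoux-Rauzy substitutions, recurrence forces the existence of a finite block $(\sigma_0,\ldots,\sigma_{m-1})$ in which each of $\sigma_1,\sigma_2,\sigma_3$ occurs at least twice. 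Inspecting the action of $E_1^*$ encoded in the graph of Figure~\ref{fig:TimoAnnulus} (as in \cite{Berthe-Jolivet-Siegel:12}), such a block makes $P_{[0,m)}$ contain $\mathcal{U}$ together with an annulus $A=P_{[0,m)}\setminus\mathcal{U}$ of positive width, i.e., $\partial P_{[0,m)}\cap\mathcal{U}=\emptyset$.

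With this seed annulus in hand, I would invoke recurrence once more to fix an increasing sequence $(n_j)$ with $n_0=0$, $n_j\ge n_{j-1}+m$, and $(\sigma_{n_j},\ldots,\sigma_{n_j+m-1})=(\sigma_0,\ldots,\sigma_{m-1})$, and then run the telescoping decomposition \eqref{eq:annul}, which writes $P_{[0,n_k+m)}=A_0\cup\cdots\cup A_k\cup\mathcal{U}$ as a disjoint union with $A_j\supset E_1^*(\sigma_{[n_j+m,n_k+m)})A$ for $j\ge 1$ (and the analogous relation for $j=0$). Thus each $A_j$ contains an $E_1^*$-image of $A$. The crucial point is then that these images remain annuli of positive width and that each intermediate union $A_0\cup\cdots\cup A_j\cup\mathcal{U}$ is simply connected; I would draw both facts from the covering properties of $A$. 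Consequently $P_{[0,n_k+m)}$ contains $k+1$ pairwise disjoint concentric annuli, and since applying $E_1^*$ preserves disjointness of annuli, every $P_{[0,n)}$ with $n_k+m\le n<n_{k+1}+m$ contains at least $k+1$ such annuli and is simply connected. Hence ${\rm rad}(P_{[0,n)})\ge k+1\to\infty$, which is exactly the geometric finiteness property.

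The main obstacle is the verification of the covering properties of the annulus $A$: one must show that applying $E_1^*(\sigma_i)$, $i\in\{1,2,3\}$, to an annulus of positive width surrounding $\mathcal{U}$ again produces an annulus of positive width, with no ``holes'' opening up that would let the origin leak through to the boundary, and that simple connectivity is preserved under the nesting. This is precisely the technical content worked out in \cite{Berthe-Jolivet-Siegel:12} (and, for related families, in \cite{BBJS16}), where the relevant local configurations of faces are tracked combinatorially; the graph of Figure~\ref{fig:TimoAnnulus} organizes exactly which blocks of substitutions close up an annulus. Everything else — the telescoping identity \eqref{eq:annul}, the nesting inclusions, and the passage from a growing annulus count to an unbounded combinatorial radius — is formal once these covering properties are in place.
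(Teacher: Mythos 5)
Your proposal is correct and follows essentially the same route as the paper: the paper's own proof is precisely the annulus argument of Section~\ref{sec:howto} specialized in Example~\ref{ex:timoAR} (seed annulus from a recurrent block containing each Arnoux-Rauzy substitution twice, read off the graph of Figure~\ref{fig:TimoAnnulus}, then the telescoping decomposition \eqref{eq:annul} and the covering properties deferred to \cite{Berthe-Jolivet-Siegel:12}). You identify the same technical crux (preservation of annuli and simple connectivity under $E_1^*$) and defer it to the same reference, exactly as the paper does.
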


\section{$S$-adic systems and torus rotations}

Let $\bsigma$ be a sequence of unimodular substitutions over an alphabet $\A$ with $d$ letters.  In the past sections we proved a variety of properties of Rauzy fractals. Using all these results makes Rauzy fractals suitable to ``see'' a rotation on the torus $\mathbb{T}^{d-1}$ acting on them. This rotation turns out to be measurably conjugate to the underlying $S$-adic system $(X_{\bsigma},\Sigma)$. In this section we prove the according results which form special cases of the main results of \cite{Berthe-Steiner-Thuswaldner} and provide some examples.

In Section~\ref{sec:191} we state Theorem~\ref{th:TilingRotation}, a result that gives the measurable conjugacy between $(X_{\bsigma},\Sigma)$ and a torus rotation together with some of its consequences under a set of natural conditions. Section~\ref{sec:rotproof} is devoted to the proof of this result. In Section~\ref{sec:metric} we formulate a metric version of Theorem~\ref{th:TilingRotation}. In particular, for a finite set $S$ of substitutions we consider the shift\footnote{Note that there are two kinds of shifts: the one just defined acts on the sequence of substitutions $S^\N$, the other one (the $S$-adic shift) acts on the set of sequences $X_\bsigma$ which is defined in terms of a single sequence of substitutions $\bsigma\in S^\N$. It should cause no confusion that both of these shift mappings are denoted by $\Sigma$.}
 $(S^\N,\Sigma,\nu)$ acting on all infinite sequences of substitutions taken from $S$. 
 The measure $\nu$ is chosen in a way that this shift becomes ergodic. 
 We prove that the conditions of Theorem~\ref{th:TilingRotation} are ``generic'' w.r.t.\ the measure $\nu$ if the {\em Pisot condition}  \eqref{eq:lyapunov} on the Lyapunov exponents associated with a linear cocycle of $(S^\N,\Sigma,\nu)$ is in force. Thus under this Pisot condition we gain that $\nu$-almost all $\bsigma\in S^\N$ give rise to an $S$-adic system $(X_\bsigma,\Sigma)$ that is measurably conjugate to a torus rotation. This result is the content of Theorem~\ref{t:3}. Section~\ref{sec:194} is devoted to the proof of this result. Finally, Section~\ref{sec:corex} gives examples for $S$-adic systems associated with Arnoux-Rauzy and Brun substitutions. This shows that the Pisot condition is satisfied in many natural situations.

\subsection{Statement of the conjugacy result}\label{sec:191}

Before we state the first main result of this survey we give some terminology. We start with a spectral property of a measurable dynamical system that is ``the opposite'' of continuous spectrum (see Section~\ref{sec:WM}; we refer to this section also for the definition of an \emph{eigenfunction}).

\begin{definition}[Pure discrete spectrum, see {\cite[Defintion~3.2]{Walters:82}}] \label{def:PDS}\index{pure discrete spectrum}\index{spectrum!pure discrete}
An ergodic dynamical system $(X,T,\mu)$ on a probability space $X$ has \emph{pure discrete spectrum} if there exists an orthonormal basis of $L^2(\mu)$ which consists of eigenfunctions of $T$.
\end{definition}

It is well known that an ergodic dynamical system on a probability space that has pure discrete spectrum is measurably conjugate to a rotation on a compact abelian group. On the other hand, each ergodic rotation on a compact abelian group has pure discrete spectrum (see for instance \cite[Theorems~3.5 and~3.6]{Walters:82}; these results can be proved by using character theory and Pontryagin duality for compact abelian groups).

The notion of \emph{natural coding} came up already in Sections~\ref{sec:sturmrot} and~\ref{sec:ARImbalance} in the framework of Sturmian sequences and Arnoux-Rauzy sequences. Sloppily speaking a natural coding is a coding of a torus rotation that induces translations on the atoms of the partition that was used to define the coding. We give a precise general definition of this concept. 

\begin{definition}[Coding and natural coding]\label{def:NC}
\index{natural coding}
Let $\Lambda$ be a full-rank lattice in~$\mathbb{R}^d$ and $T_\mathbf{t}: \mathbb{R}^d/\Lambda \to \mathbb{R}^d/\Lambda$, $\mathbf{x} \mapsto \mathbf{x} + \mathbf{t}$ a rotation on the torus $\mathbb{R}^d/\Lambda$. Let $\Omega\subset\mathbb{R}^d$ be a fundamental domain for the lattice $\Lambda$ and $\tilde T_\mathbf{t}:\Omega\to \Omega$ the mapping induced by $T_\mathbf{t}$ on $\Omega$. Assume that $\Omega = \Omega_1 \cup \cdots \cup \Omega_k$ is a (measure theoretic w.r.t.\ the Lebesgue measure) partition of $\Omega$. 

A sequence $w=w_0w_1\ldots\in\{1,\ldots,k\}^\N$ is the {\it coding} of a point $\mathbf{x} \in \Omega$ with respect to this partition if $\tilde T_\mathbf{t}^j(\mathbf{x})\in \Omega_{w_j}$ holds for each $j\in \N$. If, in addition, for each $1\le i\le k$ the restriction~$\tilde T_\mathbf{t}|_{\Omega_i}$  is given by the translation ${\mathbf x}\mapsto{\mathbf x}+{\mathbf t}_i$ for some~${\mathbf t}_i\in\mathbb{R}^d$ we call~$w$ a \emph{natural coding} of $T_\mathbf{t}$.
\end{definition}

For the sake of completeness we give the definition of \emph{bounded remainder set}.

\begin{definition}[Bounded remainder set]\label{def:BRS}\index{bounded remainder set}
Let $\Lambda$ be a full-rank lattice in~$\mathbb{R}^d$. 
A~subset~$A$ of $\mathbb{R}^d/\Lambda$ 
is called a \emph{bounded remainder set} for the rotation $T_\mathbf{t}: \mathbb{R}^d/\Lambda \to \mathbb{R}^d/\Lambda$, $\mathbf{x} \mapsto \mathbf{x} + \mathbf{t}$ if there exist $\gamma,C > 0$ such that, for  a.e.\ $\bx \in  \mathbb{R}^d/\Lambda$,
\[
|\#\{n  < N:\, T _\mathbf{t}^n(\bx) \in A \} -  \gamma N  |  <  C 
\]
holds for all $N \in \mathbb{N}$.
\end{definition}

The following result gives sufficient conditions for an $S$-adic system $(X_\bsigma,\Sigma)$ to be measurably conjugate to an irrational rotation on a torus. The subtiles $\Ra(i)$ of the Rauzy fractal $\Ra$ turn out to be bounded remainder sets for this rotation and induce natural codings of the elements of $(X_\bsigma,\Sigma)$.
 
\begin{theorem}[{see \cite[Theorem~3.1]{Berthe-Steiner-Thuswaldner}}]
\label{th:TilingRotation}
Let $S$ be a finite set of unimodular substitutions over a finite alphabet $\A=\{1,2,\ldots,d\}$ and let $\bsigma=(\sigma_n)$ be a primitive and algebraically irreducible sequence of substitutions taken from the set $S$. Assume that there is $C>0$ such that for every $\ell\in\N$ there exists $n\ge 1$ such that $(\sigma_n,\ldots,\sigma_{n+\ell-1})=(\sigma_0,\ldots,\sigma_{\ell-1})$ and the language $\Lg_\bsigma^{(n+\ell)}$ is $C$-balanced.

If the collection $\mathcal{C}_\mathbf{1}$ forms a tiling of~$\mathbf{1}^\bot$ then the following results hold.
\begin{enumerate} \setcounter{enumi}{0}
\itemsep1ex
\item \label{i:16}
The $S$-adic shift $(X_{\boldsymbol{\sigma}},\Sigma,\mu)$, with $\mu$ being the unique $\Sigma$-invariant Borel probability measure on $X_\bsigma$, is measurably conjugate to a rotation $T$ on the torus~$\mathbb{T}^{d-1}$; in particular, its 
measure-theoretic spectrum is purely discrete. 
\item\label{i:17}
Each element of $X_{\boldsymbol{\sigma}}$ is a natural coding of the torus rotation~$T$ with respect to the partition $\{\mathcal{R}(i):\,i \in \mathcal{A}\}$ of the fundamental domain $\Ra$.
\item\label{i:18}
The subtile $\mathcal{R}(i)$ is a bounded remainder set for the torus rotation~$T$ for each $i\in\mathcal{A}$.
\end{enumerate}
\end{theorem}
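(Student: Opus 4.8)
The plan is to realize the sought rotation geometrically as a domain exchange on the Rauzy fractal $\Ra$, and then to use the itinerary map of this domain exchange as the measurable conjugacy; parts~(2) and~(3) will follow from the construction together with balance. First I would fix the geometric picture. Put $\Lambda=\Z^d\cap\bone^\bot$, a lattice of rank $d-1$ in $\bone^\bot$, and $\mathbb{T}^{d-1}=\bone^\bot/\Lambda$. By Theorem~\ref{th:RauzyProperties} each $\Ra(i)$ is a nonempty compact set equal to the closure of its interior with $\lambda_\bone(\partial\Ra(i))=0$. Since $[\mathbf{0},i]\in\Gamma(\bone)$ for every $i\in\A$, the subtiles $\Ra(1),\ldots,\Ra(d)$ are distinct tiles of $\Co_\bone$; the tiling hypothesis thus makes them pairwise measure disjoint, and with $\Ra=\bigcup_i\Ra(i)$ this exhibits $\{\Ra(i)\}$ as a measurable partition of $\Ra$. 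The same tiling identity $\bone^\bot=\bigcup_{\bx\in\Lambda}(\bx+\Ra)$ shows that $\Ra$ is a fundamental domain of $\Lambda$, hence represents $\mathbb{T}^{d-1}$.

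Next I would introduce the domain exchange $E:\Ra\to\Ra$, $\bx\mapsto\bx+\pi_{\bu,\bone}\mathbf{l}(i)$ for $\bx\in\Ra(i)$, defined off the measure-zero boundary set. Writing $\tilde\Ra(i)=\Ra(i)+\pi_{\bu,\bone}\mathbf{l}(i)=\overline{\{\pi_{\bu,\bone}\mathbf{l}(pi):pi\text{ a prefix of a limit sequence}\}}$ and using that each nonempty prefix ends in a unique letter gives $\bigcup_i\tilde\Ra(i)=\Ra$ with $\sum_i\lambda_\bone(\tilde\Ra(i))=\lambda_\bone(\Ra)$, so $E$ is an a.e.\ bijection preserving $\lambda_\bone$. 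The key observation is that $\mathbf{l}(i)-\mathbf{l}(j)=\be_i-\be_j\in\Lambda$, whence $\pi_{\bu,\bone}\mathbf{l}(i)\equiv\pi_{\bu,\bone}\mathbf{l}(j)\pmod\Lambda$; hence on $\mathbb{T}^{d-1}$ the domain exchange collapses to a single translation $T=T_\mathbf{t}$ with $\mathbf{t}=\pi_{\bu,\bone}\mathbf{l}(1)\bmod\Lambda$, which is minimal and ergodic because $\bu$ has rationally independent coordinates (Lemma~\ref{irrirr}).

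Then I would construct the conjugacy via the itinerary map $\psi:\Ra\to\A^\N$, $\psi(\bx)_k=i$ iff $E^k(\bx)\in\Ra(i)$, which is defined off the null set of points whose $T$-orbit meets some $\partial\Ra(i)$ (null by Proposition~\ref{prop:measurezero}) and satisfies $\psi\circ E=\Sigma\circ\psi$. Three facts are needed. (a) $\psi(\bx)\in X_\bsigma$ almost everywhere, i.e.\ the cylinder $\bigcap_{k<n}E^{-k}\Ra(v_k)$ has positive measure exactly when $v_0\cdots v_{n-1}\in\Lg_\bsigma$; this is the $S$-adic analogue of \eqref{eq:strumianrotationiff}, obtained by identifying these cylinders---through the set equation of Proposition~\ref{prop:seteq} and its measure-disjointness (Proposition~\ref{prop:TilingSetEq})---with the level-$n$ subtiles in the subdivision of $\Ra$. (b) $\psi$ is a.e.\ injective, since the level-$n$ subtiles have diameters tending to $0$ by Lemma~\ref{l:smallsubtiles}, so distinct points are eventually separated. (c) The normalized push-forward $\psi_*\lambda_\bone$ is $\Sigma$-invariant, hence equals the unique invariant measure $\mu$ by unique ergodicity of $(X_\bsigma,\Sigma)$ (Theorem~\ref{prop:SadicUE}). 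Together these make $\varphi:=\psi^{-1}$ a measurable isomorphism between $(\mathbb{T}^{d-1},T,\lambda)$ and $(X_\bsigma,\Sigma,\mu)$; since a torus rotation has pure discrete spectrum (Definition~\ref{def:PDS}), assertion~(1) follows. The main obstacle lies precisely in~(a)--(b): matching the $E$-cylinders with the set-equation subtiles so that no itineraries outside $\Lg_\bsigma$ occur, and controlling the exceptional null set, which forces one to combine the tiling hypothesis, the shrinking diameters, and the measure-zero boundary while keeping track of the subtiles $\Ra^{(n)}_\bone(i)$ that live in the varying hyperplanes $(\bone^{(n)})^\bot$.

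Finally, assertions~(2) and~(3) come out of the construction. Each $w\in X_\bsigma$ is by definition the coding of $\varphi(w)$ under $T$ with respect to $\{\Ra(i)\}$, and the induced map acts on $\Ra(i)$ by the translation $\bx\mapsto\bx+\pi_{\bu,\bone}\mathbf{l}(i)$, which is exactly the natural-coding condition of Definition~\ref{def:NC}, proving~(2). For~(3), under $\varphi$ one has $\#\{n<N:T^n(\bx)\in\Ra(i)\}=|w_0\cdots w_{N-1}|_i$ for the coding $w$ of $\bx$; since $\Lg_\bsigma$ is $C$-balanced with letter frequencies $f_i=u_i/\|\bu\|_1$ (Lemma~\ref{lem:SadicFreq}), a block-counting argument yields $\big||w_0\cdots w_{N-1}|_i-Nf_i\big|\le C$ for all $N$, and $f_i=\lambda_\bone(\Ra(i))/\lambda_\bone(\Ra)$ by measure preservation---precisely the bounded-remainder estimate of Definition~\ref{def:BRS}.
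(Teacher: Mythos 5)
Your architecture is the mirror image of the paper's, and much of it is sound. The paper constructs the representation map $\varphi:X_{\bsigma}\to\Ra$ by nested subtiles, see \eqref{eq:repmapphi}, and proves it is a.e.\ bijective; you construct its inverse, the itinerary map $\psi$, and conjugate through the same domain exchange $E$. Your reduction of $E$ to a single translation modulo $\Lambda$, the use of unique ergodicity (Theorem~\ref{prop:SadicUE}) to identify $\psi_*\lambda_\bone$ with $\mu$, and your direct block-counting derivation of the bounded remainder property from $C$-balance (in place of the paper's citation of Adamczewski) are all correct; the last of these is a legitimate, self-contained alternative to the paper's argument for assertion~(3).

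However, steps (a) and (b) contain a genuine gap: you identify the itinerary cylinders $\bigcap_{k<n}E^{-k}\Ra(v_k)$ with the level-$n$ subtiles of the set equation and then invoke Proposition~\ref{prop:TilingSetEq} and Lemma~\ref{l:smallsubtiles}. These are two different families of sets. The pieces in the set equation \eqref{eq:seteq} are indexed by $E_1^*(\sigma_{[0,n)})[\mathbf{0},i]$, i.e.\ by occurrences of a letter inside the words $\sigma_{[0,n)}(j)$, whereas the itinerary cylinders are the \emph{length-$n$} subtiles $\Ra(v_0\cdots v_{n-1})$ indexed by words of $\Lg_\bsigma$; the paper's footnote in Section~\ref{sec:rotproof} warns against exactly this confusion. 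Lemma~\ref{l:smallsubtiles} gives shrinking diameters only for the former family, and Proposition~\ref{prop:TilingSetEq} gives measure-disjointness only for the former family; neither statement transfers to the cylinders without further argument, and equating the two families is in essence a recognizability assertion that itself requires proof. The correct route is the one the paper takes: the identity $\Ra(v_0\cdots v_{n-1})=\bigcap_{\ell<n}E^{-\ell}\Ra(v_\ell)$ of \eqref{e:R0n}, essential disjointness of the sets $\Ra(u)$ with $|u|=n$ deduced from a.e.\ injectivity of $E$ (a measure-preserving surjective piecewise isometry of a finite measure space), and --- the real crux --- the fact that the nested intersections $\bigcap_{n}\Ra(v_0\cdots v_{n-1})$ are singletons, which requires minimality together with the strong convergence property of Proposition~\ref{prop:strongconv}. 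Without this last fact your $\psi$ may fail to be a.e.\ injective: injectivity of the itinerary map is precisely the statement that $\{\Ra(i)\colon i\in\A\}$ is a generating partition for $E$, and that does not follow from measure-disjointness of cylinders alone (measure-preserving factor maps between ergodic rotations can be finite-to-one, as the doubling map between circle rotations shows). So the proposal's skeleton is viable, but the step you describe as "the main obstacle" is resolved by citations that do not apply to the sets in question, and the ingredient that actually resolves it (strong convergence plus minimality) never enters your argument.
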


For the special case of two letter alphabets the tiling condition does not have to be assumed. It can be derived from the remaining assumptions of Theorem~\ref{th:TilingRotation}. The corresponding result is proved in~\cite{BMST:16} and generalizes an analogous result for substitutive systems from~\cite{Barge-Diamond:02}.

\subsection{Proof of the conjugacy result}\label{sec:rotproof}

In this section we illustrate the proof of Theorem~\ref{th:TilingRotation} given in \cite{Berthe-Steiner-Thuswaldner}. We assume throughout this section that the sequence $\bsigma$ satisfies the conditions of Theorem~\ref{th:TilingRotation}. The main part is the proof of the measurable conjugacy between $(X_{\bsigma},\Sigma,\mu)$ and a rotation on the torus $\mathbb{T}^{d-1}$, where $d$ is the cardinality of the underlying alphabet. Here $\mu$ is the unique $\Sigma$-invariant Borel probability measure on $X_{\bsigma}$ (see Theorem~\ref{prop:SadicUE}).

Our first aim is to set up the \emph{representation map} from $X_{\bsigma}$ to the Rauzy fractal. We define this map using a nested sequence of the subsets 
\[
\mathcal{R}(u) := \overline{ \{\pi_{\bu,\bw}\mathbf{l}(p) \;:\; pu \hbox{ is a prefix of a limit sequence of } \bsigma \} } \qquad(u\in\A^*)
\]
of the Rauzy fractal $\mathcal{R}$. In particular, we set
\begin{equation}\label{eq:repmapphi}
\varphi : X_{\bsigma}\to\Ra; \quad
v_{0}v_{1}v_{2}\ldots \mapsto \bigcap_{n\in \N}\Ra(v_{0}v_{1}\ldots v_{n-1}).
\end{equation}
To show that $\varphi$ is a well-defined continuous surjection one has to prove that the intersection on the right-hand side of \eqref{eq:repmapphi} is a single point. Using the minimality of $(X_{\bsigma},\Sigma)$ and the strong convergence property from Proposition~\ref{prop:strongconv} this is done in \cite[Section~8]{Berthe-Steiner-Thuswaldner}.

In the next step one proves that $(X_{\bsigma},\Sigma,\mu)$ is measurably conjugate to the \emph{domain exchange} $(\Ra,E,\lambda_{\bone})$, where $E$ is given by 
\[
E: \Ra \to \Ra; \quad \bx \mapsto \bx  +\pi_{\bu,\bone}\mathbf{l}(i) \quad \hbox{for }\bx\in \Ra(i) \setminus \bigcup_{j\not=i}\Ra(j)
\]
which is illustrated in Figure~\ref{fig:tribodomain}. Since $\mathcal{C}_{\bone}$ is a tiling, the overlaps of the subtiles $\Ra(i)$ have measure $0$ and, hence, $E$ is well defined a.e.\ w.r.t.\ the measure $\lambda_{\bone}$ on $\Ra$. To prove the asserted conjugacy, we have to show that $\varphi$ is bijective $\mu$-a.e.\ and that the diagram
\begin{equation}\label{eq:Ediag}
\begin{CD}
X_{\boldsymbol{\sigma}} @> \Sigma >> X_{\boldsymbol{\sigma}} \\
@VV\varphi V @VV\varphi V\\
\mathcal{R} @> E >> \mathcal{R}
\end{CD}
\end{equation}
commutes. Since 
\begin{equation}\label{eq:comm}
E  \circ \varphi = \varphi \circ  \Sigma  
\end{equation}
follows easily by direct calculation it remains to prove the bijectivity assertion. This runs as follows (all statements are true up to measure zero). First observe that $E$ satisfies
\[
E(\mathcal{R}(i)) = \overline{\{\pi_{\mathbf{u},\mathbf{1}}\, \mathbf{l}(p\hspace{.1em}i):\, p \in \mathcal{A}^*,\ \mbox{$p\hspace{.1em}i$ is a prefix of a limit word of $\boldsymbol{\sigma}$}\}} \quad (i \in\mathcal{A}).
\]
Therefore, we have $\bigcup_{i\in\mathcal{A}} E(\mathcal{R}(i)) = \mathcal{R}$ and, hence, $E$ is a surjective piecewise isometry. Therefore, $E$ is bijective. Since the subtiles $\Ra(i)$, $i\in\A$, are disjoint in measure and 
\begin{equation} \label{e:R0n}
\mathcal{R}(w_0 w_1 \cdots w_{n-1}) = \bigcap_{\ell=0}^{n-1} E^{-\ell} \mathcal{R}(w_\ell),
\end{equation}
the injectivity of~$E$ implies that also the elements of the collection of ``length $n$ subtiles''\footnote{Not to be confused with the level $n$ subtiles introduced in Section~\ref{sec:seteq}.} $\mathcal{K}_{n}=\{\Ra(u)\,:\, u\in \Lg_{\bsigma} \hbox{ with } |u|=n\}$ are disjoint in measure.  By 
\eqref{eq:comm} the measure $\lambda_\mathbf{1} \circ \varphi$ is a shift invariant probability measure on~$X_{\boldsymbol{\sigma}}$. As by Theorem~\ref{prop:SadicUE} there is only one such measure, $\mu = \lambda_\mathbf{1} \circ \varphi$.
Now, essential disjointness of the elements of $\mathcal{K}_{n}$ implies that $\varphi(\mathbf{x}) \ne \varphi(\mathbf{y})$ for all distinct $\mathbf{x}, \mathbf{y} $ satisfying $\varphi(\mathbf{x}),\varphi(\mathbf{y} )  \in \mathcal{R} \setminus \bigcup_{n\in\mathbb{N}, K\in\mathcal{K}_n} \partial K$.
As, by \eqref{e:R0n} and Theorem~\ref{th:RauzyProperties}, $\lambda_\mathbf{1}(\partial K)=\mu(\varphi^{-1}(\partial K)) = 0$ for all $K\in\mathcal{K}_n$, $n\in\mathbb{N}$, the map~$\varphi$ is $\mu$-a.e.\ injective. Since surjectivity follows from the definition of $\varphi$ this proves $\mu$-a.e.\ bijectivity.
 Finally, using~\eqref{eq:comm}, the commutativity of the diagram \eqref{eq:Ediag}  follows from the bijectivity of $\varphi$.

Since $\mathcal{C}_\mathbf{1}$ forms a tiling of~$\mathbf{1}^\bot$ by assumption, the Rauzy fractal $\mathcal{R}$ is a fundamental domain of the lattice $\Lambda = \mathbf{1}^\bot \cap \mathbb{Z}^d$ spanned by $\mathbf{e}_1 - \mathbf{e}_i$, $i \in \mathcal{A} \setminus \{1\}$. 
But as $\pi_{\mathbf{u},\mathbf{1}}\, \mathbf{e}_i \equiv \pi_{\mathbf{u},\mathbf{1}}\, \mathbf{e}_1 \pmod \Lambda$ holds for each $i\in\mathcal{A}$, the canonical projection of~$E$ onto the torus $\mathbf{1}^\bot / \Lambda \simeq \mathbb{T}^{d-1}$ is equal to the rotation $T:  \mathbb{T}^{d-1}\to \mathbb{T}^{d-1},\,\mathbf{x} \mapsto\mathbf{x} + \pi_{\mathbf{u},\mathbf{1}}\, \mathbf{e}_{1}$. Thus, if we denote by $\overline{\varphi}$ the canonical projection of $\varphi$ to the torus $\mathbf{1}^\bot/\Lambda$,
the diagram
\[
\begin{CD}
X_{\boldsymbol{\sigma}} @> \Sigma >> X_{\boldsymbol{\sigma}} \\
@VV\overline{\varphi} V @VV\overline{\varphi} V\\
\mathbf{1}^\bot/\Lambda @> + \,\pi_{\mathbf{u},\mathbf{1}}\, \mathbf{e}_{1} >> \mathbf{1}^\bot/\Lambda
\end{CD}
\]
commutes. Note that $\overline{\varphi}$ is $m$ to~$1$ onto, where $m$ is the covering degree of~$\mathcal{C}_1$, and, hence, a bijection as $\mathcal{C}_1$ forms a tiling. This proves the first assertion of Theorem~\ref{th:TilingRotation}.

The second assertion of Theorem~\ref{th:TilingRotation} follows from the definition of a natural coding because the rotation~$T$ was defined in terms of an exchange of domains. Finally, due to \cite[Proposition~7]{Adamczewski:03}, the $C$-balance of~$\mathcal{L}_{\boldsymbol{\sigma}}$ implies that~$\mathcal{R}(i)$ is a bounded remainder set for each $i \in \mathcal{A}$, which also proves the last assertion.

\subsection{A metric result}\label{sec:metric}

As mentioned already in Remark~\ref{rem:commentThmRot}~(i), the assumptions of Theorems~\ref{th:RauzyProperties} and~\ref{th:TilingRotation} allow for a metric version of these results. To be more precise, let $S$ be a finite set of substitutions and consider the full shift $(S^\N, \Sigma, \nu)$, where $\nu$ is an ergodic $\Sigma$-invariant probability measure satisfying some mild conditions. Our aim is to state a version of Theorems~\ref{prop:SadicUE}, \ref{th:RauzyProperties}, and~\ref{th:TilingRotation} that is valid for $\nu$-a.e.\ $\bsigma\in S^\N$. This second main result of the present survey is also a special case of a result from Berth\'e, Steiner, and Thuswaldner~\cite{Berthe-Steiner-Thuswaldner}.

To state our result we need to introduce some new concepts. Let $S$ be a finite set of substitutions over the alphabet $\A=\{1,2,\ldots,d\}$ and consider the shift $(S^\N,\Sigma,\nu)$, where $\nu$ is some $\Sigma$-invariant probability measure on $S^\N$.  With each $\bsigma=(\sigma_n)_{n\ge 0}$ we associate the \emph{linear cocycle operator} $A(\bsigma)=(M_0)^t$ (recall that $M_0$ is the incidence matrix of $\sigma_0$) and define the \emph{Lyapunov exponents} $\vartheta_1,\ldots,\vartheta_d$ of $(S^\N,\Sigma,\nu)$ iteratively by
\begin{align}
\vartheta_1 + \vartheta_2 + \cdots + \vartheta_k & = \lim_{n\to\infty} \frac{1}{n} \int_{S^\N} \log \|\wedge^k \big(A(\Sigma^{n-1}(\bsigma)) \cdots A(\Sigma(\bsigma)) A(\bsigma)\big)\|_\infty\, d\nu(\bsigma) \nonumber \\
& = \lim_{n\to\infty} \frac{1}{n} \int_{S^\N} \log \|\wedge^k (M_{[0,n)})^t\|_\infty\, d\nu \label{eq:transposeequal}
\\
&= \lim_{n\to\infty} \frac{1}{n} \int_{S^\N} \log \|\wedge^k M_{[0,n)}\|_\infty\, d\nu \nonumber
\end{align}
for $1 \le k \le d$, where $\wedge^k$ denotes the $k$-fold wedge product. We say that $(S^\N,\Sigma,\nu)$ satisfies the \emph{Pisot condition} if
\begin{equation}\label{eq:lyapunov}
\vartheta_1 > 0 > \vartheta_2 \ge \cdots \ge \vartheta_d
\end{equation}
({\em cf.}~\cite[\S6.3]{Berthe-Delecroix}).
Using these definitions we get the following metric version of Theorems~\ref{prop:SadicUE}, \ref{th:RauzyProperties}, and~\ref{th:TilingRotation}.

\begin{theorem}[{see \cite[Theorem~3.3]{Berthe-Steiner-Thuswaldner}}] \label{t:3}
Let $S$ be a finite set of unimodular substitutions and assume that the shift $(S^\N,\Sigma,\nu)$ is ergodic and satisfies the Pisot condition. Assume further that $\nu$ assigns positive measure to every cylinder and that there exists a cylinder corresponding to a substitution with positive incidence matrix.
Then, for $\nu$-almost every $\boldsymbol{\sigma} \in S^\N$ the following assertions hold.
\begin{enumerate} \setcounter{enumi}{0}
\itemsep1ex
\item $(X_\bsigma,\Sigma)$ is minimal and uniquely ergodic (denote the unique $\Sigma$-invariant measure by $\mu$).
\item Each subtile $\Ra(i)$, $i\in\A$, is equal to the closure of its interior and satisfies $\lambda_\bone(\partial\Ra(i))=0$.
\item If the collection $\mathcal{C}_\mathbf{1}$ associated with~$\boldsymbol{\sigma}$ forms a tiling of $\mathbf{1}^\bot$ then $(X_\bsigma,\Sigma,\mu)$ is measurably conjugate to a rotation $T$ on $\mathbb{T}^{d-1}$, each element of $X_\bsigma$ is a natural coding of $T$ w.r.t.\ the partition $\{\Ra(i)\,:\, i\in \A\}$ of $\Ra$, and each $\Ra(i)$, $i\in\A$, is a bounded remainder set for~$T$.
\end{enumerate}
\end{theorem}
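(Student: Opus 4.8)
The plan is to deduce Theorem~\ref{t:3} from the non-metric results (Theorems~\ref{prop:SadicUE}, \ref{th:RauzyProperties}, and~\ref{th:TilingRotation}) by showing that, under the ergodicity and Pisot hypotheses on $(S^\N,\Sigma,\nu)$, the structural conditions required by those theorems — primitivity, algebraic irreducibility, and the recurrent $C$-balance condition — hold for $\nu$-almost every $\bsigma\in S^\N$. The key philosophy is that the qualitative combinatorial hypotheses used earlier in the excerpt become \emph{generic} properties with respect to any reasonable shift-invariant measure, so the ``almost everywhere'' statements reduce to verifying genericity one condition at a time.

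First I would establish primitivity and recurrence. Since $\nu$ assigns positive measure to every cylinder, in particular to the cylinder $[\tau]$ corresponding to a substitution $\tau$ with positive incidence matrix, the Poincar\'e recurrence theorem (or the ergodic theorem applied to $\mathbbm{1}_{[\tau]}$) shows that for $\nu$-a.e.\ $\bsigma$ the symbol $\tau$ appears infinitely often in $\bsigma$. A single occurrence of a positive incidence matrix in the product $M_{[m,n)}$ forces primitivity, so $\nu$-a.e.\ $\bsigma$ is primitive. Recurrence of $\nu$-a.e.\ sequence follows from ergodicity together with the positivity of $\nu$ on all cylinders: every finite block that occurs with positive probability reappears infinitely often almost surely. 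I would also record here, via Theorem~\ref{prop:SadicUE}, that primitivity plus recurrence already yield assertion~(1) (minimality and unique ergodicity) for $\nu$-a.e.\ $\bsigma$.

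Next I would bring in the Pisot condition \eqref{eq:lyapunov} to obtain the two quantitative ingredients: algebraic irreducibility and finite balance. This is where Oseledets' multiplicative ergodic theorem enters. The Lyapunov spectrum $\vartheta_1 > 0 > \vartheta_2 \ge \cdots \ge \vartheta_d$ means the cocycle $A(\bsigma)=(M_0)^t$ has exactly one positive exponent and all others negative; by the equalities in \eqref{eq:transposeequal} the same holds for the forward products $M_{[0,n)}$. From the Oseledets decomposition, for $\nu$-a.e.\ $\bsigma$ the image $\pi_{\bu,\bone}M_{[0,n)}\mathbf{l}(v)$ of any bounded word contracts at an exponential rate governed by $\vartheta_2<0$; this is precisely the strong convergence behaviour, and one can extract from the uniform exponential contraction (along an Oseledets-regular orbit) that the broken line stays at bounded distance from the expanding line, i.e.\ that $\Lg_\bsigma^{(n)}$ is finitely balanced. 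Algebraic irreducibility I would obtain by ruling out, almost surely, that any large block $M_{[m,\ell)}$ has $1$ as an eigenvalue or a reducible characteristic polynomial; the separation $\vartheta_1>0>\vartheta_2$ prevents a persistent eigenvalue on the unit circle, and the genericity of the measure forbids the stable algebraic degeneracy that reducibility would require. The recurrent $C$-balance condition demanded by Theorems~\ref{th:RauzyProperties} and~\ref{th:TilingRotation} then follows by combining recurrence (a prescribed initial block reappears at infinitely many positions $n$) with uniform balance of the shifted languages $\Lg_\bsigma^{(n+\ell)}$, the uniformity in $n$ coming from the stationarity of $\nu$ and the Oseledets estimates. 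Once these hypotheses are in force, assertion~(2) is immediate from Theorem~\ref{th:RauzyProperties} and assertion~(3) is immediate from Theorem~\ref{th:TilingRotation}.

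The hard part will be the passage from the Lyapunov-exponent inequality \eqref{eq:lyapunov} to the \emph{pointwise, uniform} balance and strong convergence statements. Oseledets' theorem gives exponential growth/decay rates only along $\nu$-typical orbits and only asymptotically, with multiplicative constants that depend on the orbit and a priori vary from point to point; upgrading this to the uniform boundedness $\big||u|_i-|v|_i\big|\le C$ for a single constant $C$ (and in the recurrent form, the \emph{same} $C$ for infinitely many shifted languages) requires controlling the non-uniformity. The standard device is to work over a set of Oseledets-regular points of measure close to $1$, use the recurrence of $\bsigma$ to return to a compact set on which the contraction constants are uniform, and then invoke an argument in the spirit of Proposition~\ref{prop:strongconv} and Lemma~\ref{irrirr} to convert exponential contraction of the projected images into a uniform bound. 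Making this interplay between the measurable (Oseledets) and combinatorial (balance) sides precise, and checking that the exceptional null sets for primitivity, recurrence, irreducibility, and balance can be discarded simultaneously, is the technical core; the rest is bookkeeping that invokes the already-proved Theorems~\ref{prop:SadicUE}, \ref{th:RauzyProperties}, and~\ref{th:TilingRotation} verbatim.
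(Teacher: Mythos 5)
Your overall strategy coincides with the paper's: prove that each hypothesis of Theorems~\ref{prop:SadicUE}, \ref{th:RauzyProperties}, and~\ref{th:TilingRotation} is $\nu$-generic (primitivity and recurrence via Poincar\'e recurrence on cylinders, balance via Oseledets plus the Pisot condition, algebraic irreducibility via the spectral gap), and then invoke those theorems verbatim. However, your treatment of the recurrent $C$-balance condition has a genuine gap. You claim that the required condition follows from ``uniform balance of the shifted languages $\Lg_\bsigma^{(n+\ell)}$, the uniformity in $n$ coming from the stationarity of $\nu$ and the Oseledets estimates.'' This is exactly what cannot be true: stationarity only gives that each $\Lg_\bsigma^{(n)}$ is almost surely finitely balanced with a constant $C_n$ depending on $n$, and Remark~\ref{rem:commentThmRot}~(i) points out that demanding a single constant $C$ for \emph{all} shifted languages cuts down to a measure-zero set of directive sequences --- which is precisely why the hypothesis of Theorems~\ref{th:RauzyProperties} and~\ref{th:TilingRotation} was formulated in its weaker, ``recurrent'' form. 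The paper's actual mechanism is different: Lemma~\ref{lem:genericBalance} shows that the set $S(C)$ of sequences with $C$-balanced language satisfies $\nu(S(C))>0$ for $C$ large; one then fixes such a $C$ and proves, for every block $(\sigma_0,\ldots,\sigma_{\ell-1})$, that $\nu\bigl([\sigma_0,\ldots,\sigma_{\ell-1}]\cap\Sigma^{-\ell}S(C)\bigr)>0$, and ergodicity/Poincar\'e recurrence then gives, for a.e.\ $\bsigma$ and every $\ell$, an $n$ at which the prefix block recurs \emph{and} the tail language $\Lg_\bsigma^{(n+\ell)}$ lies in $S(C)$ --- one constant $C$ for all $\ell$. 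The crux is the positivity of the \emph{joint} event (prefix block followed immediately by a tail in $S(C)$); this is where the paper must invoke independence and restricts its write-up to Bernoulli measures $\nu$. Your proposed fix (``return to a compact set on which the contraction constants are uniform'') gestures at $S(C)$ but never addresses this coupling of the recurring prefix with membership of the tail in a fixed positive-measure balance set; without it the argument does not close, and for a general ergodic $\nu$ it is not even clear how to start.

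A secondary gap is the algebraic irreducibility step. Your appeal to ``the genericity of the measure forbid[ding] the stable algebraic degeneracy that reducibility would require'' is not an argument; genericity plays no role at this point. The paper's Lemma~\ref{lem:genericIrred} is deterministic once primitivity and the Pisot contraction are in force: for large $\ell$ the block $M_{[k,\ell)}$ is a strictly positive unimodular integer matrix whose image of the unit sphere is a very elongated ellipsoid, so no eigenvalue other than the Perron--Frobenius eigenvalue can have modulus $\ge 1$, and $0$ is excluded as an eigenvalue by unimodularity. The closing step is arithmetic: if the characteristic polynomial factored over $\Z$, the factor not containing the Perron--Frobenius root would have all roots nonzero and of modulus $<1$, so its constant term would be a nonzero integer of modulus $<1$ --- a contradiction. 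You should replace the appeal to genericity by this spectral-gap-plus-integrality argument.
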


\subsection{Proof of the metric result}\label{sec:194}

In the present section we give a quite complete proof of Theorem~\ref{t:3}. The idea is to show that each of the conditions posed in Theorem~\ref{th:TilingRotation} is {\em generic}. A prominent tool in this proof is the {\em Multiplicative Ergodic Theorem} (also called {\em Oseledec Theorem}; see for instance~\cite[3.4.1~Theorem]{Arnold98}). Also the famous {\em Poincar\'e Recurrence Theorem} ({\em cf.~e.g.}~\cite[Theorem~1.4]{Walters:82}), which states that a.e. orbit in a measurable dynamical system $(X,T,\mu)$ starting in a set of positive measure $E$ hits $E$ infinitely often, will be used. In our setting, the Oseledec theorem has the following consequence.

\begin{proposition}\label{prop:oseledec}
Let $S$ be a finite set of unimodular substitutions over the alphabet $\A=\{1,2,\ldots,d\}$ and assume that the shift $(S^\N,\Sigma,\nu)$ is ergodic with Lyapunov exponents $\vartheta_1,\ldots,\vartheta_d$ satisfying the Pisot condition~\eqref{eq:lyapunov}. Assume further that $\nu$ assigns positive measure to every cylinder and that there exists a cylinder corresponding to a substitution with positive incidence matrix. Then for $\nu$-a.e. $\bsigma\in S^\N$ the following assertions hold.
\begin{enumerate}
\item[(i)]
The sequence $\bsigma$ is primitive and recurrent, thus the letter frequency vector $\bu=\bu(\bsigma)$ exists. 
\item[(ii)] 
For each $\varepsilon >0$ there exists $n_0=n_0(\varepsilon,\bsigma)$ such that the sequence of incidence matrices $\bM=(M_n)=(M_n(\bsigma))$ satisfies\footnote{Here $\Vert \cdot \Vert_2$ is the operator norm w.r.t.\ the Euclidean norm on $\R^d$.}
\[
\Vert (M_{[0,n)})^t\vert_{\bu^\bot} \Vert_2 < e^{(\vartheta_2+\varepsilon)n}
\]
for each $n\ge n_0$.
\end{enumerate}
\end{proposition}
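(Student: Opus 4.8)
The plan is to derive (i) from the ergodicity of $(S^\N,\Sigma,\nu)$ together with the two hypotheses on $\nu$, and (ii) from the Multiplicative Ergodic Theorem applied to the transpose cocycle, the Pisot gap $\vartheta_1>\vartheta_2$, and an exact duality between its slow Oseledec subspace and $\bu^\bot$. For (i), recurrence is a direct consequence of the Poincar\'e Recurrence Theorem: for each $m$ the prefix cylinder $[\sigma_0\cdots\sigma_{m-1}]$ has positive $\nu$-measure and contains $\bsigma$, so $\nu$-a.e.\ point returns to it infinitely often; intersecting the resulting full-measure sets over all $m$ shows that $\nu$-a.e.\ $\bsigma$ is recurrent. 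For primitivity, let $\tau\in S$ have positive incidence matrix $M_\tau$. Since $\nu([\tau])>0$, ergodicity gives that $\nu$-a.e.\ $\bsigma$ has $\sigma_n=\tau$ for infinitely many $n$; for two such occurrences $p<q$ the block $M_{[p,q+1)}=M_\tau M_{[p+1,q)}M_\tau$ is strictly positive, because $M_{[p+1,q)}$ has no zero column (columns of incidence-matrix products are abelianizations of nonempty words), so $M_\tau M_{[p+1,q)}$ is already positive. Thus positive blocks recur, and the Hilbert-metric contraction underlying Proposition~\ref{prop:furstmatrix} applies to every shift $\Sigma^m\bsigma$, producing a strictly positive generalized right eigenvector $\bu^{(m)}\in\R^d_{>0}$. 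Strict positivity forces every letter to occur in each limit sequence of $\Sigma^m\bsigma$, which I would use to rule out zero rows in any finite product $M_{[m,p)}$; combined with a recurring positive block this makes $M_{[m,n)}$ positive for suitable $n$, i.e.\ primitivity in the sense of Definition~\ref{def:prim}, so that $\bu=\bu^{(0)}$ exists.

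For (ii), I would apply the Oseledec Theorem to the linear cocycle $A(\bsigma)=(M_0)^t$ over $(S^\N,\Sigma,\nu)$, whose $n$-step forward product is exactly $A(\Sigma^{n-1}\bsigma)\cdots A(\bsigma)=(M_{[0,n)})^t$; log-integrability is trivial since $S$ is finite. The theorem yields, for $\nu$-a.e.\ $\bsigma$, an equivariant splitting $\R^d=F_1(\bsigma)\oplus\cdots$ with the Lyapunov exponents $\vartheta_1,\dots,\vartheta_d$ of \eqref{eq:lyapunov}; the gap $\vartheta_1>\vartheta_2$ from the Pisot condition makes $F_1(\bsigma)$ one-dimensional, and the slow subspace $W(\bsigma):=\bigoplus_{i\ge2}F_i(\bsigma)$ has dimension $d-1$ and satisfies the uniform bound
\[
\Vert (M_{[0,n)})^t\vert_{W(\bsigma)}\Vert_2\le e^{(\vartheta_2+\varepsilon)n}\qquad(n\ge n_0(\varepsilon,\bsigma)).
\]
It therefore suffices to prove the identification $W(\bsigma)=\bu^\bot$.

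I would establish this duality by the inclusion $W(\bsigma)\subseteq\bu^\bot$ followed by a dimension count (both spaces have dimension $d-1$). For $x\in W(\bsigma)$, using $\bu=M_{[0,n)}\bu^{(n)}$ with $\bu^{(n)}=M_{[0,n)}^{-1}\bu$ gives the identity
\[
\langle x,\bu\rangle=\langle x,M_{[0,n)}\bu^{(n)}\rangle=\langle (M_{[0,n)})^t x,\bu^{(n)}\rangle,
\]
valid for every $n$. Here $\Vert(M_{[0,n)})^t x\Vert\le e^{(\vartheta_2+\varepsilon)n}\Vert x\Vert\to0$ (choosing $\varepsilon<-\vartheta_2$, legitimate by the Pisot condition), while $\bu^{(n)}\ge\mathbf{0}$ lies in the cone and satisfies $\Vert\bu^{(n)}\Vert_1\le\Vert\bu\Vert_1/m_n^-\to0$, because $\Vert\bu\Vert_1=\langle M_{[0,n)}^t\bone,\bu^{(n)}\rangle\ge m_n^-\Vert\bu^{(n)}\Vert_1$ with $m_n^-=\min_j|\sigma_{[0,n)}(j)|\to\infty$ by primitivity. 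Hence the right-hand side tends to $0$; since $\langle x,\bu\rangle$ does not depend on $n$, it must vanish, proving $x\in\bu^\bot$. With $W(\bsigma)=\bu^\bot$ in hand, the restriction of $(M_{[0,n)})^t$ to $\bu^\bot$ coincides with its restriction to $W(\bsigma)$, so the displayed Oseledec bound gives $\Vert(M_{[0,n)})^t\vert_{\bu^\bot}\Vert_2\le e^{(\vartheta_2+\varepsilon)n}$; running the argument with $\varepsilon/2$ and absorbing the implicit constant for large $n$ yields the strict inequality claimed.

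I expect the main obstacle to be the careful invocation of the Oseledec Theorem: one must verify that the forward product of the cocycle is $(M_{[0,n)})^t$ (and not its inverse or a reversed product), that the Pisot condition genuinely forces simplicity of the top exponent so that $\dim W(\bsigma)=d-1$, and that the growth estimate on $W(\bsigma)$ is uniform over its unit sphere rather than merely pointwise. The duality step itself is short once the identity $\langle x,\bu\rangle=\langle (M_{[0,n)})^t x,\bu^{(n)}\rangle$ is noticed, but it hinges on the decay $\Vert\bu^{(n)}\Vert\to0$, for which the positivity of $\bu^{(n)}$ and $m_n^-\to\infty$ are essential; the subsidiary no-zero-row argument needed to pin down Definition~\ref{def:prim}-primitivity in part~(i) is the other place where some care is required.
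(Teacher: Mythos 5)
Your proposal is correct, and its skeleton coincides with the paper's proof: part~(i) via Poincar\'e recurrence/ergodicity together with a recurring positive incidence matrix, then Proposition~\ref{prop:furstmatrix} for the existence of $\bu$; part~(ii) via the Multiplicative Ergodic Theorem (log-integrability is trivial since $S$ is finite, simplicity of $\vartheta_1$ comes from the Pisot gap), followed by an identification of the slow Oseledec hyperplane with $\bu^\bot$. The one point where you genuinely diverge is the identification step. The paper argues by contraposition and pairs with $\bone$: for $\bx\notin\bu^\bot$ the quantity $\langle (M_{[0,n)})^t\bx,\bone\rangle=\langle \bx,M_{[0,n)}\bone\rangle$ is unbounded because $M_{[0,n)}\bone$ grows along $\R_+\bu$, contradicting the exponential decay on the Oseledec hyperplane $\mathcal{H}$. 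You instead pair with $\bu$ itself and exploit the cone property $\bu\in\bigcap_n M_{[0,n)}\R^d_{\ge 0}$, so that $\bu^{(n)}=M_{[0,n)}^{-1}\bu$ is nonnegative with $\Vert\bu^{(n)}\Vert_1\le\Vert\bu\Vert_1/m_n^-$, and let the decay of $(M_{[0,n)})^t x$ kill the pairing. Both are short duality computations relying on $\vartheta_2<0$; the paper's needs only the growth of $M_{[0,n)}\bone$, while yours needs the (already established) weak convergence to justify $\bu^{(n)}\ge\mathbf{0}$ --- in fact boundedness of $\bu^{(n)}$ would suffice, so your use of $m_n^-\to\infty$ is more than is needed.

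Two small blemishes, neither fatal. First, your justification that $M_{[m,p)}$ has no zero rows ``because every letter occurs in each limit sequence of $\Sigma^m\bsigma$'' is roundabout and risks circularity: the existence of limit sequences (Proposition~\ref{prop:sadicminimal}) is itself derived from primitivity, which is what you are proving. The clean argument is one line: the matrices are unimodular, hence invertible, hence have no zero rows; then any occurrence of $\tau$ at position $p\ge m$ already makes $M_{[m,p+1)}=M_{[m,p)}M_\tau$ positive. Second, over the one-sided shift $(S^\N,\Sigma)$ the Oseledec theorem furnishes a filtration rather than an equivariant splitting $\bigoplus_i F_i(\bsigma)$; what you actually use --- a $(d-1)$-dimensional subspace $W(\bsigma)$ of vectors with exponential growth rate at most $\vartheta_2$, with the estimate holding uniformly on its unit sphere --- does exist in the one-sided statement (uniformity follows from finite-dimensionality, as you note), so the argument stands after this cosmetic correction; the paper sidesteps the issue by asserting only the existence of a hyperplane $\mathcal{H}$ with the required decay.
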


\begin{proof}
Since $\nu$ puts positive mass on each cylinder, $\nu$-a.e.\ $\bsigma$ is recurrent by Poincar\'e recurrence. Together with the fact that there is a cylinder corresponding to a positive incidence matrix Poincar\'e recurrence also implies primitivity for $\nu$-a.e.\ $\bsigma$. Thus $\nu$-a.e. $\bsigma$ has a letter frequency vector $\bu$ by Proposition~\ref{prop:furstmatrix}. This proves (i).

In order to apply the Multiplicative Ergodic Theorem \cite[3.4.1~Theorem]{Arnold98} we need to assure {\em log-integrability} of the cocycle which, in our case, means that 
\begin{equation}\label{eq:logint}
\max\{0, \log\Vert M_0(\bsigma)\Vert_2\} \in L^1(S^\N,\nu).
\end{equation}
Since $S$ finite, the quantity $\max\{0, \log\Vert M_0(\bsigma)\Vert_2\}$ is bounded and therefore \eqref{eq:logint} always holds. Thus, because $\vartheta_1$ is a simple Lyapunov exponent, \cite[3.4.1~Theorem]{Arnold98} implies that for $\nu$-a.e.\ $\bsigma$  there is a hyperplane $\mathcal{H}=\mathcal{H}(\bsigma)\subset \R^d$ such that
$\lim_{n\to\infty} \frac1n \log \Vert  M_{[0,n)}(\bsigma)^t|_\mathcal{H}  \Vert_2 \le \vartheta_2$.
This implies that for each $\varepsilon >0$ there is $n_0=n_0(\varepsilon,\bsigma)$ such that
\begin{equation}\label{eq:Hubot2}
\Vert M_{[0,n)}(\bsigma)^t\vert_\mathcal{H} \Vert_2 < e^{(\vartheta_2+\varepsilon)n} 
\end{equation}
holds for $n\ge n_0$. 
It remains to show that $\mathcal{H}=\bu^\bot$. However, this follows because for $\bx\not\in\bu^\bot$ we have that  $\langle M_{[0,n)}(\bsigma)^t \bx, \bone \rangle = \langle \bx, M_{[0,n)}(\bsigma) \bone \rangle$ is unbounded because for large $n$ the vector $M_{[0,n)}(\bsigma) \bone$ is a large vector close to the line $\R_+\bu$. Thus the only hyperplane for which \eqref{eq:Hubot2} can possibly hold is $\mathcal{H}=\bu^\bot$ and (ii) follows.

\end{proof}

Proposition~\ref{prop:oseledec} is now used in order to show that balance is generic for elements of a shift $(S^\N,\Sigma,\nu)$ satisfying the Pisot condition.

\begin{lemma}\label{lem:genericBalance}
Let $S$ be a finite set of unimodular substitutions over the alphabet $\A=\{1,2,\ldots,d\}$ and assume that the shift $(S^\N,\Sigma,\nu)$ is ergodic and satisfies the Pisot condition~\eqref{eq:lyapunov}. Assume further that $\nu$ assigns positive measure to every cylinder and that there exists a cylinder corresponding to a substitution with positive incidence matrix. Then the sets
\[
S(C) =\{\bsigma\in S^\N \;:\; \mathcal{L}_\bsigma \hbox{ \rm is $C$-balanced}  \} \qquad(C \in \N)
\]
satisfy
\[
\lim_{C\to\infty} \nu(S(C))=1,
\]
{\em i.e.}, balance of $\mathcal{L}_\bsigma$ is a generic property of $\bsigma\in S^\N$.
\end{lemma}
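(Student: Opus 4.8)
The plan is to show that $\Lg_\bsigma$ is finitely balanced for $\nu$-almost every $\bsigma$ and then to conclude by continuity of measure. Since $C$-balance implies $C'$-balance for $C'\ge C$, the sets $S(C)$ are nested, and $\bigcup_{C\in\N}S(C)=\{\bsigma\in S^\N : \Lg_\bsigma \text{ is finitely balanced}\}$; each $S(C)$ is Borel because, for every fixed word $u$, the event ``$u\in\Lg_\bsigma$'' is a countable union of cylinder events. Hence it suffices to prove $\nu\big(\bigcup_C S(C)\big)=1$, for then continuity from below gives $\lim_{C\to\infty}\nu(S(C))=\nu\big(\bigcup_C S(C)\big)=1$. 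By Proposition~\ref{prop:Rcompact} together with \eqref{eq:boundbal}, for a primitive and recurrent $\bsigma$ finite balance of $\Lg_\bsigma$ is equivalent to boundedness of the Rauzy fractal $\Ra=\Ra_\bone$. So the whole problem reduces to showing that $\Ra$ is bounded for $\nu$-a.e.\ $\bsigma$.

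Fix $\bsigma$ in the full-measure set on which Proposition~\ref{prop:oseledec} holds; in particular $\bsigma$ is primitive and recurrent with right eigenvector $\bu=\bu(\bsigma)\in\R^d_{>0}$, and by Proposition~\ref{prop:sadicminimal}~(i) it has at most $|\A|$ limit sequences. The Pisot condition \eqref{eq:lyapunov} gives $\vartheta_2<0$, so I fix $\varepsilon=-\vartheta_2/2>0$ and take $n_0=n_0(\varepsilon,\bsigma)$ as in Proposition~\ref{prop:oseledec}~(ii). The key estimate is a contraction bound for the projected images $\pi_{\bu,\bone}M_{[0,k)}\by$. Using that the adjoint of the projection $\pi_{\bu,\bone}$ (along $\bu$ onto $\bone^\bot$) is the projection $\pi_{\bone,\bu}$ (along $\bone$ onto $\bu^\bot$), one computes for every $\bz\in\bone^\bot$ of unit norm
\[
\langle \pi_{\bu,\bone}M_{[0,k)}\by,\bz\rangle=\langle \by,(M_{[0,k)})^t\pi_{\bone,\bu}\bz\rangle,
\]
and since $\pi_{\bone,\bu}\bz\in\bu^\bot$, Proposition~\ref{prop:oseledec}~(ii) bounds the right-hand side by $\|\by\|_2\,\|\pi_{\bone,\bu}\|_2\,e^{(\vartheta_2+\varepsilon)k}$ for all $k\ge n_0$. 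Taking the supremum over $\bz$ yields
\[
\|\pi_{\bu,\bone}M_{[0,k)}\by\|_2 \le \|\pi_{\bone,\bu}\|_2\,\|\by\|_2\,e^{(\vartheta_2+\varepsilon)k}\qquad(k\ge n_0).
\]

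Next I would feed this into the Dumont--Thomas prefix expansion. Every prefix $p$ of a limit sequence $w=\sigma_{[0,n)}(w^{(n)})$ of $\bsigma$ admits a finite decomposition $\mathbf{l}(p)=\sum_{k=0}^{N}M_{[0,k)}\mathbf{l}(p_k)$, where each $p_k$ is a proper prefix of $\sigma_k(b_k)$ for some $b_k\in\A$; in particular $\|\mathbf{l}(p_k)\|_2\le L:=\max\{|\sigma(a)|:\sigma\in S,\,a\in\A\}$, and this holds for prefixes of every limit sequence. Applying $\pi_{\bu,\bone}$, splitting the sum at $k=n_0$, and using the contraction bound on the tail gives
\[
\|\pi_{\bu,\bone}\mathbf{l}(p)\|_2 \le L\sum_{k=0}^{n_0-1}\|\pi_{\bu,\bone}M_{[0,k)}\|_2 + L\,\|\pi_{\bone,\bu}\|_2\sum_{k\ge n_0}e^{(\vartheta_2+\varepsilon)k},
\]
which is finite (the geometric tail converges because $\vartheta_2+\varepsilon<0$) and, crucially, independent of the prefix $p$. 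Hence $\{\pi_{\bu,\bone}\mathbf{l}(p):p \text{ is a prefix of a limit sequence of }\bsigma\}$ is bounded, so its closure $\Ra$ is bounded, and finite balance of $\Lg_\bsigma$ follows. This shows $\bigcup_C S(C)$ has full measure and completes the argument.

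I expect the main obstacle to be the second paragraph: correctly transferring the restricted-norm contraction of the transpose cocycle on $\bu^\bot$ (Proposition~\ref{prop:oseledec}~(ii)) into a contraction for the non-adjoint operator $\pi_{\bu,\bone}M_{[0,k)}$, which is what actually controls the Rauzy fractal. Note that $\bu^\bot$ is \emph{not} invariant under $(M_{[0,k)})^t$, so one must argue with the operator norm of the restriction rather than with an invariant-subspace argument; the adjoint identity $\pi_{\bu,\bone}^t=\pi_{\bone,\bu}$ and the fact that $\pi_{\bone,\bu}$ lands in $\bu^\bot$ are exactly what make the estimate go through. The remaining pieces---the nestedness and continuity reduction, the prefix expansion, and the convergence of the geometric tail---are routine.
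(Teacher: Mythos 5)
Your proof is correct, and its analytic core---the contraction $\Vert (M_{[0,k)})^t\vert_{\bu^\bot}\Vert_2 < e^{(\vartheta_2+\varepsilon)k}$ from Proposition~\ref{prop:oseledec}~(ii) fed into a Dumont--Thomas expansion---is the same as in the paper's proof, but you route the argument differently. The paper never passes through the Rauzy fractal: it takes an arbitrary factor $v$ of some $w\in X_\bsigma$, writes it via the Dumont--Thomas decomposition keeping both prefix and suffix contributions $p_n,s_n$, pairs $\mathbf{l}(v)$ against the vectors $\be_i-u_i(\be_1+\cdots+\be_d)\in\bu^\bot$, and obtains the uniform bound $\big||v|_i-|v|u_i\big|\le 2\sqrt{d}\,\sum_{n}\Vert (M_{[0,n)})^t\vert_{\bu^\bot}\Vert_2\Vert M_n\Vert_2$, which is finite balance directly. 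You instead expand only prefixes of limit sequences, transfer the transposed contraction to the operator $\pi_{\bu,\bone}M_{[0,k)}$ through the adjoint identity $\pi_{\bu,\bone}^t=\pi_{\bone,\bu}$ (correct, as one checks from $\pi_{\bu,\bone}\bx=\bx-\langle\bx,\bone\rangle\bu/\langle\bu,\bone\rangle$), deduce boundedness of $\Ra$, and then invoke Proposition~\ref{prop:Rcompact} to convert boundedness into balance. The two computations are in fact dual to one another: with $\Vert\bu\Vert_1=1$ one has $\be_i-u_i\bone=\pi_{\bone,\bu}\be_i$, so the paper's quantity $\langle\be_i-u_i\bone,\mathbf{l}(v)\rangle$ is precisely the $i$-th coordinate of $\pi_{\bu,\bone}\mathbf{l}(v)$. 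What your route buys: no suffix bookkeeping, the combinatorics of balance is outsourced to the already-proved Proposition~\ref{prop:Rcompact}, and you treat the measurability of $S(C)$ and the continuity-from-below step explicitly, which the paper leaves implicit. What the paper's route buys: it is self-contained at this point (it does not need Proposition~\ref{prop:Rcompact}, whose bounded-implies-balanced direction itself relies on primitivity and the frequency result) and it produces an explicit balance constant. One small point you should make explicit: termination of your prefix expansion (finiteness of $N$) uses primitivity, since $|\sigma_{[0,k)}(a)|\to\infty$ is what forces the successive desubstituted prefixes to become empty after finitely many steps.
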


\begin{proof}
By Proposition~\ref{prop:oseledec} we see that for $\nu$-a.e.\ $\bsigma\in S^\N$ 
the sequence is primitive and recurrent, and for the letter frequency vector $\bu=(u_1,\ldots,u_d)^t$ (with $\Vert\bu\Vert_1=1$) we have 
\begin{equation}\label{eq:balFinit}
\sum_{n\ge 0}\Vert (M_{[0,n)})^t\vert_{\bu^\bot} \Vert_2 < \infty.
\end{equation}
We assume that $\bsigma\in S^\N$ has all these properties and follow the proof of \cite[Theorem~5.8]{Berthe-Delecroix}. Let $w\in X_\bsigma$ be arbitrary. Since by the proof of Proposition~\ref{prop:sadicminimal}~(iii) each element of $X_\bsigma$ has the same language, each factor $v$ of $w$ is a factor of a limit sequence of $\bsigma$ and, hence, by \eqref{def:sadicsequence} can be written as
\begin{equation}\label{eq:DMuvw}
v=p_0\sigma_0(p_1\ldots \sigma_{N-2}(p_{N-1}\sigma_{N-1}(x)s_{N-1})\ldots s_1)s_0
\end{equation}
where $p_n$ and $s_n$ is a prefix and a suffix of $\sigma_n(i)$ for some $i\in \A$, respectively,  for each $0\le n\le N-1$ and $x$ is a factor of $\sigma_N(i)$ for some $i\in \A$. To make the notation easier we set $p_N=x$ and $s_N=\varepsilon$. We mention that \eqref{eq:DMuvw} is the Dumont-Thomas decomposition of $v$ which was first introduced in \cite{Dumont-Thomas:89}. Using \eqref{eq:DMuvw} and denoting by $\be_1,\ldots, \be_d$ the standard basis vectors of $\R^d$ we have 
\[
\begin{split}
|v|_i-|v|u_i &= \sum_{n=0}^N
( |\sigma_{[0,n)}(p_n)|_i -  |\sigma_{[0,n)}(p_n)| u_i + |\sigma_{[0,n)}(s_n)|_i -  |\sigma_{[0,n)}(s_n)| u_i)\\
&= \sum_{n=0}^N \langle
\be_i-u_i(\be_1+\cdots+\be_d), M_{[0,n)}\mathbf{l}(p_n + s_n)
\rangle
\end{split}
\]
for each $i\in \A$. Since $u_1+\cdots+u_d=1$ we see that $\be_i-u_i(\be_1+\cdots+\be_d) \in \bu^\bot$. This can be used to get
\begin{align*}
\big|
|v|_i-|v|u_i 
\big| 
&\le 
\sum_{n=0}^N \big|
\langle
\be_i-u_i(\be_1+\cdots+\be_d), M_{[0,n)}\mathbf{l}(p_n + s_n)
\rangle\big|\\
&=
\sum_{n=0}^N \big|
\langle
(M_{[0,n)})^t(\be_i-u_i(\be_1+\cdots+\be_d)), \mathbf{l}(p_n + s_n)
\rangle\big|\\
& \le 2\sqrt{d} \sum_{n=0}^N
\Vert (M_{[0,n)})^t\vert_{\bu^\bot} \Vert_2 \Vert M_{n}\Vert_2.
\end{align*}
Since $S$ is a finite set, the quantity $\Vert M_n\Vert_2$ is uniformly bounded in $n$. Thus, using \eqref{eq:balFinit}  this implies that $w$ is finitely balanced. Since $\bsigma$ was taken from a set of full measure $\nu$ of $S^\mathbb{N}$ this finishes the proof.

\end{proof}

Before we can put everything together we need to deal with the genericness of algebraic irreducibility. This has been done in \cite[Lemma~8.7]{Berthe-Steiner-Thuswaldner} in the following fashion.

\begin{lemma}\label{lem:genericIrred}
Let $S$ be a finite set of unimodular substitutions over the alphabet $\A=\{1,2,\ldots,d\}$ and assume that the shift $(S^\N,\Sigma,\nu)$ is ergodic and satisfies the Pisot condition~\eqref{eq:lyapunov}. If $\nu$-a.e.\ sequence $\bsigma\in S^\N$ is primitive then $\nu$-a.e.\ sequence $\bsigma\in S^\N$ is algebraically irreducible.
\end{lemma}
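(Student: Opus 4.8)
The plan is to show that each hypothesis occurring in the deterministic statements of this section is $\nu$-generic, and then to reduce algebraic irreducibility to rational independence of the generalized eigenvectors. Under the standing assumptions the shift $(S^\N,\Sigma,\nu)$ is ergodic and satisfies the Pisot condition~\eqref{eq:lyapunov}, $\nu$-a.e.\ $\bsigma$ is primitive (by hypothesis) and recurrent (by Poincar\'e recurrence), and $\Lg_\bsigma$ is finitely balanced for $\nu$-a.e.\ $\bsigma$ by Lemma~\ref{lem:genericBalance}. Write $B_m$ for the set of $\bsigma$ for which the characteristic polynomial of $M_{[m,\ell)}$ is reducible for infinitely many $\ell$; then $\bsigma$ fails to be algebraically irreducible exactly when $\bsigma\in\bigcup_{m\in\N}B_m$. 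Since $M_{[m,\ell)}$ depends only on $\Sigma^m\bsigma$ we have $B_m=\Sigma^{-m}B_0$, so by $\Sigma$-invariance of $\nu$ it suffices to prove $\nu(B_0)=0$, i.e.\ that for $\nu$-a.e.\ $\bsigma$ the polynomials $\chi_{M_{[0,\ell)}}$ are irreducible for all large $\ell$.

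First I would establish the key generic property: for $\nu$-a.e.\ $\bsigma$ the eigenvector $\bu=\bu(\bsigma)$ has rationally independent coordinates. By the Multiplicative Ergodic Theorem, exactly as in the proof of Proposition~\ref{prop:oseledec}(ii), for $\nu$-a.e.\ $\bsigma$ there is a hyperplane on which the cocycle $(M_{[0,n)})^t$ contracts at rate $\vartheta_2<0$, and by primitivity this hyperplane equals $\bu^\bot$; fixing $\varepsilon\in(0,-\vartheta_2)$ we get $\|(M_{[0,n)})^t|_{\bu^\bot}\|_2<e^{(\vartheta_2+\varepsilon)n}\to 0$. If some $\bx\in\Z^d\setminus\{\mathbf{0}\}$ satisfied $\langle\bx,\bu\rangle=0$, then $\bx\in\bu^\bot$, hence $\|(M_{[0,n)})^t\bx\|_2\to 0$; but $(M_{[0,n)})^t\bx$ is a nonzero integer vector (the matrices are unimodular, hence invertible over $\Z$), so its norm is at least $1$, a contradiction. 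Thus $\bu^\bot\cap\Z^d=\{\mathbf{0}\}$, which is the asserted rational independence. The same argument applied to the forward cocycle $(M_{[0,n)})$, whose contracting hyperplane is $\bv^\bot$ for the dominant Oseledec direction $\bv$ of the transpose cocycle, shows that $\bv$ has rationally independent coordinates for $\nu$-a.e.\ $\bsigma$ as well.

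It then remains to carry out the deterministic step: if $\bsigma$ is primitive, recurrent, finitely balanced and both $\bu(\bsigma)$ and $\bv(\bsigma)$ are rationally independent, then $\chi_{M_{[0,\ell)}}$ is irreducible for all large $\ell$. Suppose not, so that $M:=M_{[0,\ell)}$ admits a proper nonzero rational invariant subspace $W$ of dimension $k$ for infinitely many $\ell$. Since $M$ is unimodular, the determinants of its restrictions to $W$ and to the complementary rational invariant subspace are integers whose product is $\det M=\pm1$, hence each equals $\pm1$; equivalently, writing $\xi_W$ for a primitive integer generator of $\wedge^k W$, we have $\wedge^k M\,\xi_W=\pm\xi_W$. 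For large $\ell$ the block $M$ is primitive with a simple Perron eigenvalue $\beta_\ell>1$, so $\R^d$ splits into the Perron line $\R\mathbf{p}_\ell$ and the complementary invariant subspace, and $W$ respects this splitting. If $\mathbf{p}_\ell\in W$ then $W^\bot\subset\mathbf{p}_\ell^\bot$ and any nonzero $\bz\in W^\bot\cap\Z^d$ satisfies $\langle\bz,\mathbf{p}_\ell\rangle=0$; if $\mathbf{p}_\ell\notin W$ then $W\subset\mathbf{q}_\ell^\bot$ for the left Perron eigenvector $\mathbf{q}_\ell$, and any nonzero $\bz\in W\cap\Z^d$ satisfies $\langle\bz,\mathbf{q}_\ell\rangle=0$. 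By Perron--Frobenius together with the shrinking of the cone in Proposition~\ref{prop:furstmatrix} (and its transpose version), $\mathbf{p}_\ell\to\bu$ and $\mathbf{q}_\ell\to\bv$ in direction as $\ell\to\infty$; in either case one thus produces nonzero integer vectors that become arbitrarily close to $\bu^\bot$ (respectively $\bv^\bot$), which I aim to promote to an exact relation contradicting the rational independence established above.

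The hard part will be precisely this last upgrade from \emph{asymptotically orthogonal} to \emph{exactly orthogonal}. The obstruction is that the invariant subspace $W$ contributes Lyapunov exponent exactly $0$ (its restricted determinant is $\pm1$), so the contraction estimate that powered the rational independence argument no longer forces the integer vector to vanish; moreover the minimal integer vectors spanning $W$ and $W^\bot$ may grow exponentially with $\ell$, so their inner products with $\bu$ need not tend to $0$ merely from $\mathbf{p}_\ell\to\bu$. Resolving this is where the finite balance of $\Lg_\bsigma$ must be used quantitatively: as in the proof of Lemma~\ref{irrirr}, balance bounds the projections $\pi_{\bu,\bone}M_{[0,n)}\be_i$ and thereby controls $(M_{[0,n)})^t\bz$ for $\bz$ near $\bu^\bot$, and combining this with the spectral-gap rate of the convergence $\mathbf{p}_\ell\to\bu$ (governed by $\vartheta_1-\vartheta_2$) and the recurrence of $\bsigma$ (to synchronize the recurring blocks) should yield a genuine nonzero element of $\bu^\bot\cap\Z^d$ or $\bv^\bot\cap\Z^d$. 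This contradicts the rational independence of $\bu$ and $\bv$, giving $\nu(B_0)=0$ and hence the lemma.
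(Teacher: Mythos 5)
Your framework starts soundly: the reduction to $\nu(B_0)=0$ via shift invariance is correct, and your Oseledec argument that $\bu^\bot\cap\Z^d=\{\mathbf{0}\}$ for $\nu$-a.e.\ $\bsigma$ (a nonzero integer vector in $\bu^\bot$ would be carried by $(M_{[0,n)})^t$ to a nonzero integer vector of norm less than $1$) is valid. But the proof does not close, and the unresolved ``hard part'' you flag is a genuine gap, not a technicality. Reducibility of $\chi_{M_{[0,\ell)}}$ only hands you, for each bad $\ell$, \emph{some} rational invariant subspace and hence \emph{some} nonzero integer vector orthogonal to the Perron vector $\mathbf{p}_\ell$; these vectors change with $\ell$ and their heights may grow without bound, so $\mathbf{p}_\ell\to\bu$ yields no integer vector exactly orthogonal to $\bu$ in the limit. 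Neither balance nor recurrence supplies the uniform height bound one would need to pigeonhole a single recurring relation, so the proposed ``promotion'' has no visible mechanism. Two further problems: your claim that the left Perron vectors $\mathbf{q}_\ell$ converge to a generalized left eigenvector $\bv$ is unjustified, since the cones $(M_{[0,\ell)})^t\R^d_{\ge 0}$ are not nested and the paper only obtains $\bv$ along subsequences by compactness; and you invoke Lemma~\ref{lem:genericBalance}, whose hypotheses (positive measure on every cylinder, a cylinder with positive incidence matrix) are not among the assumptions of the present lemma.

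The missing idea --- and the paper's actual route --- is to localize the \emph{eigenvalues} rather than hunt for rational relations. By the Pisot condition and Oseledec, for $\nu$-a.e.\ $\bsigma$, each fixed $k$, and large $\ell$, the matrix $M_{[k,\ell)}$ maps the unit sphere onto an ellipsoid with one semiaxis tending to infinity and all others tending to zero; by primitivity $M_{[k,\ell)}$ is eventually strictly positive, with simple Perron--Frobenius eigenvalue $\lambda_0>1$. An eigenvector for any eigenvalue of modulus at least $1$ cannot lie near the contracted directions, hence must point close to the Perron direction, hence has strictly positive entries for large $\ell$ --- but a positive eigenvector of a positive matrix belongs to $\lambda_0$, a contradiction (non-real eigenvalues are handled the same way via their invariant real $2$-planes). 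Thus every eigenvalue other than $\lambda_0$ has modulus strictly less than $1$, and none is $0$ by unimodularity. Irreducibility now follows from integrality alone: if $\chi_{M_{[k,\ell)}}=P_1P_2$ over $\Z$ with $\lambda_0$ a root of $P_1$, then all roots of $P_2$ are nonzero and lie strictly inside the unit disk, so $P_2(0)$ is a nonzero integer of modulus less than $1$, which is absurd. Note that this uses exactly the unimodular-splitting fact you observed (the restriction of $M$ to a rational invariant subspace has determinant $\pm1$), but as the \emph{engine} of the contradiction rather than as an obstruction; no rational independence of $\bu$, no balance, and no recurrence synchronization are needed.
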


\begin{proof}[Sketch]
Let $\bsigma$ be a generic sequence with sequence of incidence matrices $\bM=(M_n)$ and fix $k\in \N$. Then for $\ell \to \infty$ the matrix $M_{[k,\ell)}$ maps the unit sphere into an ellipse whose largest semiaxis tends to infinity and all of whose other semiaxes tend to zero by the Pisot condition. We prove that for $\ell$ large enough there can be only one eigenvalue $\lambda$ with $|\lambda|\ge 1$.

Indeed, if $\ell$ is large enough then $M_{[k,\ell)}$ is strictly positive, thus there is a dominant Perron-Frobenius eigenvalue $\lambda_0 >1$. It corresponds to an eigenvector $\bw_0$ with strictly positive entries. Suppose that there is another real eigenvalue $\lambda$ with $|\lambda|\ge 1$ and corresponding eigenvector $\bw$. Since the image of the unit sphere under $M_{[k,\ell)}$ is an ellipse with the above mentioned properties, the corresponding eigenvector has to have a direction close to $\bw_0$ for $\ell$ large, because otherwise its length would be shrunk by the application of $M_{[k,\ell)}$ as can be seen in Figure~\ref{fig:ExtEll}. 
\begin{figure}[hh]
\includegraphics[width=0.7\textwidth]{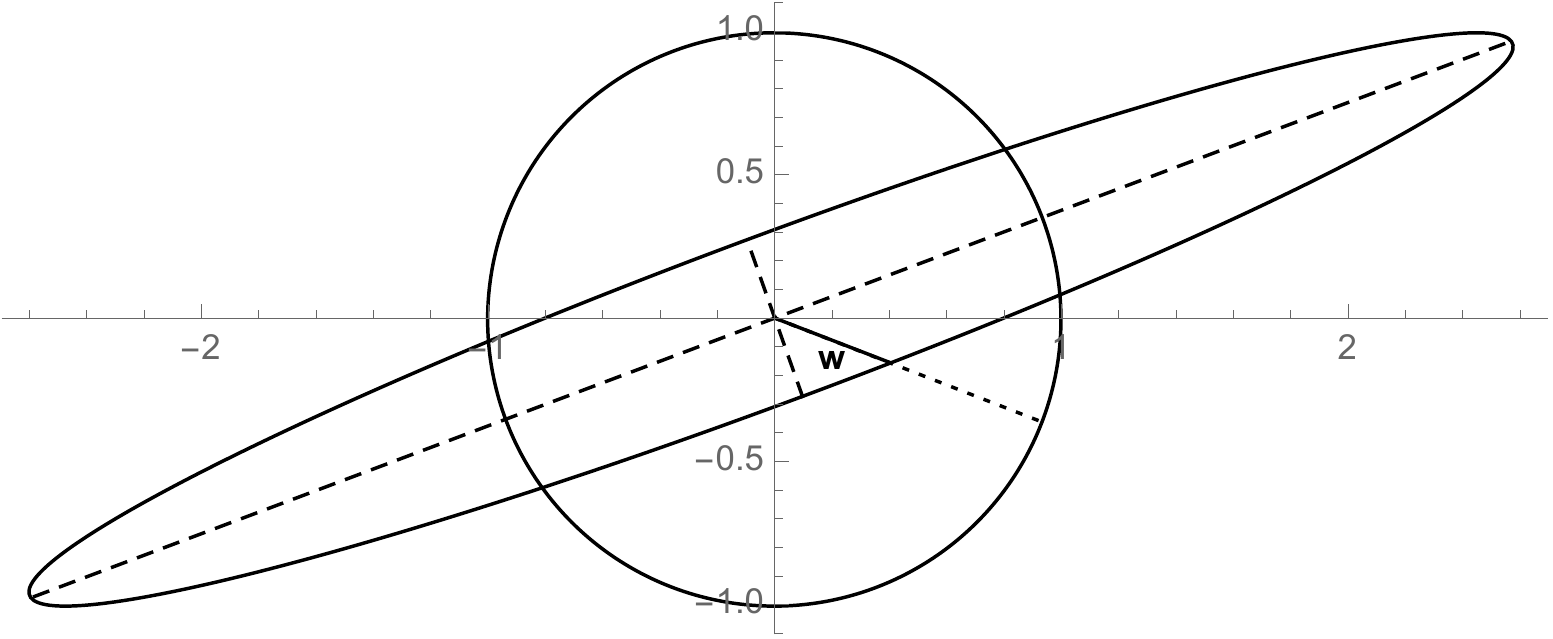}
\caption{An illustration of the elliptic image of the unit circle under $M_{[k,\ell)}$. The dashed lines are the axes of the ellipse, the largest axis being the direction of the Perron-Frobenius eigenvector $\bw_0$. If the indicated vector $\bw$ is an eigenvector of $M_{[k,\ell)}$ for another eigenvalue, its direction has to be far from the direction of $\bw_0$ (because not all of its entries can be positive). This entails that its length is less than $1$ and so it can only correspond to an eigenvalue less than $1$ in modulus. 
\label{fig:ExtEll}}
\end{figure}
Thus, if $\ell$ is large enough then $\bw$ must have strictly positive entries. However, such an eigenvector has to belong to the Perron-Frobenius eigenvalue, a contradiction. The case of nonreal eigenvalues can be treated similarly. Thus $M_{[k,\ell)}$ has only one eigenvalue of modulus greater than or equal to $1$. Since $M_{[k,\ell)}$ is an unimodular integer matrix, it cannot have $0$ as an eigenvalue. This implies that the characteristic polynomial of $M_{[k,\ell)}$ is irreducible and, hence, $\bsigma$ is algebraically irreducible. Indeed, we even proved that the characteristic polynomial of $M_{[k,\ell)}$ is the minimal polynomial of the Pisot number $\lambda_0$.

\end{proof}

We now have all the necessary ingredients to finish the proof of Theorem~\ref{t:3}. 

\begin{proof}[Conclusion of the proof of Theorem~\ref{t:3}]
We show that the conditions of Theorem~\ref{th:TilingRotation} are satisfied for $\nu$-a.e. $\bsigma\in S^\N$. To keep things simple we give the proof only for $\nu$ being a Bernoulli measure. Primitivity and algebraic irreducibility hold $\nu$-a.e.\ by Proposition~\ref{prop:oseledec}~(i) and Lemma~\ref{lem:genericIrred}, respectively.

It remains to deal with the condition involving recurrence and balance. We claim that there is $C\in \N$ such that 
\begin{equation}\label{eq:th:3pf}
\nu([\sigma_0,\ldots,\sigma_{\ell-1}] \cap \Sigma^{-\ell} S(C) ) > 0 \hbox{ for each } (\sigma_n)\in S^\N \hbox{ and each } \ell \ge 0.
\end{equation}
Indeed, since $\nu$ is a Bernoulli measure, $[\sigma_0,\ldots,\sigma_{\ell-1}]$ is independent from $\Sigma^{-\ell} S(C)$. Thus we have
\[
\nu([\sigma_0,\ldots,\sigma_{\ell-1}] \cap \Sigma^{-\ell} S(C) ) = \nu([\sigma_0,\ldots,\sigma_{\ell-1}])
\nu(S(C))
\]
and the claim \eqref{eq:th:3pf} follows because $\nu([\sigma_0,\ldots,\sigma_{\ell-1}])>0$ by assumption and $\nu(S(C))>0$ for $C$ large enough by Lemma~\ref{lem:genericBalance}. By another application of Poincar\'e recurrence \eqref{eq:th:3pf} yields that for $\nu$-a.e.\ $\bsigma \in S^\N$ and for every $\ell \in \N$ there is $n>0$ such that $\Sigma^n\bsigma \in [\sigma_0,\ldots,\sigma_{\ell-1}]$ and $\Sigma^{n+\ell}\bsigma \in S(C)$. 

Summing up we see that the assumptions of Theorem~\ref{th:TilingRotation} are satisfied for $\nu$-a.e. $\bsigma\in S^\N$. Thus Theorem~\ref{t:3} (1) follows from Theorem~\ref{prop:SadicUE}, Theorem~\ref{t:3} (2) follows from Theorem~\ref{th:RauzyProperties}, and Theorem~\ref{t:3} (3) follows from Theorem~\ref{th:TilingRotation}.

\end{proof}

\begin{remark}
With small amendments in the conclusion of the proof of Theorem~\ref{t:3} it is possible to prove Theorem~\ref{t:3} for sofic subshifts $(X,\Sigma,\nu)$ of $(S^\N,\Sigma,\nu)$. Even the case of infinitely many substitutions ({\it i.e.}, $|S|=\infty$) can be treated provided that the log-integrability condition \eqref{eq:logint} is satisfied. In this case one has to deal with the {\em $S$-adic graph} introduced in \cite{Berthe-Delecroix}. As mentioned above, the general result is contained in \cite{Berthe-Steiner-Thuswaldner}.
\end{remark}

\subsection{Corollaries for Arnoux-Rauzy and Brun systems}\label{sec:corex}

We now want to apply the two main theorems to Arnoux-Rauzy as well as Brun $S$-adic systems. Since these systems and their related generalized continued fraction algorithms have been studied quite well in the literature this will yield unconditional results on measurable conjugacy to a torus rotation, natural codings, and bounded remainder sets.

We start with the case of Arnoux-Rauzy systems. Let $S=\{\sigma_1,\sigma_2,\sigma_3\}$ be the set of Arnoux-Rauzy substitutions defined in \eqref{eq:ARsubs}. First we give a version of Theorem~\ref{t:3} for the $S$-adic sequences taken from $S^\N$.

\begin{corollary}[{see \cite[Theorem~3.8]{Berthe-Steiner-Thuswaldner}}]\label{cor:AR1}
Let $S$ be the set of Arnoux-Rauzy substitutions defined in \eqref{eq:ARsubs} and consider the full shift $(S^\N,\Sigma, \nu)$ equipped with an ergodic invariant measure $\nu$ that assigns positive mass to each cylinder. Then $\nu$-a.e. $\bsigma\in S^\N$ defines an $S$-adic system $(X_\bsigma,\Sigma)$ that is measurably conjugate to a rotation $T$ on the $2$-torus $\mathbb{T}^2$. Moreover, each element of $X_\bsigma$ forms a natural coding of $T$ w.r.t.\ the partition $\{\Ra_i\,:\, i\in \A\}$ defined by the subtiles of the Rauzy fractal $\Ra$. Each of these subtiles is a bounded remainder set of $T$.
\end{corollary}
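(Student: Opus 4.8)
The plan is to derive Corollary~\ref{cor:AR1} as a direct application of the metric result Theorem~\ref{t:3} to the specific three-element set $S=\{\sigma_1,\sigma_2,\sigma_3\}$ of Arnoux-Rauzy substitutions from \eqref{eq:ARsubs}. The strategy is to verify the hypotheses of Theorem~\ref{t:3} for the shift $(S^\N,\Sigma,\nu)$ and then to establish the tiling condition $\mathcal{C}_\mathbf{1}$, which is the only remaining assumption needed to invoke assertion~(3) of that theorem. First I would observe that since $\nu$ assigns positive mass to every cylinder and the Arnoux-Rauzy substitutions all have unimodular incidence matrices, most of the measure-theoretic hypotheses of Theorem~\ref{t:3} are immediate; the two substantive points to check are the \emph{Pisot condition} \eqref{eq:lyapunov} on the Lyapunov exponents and the existence of a cylinder whose incidence matrix is positive.

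The positivity requirement is subtle here, because no single Arnoux-Rauzy matrix $M_i$ is positive---each $\sigma_i$ fixes the letter $i$. I would handle this by blocking: a product such as $M_1M_2M_3$ (or some fixed finite word in the three letters) is strictly positive, so one passes to the induced shift on blocks, or equivalently one notes that $\nu$ assigns positive mass to the cylinder $[\sigma_1,\sigma_2,\sigma_3]$, which corresponds to a positive matrix. This is exactly the hypothesis ``there exists a cylinder corresponding to a substitution with positive incidence matrix'' once one recodes $S$ by admissible blocks. For the Pisot condition, the key input is the known behaviour of the Arnoux-Rauzy (equivalently, the associated continued fraction) cocycle: I would cite the established fact that the second Lyapunov exponent $\vartheta_2$ of the Arnoux-Rauzy matrix cocycle is strictly negative (and $\vartheta_1>0>\vartheta_2\ge\vartheta_3$), which has been verified in the literature on the Rauzy gasket and Arnoux-Rauzy dynamics. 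Granting this, Theorem~\ref{t:3} applies and yields, for $\nu$-a.e.\ $\bsigma$, that $(X_\bsigma,\Sigma)$ is minimal and uniquely ergodic and that each subtile $\Ra(i)$ is the closure of its interior with boundary of $\lambda_\bone$-measure zero.

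It remains to supply the tiling hypothesis of Theorem~\ref{t:3}~(3), namely that $\mathcal{C}_\mathbf{1}$ forms a tiling of $\mathbf{1}^\bot$. Here I would invoke Proposition~\ref{prop:ARgeometric}: a recurrent sequence of Arnoux-Rauzy substitutions that contains each of the three substitutions satisfies the geometric finiteness property, which by Proposition~\ref{prop:tilingR}~(iii) (strong coincidence together with geometric finiteness) implies that $\mathcal{C}_\mathbf{1}$ is a tiling of $\mathbf{1}^\bot$. The strong coincidence condition holds trivially for Arnoux-Rauzy substitutions, since $\sigma_i(j)$ begins with $i$ for all $j$ after one step, so all images of letters under $\sigma_{[0,\ell)}$ share a common prefix for suitable $\ell$. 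Since $\nu$ gives positive mass to each cylinder, Poincar\'e recurrence guarantees that $\nu$-a.e.\ $\bsigma$ is recurrent and contains each of $\sigma_1,\sigma_2,\sigma_3$ infinitely often, so Proposition~\ref{prop:ARgeometric} applies $\nu$-almost everywhere. Combining this with the already-verified hypotheses, Theorem~\ref{t:3}~(3) gives for $\nu$-a.e.\ $\bsigma$ that $(X_\bsigma,\Sigma,\mu)$ is measurably conjugate to a rotation $T$ on $\mathbb{T}^2$, that each element of $X_\bsigma$ is a natural coding of $T$ with respect to $\{\Ra(i):i\in\A\}$, and that each $\Ra(i)$ is a bounded remainder set for $T$, which is precisely the assertion of the corollary.

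The main obstacle I anticipate is the verification of the Pisot condition $\vartheta_1>0>\vartheta_2\ge\vartheta_3$ for the Arnoux-Rauzy cocycle, since this is a genuine dynamical estimate on Lyapunov exponents rather than a formal consequence of the earlier structural results in the excerpt. Unlike the positivity and recurrence points, which follow cleanly from the positive-mass assumption on $\nu$ via Poincar\'e recurrence, the negativity of $\vartheta_2$ depends on the specific geometry of the Arnoux-Rauzy matrices and on the choice of $\nu$, and the cleanest route is to quote the known computation from the literature rather than to reprove it here.
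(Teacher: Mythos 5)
Your proposal is correct and takes essentially the same route as the paper's own proof: check the hypotheses of Theorem~\ref{t:3} (the positive-matrix hypothesis via a cylinder containing all three substitutions, the Pisot condition quoted from the literature --- the paper cites Avila--Delecroix), and deduce the tiling property of $\Co_\bone$ from Proposition~\ref{prop:tilingR}~(iii), i.e.\ strong coincidence plus the geometric finiteness property of Proposition~\ref{prop:ARgeometric}, with Poincar\'e recurrence ensuring these apply for $\nu$-a.e.\ $\bsigma$. The only cosmetic difference is that you verify strong coincidence directly (all images under a single Arnoux--Rauzy substitution from \eqref{eq:ARsubs} begin with the same letter, exactly the trivial case noted in Section~\ref{sec:howto}), whereas the paper instead cites Barge--\v{S}timac--Williams; both are legitimate under the paper's prefix conventions.
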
 

\begin{proof}[Sketch]
It is easy to see that each cylinder containing each of the three substitutions has positive incidence matrix.
Thus the result follows from Theorem~\ref{t:3} if we can establish that $(S^\N,\Sigma,\nu)$ satisfies the Pisot condition and that for $\nu$-a.e.\ $\bsigma\in S^\N$ the associated collection $\Co_\bone$ of Rauzy fractals forms a tiling. The fact that the Pisot condition holds was proved by Avila and Delecroix~\cite{AD15}. The tiling property is a consequence  of Proposition~\ref{prop:tilingR}. Indeed, assertion~(iii) of this proposition holds by the following results. Firstly, strong coincidence follows from \cite[Proposition~4]{Barge-Stimac-Williams:13} (or \cite[Section~9]{Berthe-Steiner-Thuswaldner} where ``negative coincidence'' was used). The other assertion from Proposition~\ref{prop:tilingR}~(iii) is a weaker form of the geometric finiteness property which holds by Proposition~\ref{prop:ARgeometric}
(see also \cite[Theorem~4.7]{Berthe-Jolivet-Siegel:12}).

\end{proof}

With help of the balance properties of Arnoux-Rauzy sequences proved in \cite{Berthe-Cassaigne-Steiner} it is possible to use Theorem~\ref{th:TilingRotation} in order to show results for concrete Arnoux-Rauzy systems. For instance it is proved in \cite[Corollary~3.9]{Berthe-Steiner-Thuswaldner} that any linearly recurrent Arnoux-Rauzy sequence with recurrent directive sequence generates an $S$-adic system $(X_\bsigma,\Sigma)$ that is measurably conjugate to a rotation on a $2$-torus.

\medskip

For the second class of examples let $S=\{\sigma_1,\sigma_2,\sigma_3\}$ be the set of Brun substitutions defined in \eqref{eq:brun}. In this case a version of Theorem~\ref{t:3} completely analogous to Corollary~\ref{cor:AR1} holds. 

\begin{corollary}[{see \cite[Theorem~3.10]{Berthe-Steiner-Thuswaldner}}]\label{cor:BRUN11}
Let $S$ be the set of Brun substitutions defined in \eqref{eq:brun} and consider the full shift $(S^\N,\Sigma, \nu)$ equipped with an ergodic invariant measure $\nu$ that assigns positive mass to each cylinder. Then $\nu$-a.e. $\bsigma\in S^\N$ defines an $S$-adic system $(X_\bsigma,\Sigma)$ that is measurably conjugate to a rotation $T$ on the $2$-torus $\mathbb{T}^2$. Moreover, each element of $X_\bsigma$ forms a natural coding of $T$ w.r.t.\ the partition $\{\Ra_i\,:\, i\in \A\}$ defined by the subtiles of the Rauzy fractal $\Ra$. Each of these subtiles is a bounded remainder set of $T$.
\end{corollary}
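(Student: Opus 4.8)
The plan is to follow the proof of Corollary~\ref{cor:AR1} almost verbatim, replacing the Arnoux-Rauzy inputs by their Brun counterparts: I would verify that the full shift $(S^\N,\Sigma,\nu)$ on the Brun substitutions satisfies all hypotheses of Theorem~\ref{t:3} and that, for $\nu$-a.e.\ $\bsigma\in S^\N$, the collection $\Co_\bone$ forms a tiling of $\bone^\bot$; the three assertions then follow from Theorem~\ref{t:3}~(3).

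First I would dispatch the structural hypotheses of Theorem~\ref{t:3}. By assumption $\nu$ assigns positive mass to every cylinder, and from Example~\ref{ex:brunprim} we know that $M_1M_2M_1M_2$ is strictly positive; hence the cylinder $[\sigma_1,\sigma_2,\sigma_1,\sigma_2]$ corresponds to a block with positive incidence matrix, which is precisely the positivity hypothesis required. Granting the Pisot condition \eqref{eq:lyapunov} (discussed below), Theorem~\ref{t:3}~(1) and~(2) then supply minimality, unique ergodicity, and the fact that each subtile $\Ra(i)$ is the closure of its interior with boundary of $\lambda_\bone$-measure zero, for $\nu$-a.e.\ $\bsigma$.

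The two genuinely ``imported'' ingredients are the Pisot condition for the Brun cocycle and the tiling property. The Pisot condition---negativity of the second Lyapunov exponent of the Brun cocycle---is available in the literature on Brun's algorithm and is recorded in this generality in \cite{Berthe-Steiner-Thuswaldner}. For the tiling property I would appeal to Proposition~\ref{prop:tilingR}, checking its assertion~(iii): strong coincidence together with the weak finiteness statement on $\bigcup_{i\in\A}E_1^*(\sigma_{[0,n)})[\mathbf{0},i]$. Strong coincidence for sequences of Brun substitutions follows from the combinatorial structure of \eqref{eq:brun} once a common prefix is forced after finitely many compositions, and recurrence guarantees that the requisite blocks occur $\nu$-a.e. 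The finiteness statement is exactly what the annulus method of Section~\ref{sec:howto} delivers: \cite{BBJS16} carries out the annulus analysis for the Brun substitutions, so a recurrent $\bsigma$ containing the relevant annulus-producing block enjoys the geometric finiteness property. By Poincar\'e recurrence and positivity of $\nu$ on cylinders, such $\bsigma$ form a set of full $\nu$-measure, and Proposition~\ref{prop:tilingR} then yields that $\Co_\bone$ tiles $\bone^\bot$ for $\nu$-a.e.\ $\bsigma$.

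With all hypotheses of Theorem~\ref{t:3}~(3) verified $\nu$-a.e., the measurable conjugacy to a rotation $T$ on $\mathbb{T}^2$, the natural coding property, and the bounded remainder set property follow at once. I expect the main obstacle to be the two imported facts rather than the bookkeeping: the Pisot condition is an analytic statement about the asymptotics of the Brun cocycle, while the geometric finiteness requires the full combinatorial annulus argument of \cite{BBJS16} (in particular, verifying the requisite covering properties of the annulus $A$), and both must be quoted rather than reproved here.
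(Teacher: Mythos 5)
Your overall architecture matches the paper's: verify the hypotheses of Theorem~\ref{t:3} (positivity via $M_1M_2M_1M_2$, the Pisot condition from the literature) and then get the tiling of $\bone^\bot$ from the coincidence machinery of Proposition~\ref{prop:tilingR} with input from \cite{BBJS16}. The paper's proof is exactly this, except that for the tiling step it invokes Proposition~\ref{prop:tilingR}~(ii) directly: \emph{geometric coincidence} is established for the Brun class in \cite{BBJS16}, and that alone yields the tiling property. (For the Pisot condition the paper cites Avila--Delecroix \cite{AD15}, the same source as in the Arnoux-Rauzy case; your vaguer appeal to the literature is acceptable but less precise.)

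There is, however, a genuine gap in your route via Proposition~\ref{prop:tilingR}~(iii). You justify strong coincidence by claiming that ``a common prefix is forced after finitely many compositions'' of Brun substitutions. This is false: for each of the three Brun substitutions in \eqref{eq:brun}, the map sending a letter $j$ to the first letter of $\sigma_i(j)$ is a \emph{permutation} of $\{1,2,3\}$ (for $\sigma_1$ it is $1\mapsto 3$, $2\mapsto 1$, $3\mapsto 2$; for $\sigma_2$ it is $1\mapsto 1$, $2\mapsto 3$, $3\mapsto 2$; for $\sigma_3$ it is the identity). Consequently, for \emph{any} composition $\sigma_{[0,\ell)}$ of Brun substitutions, the images of the three letters begin with three distinct letters, so no common prefix ever appears, no matter how long the block. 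Strong coincidence for Brun sequences therefore cannot be obtained this way; it requires producing prefixes $p_1,p_2$ with $\mathbf{l}(p_1)=\mathbf{l}(p_2)$ (abelian coincidence) followed by the same letter, which is a nontrivial combinatorial fact. The clean repair is the paper's: quote geometric coincidence from \cite{BBJS16} and use the equivalence in Proposition~\ref{prop:tilingR} (in particular (ii)$\Rightarrow$(iii), so strong coincidence comes for free and need not be checked by hand). With that substitution your argument goes through; the rest of your bookkeeping (positivity of the cylinder, Poincar\'e recurrence, and the deduction of the three assertions from Theorem~\ref{t:3}~(3)) is correct.
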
 

\begin{proof}[Sketch]
First observe that $\sigma_1\sigma_2\sigma_1\sigma_2$ has positive incidence matrix.
One uses again \cite{AD15} to ensure that the Pisot condition holds (see also~\cite{FUKE96,MEESTER,Schratzberger:98} for similar results). The tiling property follows from geometric coincidence which is established in \cite{BBJS16} for the Brun class. 

\end{proof}

Contrary to the Arnoux-Rauzy continued fraction algorithm, the Brun algorithm 
can be performed for all elements $(x_1,x_2)\in\Delta$ with $\Delta$ as in \eqref{eq:deltaset}. Thus, using Brun systems we get natural codings for a.a.\ torus rotations $\bt\in\mathbb{T}^2$. 

\begin{corollary}[{see \cite[Corollary~3.12]{Berthe-Steiner-Thuswaldner}}]\label{cor:BRUN2}
Let $S$ be the set of Brun substitutions defined in \eqref{eq:brun}. Then for almost every $\bt\in\mathbb{T}^2$ (w.r.t.\ the Haar measure on $\mathbb{T}^2$) there is $\bsigma\in S^\N$ such that the shift $(X_\bsigma,\Sigma)$ is measurably conjugate to the rotation $T_\bt$ by $\bt$ on $\mathbb{T}^2$. Moreover, the sequences in $X_\bsigma$ form natural codings of the rotation $T_\bt$.
\end{corollary}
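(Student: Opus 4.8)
The plan is to transfer the metric statement of Corollary~\ref{cor:BRUN11} from the substitution space $S^\N$ to the parameter space $\mathbb{T}^2$ via Brun's continued fraction algorithm. First I would recall from Example~\ref{ex:brun} that Brun's map $f_{\rm B}$ is defined on all of $\Delta=\{(x_1,x_2):0\le x_1\le x_2\le 1\}$ as in \eqref{eq:deltaset}, and that it can be iterated indefinitely precisely for those $(x_1,x_2)$ whose coordinates together with $1$ are rationally independent; this is a full-measure condition on $\Delta$ by a standard Borel--Cantelli or ergodicity argument for $f_{\rm B}$. Each such point produces an infinite sequence of matrices drawn from $\{M_1,M_2,M_3\}$ in \eqref{eq:brunmatrices}, hence (by the correspondence between these matrices and the Brun substitutions of \eqref{eq:brun}) an infinite directive sequence $\bsigma\in S^\N$. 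This gives a coding map $\Phi:\Delta\to S^\N$ defined almost everywhere.

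Second, I would identify the invariant measure. The Brun map $f_{\rm B}$ possesses an ergodic invariant measure $\nu_{\rm B}$ absolutely continuous with respect to Lebesgue measure on $\Delta$ (this is classical; see Brun~\cite{BRUN} and the references after Example~\ref{ex:brun}). Pushing $\nu_{\rm B}$ forward under $\Phi$ yields a shift-invariant ergodic measure $\nu=\Phi_*\nu_{\rm B}$ on $S^\N$ which assigns positive mass to every cylinder, since the corresponding partition elements of $\Delta$ all have positive Lebesgue measure. Thus $\nu$ satisfies exactly the hypotheses of Corollary~\ref{cor:BRUN11}, and that corollary applies to $\nu$-a.e.\ $\bsigma\in S^\N$: the associated $S$-adic system $(X_\bsigma,\Sigma)$ is measurably conjugate to a rotation $T$ on $\mathbb{T}^2$, with the sequences in $X_\bsigma$ forming natural codings.

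Third, and this is the crux, I must check that the rotation vector produced for $\bsigma=\Phi(x_1,x_2)$ is determined by $(x_1,x_2)$ in such a way that, as $(x_1,x_2)$ ranges over a full-measure subset of $\Delta$, the rotation vector $\bt$ ranges over a full-measure subset of $\mathbb{T}^2$. The rotation is by $\pi_{\bu,\bone}\be_1$ where $\bu=\bu(\bsigma)$ is the generalized right eigenvector, which by weak convergence (Proposition~\ref{prop:furstmatrix}) is exactly the direction $\bigcap_n M_{[0,n)}\R^3_{\ge0}$ recovered from the matrix products; this direction is the projective point that Brun's algorithm reconstructs from its digits, i.e.\ it corresponds to $(x_1,x_2)$ itself. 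Hence the map $(x_1,x_2)\mapsto\bt$ is essentially the composition of $\Phi$ with the eigenvector-to-rotation-vector assignment, and I would verify that its image has full Haar measure in $\mathbb{T}^2$ by showing this map is nonsingular (the absolutely continuous invariant measure on $\Delta$ pushes to something equivalent to Haar measure on the torus of rotation vectors).

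The main obstacle I anticipate is this last measure-transfer step: one must argue carefully that the assignment from the parameter $(x_1,x_2)$ to the rotation vector $\bt$ does not collapse positive-measure sets to null sets and that its image is conull in $\mathbb{T}^2$. In particular I would need that almost every $\bt\in\mathbb{T}^2$ arises, which requires both surjectivity-up-to-measure-zero of Brun's reconstruction onto projective directions and the absolute continuity of $\nu_{\rm B}$; the technical heart is matching the normalization of the eigenvector $\bu$ with the lattice $\Lambda=\bone^\bot\cap\Z^3$ so that $\pi_{\bu,\bone}\be_1$ sweeps out the full torus. For the remaining details, which are carried out in the reference, I would defer to \cite[Corollary~3.12]{Berthe-Steiner-Thuswaldner}.
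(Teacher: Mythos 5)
Your Steps 1 and 2 are sound and are exactly the intended route: Brun's algorithm is defined on all of $\Delta$, the push-forward of its absolutely continuous ergodic invariant measure under the coding map $\Phi$ is an ergodic shift-invariant measure on $S^\N$ giving positive mass to every cylinder, and Corollary~\ref{cor:BRUN11} applies to almost every directive sequence so obtained. The gap is in Step 3, and it is not a technicality. Work out what the rotation actually is: by the proof of Theorem~\ref{th:TilingRotation}, the shift $(X_\bsigma,\Sigma)$ with $\bsigma=\Phi(x_1,x_2)$ is conjugate to the translation by $\pi_{\bu,\bone}\be_1$ on $\bone^\bot/\Lambda$, where $\bu\propto(x_1,x_2,1)^t$ is the generalized right eigenvector. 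Identifying $\bone^\bot/\Lambda$ with $\mathbb{T}^2$ (say by deleting the third coordinate, which maps $\Lambda$ isomorphically onto $\Z^2$), this translation is by $-(x_1,x_2)/(1+x_1+x_2) \bmod \Z^2$. As $(x_1,x_2)$ runs through $\Delta$, these vectors fill, modulo $\Z^2$ and up to sign, only the triangle $\{(y_1,y_2)\,:\,0\le y_1\le y_2,\ y_1+2y_2\le 1\}$, whose Haar measure is $\int_\Delta(1+x_1+x_2)^{-3}\,dx_1\,dx_2=1/12$. So the push-forward of $\nu_{\mathrm{B}}$ under the parameter-to-rotation-vector map is carried by a set of measure $1/12$ and is certainly not equivalent to Haar measure on $\mathbb{T}^2$; no nonsingularity argument can repair this, since a smooth map on the bounded region $\Delta$ cannot have conull image in $\mathbb{T}^2$. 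For the same reason, your implicit assumption that the Brun expansion of $\bt$ itself codes $T_\bt$ is false: it codes the rotation by $\bt/(1+t_1+t_2)$.

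The missing idea is to work with conjugacy classes of rotations rather than with rotation vectors. Translations by $\bt$ and by $A\bt+\mathbf{n}$ (with $A\in\mathrm{GL}_2(\Z)$, $\mathbf{n}\in\Z^2$) are measurably conjugate via the corresponding affine toral automorphism, and this automorphism transports fundamental domains, natural codings, and bounded remainder sets; hence it suffices to show that almost every $\bt$ is equivalent in this sense to a vector in the realized triangle. This can be done explicitly: replace $\bt$ by a representative with $t_1,t_2>0$ and $t_1+t_2<1$ (integer translations and sign flips achieve this a.e.), put $t_3=1-t_1-t_2$, sort $t_{(1)}\le t_{(2)}\le t_{(3)}$, and feed the parameter $(x_1,x_2)=\bigl(t_{(1)}/t_{(3)},\,t_{(2)}/t_{(3)}\bigr)\in\Delta$ into your Steps 1--2. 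Then $(x_1,x_2)/(1+x_1+x_2)=(t_{(1)},t_{(2)})$, and the translation by $(t_{(1)},t_{(2)})$ is $\mathrm{GL}_2(\Z)\ltimes\Z^2$-equivalent to the translation by $(t_1,t_2)$ because $t_{(1)},t_{(2)}$ are two of the three numbers $t_1,t_2,1-t_1-t_2$. Since $\bt\mapsto(x_1,x_2)$ is piecewise smooth with nonvanishing Jacobian, almost every $\bt$ produces a parameter inside the conull subset of $\Delta$ where Corollary~\ref{cor:BRUN11} applies, which completes the proof. (Alternatively, one can note that the set of good $\bt$ is invariant under the $\mathrm{GL}_2(\Z)$-action on $\mathbb{T}^2$, which contains an ergodic hyperbolic automorphism, and has positive measure, hence full measure.) Without a step of this kind your argument establishes the statement only for $\bt$ in a set of Haar measure $1/12$, not for almost every $\bt\in\mathbb{T}^2$.
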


To create concrete examples of Brun $S$-adic shifts being measurably conjugate to a rotation, one can use Theorem~\ref{th:TilingRotation} together with the balance results established in \cite{Delecroix-Hejda-Steiner}.

\section{Concluding remarks: Natural extensions, flows, and their Poincar\'e sections}

It remains to extend the ideas and results presented in Section~\ref{sec:NatlGauss} to generalized continued fraction algorithms and $S$-adic systems on $d$ letters. This is the subject of the ongoing paper by Arnoux {\it et al.}~\cite{ABMST:18}. 

It is possible to study natural extensions of generalized continued fraction algorithms (see for instance~\cite{AL:15,Arnoux-Nogueira}). In the way we do it in \cite{ABMST:18}, the analogs of the $L$-shaped regions of Section~\ref{sec:NatlGauss}  are ``Rauzy-Boxes'' which are defined as suspensions of $S$-adic Rauzy fractals. They were introduced in the $S$-adic setting in \cite[Section~2.9]{Berthe-Steiner-Thuswaldner} but have been studied earlier in the substitutive case, see for instance Ito and Rao~\cite{Ito-Rao:06}. These Rauzy boxes allow nonstationary Markov partitions for so-called ``mapping families'' in the sense studied by Arnoux and Fisher~\cite{AF:05} that can be visualized by restacking $S$-adic Rauzy fractals in a suitable way.

Also Artin's idea of viewing continued fraction algorithms as Poincar\'e sections of the geodesic flow on $\mathrm{SL}_2(\Z)\backslash \mathrm{SL}_2(\R)$ can be generalized. In this generalization the role of the geodesic flow is played by the \emph{Weyl Chamber Flow}, a diagonal $\R^{d-1}$-action on the space $\mathrm{SL}_d(\Z)\backslash \mathrm{SL}_d(\R)$ of $d$-dimensional lattices. It turns out that each coordinate direction of this $\R^{d-1}$-action has a Poincar\'e section which is arithmetically coded by a generalized continued fraction algorithm. Geometrically, this is visualized by deforming a given Rauzy box (one for each coordinate) by the action of the Weyl Chamber Flow and restacking it accordingly as soon as a Poincar\'e section is reached. 

\medskip

Details of all this will be contained in~\cite{ABMST:18}.

\section{Acknowledgements}

I warmly thank Shigeki Akiyama and Pierre Arnoux for inviting me to contribute to this volume. I very much appreciate their constant encouragement and support, and the many discussions I had with them. Moreover, I am indebted to Val\'erie Berth\'e, S\'ebastien Labb\'e, and Wolfgang Steiner for their suggestions.

\bibliographystyle{spmpsci}
\bibliography{MorletJT}

\end{document}